\documentclass[11pt, leqno]{article}
\usepackage[numbers]{natbib}

\usepackage[utf8]{inputenc}
\usepackage{amsmath,amssymb}
\usepackage[export]{adjustbox}
\usepackage{hyperref}
\usepackage{bbm}
\usepackage{bm}
\usepackage{geometry}
\usepackage{comment}
\usepackage[ruled,vlined]{algorithm2e}
\usepackage[toc,page]{appendix}
\usepackage{subfig}
\usepackage{authblk}
\usepackage{tabularx}
\usepackage{booktabs}
\usepackage[labelfont=bf,format=plain,justification=raggedright,singlelinecheck=false]{caption}
\usepackage{fullpage}
\usepackage[symbol]{footmisc}

\title{Convergence rates of non-stationary and deep Gaussian process regression}
\author[1]{Conor Osborne\thanks{conor.osborne@ed.ac.uk}}
\author[1]{Aretha L. Teckentrup\thanks{a.teckentrup@ed.ac.uk}}
\affil[1]{School of Mathematics and Maxwell Institute for Mathematical Sciences, University of Edinburgh, King’s Buildings, Edinburgh EH9 3FD, UK}

\date{}

\newtheorem{theorem}{Theorem}[section]
\newtheorem{corollary}[theorem]{Corollary}
\newtheorem{lemma}[theorem]{Lemma}
\newtheorem{proposition}[theorem]{Proposition}

\newtheorem{definition}[theorem]{Definition}

\newtheorem{remark}[theorem]{Remark}
\newtheorem{assumption}{Assumption}
\newenvironment{proof}{\paragraph{Proof:}}{\hfill$\square$}


\DeclareMathOperator{\warp}{warp}
\DeclareMathOperator{\mix}{mix}  
\DeclareMathOperator{\conv}{conv} 

\DeclareMathOperator{\inv}{inv}

\DeclareMathOperator{\MS}{MS}
\DeclareMathOperator*{\esssup}{ess \, sup}

\def\bE{\mathbb{E}}
\def\bN{\mathbb{N}}
\def\bP{\mathbb{P}}

\def\bR{\mathbb{R}}

\def\R{\bR}
\def\N{\bN}
\def\P{\bP}
\def\E{\bE}
\def\C{\mathbb{C}} 
\def\GP{\mathcal{GP}} 
\def\TGP{\mathcal{TGP}} 
\def\H{\mathcal{H}}

\def\M{\text{Mat}}
\def\a{\alpha}
\def\s{\sigma}
\def\O{\Omega}

\def\low{\text{low}}
\def\up{\text{up}}

\usepackage{natbib}
\usepackage{graphicx}
\usepackage{xcolor}

\usepackage{geometry}
\usepackage{xcolor}
\usepackage{amsmath}
\usepackage[some]{background}

\definecolor{titlepagecolor}{cmyk}{1,.60,0,.40}

\backgroundsetup{
scale=1,
angle=0,
opacity=1,
contents={\begin{tikzpicture}[remember picture,overlay]
 \path [fill=titlepagecolor] (current page.west)rectangle (current page.north east); 
 \draw [color=white, very thick] (5,0)--(5,0.5\paperheight);
\end{tikzpicture}}
}
\providecommand{\keywords}[1]
{
  \small	
  \textit{AMS 2020 subject classifications:} #1
}

\makeatletter                   
\def\printauthor{%
    {\large \@author}}          
\makeatother

\begin{document}

\maketitle

\begin{abstract}
        The focus of this work is the convergence of non-stationary and deep Gaussian process regression. More precisely, we follow a Bayesian approach to regression or interpolation, where the prior placed on the unknown function $f$ is a non-stationary or deep Gaussian process, and we derive convergence rates of the posterior mean to the true function $f$ in terms of the number of observed training points. In some cases, we also show convergence of the posterior variance to zero. The only assumption imposed on the function $f$ is that it is an element of a certain reproducing kernel Hilbert space, which we in particular cases show to be norm-equivalent to a Sobolev space. Our analysis includes the case of estimated hyper-parameters in the covariance kernels employed, both in an { empirical Bayes} setting and the particular hierarchical setting constructed through deep Gaussian processes. We consider the settings of noise-free or noisy observations on deterministic or random training points. We establish general assumptions sufficient for the convergence of deep Gaussian process regression, along with explicit examples demonstrating the fulfilment of these assumptions. Specifically, our examples require that the H\"older or Sobolev norms of the penultimate layer are bounded almost surely.

\end{abstract}
{\keywords{62G08, 62G20, 60G17, 65C20, 68T07}}

\section{Introduction}
Given a model, such as a simulator of a real world physical process, we are interested in forming an approximation, or {\it emulator}, of this model. Following the Bayesian framework \cite{kennedy2001bayesian,sacks1989design}, we construct a \emph{prior distribution} over the space of potential models. This prior distribution encompasses our initial knowledge of the model. Given some observations, which typically come in the form of input-output pairs, we then condition the prior distribution on the observations to construct a more informative probability distribution, known as the \emph{posterior distribution}. This posterior distribution then encompasses our initial knowledge, as well as the observations. An advantage of the Bayesian framework is that the posterior distribution allows for uncertainty quantification in our approximation.

Mathematically, the model of interest corresponds to a function, mapping inputs to corresponding outputs. A popular choice of prior distributions on functions are Gaussian processes (GPs), see e.g. \cite{Rasmussen06gaussianprocesses,kennedy2001bayesian,helin2023introduction} for an introduction to the topic.
A GP is completely described by its mean and covariance functions, and these functions can then be designed to describe features we expect to see in our model, such as typical length scales and amplitudes, or a prescribed regularity. In this work, we assume a zero mean in the prior distribution, and focus on sophisticated covariance kernels to capture the features of the model. The posterior mean, which is a linear combination of kernel evaluations, then inherits the features from the covariance kernel.

When the model being recovered is non-stationary, here meaning that it behaves differently in different parts of its domain, 
standard choices of covariance kernels such as the Mat\'ern family \cite{Rasmussen06gaussianprocesses,matern} fail to capture essential properties of the model, and as a result, the accuracy of the emulator is often poor. Many modern applications involve non-stationary models (see e.g. \cite{Ping2020,Nabney1996,Nason2006}), and a 
considerable amount of work has been done over the last decades to design non-stationary covariance kernels (see e.g. the recent survey \cite{sauer2023nonstationary}).
However, although the superior performance of these methods has been observed in practice, convergence properties of these methods are often still less understood than for classical, stationary constructions.

In the last decade, the use of deep GPs (DGPs) has further gained traction as a method to create flexible priors, including capturing non-stationary behaviour \cite{damianou2013deep, dunlop2018deep, Sauer2022, finocchio2023posterior}. DGPs are no longer GPs themselves, but are defined through a hierarchy of stochastic processes that are conditionally Gaussian. In particular, the stochastic process at layer $n$ of the hierarchy is used to define the covariance structure of stochastic process at layer $n+1$, and conditioned on the $n$-th layer, the distribution of the $n+1$-th layer is Gaussian. DGPs can be seen as a particular example of a hierarchical statistical model.
Although DGPs have been shown to work much better than standard stationary GPs in a  wide variety of applications (see e.g. \cite{Muir2023,Desai_2023,Soltanpour2023}), 
the theoretical understanding of DGPs, including convergence properties, is still limited.

The focus of this work is to study convergence properties of regression (in the case of noisy training data) or interpolation (in the case of noise-free training data) with non-stationary and deep GP priors. More precisely, we follow a Bayesian approach where the prior placed on the unknown function $f$ is a non-stationary or deep Gaussian process and the data comes in the form of noisy or noise-free point evaluations of $f$, and we derive convergence rates of the posterior mean to the true function $f$ in terms of the number of observed training points. In some cases, we also show convergence of the posterior variance to zero. Whether or not the training data is assumed noisy depends on the context. If the training data consists of real data, the assumption that the observed function values contain noise is common. If on the other hand the training data is simulated data, obtained from simulating a mathematical model on a computer, assuming no noise in the function values may be more {appropriate.}


A large body of work exists exploring the convergence analysis of GP regression (or interpolation) under various assumptions on the training data and the true function $f$, see e.g. the seminal works  \cite{Choi2007OnPC, Stein1999,  van_der_Vaart_2008,Vaart2011,sss13} as well as the more recent works \cite{hoffmann2015adaptive, karvonen2020maximum,wynne2021convergence, nieman2022contraction}. However, the vast majority of results known in the literature are often either proven for specific, stationary kernels, or they are proven for general kernels under assumptions that have so far only been verified for stationary kernels. 
In this work, our primary interest is in the case where the training data is noise-free, and we hence use proof techniques that give optimal convergence rates in this case. The same approach was already used in for example \cite{sss13,karvonen2020maximum,Teckentrup2019}, and uses tools mostly from the scattered data approximation literature \cite{wendland_2004}. Although we do obtain convergence rates also in the case of noisy data, the rates we obtain are not optimal. We consider the case where the locations of the training data are fixed, given e.g. by a rule with known theoretical properties, or sampled randomly from a specified distribution.

In practice, GP and DGP regression is often used in a black-box context, where properties of the function $f$ to be approximated are largely unknown. Therefore, in the majority of our results the only assumption that we impose on $f$ is that it belongs to a Sobolev space $H^\beta(\O)$ (or a corresponding tensor product version), for a bounded domain $\O \subseteq \R^d$ and some $\beta \geq d/2$. This translates into assuming that $f$ is continuous and that it possesses a certain number of square-integrable weak derivatives, which is often reasonable. Considering this setting allows us to (i) justify the use of the methodology in a black-box setting, and (ii) conclude that the methodology is robust to misspecification of the ``non-stationarity" in the sense that our results do not require the non-stationary structure of $f$ and that of the prior distribution to match. 

As such, this work complements the recent works \cite{finocchio2023posterior,Bachoc2024,castillo2024deep,abraham2023deep}, which show that for functions $f$ with a certain compositional structure, a DGP prior based on compositions can be constructed which exploits the structure and gives faster convergence rates. We believe it is possible to extend the analysis presented here to include further structural assumptions on $f$ and show faster convergence of non-stationary GP and DGP regression compared to stationary counterparts, but this is not straight-forward and has therefore been left for future work. 



Finally, we note here that as in \cite{finocchio2023posterior,Bachoc2024,castillo2024deep,abraham2023deep}, our final results on the convergence of DGP regression for specific constructions require a truncation of the (conditionally) Gaussian process on the penultimate layer, such that appropriate H\"older or Sobolev norms of the penultimate layer are uniformly bounded almost surely. Although we do present general convergence results for DGP regression that do not require this truncation, we were unable to verify the assumptions of these results without enforcing the truncation. 




\subsection{Our contributions}
In this paper we make the following novel contributions to the analysis of non-stationary and deep GP regression, of which the first two are also of independent interest:
\begin{enumerate}
    \item We show that for two typical non-stationary kernels, namely the warping and mixture kernels introduced in section \ref{subsubsec:nonstationary}, the reproducing kernel Hilbert space (RKHS) can be found explicitly, and we prove that under given conditions the RKHS is norm-equivalent to a Sobolev space. These results are given in Theorem \ref{lemma:sob equiv warping} and \ref{lemma: sob equiv mixture}.
    \item We present results on the regularity properties of sample paths of a non-stationary Gaussian process for general kernels, as given in Lemmas \ref{lem:old school sob regularity method} and \ref{lemma: Sullivan regularity}. Additionally, specific applications to the warping and mixture kernels  are discussed in Corollaries \ref{cor:sample path old school} and \ref{cor: sample path sullivan}.
    \item We prove convergence rates for non-stationary GP regression in terms of the number of training points, under various assumptions on the true function $f$ being approximated and the training data used. 
    
    For noise-free data, these results are largely based on the known results for general kernels given in Propositions \ref{thm:Wend11.13} and \ref{thm:Convergence_in_sob}, with the required assumptions proven here for the warping and mixture kernels. The main final results are given in Corollaries \ref{cor:convergence in the native space}, \ref{cor:conv_sob}, \ref{cor:conv_rkhs_est}, \ref{cor:conv_sob_est}, \ref{cor:conv_sep} and \ref{cor:conv_random}. Corollaries \ref{cor:convergence in the native space} and \ref{cor:conv_rkhs_est} also apply to the convolution kernel from section \ref{subsubsec:nonstationary}.
    
    For noisy data, results for general kernels are given in Theorem \ref{thm:conv_sob_noisy} (adapted from \cite[Theorem 2]{wynne2021convergence}) and \cite[Theorem 2]{Vaart2011}, and results for specific kernels are given in  Corollaries \ref{cor:conv_sob_noisy}, \ref{cor:conv_sob_est_noisy} and \ref{cor:conv_random w/ noise}.
    \item We prove convergence rates for deep GP regression in terms of the number of training points used, under general assumptions on the true function $f$ being approximated and the training data used. The results for general kernels and constructions is given in Theorems \ref{thm:exp convergence rate} and \ref{thm:exp convergence rate_wgp}.
    
   Explicit constructions of (constrained) deep GPs based on the warping and mixture kernels, and satisfying the assumptions of Theorem \ref{thm:exp convergence rate} or \ref{thm:exp convergence rate_wgp}, are provided in Lemmas \ref{lem:DGPkernelWarp}, \ref{lem:DGPkernelMix} and  \ref{lem:DGPWideGP}.
\end{enumerate}

To the best of our knowledge, the error bounds proven in items 3 and 4 are the first to apply in the settings considered in this work. In particular, the analysis in section \ref{sec:EADGPR} is the first to show convergence of deep GP regression under the general assumption that the true function $f$ belongs to a Sobolev space, rather than imposing further structural assumptions (as in e.g. \cite{finocchio2023posterior,Bachoc2024,castillo2024deep,abraham2023deep}).

\subsection{Paper structure}

This paper is structured as follows. 
In section \ref{sec:GPR}, we introduce the general set-up of GP regression, together with standard choices of stationary and non-stationary covariance kernels and the estimation of related hyper-parameters. In section \ref{sec:nativespaces}, we introduce and construct native spaces for the non-stationary covariance kernels, and show that these are norm-equivalent to Sobolev spaces in special cases. Section \ref{sec:sample path regularity} gives results on the sample path regularity of non-stationary Gaussian processes. Sections \ref{sec:EAGPR} and \ref{sec:EADGPR} are devoted to the error analysis on non-stationary and deep GP regression, respectively. Finally, in section \ref{sec:numsim}, we present simple numerical simulations that illustrate the theory. Section \ref{sec:conclusion} offers some conclusions and discussion.

\subsection{Preliminaries and notation}
Throughout this work, $\O\subset\R^d$, for $d\in\N$, will be a bounded Lipschitz domain that satisfies an interior cone condition \cite{wendland_2004}.
For any multi-index \(q=(q_1,\ldots,q_n)\in\N^{d}_0\) with length \(|q|=q_1+\ldots+q_n\) we denote the \(q\)-th derivative of a function \(g:\O \rightarrow \R\) by 
\[
    D^{q}g=\frac{\partial^{|q|}}{\partial^{ q_1}\ldots\partial^{q_d}}g.
\]
In the case $d=1$, we use the simpler notation $g' := D^1 g$ and $g^{(q)}:= D^q g$.
{ When \(g\) is defined on \(\O \times \O\), the operator \(D^{q_1, q_2}\) computes the \(q_1\)-th partial derivative with respect to the first input and the \(q_2\)-th partial derivative with respect to the second input, where \((q_1, q_2) \in \mathbb{N}_0^{2d}\).
For \(p\geq0\) we use the notation \(g\in C^{p}(\O)\) if for all multi-indices \(|q|\leq\lfloor p\rfloor\) the derivative \(D^{q}g\) exists and is \(p-\lfloor p\rfloor\)-H\"older continuous.}
The norm on \(C^{p}(\O)\) is given by
\begin{equation*}
    \|g\|_{C^{p}(\O)} = \sum_{|\beta|\leq \lfloor p\rfloor}\sup_{u\in\O}|D^{\beta}g(u)|+\sum_{|\beta|= \lfloor p\rfloor}\sup_{u,u'\in\O,u\neq u'}\frac{|D^{\beta}g(u)-D^{\beta}g(u')|}{\|u-u'\|_2^{p-\lfloor p\rfloor}}.
\end{equation*}
When \(p\) is an integer this norm is the standard norm on continuously differentiable functions. The space of smooth functions is defined as \(C^{\infty}(\O)=\cap_{p\geq 0}C^{p}(\O)\).
Additionally, we use the notation \(k\in C^{p,p}(\O\times\O)\) for \(p\geq 0\)  when \(k(u_1,\cdot)\in C^{p}(\O)\) and \(k(\cdot,u_2)\in C^{p}(\O)\) for all \(u_1, u_2\in \O \). Note that \(C^{2p}(\O\times\O)\subset C^{p,p}(\O\times\O)\) for all \(p \geq 0\).

For \(\beta\in\N\), the Sobolev space of functions with square integrable weak derivatives up to the \(\beta\)-th order is denoted \(H^{\beta}(\O)\).
For \(\beta\in\R_+\setminus\N\), with $1 \leq \beta < \infty$, we define \(H^{\beta}(\O)\) as a fractional order Sobolev space known as a Sobolev–Slobodeckij space (see, e.g. \cite{Aronszajn1955}), 
    equipped with the norm
    \[
        \|g\|_{H^{\beta}(\O)}:= \sqrt{ \sum_{|q|\leq \lfloor \beta \rfloor}\int_{\O}|D^{q}g(u)|^2 \mathrm{d} u} + \sup_{|q| = \lfloor \beta \rfloor} \sqrt{\int_\O\int_\O\frac{|D^{q}g(u)-D^{q}g(v)|^2}{|u-v|^{2 (\beta - \lfloor \beta \rfloor)+1}}\mathrm{d}u \mathrm{d}v}.
    \]
    The special case $\beta=0$ is denoted by $L^2(\O) := H^0(\O)$.
For \(\beta\in\N\), \(W^{\beta,\infty}(\O)\) denotes the space of functions with essentially bounded weak derivatives up to the \(\beta\)-th order, equipped with the norm 
\[
\|g\|_{W^{\beta,\infty}(\O)} = \sum_{|\beta|\leq \lfloor p\rfloor}\esssup_{u\in\O}|D^{\beta}g(u)|.
\]

For vector spaces \(V_1,V_2\) the notation \(V_1\hookrightarrow V_2\) denotes the continuous embedding of \(V_1\) into \(V_2\) and \(V_1\cong V_2\) denotes that \(V_1\) and \(V_2\) are equal as vector spaces and their respective norms are equivalent. For any function \(g:V_1\to V_2\) we denote by \(\inv(g):V_2\to V_1\) the inverse function of \(g\), whereas $1/g$ refers to its reciprocal function.

We have further summarised the notation used in this work in Table \ref{table:notation}
.

\begin{table}
\caption{Notation}
\begin{tabularx}{\textwidth}{@{}cl@{}}
\toprule
    \(\O\)  & Bounded Lipschitz domain satisfying an interior cone condition \\ 
  \(d\) & Dimension of domain \(\O\subset\R^d\) \\
  $N$ & Number training points \(U_N\subset\O \) \\
  \(L\) & Number of components in mixture kernel \(k_{\mix}^{\{\s_\ell,k_\ell\}_{\ell=1}^L}\)\\
  \(D\) & Depth of a deep GP \(\{f^n\}_{n=0}^{D-1}\)\\
  \(\GP\) & A Gaussian process\\
  \(k_{\M(\nu)}\) & Mat\'ern kernel \\
  \(\s^2\) &  Marginal variance of Mat\'ern kernel \\
  \(\lambda\) & Correlation length of Mat\'ern kernel\\
  \(\nu\) & Smoothness parameter of Mat\'ern kernel \\
  \(k_{\warp}^{w,k_s}\) & Warping kernel, see \eqref{eq:warpingkernel}\\
  \(k_{\mix}^{{\{\s_\ell\}}_{\ell=1}^L}\) & Mixture kernel, see \eqref{eq:kernel_mixture}\\
   \(k_{\conv}^{\lambda,k_i}\) & Convolution kernel, see \eqref{eq:convolution_kernel}\\
    \(\H_k(\O)\) & The native space relating to kernel \(k\) on \(\O\)\\
  \(V_1 \cong V_2\) & \(V_1\) and \(V_2\) are equal as vector spaces and their respective norms are equivalent\\
  \(V_1\hookrightarrow V_2\) & \(V_1\) is continuously embedded into \(V_2\)\\
    \(C_{\low}\), \(C_{\up}\) & Constants for norm equivalences\\
  \(\inv(g)\) & The inverse function of a function \(g\)\\
  \(1/g\) & The reciprocal function of a function \(g\)\\
  \(g{'}\) & the \(1\)-st derivative of a univariate function \(g\)\\
  \(g^{(r)}\) & the \(r\)-th derivative of a function \(g\)\\
  \(\lceil x\rceil\) & the ceiling of \(x\in\R\), the smallest integer greater than or equal to \(x\)\\
  \(\lfloor x\rfloor\) & the floor of \(x\in\R\), the greatest integer less than or equal to \(x\)\\
  \(\N_0\) & Set of natural numbers \(\N\) and \(0\)\\
  \(\R_+\) & Set of positive real numbers\\
   \(\mathbbm{1}_{A}\) & Indicator function on a set \(A\)\\ 
   \(\mathbf{I}_N\) & Identity matrix of size \(N\)\\
   \(H^\beta(\O)\) & Sobolev space \(W^{\beta,2}(\O)\)\\
\bottomrule
\end{tabularx} \label{table:notation}
\end{table}


\section{GP regression}\label{sec:GPR}

We want to use GP regression to derive approximations of functions \(f:\O\subset\R^d\to\R\). We will focus on the convergence of these approximations as the number of training points \(N\) tends to infinity, but first give an introduction to the general methodology.

\subsection{Set up}
Let $f:\O\to\R$ be an arbitrary function, with $\O\subset\R^d$ a bounded Lipschitz domain that satisfies an interior cone condition \cite{wendland_2004}.  Many domains \(\Omega\) satisfy an interior cone condition; for instance, the unit cube \([0,1]^{d}\) is a typical example. Specifically, this assumption is necessary in Propositions \ref{thm:Wend11.13} and \ref{thm:Convergence_in_sob}, Theorems \ref{thm:conv_sob_noisy}, \ref{lem:cond on estimates for Sob bnds}, \ref{thm:exp convergence rate} and \ref{thm:exp convergence rate_wgp}, as well as all associated lemmas and corollaries. 

Denote by $U_N:=\{u_n\}_{n=1}^N\subset \O$ a set of $N\in\N$ distinct training points where $f(u_n)$ is observed, possibly with noise. Collectively, we denote this data as 
$y_N := \{u_n, f(u_n)+\varepsilon_n\}_{n=1}^N$, where $\varepsilon_n \sim N(0, \delta^2)$ { i.i.d for $\delta^2 \geq 0$. Note that the i.i.d. assumption can be relaxed in the misspecified setting where the assumption that $\varepsilon_n \sim N(0, \delta^2)$ is not satisfied. This is discussed in section \ref{sec:conv_sobolev_noisy}.} In our analysis, we will be interested in the setting of {\em noisy data}, where $\delta^2 >0$, as well as the setting of {\em noise-free data}, where $\varepsilon_n \equiv 0$ and formally $\delta^2=0$.

To recover $f$ from $y_N$ in the Bayesian framework, we first assign a GP \emph{prior} to $f$, denoted by 
\begin{equation}\label{eq:GP empirical prior}
f_0\sim\mathcal{GP}(0, k(u,u')).
\end{equation}
Here, 
$k:\O\times \O\to\R$ is a {positive semi-definite covariance function (or covariance kernel), that is, for any \(n\in\N\), \(c \in\R^n\setminus\{0\}\) and  \(u_1\ldots,u_n\in \O\), we have that\footnote{We use the convention of \cite{wendland_2004} whereby a non-strict inequality is used to define positive semi-definiteness. The kernel is referred to as positive definite if strict inequality holds for pairwise distinct \(u_1\ldots,u_n\in \O\). 
}
\begin{equation*} \sum_{i=1}^{n}\sum_{j=1}^nc_ic_jk(u_i,u_j) \geq 0.
\end{equation*}}
For ease of presentation, we have chosen the mean function in \eqref{eq:GP empirical prior} to be zero, but all results in this paper extend to the case where a non-zero mean function is used, under suitable assumptions (see, e.g. \cite{Teckentrup2019}).

The prior distribution encapsulates our prior knowledge of the function $f$, and in particular is independent of the data \(y_N\). Intuitively, it should give a higher probability to the types of functions we expect to see, and this is typically reflected in the choice of mean and covariance function. Typical choices for the covariance function $k$ are discussed in section \ref{subsec:kernels}.

{We condition the prior on the training data \(y_N\) to obtain the \emph{posterior} distribution
\begin{equation}\label{eq:gp_posterior}
f_N := f_0 |y_N \sim\mathcal{GP}(m_{N,\delta^2}^f(u), k_{N,\delta^2}(u,u')),
\end{equation}
with mean and the covariance function given by (see e.g. \cite{Rasmussen06gaussianprocesses})
\begin{align}
\label{eq:gp_mean}
m_{N,\delta^2}^f(u)&=k(u,U_N)^T (K(U_N,U_N) + \delta^2 \mathrm{I})^{-1}f^{\varepsilon}(U_N), \\
\label{eq:gp_variance}
    k_{N,\delta^2}(u,u')&=k(u,u')-k(u,U_N)^T (K(U_N,U_N) + \delta^2 \mathrm{I})^{-1}k(u',U_N),
\end{align}
where $f^{\varepsilon}(U_N)=\left[f(u_1) + \varepsilon_1,\ldots,f(u_N)+\varepsilon_N\right]\in\R^N$, $k(u,U_N)=\left[k(u,u_1),\ldots,k(u,u_N)\right]\in\R^N$ and $K(U_N,U_N)\in\R^{N\times N}$ is the matrix with $ij$\textsuperscript{th} entry $k(u_i,u_j)$. Note that $K(U_N,U_N) + \delta^2 \mathrm{I}$ is invertible for $\delta^2 > 0$ since we have assumed that $k$ is positive semi-definite. In the noise-free case, in which the above equations hold with $\delta^2=0$, we require $k$ to be positive definite.}

One of the major advantages of using the Bayesian framework outlined above is that it allows us to perform uncertainty quantification. We are not given a point estimate for the unknown function $f$ given the data $y_N$, but rather a distribution over a suitable function space. Calculating variances, and hence error estimates, is therefore possible, and this can be crucial in applications of GP regression in computational pipelines (see, e.g. \cite{Stuart2016}).

\subsection{Covariance kernels}\label{subsec:kernels}
We begin our discussion on covariance kernels by introducing the widely used family of stationary kernels known as Mat\'ern kernels. Following this, we discuss various methods to construct non-stationary kernels from stationary ones.

\subsubsection{Mat\'ern kernel}\label{subsubsec:mat}
  The family of Mat\'ern kernels (see e.g. \cite{Rasmussen06gaussianprocesses}) is given by
\[
    k_{\M(\nu)}(u,u')=\s^2\frac{2^{1-\nu}}{\Gamma(\nu)}\left(\sqrt{2\nu}\frac{\|u-u'\|_2}{\lambda}\right)K_{\nu}\left(\sqrt{2\nu}\frac{\|u-u'\|_2}{\lambda}\right),
\]
where \(\Gamma\)  is the gamma function, \(B_{\nu}\) is the modified Bessel function of the second kind and \(\s^2, \lambda, \nu>0\) are positive parameters. The parameter \(\s^2\) is the marginal variance, \(\lambda\) is the correlation length scale and \(\nu\) is the smoothness parameter. 
Figures \ref{fig:station prior lam = 0.5} and \ref{fig:station prior lam = 3} illustrate the effect of changing the correlation length \(\lambda\) on sample paths from a Mat\'ern kernel with \(\nu=5/2\). Notice that with a smaller length scale of \(0.5\) the samples fluctuate much more rapidly than with a larger length scale of \(3\).

\begin{figure}[htp]%
\vspace{-1.5cm}
\centering
\begin{minipage}{0.5\textwidth}
\includegraphics[width=\textwidth]{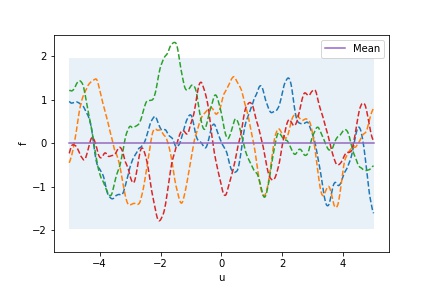}
\caption{Sample paths from \(\GP(0,k_{\M(5/2)})\), with \(\{\s^2, \lambda\}=\{1, 0.5\}\).}
\label{fig:station prior lam = 0.5}
\end{minipage}\hfill
\begin{minipage}{0.5\textwidth}
\includegraphics[width=\textwidth]{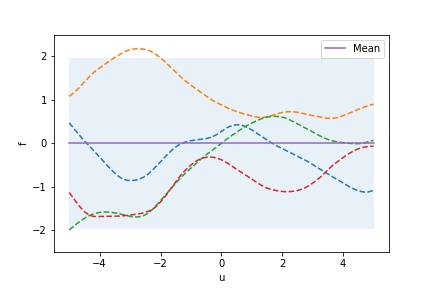}
\caption{Sample paths from \(\GP(0,k_{\M(5/2)})\), with \(\{\s^2, \lambda\}=\{1, 3\}\).}
\label{fig:station prior lam = 3}
\end{minipage}\par
\vskip\floatsep
\begin{minipage}{0.45\textwidth}
\includegraphics[width=\textwidth]{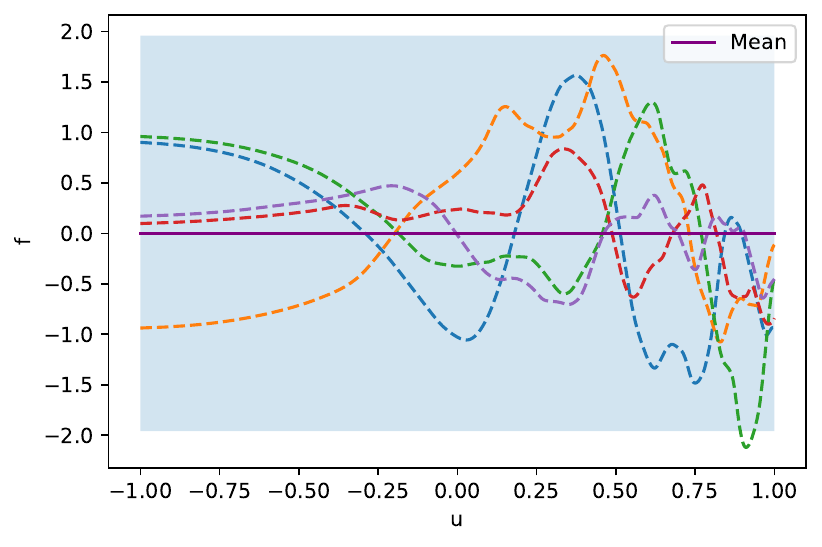}
\caption{Sample paths from \(\GP(0, k_{\warp}^{w,k_s})\), with stationary kernel \(k_{\M(5/2)}\) and \(w(u)=(u+1.5)^4\).}
\label{fig:warping_prior}
\end{minipage}\hfill
\begin{minipage}{0.45\textwidth}
\includegraphics[width=\textwidth]{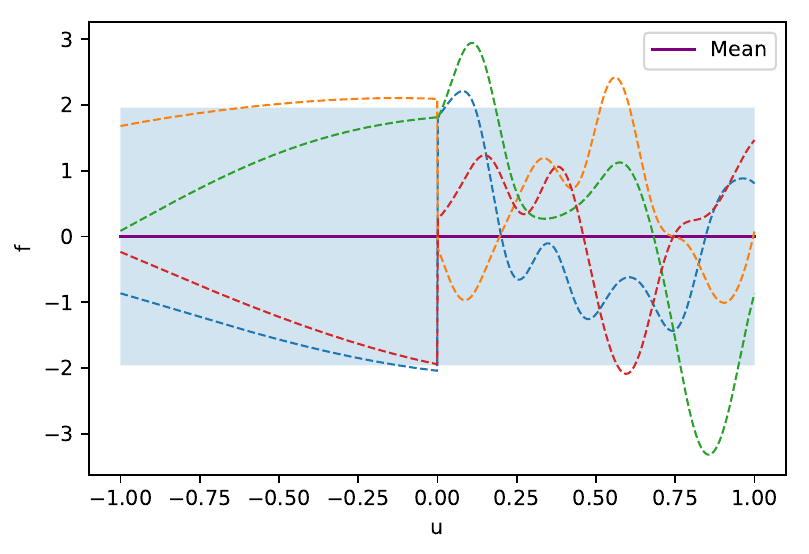}
\caption{Sample paths from \(\GP(0, k_{\mix}^{\{\s_\ell,k_{\ell}\}_{\ell=1}^2})\), with stationary kernels \(k_1=k_{\M(\infty)}\) with \(\lambda = 1\) and \(k_2=k_{\M(\infty)}\) with \(\lambda = 0.1\), and \(\s_1(u)=\mathbbm{1}_{\{u<0\}}\), \(\s_2(u)=\mathbbm{1}_{\{u\geq0\}}\).}
\label{fig:mixture_prior}
\end{minipage}\par
\vskip\floatsep
\begin{minipage}{0.45\textwidth}
\includegraphics[width=\textwidth]{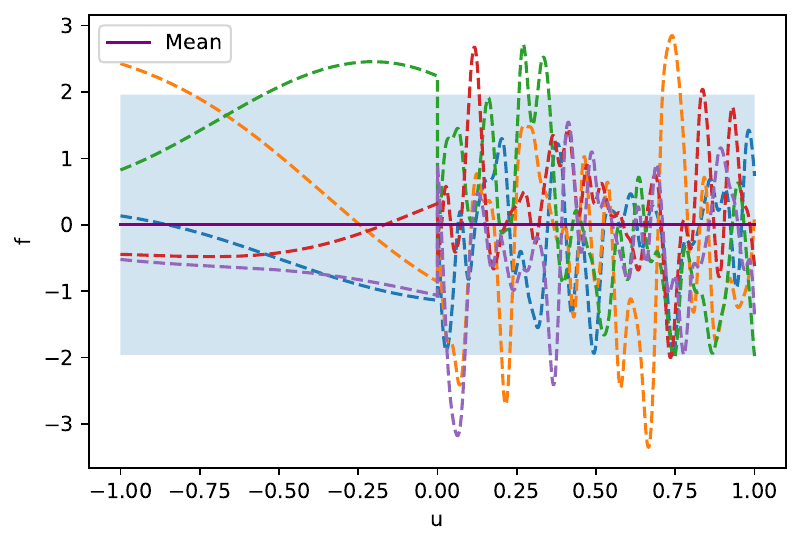}
\caption{Samples from \(\GP(0, k_{\conv}^{{\lambda_{a}},k_i})\)  with stationary Gaussian kernel and \({\lambda_{a}}(u)=\mathbbm{1}_{\{u<0\}}+\mathbbm{1}_{\{u\geq0\}}/100\).}
\label{fig:conv_prior}
\end{minipage}\hfill
\begin{minipage}{0.5\textwidth}
\includegraphics[width=\textwidth]{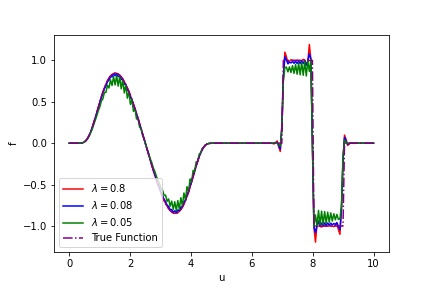}
\caption{Posterior mean with prior kernel a Mat\'ern kernel with \(\nu=1/2\), with \({\lambda} = 0.8, 0.08, 0.05\).}
\label{fig:post_mult}
\end{minipage}\par
\end{figure}

When \(\nu = p+1/2\) with \(p\in\N_0\), the Mat\'ern kernel can be written as a product of an exponential and a polynomial of order \(p\) \cite{abramowitz1965handbook}, greatly simplifying the expression. 
As \(\nu\to\infty\) the Mat\'ern covariance kernel converges to the Gaussian kernel
\[
    k_{\M(\infty)}(u,u')=\s^2\exp\left(-\frac{\|u-u'\|_2^2}{2\lambda^2}\right).
\]
Mat\'ern kernels are often the standard choice for GP regression, but can behave poorly when the observations exhibit non-stationary trends. Figure \ref{fig:post_mult} gives a simple example of  when non-stationary kernels are much better suited. The function to be approximated is continuous but not continuously differentiable, and it varies smoothly in the left hand side of the domain, whereas it has very steep gradients in the right hand side. Figure \ref{fig:post_mult} shows the posterior mean $m_{N,0}^f$, varying the length scale $\lambda$ in the prior with a Mat\'ern kernel with \(\nu=1/2\). We can see that no single length scale works for the entire function.  We desire a large length scale when the function is very flat, and a small length scale at the three jump points where the gradient is very large. Note that it is typically unknown a-priori which parts of the domain require large, or respectively small, length scales.

\subsubsection{Non-stationary covariance kernels}\label{subsubsec:nonstationary}
We now introduce three specific constructions for non-stationary kernels that can model different length scales in the observations. There are of course many more methods known to create non-stationary kernels, see for instance \cite{Remes2017, Sethian2021, Paciorek2003phd} and the references therein, but we believe the constructions covered in this paper are prototypical of approaches that efficiently model separate length scales in the data. It may be possible to analyse alternative methods using techniques similar to those presented in this paper. 

Before we go on, we introduce here precise definitions of stationarity, non-stationarity and anisotropy. A kernel \(k:\O\times\O\to\R\) is {\em stationary} if there exists a function \(k_s:\O \to\R\) such that that \(k(u,u)=k_s(u-u')\). In other words, the covariance between the GP at $u$ and $u'$ does not depend on the location of $u$ and $u'$. A kernel is thus non-stationary if no such \(k_s\) exists. We refer to a GP as (non-)stationary if it has a (non-)stationary covariance kernel. A stationary kernel is {\em isotropic} if there exists a function \(k_i:[0,\infty)\to\R\) such that that \(k(u,u)=k_i(\|u-u'\|)\). 
In an abuse of notation, we shall for simplicity often write \(k(u,u')=k(u-u')=k(\|u-u'\|)\) when referring to a stationary and/or isotopic kernel.

\subsubsection*{Warping kernel}
The warping kernel \cite{damianou2013deep,Sampson1992} uses a homeomorphism to warp the input space of a GP. 
For a warping function \(w:\O\to\O\) and a stationary kernel $k_s$, the warping kernel is defined by
\begin{equation}\label{eq:warpingkernel}
    k_{\warp}^{w,k_s}(u,u')=k_s\left({w(u)-w(u')}\right).
\end{equation}
The positive (semi-)definiteness of this kernel follows from a simple argument.
{
\begin{proposition}
If $k_s$ is positive semi-definite, then \(k_{\warp}^{w,k_s}\) is positive semi-definite. If \(w\) is injective and $k_s$ is positive definite, then \(k_{\warp}^{w,k_s}\) is positive definite.
\end{proposition}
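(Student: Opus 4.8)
The plan is to verify positive (semi-)definiteness directly from the definition, exploiting the fact that warping only relabels the evaluation points. Fix $n \in \N$, a vector $c \in \R^n \setminus \{0\}$, and pairwise distinct points $u_1, \ldots, u_n \in \O$. Set $v_i := w(u_i)$ for $i = 1, \ldots, n$, so that each $v_i \in \O$ (or more generally in whatever space $k_s$ is defined on). Then by the definition \eqref{eq:warpingkernel},
\[
\sum_{i=1}^n \sum_{j=1}^n c_i c_j\, k_{\warp}^{w,k_s}(u_i, u_j) = \sum_{i=1}^n \sum_{j=1}^n c_i c_j\, k_s(v_i - v_j) = \sum_{i=1}^n \sum_{j=1}^n c_i c_j\, k_s(v_i, v_j),
\]
where in the last step I use the stationarity convention $k_s(v,v') = k_s(v-v')$. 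Since $k_s$ is positive semi-definite, the right-hand side is $\geq 0$, which proves that $k_{\warp}^{w,k_s}$ is positive semi-definite.

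For the second claim, assume additionally that $w$ is injective and $k_s$ is positive definite. The only subtlety is that the definition of positive definiteness requires the points $v_1, \ldots, v_n$ fed to $k_s$ to be pairwise distinct. But injectivity of $w$ guarantees exactly this: if $u_i \neq u_j$ then $v_i = w(u_i) \neq w(u_j) = v_j$. Hence the $v_i$ are pairwise distinct, and for $c \neq 0$ positive definiteness of $k_s$ gives $\sum_{i,j} c_i c_j\, k_s(v_i, v_j) > 0$, so $k_{\warp}^{w,k_s}$ is positive definite.

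There is essentially no obstacle here — the proof is a one-line change of variables — so the only thing to be careful about is bookkeeping: making sure the distinctness requirement in the definition of positive definiteness is explicitly discharged using injectivity, and noting that no assumption on $w$ is needed for the positive semi-definite case (the map $u \mapsto w(u)$ need not even be injective, since positive semi-definiteness places no distinctness requirement on the points). I would state the argument in two short sentences mirroring the two sentences of the proposition, and not belabor the stationarity notation beyond recalling the convention $k_s(v,v') = k_s(v - v')$ introduced just above the statement.
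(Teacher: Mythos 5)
Your proof is correct and follows essentially the same route as the paper's: a change of variables $\tilde u^i = w(u^i)$ reduces the quadratic form to one for $k_s$, with injectivity of $w$ ensuring the warped points are pairwise distinct in the positive definite case. Your write-up is simply more explicit about discharging the distinctness requirement, which the paper leaves implicit.
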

\begin{proof}
We have
\[
    \sum_{i=1}^{n}\sum_{j=1}^nc_ic_jk_{\warp}^{w,k_s}(u^i,u^j)=\sum_{i=1}^{n}\sum_{j=1}^nc_ic_jk_s(w(u^i)-w(u^j)) = \sum_{j=1}^nc_ic_jk_s(\tilde{u}^i-\tilde{u}^j),
\]
where $\tilde{u}^i = w(u^i)$, for $i=1,\dots,n$. The claim then follows.
\end{proof}
}

See Figure \ref{fig:warping_prior} for an example of sample paths of a GP with a warping kernel. Notice that the samples fluctuate much more in the positive than the negative part of the domain. This happens because the negative part is ``pushed together" by the warping function \(w(u)=(u+1.5)^4\), while the positive part of the domain is ``pulled apart". For example, \(|w(0.75)-w(0.5)|>|0.25|>|w(-0.5)-w(-0.75)|\), and so for this non-stationary kernel the correlation between \((u,u')=(0.75, 0.5)\) will be lower than for \((u,u')=(-0.5, -0.75)\).

\subsubsection*{Kernel mixture}
The kernel mixture formulation \cite{Fuentes2001,Volodina2020} in general allows the regularity, variance and length scales to change over the domain. 
For \(L\in\N\), the non-stationary covariance kernel mixture is defined as
\begin{equation}\label{eq:kernel_mixture}
    k_{\mix}^{\{\s_\ell,k_\ell\}_{\ell=1}^L}(u,u'):=\sum_{\ell=1}^L\sigma_\ell(u)\sigma_\ell(u')k_\ell\left(u,u'\right),
\end{equation}
where the functions \(\{\s_\ell, k_\ell\}_{\ell=1}^L\) represent mixture component coefficients and stationary kernels, respectively. The kernel mixture is positive (semi-)definite under mild assumptions.
{
\begin{proposition}
Suppose \(\sigma_\ell:\O\to\R\) satisfies \(\s_\ell(u)\geq0\) for all \(u\in \O\) and \(\ell =1,\dots, L\). If $k_\ell$ is positive semi-definite for all \(\ell =1,\dots, L\), then \(k_{\mix}^{\{\s_\ell,k_\ell\}_{\ell=1}^L}\) is positive semi-definite. If there exists \(j\in\{1,\ldots, L\}\) such that \(\s_j(u)>0\) for all \(u\in \O\) and \(k_j\) is positive definite, then \(k_{\mix}^{\{\s_\ell,k_\ell\}_{\ell=1}^L}\) is positive definite.
\end{proposition}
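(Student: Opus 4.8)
The plan is to mimic the short argument used for the warping kernel, reducing positive semi-definiteness of the mixture kernel to that of the individual stationary components $k_\ell$. First I would fix $n \in \N$, pairwise distinct points $u^1,\dots,u^n \in \O$, and a vector $c = (c_1,\dots,c_n) \in \R^n$, and expand the quadratic form
\[
    \sum_{i=1}^n \sum_{j=1}^n c_i c_j\, k_{\mix}^{\{\s_\ell,k_\ell\}_{\ell=1}^L}(u^i,u^j)
    = \sum_{\ell=1}^L \sum_{i=1}^n \sum_{j=1}^n c_i c_j\, \s_\ell(u^i)\s_\ell(u^j) k_\ell(u^i,u^j).
\]
For each fixed $\ell$, I would introduce the modified coefficients $c_i^{(\ell)} := c_i\, \s_\ell(u^i)$, so that the inner double sum equals $\sum_{i,j} c_i^{(\ell)} c_j^{(\ell)} k_\ell(u^i,u^j) \geq 0$ by positive semi-definiteness of $k_\ell$. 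Summing over $\ell$ gives a sum of non-negative terms, hence the whole expression is $\geq 0$. Note that the sign condition $\s_\ell \geq 0$ is not actually needed for semi-definiteness — only the positivity of each summand matters — but it is harmless to state (and it will matter for the positive-definite claim only insofar as we need a single dominating component).

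For the positive definite claim, suppose $c \neq 0$ and the points $u^1,\dots,u^n$ are pairwise distinct, and let $j \in \{1,\dots,L\}$ be the index with $\s_j(u) > 0$ for all $u \in \O$ and $k_j$ positive definite. I would again use non-negativity of every $\ell$-summand to bound the full quadratic form below by the single $\ell = j$ term:
\[
    \sum_{i,i'} c_i c_{i'}\, k_{\mix}(u^i,u^{i'}) \;\geq\; \sum_{i,i'} c_i^{(j)} c_{i'}^{(j)} k_j(u^i,u^{i'}),
\]
where $c_i^{(j)} = c_i \s_j(u^i)$. Since $\s_j$ is strictly positive on $\O$, the vector $c^{(j)}$ is nonzero whenever $c$ is nonzero (componentwise $c_i^{(j)} = 0 \iff c_i = 0$), and the $u^i$ remain pairwise distinct, so positive definiteness of $k_j$ forces this lower bound to be strictly positive. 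Hence $k_{\mix}$ is positive definite.

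There is no real obstacle here; the argument is elementary. The only points worth stating carefully are (i) that the sign assumption $\s_\ell \geq 0$ is what guarantees each summand $\sum_{i,j} c_i c_j \s_\ell(u^i)\s_\ell(u^j) k_\ell(u^i,u^j)$ is itself a bona fide quadratic form of the type appearing in the definition of positive semi-definiteness (via the substitution $c_i \mapsto c_i \s_\ell(u^i)$), so that we may apply the semi-definiteness of $k_\ell$; and (ii) in the positive-definite case, that strict positivity of $\s_j$ is exactly what is needed to transfer nonvanishing of $c$ to nonvanishing of the reweighted vector $c^{(j)}$. I would write the proof in two or three lines along these lines, parallel in style to the warping-kernel proof.
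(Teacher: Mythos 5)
Your argument is correct and is essentially the paper's own proof: both rewrite the quadratic form with the reweighted coefficients $\tilde c_{i,\ell} = c_i\,\s_\ell(u^i)$ and sum the nonnegative contributions, with the strict-positivity of $\s_j$ and positive definiteness of $k_j$ handling the definite case (which the paper leaves implicit). Your earlier remark that $\s_\ell \geq 0$ is not needed for semi-definiteness is right, so the later claim in point (i) that the sign assumption is what makes each summand a valid quadratic form is unnecessary, but this does not affect the validity of the proof.
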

\begin{proof}
We have
\[     \sum_{i,j=1}^{n}c_ic_jk_{\mix}^{\{\s_\ell,k_\ell\}_{\ell=1}^L}(u^i,u^j)=\sum_{i,j=1}^{n}c_ic_j\sum_{\ell=1}^L\s_\ell(u^i)\s_\ell(u^j)k_\ell(u^i,u^j)= \sum_{\ell=1}^L  \sum_{i,j=1}^{n} \tilde{c}_{i,\ell} \tilde{c}_{j,\ell} k_\ell(u^i,u^j)
\]
where \(\tilde{c}_{i,\ell} = c_i\s_\ell(u^i)\), for $i=1,\dots,n$ and $\ell=1,\dots,L$. The claim then follows.
\end{proof}
}

To model separate length scales, the kernel mixture could be constructed such that each \(k_\ell\) is a Mat\'ern kernel with a different length scale $\lambda_\ell$.
{See Figure \ref{fig:mixture_prior} for an example of a GP with a mixture kernel. In this example, \(L=2\) with two stationary kernels \(k_1\) and \(k_2\), both equal to \(k_{\M(\infty)}\). For \(k_1\), we set \(\lambda = 1\), and for \(k_2\), we set \(\lambda = 0.1\). The non-stationary functions \(\s_1\) and \(\s_2\) are defined as \(\s_1(u)=\mathbbm{1}_{\{u<0\}}\), \(\s_2(u)=\mathbbm{1}_{\{u\geq0\}}\). Notice how the fluctuation of the samples changes at \(u=0\). This behaviour is expected because, 
for \(u, u'>0\), the only kernel not taking non-zero values is the kernel with a lower length scale \(\lambda = 0.1\), and similarly for \(u,u'<0\) the only kernel taking non-zero values has a higher length scale \(\lambda = 1\).}

In the case $L=1$, the mixture kernel is no longer a mixture, but can instead be interpreted as a version of the stationary kernel $k_1$ with non-stationary marginal variance {$\sigma_1(u)^2k_1(0)$}. 

\subsubsection*{Convolution kernel}

Paciorek \cite{Paciorek2003} gives a general method to construct anisotropic versions of isotropic covariance kernels, based on the convolution of two Gaussian kernels.
The resulting convolution kernel allows non-stationarity in the form of different length scales in the domain.  
For $k_i$ an isotropic covariance function and \({\lambda_{a}}:\O\to\R_{+}\) representing the length scale, we define the non-stationary convolution covariance kernel by
\begin{equation}\label{eq:convolution_kernel}
    k_{\conv}^{{\lambda_{a}},k_i}(u,u')= \frac{2^{d/2}{\lambda_{a}}(u)^{d/4}{\lambda_{a}}(u')^{d/4}}{({\lambda_{a}}(u)+{\lambda_{a}}(u'))^{d/2}}k_i\left(\frac{\|u-u'\|_2}{\sqrt{({\lambda_{a}}(u)+{\lambda_{a}}(u'))/2)}}\right).
\end{equation}
Crucially, \({\lambda_{a}}\) is a function rather than a scalar parameter as in the Mat\'ern covariance kernel.
{
\begin{proposition}Suppose \({\lambda_{a}}:\O\to\R\) satisfies \({\lambda_{a}}(u)>0\) for all \(u\in \O\). If $k_i$ is positive (semi-) definite, then \(k_{\conv}^{{\lambda_{a}},k_i}\) is positive (semi-)definite.
\end{proposition}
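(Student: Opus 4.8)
The plan is to reduce the general statement to the case of a Gaussian base kernel, and to treat the Gaussian case by recognising $k_{\conv}^{\lambda_a,k_i}$ as an $L^2(\R^d)$-inner product of Gaussian densities (this is essentially the mechanism behind Paciorek's construction). The starting point is a Schoenberg-type representation of the base kernel: an isotropic, continuous, positive (semi-)definite $k_i$ of the kind used in this paper (Mat\'ern or Gaussian) can be written as a scale mixture of Gaussians, $k_i(r)=\int_{[0,\infty)} e^{-t r^2}\,\mu(\mathrm{d}t)$ for a finite non-negative measure $\mu$ on $[0,\infty)$. Since the argument of $k_i$ in \eqref{eq:convolution_kernel} is $r=\|u-u'\|_2/\sqrt{(\lambda_a(u)+\lambda_a(u'))/2}$, i.e.\ $r^2=2\|u-u'\|_2^2/(\lambda_a(u)+\lambda_a(u'))$, substituting and interchanging the finite sum with the integral (immediate by Tonelli, since each integrand is bounded on a finite point set and $\mu$ is finite) shows that it suffices to prove that for every fixed $t\geq 0$ the kernel
\[
F_t(u,u'):=\frac{2^{d/2}\lambda_a(u)^{d/4}\lambda_a(u')^{d/4}}{(\lambda_a(u)+\lambda_a(u'))^{d/2}}\exp\!\left(-\frac{2t\,\|u-u'\|_2^2}{\lambda_a(u)+\lambda_a(u')}\right)
\]
is positive semi-definite; then $k_{\conv}^{\lambda_a,k_i}=\int_{[0,\infty)}F_t\,\mu(\mathrm{d}t)$ is positive semi-definite as an average of positive semi-definite kernels.

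For $t>0$ I would first peel off the separable factor $\lambda_a(u)^{d/4}\lambda_a(u')^{d/4}=g(u)g(u')$, $g=\lambda_a^{d/4}>0$; multiplying a kernel by $g(u)g(u')$ preserves positive (semi-)definiteness (the reweighting $\tilde c_i=c_ig(u^i)$ already used in the proofs of the warping and mixture propositions), so it remains to treat $H_t(u,u'):=2^{d/2}(\lambda_a(u)+\lambda_a(u'))^{-d/2}\exp(-2t\|u-u'\|_2^2/(\lambda_a(u)+\lambda_a(u')))$. Choosing $s_u^2=\lambda_a(u)/(4t)$ and using the Gaussian convolution identity $\int_{\R^d}N(z;u,s_u^2\mathbf{I})\,N(z;u',s_{u'}^2\mathbf{I})\,\mathrm{d}z=(2\pi(s_u^2+s_{u'}^2))^{-d/2}\exp(-\|u-u'\|_2^2/(2(s_u^2+s_{u'}^2)))$, a short computation gives $H_t(u,u')=(\pi/t)^{d/2}\int_{\R^d}N(z;u,s_u^2\mathbf{I})N(z;u',s_{u'}^2\mathbf{I})\,\mathrm{d}z$, whence $\sum_{i,j=1}^n c_ic_jH_t(u^i,u^j)=(\pi/t)^{d/2}\int_{\R^d}\big(\sum_{i=1}^n c_iN(z;u^i,s_{u^i}^2\mathbf{I})\big)^2\mathrm{d}z\geq 0$. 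The single value $t=0$ is handled separately via $(\lambda_a(u)+\lambda_a(u'))^{-d/2}=\Gamma(d/2)^{-1}\int_0^\infty s^{d/2-1}e^{-s\lambda_a(u)}e^{-s\lambda_a(u')}\,\mathrm{d}s$, which writes $F_0$ as a positively weighted superposition of separable rank-one kernels.

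For the strictly positive definite case, assume $k_i$ is positive definite; then $\mu$ is not a point mass at $0$ (otherwise $k_i$ would be constant, hence not positive definite), so $\mu((0,\infty))>0$. Given pairwise distinct $u^1,\dots,u^n$ and $c\in\R^n\setminus\{0\}$, the weights $\tilde c_i=c_i\lambda_a(u^i)^{d/4}$ are not all zero, and since Gaussian densities with distinct centres are linearly independent in $L^2(\R^d)$, the integrand $z\mapsto\sum_i\tilde c_iN(z;u^i,s_{u^i}^2\mathbf{I})$ is not identically zero; hence $\sum_{i,j}c_ic_jF_t(u^i,u^j)>0$ for every $t>0$, and integrating over $(0,\infty)$ against $\mu$ yields $\sum_{i,j}c_ic_jk_{\conv}^{\lambda_a,k_i}(u^i,u^j)>0$.

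The step I expect to be the main obstacle is the reduction itself: one must check that the prefactor in \eqref{eq:convolution_kernel} is exactly invariant under the rescaling $\lambda_a\mapsto\lambda_a/(2t)$, so that $F_t$ is genuinely of the (reparametrised) convolution-kernel form and matches the $L^2$-inner product of Gaussians term by term. Conceptually, the other delicate point is that invoking the Schoenberg representation tacitly requires $k_i$ to be a valid isotropic correlation function in every dimension --- equivalently a completely monotone function of $r^2$ --- which is why I would, as is done for the explicit examples in this paper, restrict to Mat\'ern and Gaussian base kernels rather than an arbitrary positive (semi-)definite isotropic $k_i$.
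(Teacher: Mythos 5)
Your proof is correct, but it takes a genuinely different route from the paper: the paper disposes of this proposition in one line by citing Paciorek's Theorem 1 (together with \cite[Proposition 1]{dunlop2018deep}), noting that $\lambda_a(u)\mathrm{I}$ is a positive definite matrix for each $u$, whereas you essentially re-derive Paciorek's theorem from scratch in the scalar length-scale case. Your chain — Schoenberg scale-mixture $k_i(r)=\int_{[0,\infty)}e^{-tr^2}\mu(\mathrm{d}t)$, peeling off the separable factor $\lambda_a(u)^{d/4}\lambda_a(u')^{d/4}$, the choice $s_u^2=\lambda_a(u)/(4t)$ turning each Gaussian component into $(\pi/t)^{d/2}$ times an $L^2(\R^d)$ inner product of Gaussian densities, and the Gamma-integral treatment of the $t=0$ atom — checks out computationally (the prefactor bookkeeping you flag as the delicate step does come out exactly right), and your strict-positive-definiteness argument via linear independence of Gaussian densities with distinct centres is sound and even a little more explicit than what the cited results give. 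The trade-off is scope: your argument needs $k_i$ to admit the Gaussian scale-mixture representation, i.e.\ to be positive definite in every dimension (completely monotone in $r^2$), which is in fact also the hypothesis of Paciorek's Theorem 1 and is satisfied by the Gaussian and Mat\'ern kernels used in this paper — so your explicitly stated restriction is not a gap relative to what the paper's citation actually delivers, but rather makes visible a hypothesis that the proposition's wording leaves implicit. What the paper's approach buys is brevity and coverage of the general matrix-valued $\Sigma(u)$ construction; what yours buys is a self-contained, verifiable proof that exposes exactly where isotropy-in-all-dimensions is used.
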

\begin{proof} This follows from \cite[Theorem 1]{Paciorek2003} and \cite[Propostion 1]{dunlop2018deep}, since the matrix ${\lambda_{a}}(u) \mathrm{I}$ is positive definite for all \(u\in \O\).
\end{proof}}

Figure \ref{fig:conv_prior} gives an example for the convolution kernel being used as the kernel for a GP prior. The isotropic kernel in this example is the Gaussian kernel, and the non-stationary function \({\lambda_{a}}:[-1,1]\to\R\) is given by \({\lambda_{a}}(u)=\mathbbm{1}_{\{u<0\}}+\mathbbm{1}_{\{u\geq0\}}/100\). For negative \(u\) we see that \({\lambda_{a}}(u)=1\), meaning that there is a high correlation between any two points \(u,u'<0\), whereas for positive \(u\) we see that \({\lambda_{a}}(u)=1/100\) and so there is a low correlation between any two points \(u,u'>0\). 

\subsection{Estimating hyper-parameters}\label{subsec:Estimating hyper-paramters}
To obtain good practical performance, it is crucial to use suitable choices of the functions $w$, $\{\sigma_\ell\}_{\ell=1}^L$ and $\lambda_a$ defining the non-stationary structure of the warping, mixture and convolution kernels, respectively. Good choices are typically not known a-priori, and these functions are usually learnt from the observations $y_N$. Including the hyper-parameters appearing in the stationary and/or isotropic kernels used in the constructions \eqref{eq:warpingkernel}-\eqref{eq:convolution_kernel}, we then need to estimate a set of hyper-parameters $\theta$, containing both scalars and functions. 


In the following, we discuss two options for dealing with the hyper-parameters. Firstly, we present an empirical Bayesian approach, whereby a point-estimate of the hyper-parameters is computed, and then ``plugged-in" to the GP prior before we perform the standard prior to posterior update. Secondly, we discuss a particular hierarchical Bayesian approach, where a GP prior is placed on the functional hyper-parameters resulting in a DGP model. We are then interested in the marginal posterior on the unknown function $f$ given $y_N$.

For a fixed value of the hyper-parameters $\theta$, recall the prior \eqref{eq:GP empirical prior} on $f$ given by
\[
    f_0|\theta\sim\GP(0, k(\cdot,\cdot;\theta)),
\]
where we have now explicitly indicated the dependence on $\theta$.
In the empirical Bayesian approach, we use the available data \(y_N\) to estimate suitable values for the hyper-parameters, denoted by \(\widehat\theta_N\), and then use the conditional posterior
\begin{equation}\label{eq:gp_posterior_est}
    f_N(\widehat \theta_N) := f_0|\widehat\theta_N, y_N \sim \GP(m_{N,\delta^2}^f(\cdot;\widehat\theta_N), k_{N,\delta^2}(\cdot,\cdot;\widehat\theta_N))
\end{equation}
to recover $f$. Here, $m_{N,\delta^2}^f$ and $k_{N,\delta^2}$ are as in \eqref{eq:gp_mean}-\eqref{eq:gp_variance}, with the dependence on $\theta$ made explicit. Note that this distribution is given in closed form, and in particular, its mean and covariance functions are known analytically. This approach is discussed in more detail in section \ref{subsec:eagr_empirical}.

In a hierarchical approach, we instead place a prior distribution on $\theta$. A particular example is a DGP, which is constructed using a sequence of random fields $\{f^n\}_{n\in\N_0}$ that are conditionally Gaussian \cite{damianou2013deep,dunlop2018deep}:
\begin{align*}
\nonumber f^0&\sim\mathcal{GP}(0, k_0(u,u')),\\
f^{n+1}|f^n&\sim\mathcal{GP}(0, k_n(u,u';f^n)).
\end{align*}
We denote by $f^{D}$ a DGP of depth \(D+1\). 
In the context of constructing priors for regression, we can then use the DGP $f^{D}$ as a prior on $f$. The penultimate layer $f^{D-1}$ will be used to construct the non-stationarity in $f^{D}$, for example by defining the warping function {\(w=f^{D-1}\) or the length scale function \({\lambda_{a}} = F(f^{D-1})\) for some positive function \(F:\O\to\R_+\)}. The remaining layers define the prior on $f^{D-1}$. In contrast to standard GP regression, the posterior $f^{D}|y_N$ can no longer be found explicitly, and one typically uses Markov chain Monte Carlo (MCMC) methods for sampling \cite{dunlop2018deep,monterrubio2020posterior}. DGPs are discussed in more detail in section \ref{sec:EADGPR}. 

\section{Reproducing kernel Hilbert spaces}\label{sec:nativespaces}
A crucial ingredient in the analysis of GP and DGP regression are reproducing kernel Hilbert spaces (RKHSs) \cite{Aronszajn1950, paulsen_raghupathi_2016}, also referred to as native spaces \cite{wendland_2004} or Cameron-Martin spaces \cite{da2014stochastic} (in the context of Gaussian measures). This section is devoted to studying the RKHSs corresponding to the non-stationary kernels $k^{w,k_s}_\mathrm{warp}$ and $k_{\mix}^{\{\s_\ell,k_\ell\}_{\ell=1}^L}$ introduced in section \ref{subsubsec:nonstationary}. In particular, we show that these are norm-equivalent to Sobolev spaces in special cases. We were unable to find an explicit expression for the RKHS of the convolution kernel $k^{{\lambda_{a}},k_i}_\mathrm{conv}$.

Let us start with the definition of an RKHS.
\begin{definition}\label{def:RKHS}
Let \(\mathcal{H}(\O)\) be a real Hilbert space of functions \(f:\O\to\R\). \(\mathcal{H}(\O)\) is the RKHS of a kernel \(k:\O\times\O\to\R\) if
\begin{itemize}
    \item[(i)] \(k(\cdot, u)\in\mathcal{H}(\O)\) for all \(u\in\O\),
    \item[(ii)] \(f(u)=\left(f, k(\cdot,u)\right)_{\mathcal{H}}(\O)\) for all \(f\in\mathcal{H}(\O)\) and all \(u\in\O\).
\end{itemize}
\end{definition}

In the remainder of this work, we shall use $\mathcal{H}_k(\O)$ to denote the RKHS corresponding to a kernel $k$. It is not always easy to find the RKHS of a specific kernel in closed form. A notable exception is the family of Mat\'ern covariance kernels defined in section \ref{subsubsec:mat}, for which the RKHS can be characterised by its norm-equivalence with a Sobolev space \cite{wendland_2004,Teckentrup2019}. 
\begin{proposition}[{\cite[Lemma 3.4]{Teckentrup2019}}]\label{prop:sobolev equivnorms}
We have \(\H_{k_{\M(\nu)}}(\Omega) \cong H^{\nu+d/2}(\Omega)\). In particular,
\[
     C_\low(k_{\M(\nu)})\|g\|_{\H_{k_{\M(\nu)}}(\O)} \leq \|g\|_{H^{\nu+d/2}(\O)}\leq C_\up(k_{\M(\nu)}) \|g\|_{\H_{k_{\M(\nu)}}(\O)},
\]
for all \(g\in H^{\nu+d/2}(\Omega)\), where
{\begin{align*}
    C_\low(k_{\M(\nu)})&=\frac{\s\Gamma(\nu+d/2)^{1/2}\lambda^{d/2}}{\pi^{d/4}\Gamma(\nu)^{1/2}}\min\{1,\lambda^{-1}\}, \\
    C_\up(k_{\M(\nu)})&=\frac{\s\Gamma(\nu+d/2)^{1/2}\lambda^{d/2}}{\pi^{d/4}\Gamma(\nu)^{1/2}}\max\{1,\lambda^{-1}\}.
\end{align*}
}
\end{proposition}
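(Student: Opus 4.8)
The statement to be proven, Proposition~\ref{prop:sobolev equivnorms}, is cited from \cite[Lemma 3.4]{Teckentrup2019}, so the plan is essentially to reconstruct the standard argument characterising the RKHS of a Mat\'ern kernel via Fourier transforms. The key tool is the classical fact (see e.g. \cite{wendland_2004}) that for a stationary kernel $k(u,u') = k_s(u-u')$ on $\R^d$ whose Fourier transform $\widehat{k_s}$ is positive and decays like a power, the native space on $\R^d$ has the equivalent norm $\|g\|^2 = (2\pi)^{-d/2}\int_{\R^d} |\widehat{g}(\xi)|^2 / \widehat{k_s}(\xi)\, \mathrm{d}\xi$, and that the native space on a bounded Lipschitz domain $\O$ is obtained by restriction, with norm-equivalent to the restriction of the $\R^d$ native space (using that $\O$ admits a Sobolev extension operator, which follows from $\O$ being a bounded Lipschitz domain). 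So the first step is to compute the Fourier transform of the Mat\'ern kernel: it is well known that
\[
    \widehat{k_{\M(\nu)}}(\xi) = \s^2 \,\frac{2^d \pi^{d/2} \Gamma(\nu+d/2) (2\nu)^\nu}{\Gamma(\nu)\lambda^{2\nu}} \left(\frac{2\nu}{\lambda^2} + \|\xi\|_2^2\right)^{-(\nu+d/2)},
\]
up to the normalisation convention of the Fourier transform used, which one must fix carefully since the explicit constants $C_\low, C_\up$ depend on it.

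The second step is to compare the induced native-space norm with the Sobolev–Slobodeckij norm on $H^{\nu+d/2}(\O)$. Since the Fourier multiplier $\widehat{k_{\M(\nu)}}(\xi)^{-1}$ is, up to constants, $(2\nu/\lambda^2 + \|\xi\|_2^2)^{\nu+d/2}$, and the Bessel-potential norm $\int (1+\|\xi\|_2^2)^{\nu+d/2}|\widehat g(\xi)|^2\,\mathrm{d}\xi$ is norm-equivalent to the Sobolev–Slobodeckij norm on a bounded Lipschitz domain, the whole argument reduces to bounding the ratio
\[
    \frac{(2\nu/\lambda^2 + \|\xi\|_2^2)^{\nu+d/2}}{(1+\|\xi\|_2^2)^{\nu+d/2}}
\]
above and below by explicit constants. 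Writing $a = 2\nu/\lambda^2$, one has $\min\{1,a\}\,(1+\|\xi\|_2^2) \le a + \|\xi\|_2^2 \le \max\{1,a\}\,(1+\|\xi\|_2^2)$, so the ratio lies between $\min\{1,a\}^{\nu+d/2}$ and $\max\{1,a\}^{\nu+d/2}$; combined with the explicit prefactor in $\widehat{k_{\M(\nu)}}$, and after absorbing $(2\nu)^{\nu+d/2}$-type terms, this should yield exactly the stated $C_\low$ and $C_\up$ (note $\sqrt{2\nu/\lambda^2}$-scaling collapses to the $\min/\max\{1,\lambda^{-1}\}$ form once one tracks that the $(2\nu)$ powers cancel against those in the prefactor; I would double-check this bookkeeping against the reference). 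I would present the $\R^d$ computation first, then invoke the extension/restriction lemma for $\O$.

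The main obstacle I anticipate is purely bookkeeping of constants: getting the precise form of $C_\low(k_{\M(\nu)})$ and $C_\up(k_{\M(\nu)})$ right requires committing to one Fourier-transform normalisation, one definition of the Sobolev–Slobodeckij norm (which here is the one stated in the preliminaries, \emph{not} the Bessel-potential norm), and tracking how the equivalence constants between those two Sobolev norms interact with the kernel's Fourier decay. In fact, because the Sobolev–Slobodeckij and Bessel-potential norms are only equivalent (not equal) with domain-dependent constants, strictly speaking the ``clean'' constants $C_\low, C_\up$ can only be attained if one either works with the Bessel-potential norm throughout or absorbs those domain constants — so I would either (a) reinterpret $\|\cdot\|_{H^{\nu+d/2}}$ as the Bessel-potential norm, consistent with \cite{Teckentrup2019}, or (b) state that the displayed constants hold up to the fixed norm-equivalence constants of $H^{\nu+d/2}(\O)$. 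Everything else — positivity and decay of $\widehat{k_{\M(\nu)}}$, existence of the Sobolev extension operator on a bounded Lipschitz $\O$, and the abstract native-space characterisation — is standard and can be cited directly.
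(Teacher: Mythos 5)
The paper gives no proof of this proposition: it is quoted verbatim (with constants) from \cite[Lemma 3.4]{Teckentrup2019}, and the Fourier-transform argument you outline — explicit Fourier transform of the Mat\'ern kernel, the native-space characterisation on $\R^d$, restriction to the bounded Lipschitz domain via a Sobolev extension operator, and the elementary $\min\{1,a\}(1+\|\xi\|_2^2)\le a+\|\xi\|_2^2\le\max\{1,a\}(1+\|\xi\|_2^2)$ comparison — is exactly the standard route used in that reference, so your approach matches. Your caveat is also the right one to flag: the displayed constants are tied to the Fourier/Bessel-potential form of the $H^{\nu+d/2}$ norm used in \cite{Teckentrup2019}, and with the Sobolev--Slobodeckij norm stated in this paper's preliminaries they hold only up to a fixed norm-equivalence constant.
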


Note that the only hyper-parameter that affects the order of the Sobolev space is $\nu$, whereas  $\lambda$ and $\sigma^2$ also affect the constants in the norm equivalence. 

\subsection{Operations on kernels}
For the warping kernel $k^{w,k_s}_\mathrm{warp}$ and the kernel mixture $k_{\mix}^{\{\s_\ell,k_\ell\}_{\ell=1}^L}$, we can use algebraic operations on kernels to construct the corresponding RKHSs explicitly. 
We recall the following results from  \cite{Aronszajn1950,paulsen_raghupathi_2016}. 
\begin{proposition}\label{prop:Compo}
{\cite[Theorem 5.7]{paulsen_raghupathi_2016}}
Let \(k:\O \times \O\to\R\) and \(w:\O\to\O\) be given. Then the RKHS corresponding to the kernel
\(k^w(u,u')=k\left({w(u),w(u')}\right)\)
is given by
\begin{equation*}
     \H_{k^{w}}(\Omega)=\left\{h\circ w: h\in \H_{k}(\Omega)\right\}
\end{equation*}
equipped with the norm $\|u\|_{\H_{k^{w}}}=\min\{\|h\|_{\H_{k}}:u = h\circ w\}$. 
\end{proposition}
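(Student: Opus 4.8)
The plan is to verify directly that the candidate space
$\mathcal{V} := \{h \circ w : h \in \mathcal{H}_k(\Omega)\}$, equipped with the stated norm
$\|g\|_{\mathcal{V}} = \min\{\|h\|_{\mathcal{H}_k} : g = h \circ w\}$, satisfies the two defining properties of Definition \ref{def:RKHS} for the kernel $k^w$. Since $w$ need not be injective, the first task is to make sense of the norm: the map $T : \mathcal{H}_k(\Omega) \to \mathcal{V}$, $h \mapsto h \circ w$, is linear and surjective by construction, so $\mathcal{V}$ is naturally a quotient space $\mathcal{H}_k(\Omega) / \ker T$, where $\ker T = \{h \in \mathcal{H}_k : h|_{w(\Omega)} \equiv 0\}$. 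I would first check that $\ker T$ is a closed subspace of $\mathcal{H}_k(\Omega)$ — this follows because point evaluations are continuous on an RKHS, so $\ker T = \bigcap_{u \in \Omega} \{h : h(w(u)) = 0\}$ is an intersection of closed sets. Hence the quotient norm is well-defined, the minimum in the definition is attained (the infimum over the closed affine subspace $h_0 + \ker T$ is realized by the orthogonal projection), and $\mathcal{V}$ is a Hilbert space isometrically isomorphic to $(\ker T)^\perp \subseteq \mathcal{H}_k(\Omega)$ via $T$.

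Next I would verify property (i): for fixed $u' \in \Omega$, we have $k^w(\cdot, u') = k(w(\cdot), w(u')) = \big(k(\cdot, w(u'))\big) \circ w$, and since $k(\cdot, w(u')) \in \mathcal{H}_k(\Omega)$ by property (i) for $\mathcal{H}_k$, we conclude $k^w(\cdot, u') \in \mathcal{V}$. For property (ii), the reproducing identity, I would use the isometric identification: write $g = h \circ w$ with $h \in (\ker T)^\perp$ the minimal-norm representative, so $\|g\|_{\mathcal{V}} = \|h\|_{\mathcal{H}_k}$ and more generally $(g_1, g_2)_{\mathcal{V}} = (h_1, h_2)_{\mathcal{H}_k}$ for the minimal representatives. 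The minimal representative of $k^w(\cdot, u')$ is the orthogonal projection of $k(\cdot, w(u'))$ onto $(\ker T)^\perp$; call it $\tilde{k}_{u'}$. Then for any $g = h \circ w$ with minimal representative $h \in (\ker T)^\perp$,
\[
(g, k^w(\cdot, u'))_{\mathcal{V}} = (h, \tilde{k}_{u'})_{\mathcal{H}_k} = (h, k(\cdot, w(u')))_{\mathcal{H}_k} = h(w(u')) = g(u'),
\]
where the second equality holds because $h \perp \ker T$ and $k(\cdot, w(u')) - \tilde{k}_{u'} \in \ker T$, and the third is the reproducing property in $\mathcal{H}_k(\Omega)$. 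This establishes both axioms.

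I would also remark that uniqueness of the RKHS (a standard consequence of the Moore–Aronszajn theorem) then forces $\mathcal{V} = \mathcal{H}_{k^w}(\Omega)$ with matching norms, completing the proof; this is why it suffices to exhibit one Hilbert space satisfying (i)–(ii). The main obstacle — and the only place any real care is needed — is handling the non-injectivity of $w$: when $w$ is not injective, $T$ has a nontrivial kernel, the representation $g = h \circ w$ is not unique, and one must pass to the quotient to get a genuine Hilbert space and to correctly identify the inner product. All the reproducing-kernel bookkeeping then goes through the orthogonal projection onto $(\ker T)^\perp$. Everything else (linearity, surjectivity, the algebraic identities for $k^w$) is routine. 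If one is willing to cite \cite[Theorem 5.7]{paulsen_raghupathi_2016} directly, as the proposition does, the proof is essentially complete by that reference; I would include the quotient-space argument only for completeness and to make the attainment of the minimum explicit.
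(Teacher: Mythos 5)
Your proof is correct: the quotient identification of $\{h\circ w\}$ with $(\ker T)^\perp$, the attainment of the minimum via orthogonal projection, the verification of both RKHS axioms through the minimal representatives, and the appeal to uniqueness of the RKHS are all sound, including the only delicate point (non-injectivity of $w$). The paper gives no proof of this proposition — it is quoted verbatim from \cite[Theorem 5.7]{paulsen_raghupathi_2016} — and your argument is essentially the standard pullback/composition-operator proof of that cited theorem, so it matches the intended justification.
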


\begin{proposition}\label{prop:NonStatCov}
{\cite[Proposition 5.20]{paulsen_raghupathi_2016}}
Let \(k:\O \times \O\to\R\) and \(\s:\O\to\R\) be given, and define
\[
    \H_{k}^0(\Omega):=\{h\in \H_{k}(\Omega):\sigma h \equiv 0\}.
\]
Then the RKHS corresponding to the kernel $k^{\sigma}(u,u')=\sigma(u) \sigma(u')k(u,u')$ is given by 
\begin{equation*}
     \H_{k^{\s}}(\O)=\left\{\sigma h:h\in{\left(\H_{k}^0(\O)\right)}^\perp\right\},
\end{equation*}
equipped with the norm
\(  \|\s h\|_{\H_{k^{\s}}} = \| h\|_{\H_{k}}\).
\end{proposition}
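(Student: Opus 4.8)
My plan is to realise $\H_{k^\s}(\O)$ as the image of $\H_k(\O)$ under pointwise multiplication by $\s$, via the standard characterisation of an RKHS through a feature map. Since $\H_k(\O)$ is the RKHS of $k$, we have $k(u,u') = \left(k(\cdot,u),k(\cdot,u')\right)_{\H_k}$, so the map $u \mapsto \Phi^\s(u) := \s(u)\,k(\cdot,u) \in \H_k(\O)$ is a feature map for $k^\s$: indeed $\left(\Phi^\s(u),\Phi^\s(u')\right)_{\H_k} = \s(u)\s(u')\,k(u,u') = k^\s(u,u')$ (which in passing re-proves that $k^\s$ is positive semi-definite, so it does have an RKHS). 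The general principle (a standard fact; see e.g. \cite{paulsen_raghupathi_2016}) then yields that the RKHS of $k^\s$ is $\{u \mapsto \left(v,\Phi^\s(u)\right)_{\H_k} : v \in \H_k(\O)\}$, and since $\left(v,\Phi^\s(u)\right)_{\H_k} = \s(u)\,v(u)$, this set equals $\{\s v : v \in \H_k(\O)\}$, with norm $\|\s v\|_{\H_{k^\s}} = \min\{\|v'\|_{\H_k} : \s v' = \s v\}$.

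It then remains to put this into the stated form, i.e. to identify the minimiser. For fixed $v$, the set $\{v' : \s v' = \s v\}$ is the affine subspace $v + \H_k^0(\O)$, and here $\H_k^0(\O) = \bigcap_{u:\,\s(u)\neq 0}\ker(\mathrm{ev}_u)$ is a \emph{closed} subspace of $\H_k(\O)$, being an intersection of kernels of the continuous point-evaluation functionals on the RKHS; this is the one structural fact the argument genuinely uses. Hence $v + \H_k^0(\O)$ has a unique minimal-norm element, namely the orthogonal projection $P_{(\H_k^0(\O))^\perp}v$, so the minimiser lies in $(\H_k^0(\O))^\perp$ and $\H_{k^\s}(\O) = \{\s h : h \in (\H_k^0(\O))^\perp\}$. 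Moreover, for $h \in (\H_k^0(\O))^\perp$ and $g \in \H_k^0(\O)$ the Pythagorean identity $\|h+g\|_{\H_k}^2 = \|h\|_{\H_k}^2 + \|g\|_{\H_k}^2$ shows that $h$ is itself the minimal-norm representative of its coset, giving $\|\s h\|_{\H_{k^\s}} = \|h\|_{\H_k}$; it also shows $h \mapsto \s h$ is injective on $(\H_k^0(\O))^\perp$ (if $\s h \equiv 0$ then $h \in \H_k^0(\O) \cap (\H_k^0(\O))^\perp = \{0\}$), so the stated norm is well defined.

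Alternatively — and this is the route I would write out in full, so as not to invoke an external ``feature map'' lemma — one verifies Definition \ref{def:RKHS} directly for $\cF := \{\s h : h \in (\H_k^0(\O))^\perp\}$ with $\|\s h\|_{\cF} := \|h\|_{\H_k}$. Since $h \mapsto \s h$ is then an isometric isomorphism from the closed subspace $(\H_k^0(\O))^\perp$ of $\H_k(\O)$ onto $\cF$, the latter is a Hilbert space. For property (i), decompose $\s(u)k(\cdot,u) = h_u + g_u$ with $h_u \in (\H_k^0(\O))^\perp$ and $g_u \in \H_k^0(\O)$; then $\s g_u \equiv 0$ gives $k^\s(\cdot,u) = \s(\cdot)\s(u)k(\cdot,u) = \s h_u \in \cF$. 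For property (ii), for $F = \s h \in \cF$ the transported inner product gives $\left(F, k^\s(\cdot,u)\right)_{\cF} = \left(h, h_u\right)_{\H_k} = \left(h, \s(u)k(\cdot,u)\right)_{\H_k} = \s(u)\,h(u) = F(u)$, where the middle equality uses $h \perp g_u$ and the last uses the reproducing property of $\H_k(\O)$. By uniqueness of the RKHS of a given kernel, $\cF = \H_{k^\s}(\O)$. The argument is routine and I anticipate no genuine obstacle; the only points needing care are the closedness of $\H_k^0(\O)$ (without which the projection arguments are unjustified) and the bookkeeping that the minimal-norm representative of each coset is exactly the element lying in $(\H_k^0(\O))^\perp$ — this is precisely what pins down both the description of the space and the clean isometry $\|\s h\|_{\H_{k^\s}} = \|h\|_{\H_k}$.
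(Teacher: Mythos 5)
Your proof is correct: the feature-map identification $\Phi^\s(u)=\s(u)k(\cdot,u)$, the closedness of $\H_k^0(\O)$ as an intersection of kernels of evaluation functionals, and the projection/Pythagoras argument pinning the minimal-norm representative in $(\H_k^0(\O))^\perp$ together give exactly the stated space and isometry, and your direct verification of Definition \ref{def:RKHS} makes it self-contained. The paper itself offers no proof, simply citing \cite[Proposition 5.20]{paulsen_raghupathi_2016}, and your argument is essentially the standard one given there, so there is nothing substantive to reconcile.
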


\begin{remark}\label{rem:NonStatConv}
    Note that if we in Proposition \ref{prop:NonStatCov} additionally assume \(\s(u)> 0\) for all $u \in \O$, then \(\s h\equiv0\) if and only if \(h\equiv0\) and hence \(\H_{k^{\s}}(\O)=\{\s h:h\in\H_{k}(\O)\}\).
\end{remark}


%
\begin{proposition}\label{prop:addition_kernels}
{\cite[Theorem 5.4]{paulsen_raghupathi_2016}}
Let \(k_1:\O \times \O\to\R\) and \(k_2:\O \times \O\to\R\) be given. Then the RKHS corresponding to the kernel \(k=k_1+k_2\) is given by
\[
    \H_k(\O):=\{u_1+u_2:u_i\in \H_{k_i}(\O)\, i=1,2\}
\]
equipped with the norm  
\[
    \|u\|^2_{\H_{k}(\O)}=\min\{\|u_1\|^2_{\H_{k_1}(\O)}+\|u_2\|^2_{\H_{k_2}(\O)}:u =u_1+u_2, u_i\in \H_{k_i}(\O), i=1,2\}.
\]
\end{proposition}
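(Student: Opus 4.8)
The plan is to realize the candidate space as the image of an external direct sum under the summation map, transport the Hilbert space structure along that map, and then verify the two RKHS axioms directly. Write $\H_i := \H_{k_i}(\O)$ and form the external direct sum $\mathcal{K} := \H_1 \oplus \H_2$ with inner product $\langle (u_1,u_2),(v_1,v_2)\rangle_{\mathcal{K}} = \langle u_1,v_1\rangle_{\H_1} + \langle u_2,v_2\rangle_{\H_2}$, which is again a Hilbert space. Define the linear \emph{summation map} $T:\mathcal{K}\to\R^{\O}$ by $T(u_1,u_2) = u_1+u_2$. Its range is precisely the candidate space $\H := \{u_1+u_2 : u_i\in\H_i\}$, and its null space is $\mathcal{N} = \{(h,-h) : h\in\H_1\cap\H_2\}$.

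First I would show that $\mathcal{N}$ is closed in $\mathcal{K}$: if $(h_n,-h_n)\to (g_1,g_2)$ in $\mathcal{K}$, then $h_n\to g_1$ in $\H_1$ and $h_n\to -g_2$ in $\H_2$; since point evaluations are bounded on an RKHS, both convergences hold pointwise, so $g_1 = -g_2$, and then $g_1\in\H_1$ while $g_1 = -g_2\in\H_2$, i.e. $(g_1,g_2)\in\mathcal{N}$. Hence $\mathcal{N}^\perp$ is a closed (thus complete) subspace of $\mathcal{K}$, and $T|_{\mathcal{N}^\perp}:\mathcal{N}^\perp\to\H$ is a linear bijection: it is injective since $\mathcal{N}\cap\mathcal{N}^\perp = \{0\}$, and surjective since for any preimage $a$ of $f\in\H$ the element $P_{\mathcal{N}^\perp}a = a - P_{\mathcal{N}}a$ lies in $\mathcal{N}^\perp$ and still satisfies $T(P_{\mathcal{N}^\perp}a) = f$. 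Transporting the inner product, i.e. setting $\langle f,g\rangle_{\H} := \langle (T|_{\mathcal{N}^\perp})^{-1}f,\, (T|_{\mathcal{N}^\perp})^{-1}g\rangle_{\mathcal{K}}$, makes $\H$ a Hilbert space of functions on $\O$. For $f\in\H$ and any representation $f = u_1+u_2$, the coset $T^{-1}(\{f\}) = (u_1,u_2)+\mathcal{N}$ has minimal-norm element $P_{\mathcal{N}^\perp}(u_1,u_2)$, whence $\|f\|_{\H}^2 = \min\{\|u_1\|_{\H_1}^2 + \|u_2\|_{\H_2}^2 : f = u_1+u_2,\ u_i\in\H_i\}$, which is exactly the claimed norm (and the minimum is attained).

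It then remains to check that $k := k_1+k_2$ is the reproducing kernel of $\H$. Axiom (i) is immediate: $k(\cdot,u) = k_1(\cdot,u)+k_2(\cdot,u)\in\H_1+\H_2 = \H$. For axiom (ii), fix $u\in\O$ and $f\in\H$, and set $(\tilde u_1,\tilde u_2) := (T|_{\mathcal{N}^\perp})^{-1}f\in\mathcal{N}^\perp$, so $\tilde u_1+\tilde u_2 = f$ pointwise. Since $P_{\mathcal{N}^\perp}$ is self-adjoint and fixes $\mathcal{N}^\perp$, for any $g\in\H$ with preimage $b\in\mathcal{K}$ one has $\langle f,g\rangle_{\H} = \langle (\tilde u_1,\tilde u_2),\, P_{\mathcal{N}^\perp}b\rangle_{\mathcal{K}} = \langle (\tilde u_1,\tilde u_2),\, b\rangle_{\mathcal{K}}$; applying this with $g = k(\cdot,u)$ and $b = (k_1(\cdot,u),k_2(\cdot,u))$ and using the reproducing properties of $\H_1$ and $\H_2$ gives
\[
\langle f, k(\cdot,u)\rangle_{\H} = \langle \tilde u_1, k_1(\cdot,u)\rangle_{\H_1} + \langle \tilde u_2, k_2(\cdot,u)\rangle_{\H_2} = \tilde u_1(u) + \tilde u_2(u) = f(u).
\]
Thus $\H$ is a Hilbert space of functions with reproducing kernel $k_1+k_2$, and by the uniqueness of the RKHS associated to a kernel \cite{Aronszajn1950} we conclude $\H = \H_{k_1+k_2}(\O)$ with the stated norm. \textbf{Main obstacle:} the genuinely delicate points are (a) proving $\mathcal{N}$ is closed — this is precisely where the RKHS hypothesis (boundedness of point evaluations) is used and cannot be bypassed — and (b) being careful that the transported inner product is well defined and really coincides with the minimum-norm formula; once these are settled, verifying the reproducing property is routine.
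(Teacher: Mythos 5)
The paper does not prove this proposition; it is quoted directly from the cited reference, whose proof is precisely the direct-sum/quotient argument you give. Your write-up is correct — the closedness of $\mathcal{N}=\{(h,-h):h\in\H_{k_1}(\O)\cap\H_{k_2}(\O)\}$ via bounded point evaluations, the transported inner product with the attained minimum-norm formula, and the verification of the reproducing property for $k_1+k_2$ all check out — so it essentially reproduces the standard proof behind the citation.
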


\subsection{RKHSs of non-stationary kernels as Sobolev spaces} \label{sec:nativespaces_nonstationary}
The results presented in the previous section allow us to derive the RKHSs of the non-stationary kernels $k^{w,k_s}_\mathrm{warp}$ and $k_{\mix}^{\{\s_\ell,k_\ell\}_{\ell=1}^L}$ explicitly in terms of the RKHS of the stationary/isotropic kernel used to construct them and the hyper-parameters $w$ or $\{\s_\ell\}_{\ell=1}^L$, respectively. By Proposition \ref{prop:sobolev equivnorms}, we know that the native space associated with the Mat\'ern kernel can be characterised by its equivalence with a Sobolev space, and we use this fact to prove the following equivalences.


\begin{theorem}[RKHS for warping kernel]\label{lemma:sob equiv warping} 
Let $\Omega \subset \R$.
    Suppose $\{w,k_s\}$ are such that $(i)$ \(k_s=k_{\M(\nu)}\) with \(\nu=p+1/2\) for some \(p\in\N_0\), $(ii)$ \(w\in C^{\beta}(\O)\) 
    for \(\beta=\nu+1/2\), and $(iii)$  \(|w{'}| \geq c>0\) and \(|\inv(w){'}|\geq c'>0\) a.e. in $\O$. 
 Then \(\H_{k_{\warp}^{w,k_s}}(\O) \cong H^{\beta}(\O)\). In particular,
\[
     C_\low(k_{\warp}^{w,k_s})\|g\|_{\H_{k_{\warp}^{w,k_s}(\O)}} \leq \|g\|_{H^{\beta}(\O)}\leq C_\up(k_{\warp}^{w,k_s}) \|g\|_{\H_{k_{\warp}^{w,k_s}(\O)}}
\]
for all \(g\in H^{\beta}(\Omega)\), where {for a constant \(C(\beta)\) depending only \(\beta\) we have 
\begin{align*}
  C_\low({k_{\warp}^{w,k_s}}) &=  C_\low(k_s) C(\beta)^{-1} \sqrt{c'} \max\left\{1,\|\inv(w)\|_{C^{\beta}(\O)}^{\beta} \right\}^{-1} \\
    C_\up({k_{\warp}^{w,k_s}}) &= C_\up(k_s) C(\beta)\frac{1}{\sqrt{c}}\max\left\{1,\|w\|_{C^{\beta}(\O)}^{\beta} \right\}.
\end{align*}
}
\end{theorem}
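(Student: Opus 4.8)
The plan is to reduce the statement to a quantitative change-of-variables estimate in an integer-order Sobolev space. Condition $(iii)$, together with $w\in C^{\beta}(\O)\subset C^{1}(\O)$, forces $w'$ to be continuous and non-vanishing, hence of constant sign, so $w$ is a $C^{\beta}$ homeomorphism of $\O$ whose inverse $\inv(w)$ is again $C^{\beta}$ (inverse function theorem). Writing the stationary kernel $k_s$ as the bivariate kernel $\hat k(x,x'):=k_s(x-x')$, we have $k_{\warp}^{w,k_s}(u,u')=\hat k(w(u),w(u'))$, so Proposition~\ref{prop:Compo} gives
\[
  \H_{k_{\warp}^{w,k_s}}(\O)=\{h\circ w:h\in\H_{k_s}(\O)\},
\]
and, $w$ being bijective, the minimum defining the norm is attained at the unique representative $h=g\circ\inv(w)$, so $\|g\|_{\H_{k_{\warp}^{w,k_s}}(\O)}=\|g\circ\inv(w)\|_{\H_{k_s}(\O)}$. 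By Proposition~\ref{prop:sobolev equivnorms} with $d=1$, $\H_{k_s}(\O)\cong H^{\nu+1/2}(\O)=H^{\beta}(\O)$ with constants $C_\low(k_s),C_\up(k_s)$. It therefore suffices to show that the operators $g\mapsto g\circ w$ and $g\mapsto g\circ\inv(w)$ map $H^{\beta}(\O)$ boundedly into itself, with operator norms at most $C(\beta)\,c^{-1/2}\max\{1,\|w\|_{C^{\beta}(\O)}^{\beta}\}$ and $C(\beta)\,(c')^{-1/2}\max\{1,\|\inv(w)\|_{C^{\beta}(\O)}^{\beta}\}$ respectively, for some $C(\beta)$ depending only on $\beta$; this also yields the set equality $\H_{k_{\warp}^{w,k_s}}(\O)=H^{\beta}(\O)$, since any $g\in H^{\beta}(\O)$ equals $(g\circ\inv(w))\circ w$.

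For the key estimate I would use that $\beta=\nu+1/2=p+1\in\N$, so $H^{\beta}(\O)$ carries the classical norm $\|g\|_{H^{\beta}(\O)}^{2}=\sum_{k=0}^{\beta}\|g^{(k)}\|_{L^{2}(\O)}^{2}$. Let $\phi$ be a $C^{\beta}$ homeomorphism of $\O$ with $|\phi'|\ge c_\phi>0$ a.e. Since $\O$ is bounded and $\phi'$ is bounded, $\phi$ and $\inv(\phi)$ are bi-Lipschitz, hence send null sets to null sets; so $g^{(k)}\circ\phi$ is well defined a.e.\ for every $g\in H^{\beta}(\O)$ and $k\le\beta$, and an approximation by smooth functions (valid because $\phi$ is bi-Lipschitz) shows the weak derivatives of $g\circ\phi$ up to order $\beta$ satisfy the Fa\`a di Bruno formula: for $k\le\beta$,
\[
  (g\circ\phi)^{(k)}=\sum_{(j,m)} a_{j,m}\,(g^{(j)}\circ\phi)\prod_{i=1}^{j}\phi^{(m_i)},
\]
the sum ranging over the finitely many (depending only on $k$) tuples with $1\le j\le k$, $m_i\ge 1$ and $\sum_i m_i=k$, with combinatorial coefficients $a_{j,m}$. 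Each summand contains $j\le k\le\beta$ factors $\phi^{(m_i)}$ with $m_i\le\beta$, so $\prod_i\phi^{(m_i)}$ is pointwise bounded by $\max\{1,\|\phi\|_{C^{\beta}(\O)}\}^{\beta}$, while the substitution $v=\phi(u)$, $\mathrm{d}u=|\inv(\phi)'(v)|\,\mathrm{d}v\le c_\phi^{-1}\,\mathrm{d}v$, gives $\|g^{(j)}\circ\phi\|_{L^{2}(\O)}^{2}\le c_\phi^{-1}\|g^{(j)}\|_{L^{2}(\O)}^{2}\le c_\phi^{-1}\|g\|_{H^{\beta}(\O)}^{2}$. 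Summing over $k\le\beta$ and over the summands, and absorbing the combinatorial factors into $C(\beta)$, yields $\|g\circ\phi\|_{H^{\beta}(\O)}\le C(\beta)\,c_\phi^{-1/2}\max\{1,\|\phi\|_{C^{\beta}(\O)}^{\beta}\}\,\|g\|_{H^{\beta}(\O)}$.

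Applying this with $\phi=w$ (so $c_\phi=c$) and with $\phi=\inv(w)\in C^{\beta}(\O)$ (so $c_\phi=c'$ by $(iii)$) gives the two operator bounds, and the constants in the theorem follow by chaining with Proposition~\ref{prop:sobolev equivnorms}: using $g=(g\circ\inv(w))\circ w$,
\begin{align*}
  \|g\|_{H^{\beta}(\O)}
  &\le C(\beta)\,c^{-1/2}\max\{1,\|w\|_{C^{\beta}(\O)}^{\beta}\}\,\|g\circ\inv(w)\|_{H^{\beta}(\O)}\\
  &\le C(\beta)\,c^{-1/2}\max\{1,\|w\|_{C^{\beta}(\O)}^{\beta}\}\,C_\up(k_s)\,\|g\|_{\H_{k_{\warp}^{w,k_s}}(\O)},
\end{align*}
which is $C_\up(k_{\warp}^{w,k_s})\|g\|_{\H_{k_{\warp}^{w,k_s}}(\O)}$; and
\begin{align*}
  \|g\|_{\H_{k_{\warp}^{w,k_s}}(\O)}
  &=\|g\circ\inv(w)\|_{\H_{k_s}(\O)}\le C_\low(k_s)^{-1}\,\|g\circ\inv(w)\|_{H^{\beta}(\O)}\\
  &\le C_\low(k_s)^{-1}\,C(\beta)\,(c')^{-1/2}\max\{1,\|\inv(w)\|_{C^{\beta}(\O)}^{\beta}\}\,\|g\|_{H^{\beta}(\O)},
\end{align*}
which gives $C_\low(k_{\warp}^{w,k_s})=C_\low(k_s)\,C(\beta)^{-1}\sqrt{c'}\,\max\{1,\|\inv(w)\|_{C^{\beta}(\O)}^{\beta}\}^{-1}$, after possibly enlarging $C(\beta)$ so that a single constant serves both estimates.

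The step I expect to be the main obstacle is the bookkeeping in the middle paragraph: one must justify the Fa\`a di Bruno identity at the level of \emph{weak} derivatives when $g$ lies only in $H^{\beta}$ (so $g^{(\beta)}$ is merely $L^{2}$ and $g^{(\beta)}\circ\phi$ must be shown measurable and square integrable via the bi-Lipschitz change of variables), and one must track the constants so that the $w$-dependence enters only through $c$, $c'$, $\|w\|_{C^{\beta}(\O)}$ and $\|\inv(w)\|_{C^{\beta}(\O)}$, with a prefactor depending on $\beta$ alone. The reduction via Propositions~\ref{prop:Compo} and~\ref{prop:sobolev equivnorms}, and the chaining of constants, are routine by comparison.
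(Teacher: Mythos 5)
Your proposal is correct and follows essentially the same route as the paper: reduce via Proposition \ref{prop:Compo} (using bijectivity of $w$ so that the minimal representative is $g\circ\inv(w)$) and Proposition \ref{prop:sobolev equivnorms} to boundedness of the composition operators $g\mapsto g\circ w$ and $g\mapsto g\circ\inv(w)$ on $H^{\beta}(\O)$, proved by a Fa\`a di Bruno/chain-rule expansion together with the change of variables bounded through $c$, $c'$. The paper writes out only the case $\beta=2$ explicitly and defers the general case to exactly the Fa\`a di Bruno argument you carry out, so your treatment of general integer $\beta$ (including the care about weak derivatives under a bi-Lipschitz change of variables) is the same argument in its general form.
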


{
\begin{theorem}[RKHS for mixture kernel]\label{lemma: sob equiv mixture}
    Suppose $\{\s_\ell,k_\ell\}_{\ell=1}^L$ are such that $(i)$ \(k_\ell=k_{\M(\nu)}\), $(ii)$ \(\s_\ell\in H^{\beta}(\O)\) for \(\beta = \nu + d/2\)  for $\ell=1,\dots,L$, and $(iii)$ \(1/\s_{j}\in H^{\beta}(\O)\) for some \(j\in\{1,\ldots,L\}\). Then \(\H_{k_{\mix}^{{\{\s_\ell,k_\ell\}}_{\ell=1}^L}}(\O) \cong H^{\beta}(\O)\). In particular,
\[
     C_\low(k_{\mix}^{{\{\s_\ell,k_\ell\}}_{\ell=1}^L}) \|g\|_{\H_{k_{\mix}^{{\{\s_\ell,k_\ell\}}_{\ell=1}^L}}(\O)} \leq \|g\|_{H^{\beta}(\O)}\leq C_\up(k_{\mix}^{{\{\s_\ell,k_\ell\}}_{\ell=1}^L}) \|g\|_{\H_{k_{\mix}^{{\{\s_\ell,k_\ell\}}_{\ell=1}^L}}(\O)}
\]
for all \(g\in H^{\beta}(\Omega)\), where
\begin{align*}
     C_\low(k_{\mix}^{{\{\s_\ell,k_\ell\}}_{\ell=1}^L})&= C_{\ref{prop:banachalg}}^{-1} C_\low(k_j)   \|1/\sigma_j\|_{H^{\beta}}^{-1},\\
    C_\up(k_{\mix}^{{\{\s_\ell,k_\ell\}}_{\ell=1}^L})&= \sqrt{2} C_{\ref{prop:banachalg}} \max_{{1 \leq \ell \leq L}} \{C_\up(k_\ell) \|\s_\ell\|_{H^{\beta}(\O)} \}.
\end{align*}
\end{theorem}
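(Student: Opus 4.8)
The plan is to realise the mixture kernel as a finite sum of single-component non-stationary kernels and then run the RKHS calculus of Section~\ref{sec:nativespaces} together with the Mat\'ern--Sobolev identification of Proposition~\ref{prop:sobolev equivnorms} and the Banach algebra property of $H^\beta(\O)$ (Proposition~\ref{prop:banachalg}), which is available because $\beta=\nu+d/2>d/2$. Writing $k_\ell^{\s_\ell}(u,u'):=\s_\ell(u)\s_\ell(u')k_\ell(u,u')$ we have $k_{\mix}^{\{\s_\ell,k_\ell\}_{\ell=1}^L}=\sum_{\ell=1}^L k_\ell^{\s_\ell}$, so Propositions~\ref{prop:NonStatCov} and~\ref{prop:addition_kernels} will give
\[
 \H_{k_{\mix}^{\{\s_\ell,k_\ell\}_{\ell=1}^L}}(\O)=\Big\{\textstyle\sum_{\ell=1}^L \s_\ell h_\ell: h_\ell\in\H_{k_\ell}(\O)\Big\},
\]
with $\|g\|_{\H_{k_{\mix}}}^2$ the infimum of $\sum_{\ell=1}^L\|h_\ell\|_{\H_{k_\ell}}^2$ over representations $g=\sum_\ell\s_\ell h_\ell$ (choosing, via Proposition~\ref{prop:NonStatCov}, the minimal-norm preimage $h_\ell$ of each summand). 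Replacing each $\H_{k_\ell}(\O)=\H_{k_{\M(\nu)}}(\O)$ by $H^\beta(\O)$ using Proposition~\ref{prop:sobolev equivnorms}, it then remains to establish the two continuous embeddings $\H_{k_{\mix}}(\O)\hookrightarrow H^\beta(\O)$ and $H^\beta(\O)\hookrightarrow\H_{k_{\mix}}(\O)$ with the stated constants.

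For the embedding $\H_{k_{\mix}}(\O)\hookrightarrow H^\beta(\O)$ (the right-hand inequality), I would take $g\in\H_{k_{\mix}}(\O)$ and a representation $g=\sum_{\ell=1}^L\s_\ell h_\ell$ with $\sum_\ell\|h_\ell\|_{\H_{k_\ell}}^2$ arbitrarily close to $\|g\|_{\H_{k_{\mix}}}^2$. For each $\ell$, Proposition~\ref{prop:banachalg} bounds $\|\s_\ell h_\ell\|_{H^\beta}$ by $C_{\ref{prop:banachalg}}\|\s_\ell\|_{H^\beta}\|h_\ell\|_{H^\beta}$, and Proposition~\ref{prop:sobolev equivnorms} bounds $\|h_\ell\|_{H^\beta}$ by $C_\up(k_\ell)\|h_\ell\|_{\H_{k_\ell}}$. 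Summing over $\ell$ with the triangle inequality and converting $\sum_\ell\|h_\ell\|_{\H_{k_\ell}}$ into $(\sum_\ell\|h_\ell\|_{\H_{k_\ell}}^2)^{1/2}=\|g\|_{\H_{k_{\mix}}}$ by Cauchy--Schwarz then produces a bound of the form $\|g\|_{H^\beta}\le c\,C_{\ref{prop:banachalg}}\max_{1\le\ell\le L}\{C_\up(k_\ell)\|\s_\ell\|_{H^\beta}\}\,\|g\|_{\H_{k_{\mix}}}$ for a $g$-independent combinatorial constant $c$, which after a careful estimate of the combination gives the stated $C_\up(k_{\mix}^{\{\s_\ell,k_\ell\}_{\ell=1}^L})$.

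For the reverse embedding $H^\beta(\O)\hookrightarrow\H_{k_{\mix}}(\O)$ (the left-hand inequality), I would use only the $j$-th component. Since $1/\s_j\in H^\beta(\O)\hookrightarrow C(\overline{\O})$ (continuous Sobolev embedding, valid because $\beta>d/2$) is finite-valued, $\s_j$ never vanishes, so Remark~\ref{rem:NonStatConv} applies: $\H_{k_j^{\s_j}}(\O)=\{\s_j h:h\in\H_{k_j}(\O)\}$ with $\|\s_j h\|_{\H_{k_j^{\s_j}}}=\|h\|_{\H_{k_j}}$. Given $g\in H^\beta(\O)$ I write $g=\s_j\cdot(g/\s_j)$; since $g$ and $1/\s_j$ both lie in $H^\beta(\O)$, Proposition~\ref{prop:banachalg} gives $g/\s_j\in H^\beta(\O)$ with $\|g/\s_j\|_{H^\beta}\le C_{\ref{prop:banachalg}}\|1/\s_j\|_{H^\beta}\|g\|_{H^\beta}$, hence $g/\s_j\in\H_{k_j}(\O)$ with $\|g/\s_j\|_{\H_{k_j}}\le C_\low(k_j)^{-1}\|g/\s_j\|_{H^\beta}$ by Proposition~\ref{prop:sobolev equivnorms}. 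Using the single-summand decomposition $g=\s_j(g/\s_j)$ (all other $h_\ell=0$) in the infimum defining $\|\cdot\|_{\H_{k_{\mix}}}$ then yields $\|g\|_{\H_{k_{\mix}}}\le\|g/\s_j\|_{\H_{k_j}}\le C_\low(k_j)^{-1}C_{\ref{prop:banachalg}}\|1/\s_j\|_{H^\beta}\|g\|_{H^\beta}$, i.e.\ the left-hand inequality with $C_\low(k_{\mix}^{\{\s_\ell,k_\ell\}_{\ell=1}^L})=C_{\ref{prop:banachalg}}^{-1}C_\low(k_j)\|1/\s_j\|_{H^\beta}^{-1}$. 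Combining the two embeddings gives $\H_{k_{\mix}}(\O)\cong H^\beta(\O)$.

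Since all the structural inputs (the three kernel-operation propositions, the Mat\'ern--Sobolev equivalence, the Banach algebra estimate) are quotable, the real content is the bookkeeping: recognising that the lower bound comes from a single summand — which is why only $1/\s_j$, and none of the other $\s_\ell$, enters $C_\low$ — and checking that $\s_j>0$ pointwise so that Remark~\ref{rem:NonStatConv} legitimises the identity $g=\s_j(g/\s_j)$. The step I expect to require the most care is the upper bound: combining the $L$ summands and handling the orthogonal-complement correction in Proposition~\ref{prop:NonStatCov}. That correction should be harmless — for the summands with $\ell\neq j$, where $\s_\ell$ may vanish on part of $\O$, the argument only needs $\|\s_\ell h_\ell\|_{\H_{k_\ell^{\s_\ell}}}\le\|h_\ell\|_{\H_{k_\ell}}$, which holds for any preimage $h_\ell$ — so the one genuinely fiddly point is extracting the correct constant from the Cauchy--Schwarz combination.
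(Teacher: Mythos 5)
Your proposal is correct in substance and rests on exactly the same ingredients as the paper's proof: the kernel-operation calculus of Propositions \ref{prop:NonStatCov} and \ref{prop:addition_kernels}, the Mat\'ern--Sobolev identification of Proposition \ref{prop:sobolev equivnorms}, the Banach algebra bound of Proposition \ref{prop:banachalg}, the upper embedding via triangle inequality plus the algebra estimate, and the lower embedding extracted from the single $j$-th summand through the factorisation $g=\s_j\,(g/\s_j)$ (which is indeed why only $1/\s_j$ enters $C_\low$, and your use of the Sobolev embedding to see that $\s_j$ is nonvanishing, so that Remark \ref{rem:NonStatConv} applies, matches the paper's reasoning). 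The only structural difference is that you handle general $L$ in one pass, whereas the paper proves $L=1$ and $L=2$ explicitly and then inducts by grouping the first $L-1$ components (with coefficient $\tilde\s\equiv 1$) against the $L$-th; your direct route is arguably cleaner. The one point to fix is the constant in the upper bound: Cauchy--Schwarz over $L$ summands yields the combinatorial factor $\sqrt{L}$, so your argument delivers $C_\up=\sqrt{L}\,C_{\ref{prop:banachalg}}\max_{1\le\ell\le L}\{C_\up(k_\ell)\|\s_\ell\|_{H^{\beta}(\O)}\}$, and your closing assertion that a careful estimate recovers the stated $\sqrt{2}$ is not substantiated for $L\ge 3$. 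This does not affect the norm equivalence $\H_{k_{\mix}^{\{\s_\ell,k_\ell\}_{\ell=1}^L}}(\O)\cong H^{\beta}(\O)$, and in fairness the paper's own induction, if the constants are tracked, compounds factors of $\sqrt{2}\,C_{\ref{prop:banachalg}}$ across the $L-2$ induction steps rather than producing a single $\sqrt{2}$; but you should state the $L$-dependent constant your route actually gives rather than claim the advertised one.
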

}
Since this work focuses on modelling variable length scales, we have assumed that \(\nu_\ell \equiv \nu\) for \(\ell = 1, \ldots, L\) in Theorem \ref{lemma: sob equiv mixture}, and that the kernels $k_\ell$ are characterised by different length scales $\lambda_\ell$. However, Theorem \ref{lemma: sob equiv mixture} can be extended to the case of non-constant \(\nu_\ell\) without much effort,  see \cite{MoriartyOsborne2024} for more details.

\begin{remark}\label{rmk:on ess bnd w}
    In Theorem \ref{lemma:sob equiv warping}, it is possible to replace the assumption that \(w\in C^{\beta}(\O)\) with \(w\in W^{\beta,\infty}(\O)\). If additionally \(\inv(w)\in W^{\beta,\infty}(\O)\), the conclusions of the Theorem still hold, with the obvious replacement of norms.
\end{remark}

{\label{rmk: on suff cond for 1/sig is sob}
\begin{remark}
    In Theorem \ref{lemma: sob equiv mixture}, it might be possible to change the assumption that \(\s_j\geq\s_{\min} > 0\) (and hence $1/\s_j \in H^\beta(\O)$ by Lemma \ref{lemma:reciprocal has derivatives}) to assuming that for each \(u\in\O\) there exists \(\ell\in\{1,\ldots,L\}\) such that \(\s_\ell(u)\geq\s_{\min}\). This has been not been investigated for brevity. Note however, that we would still require that each \(\s_\ell\in H^{\beta}(\O)\), and thus, that each \(\s_{\ell}\) is continuous. 
    This would preclude examples where \(\s_{\ell}\) are piecewise constant. 
\end{remark}}

Theorems \ref{lemma:sob equiv warping} and \ref{lemma: sob equiv mixture} provide assumptions on the non-stationary hyper-parameters $w$ and $\{\s_\ell\}_{\ell=1}^L$ such that the RKHS of the non-stationary kernel is identical (up to norm-equivalence) to the RKHS of the Mat\'ern kernel used to construct it. 
Choosing $w$ and $\{\s_\ell\}_{\ell=1}^L$ more regular will not change the RKHS, since we are limited by the regularity of $k_s$ and $\{k_\ell\}_{\ell=1}^L$. Choosing $w$ and $\{\s_\ell\}_{\ell=1}^L$ less regular on the other hand is unfortunately not within the framework of this theory, and the RKHS is no longer a Sobolev space. It is a subspace of a Sobolev space, dependent on $w$ or $\{\s_\ell\}_{\ell=1}^L$, respectively. To see this suppose, for instance, that \(L=1\),  \(k_{1}=k_{\M(\nu_1)}\) for \(\beta=\nu_1+d/2\) such that $\H_{k_1}(\O) \cong H^\beta(\O)$, and \(\s_1\in H^{\a}(\O)\) with \(\a<\beta\). By Proposition \ref{prop:NonStatCov}, we know that every element of $\H_{k_{\mix}^{\s_1,k_1}}$ can be written in the form $\s_1 h$ for $h \in H^\beta(\O)$, which is not true for all elements of $H^\alpha(\O)$.

It is possible to extend Theorem \ref{lemma:sob equiv warping} to the case of general $\nu >0 $ and $\O \subset \R^d$ following the same proof technique, but this becomes very technical and has been omitted for brevity. In the statement of Theorem \ref{lemma:sob equiv warping}, we have further implicitly used the following result, which allows us to conclude that \(\inv(w)\in C^{\beta}(\O)\) under the assumptions of Theorem \ref{lemma:sob equiv warping}.

\begin{lemma}\label{lemma:inverse has derivatives}
Suppose $\O \subseteq \mathbb{R}$, \(w\in C^{\beta}(\O)\) for \(\beta\in\N\), and \(w'(u)\geq w'_{\min}\) for all \(u\in\O\), for some \(1\geq w'_{\min}>0\). Then \(\inv(w)\in C^{\beta}(\O)\) and
\[
    \|\inv(w)\|_{C^{\beta}(\O)}\leq \left(1+\frac{1}{(w'_{\min})^{2\beta}}\right)(\beta+1)(2\beta-1)!B_{2\beta-1}\left(1+\left\|w\right\|^{\beta-1}_{C^{\beta}(\O)}\right),
\]
where \(B_{2\beta-1}\) is the Bell number, the count of the possible partitions of the set \(\{1,\ldots,2\beta-1\}\).
\end{lemma}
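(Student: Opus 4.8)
The plan is to combine the inverse function theorem with the Faà di Bruno formula, and then obtain the explicit constant by an induction on the order of differentiation. Since $\O \subseteq \mathbb{R}$ is an interval and $w'(u) \geq w'_{\min} > 0$ throughout, $w$ is strictly increasing, hence a homeomorphism onto $w(\O)$, and the classical inverse function theorem gives $\inv(w) \in C^1(w(\O))$ with $(\inv(w))'(y) = 1/w'(\inv(w)(y))$. Writing $g := \inv(w)$, one bootstraps regularity: $w' \in C^{\beta-1}(\O)$ with $\|w'\|_{C^{\beta-1}(\O)} \leq \|w\|_{C^{\beta}(\O)}$, and since $w' \geq w'_{\min} > 0$, applying Faà di Bruno to the composition $t \mapsto 1/t$ with $w'$ shows $1/w' \in C^{\beta-1}(\O)$ with a controlled norm (this is the reciprocal estimate, cf.\ Lemma~\ref{lemma:reciprocal has derivatives}); then $g' = (1/w')\circ g$, and $\beta-1$ further applications of the chain rule give $g \in C^{\beta}(\O)$. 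Because $\beta \in \mathbb{N}$, the only Hölder term in $\|g\|_{C^{\beta}(\O)}$ has exponent $0$ and is therefore just the oscillation of $g^{(\beta)}$, bounded by $2\sup_{\O}|g^{(\beta)}|$; hence it needs no separate treatment and it suffices to bound $\sup_{\O}|g^{(q)}|$ for $q = 0,\dots,\beta$.

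For the explicit constant I would avoid carrying the two-step composition through and instead differentiate the identity $w(g(y)) = y$ directly. For $2 \leq k \leq \beta$, Faà di Bruno gives $0 = \sum_{\pi} w^{(|\pi|)}(g)\prod_{B \in \pi} g^{(|B|)}$, the sum over partitions $\pi$ of $\{1,\dots,k\}$; isolating the unique one-block partition yields the recursion
\[
   g^{(k)} = -\frac{1}{w'(g)} \sum_{\substack{\pi \,\vdash\, \{1,\dots,k\} \\ |\pi| \geq 2}} w^{(|\pi|)}(g) \prod_{B \in \pi} g^{(|B|)},
\]
in which every block on the right has size $\leq k-1$, so the right-hand side involves only $g',\dots,g^{(k-1)}$. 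Now induct on $k$, base case $|g'| \leq (w'_{\min})^{-1}$. In the inductive step one bounds $|w^{(|\pi|)}(g)| \leq \|w\|_{C^{\beta}(\O)}$ and $|1/w'(g)| \leq (w'_{\min})^{-1}$ (using $w'_{\min}\leq 1$, so the larger power of $(w'_{\min})^{-1}$ dominates), counts the partitions by the Bell number, and tracks how the power of $(w'_{\min})^{-1}$ accumulates: each factor $g^{(|B|)}$ already carries $(w'_{\min})^{-(2|B|-1)}$ by the inductive hypothesis, and since $\sum_B |B| = k$ with at least two blocks, the product together with the prefactor stays at exponent $\leq 2k-1$. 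Summing over $q = 0,\dots,\beta$ (yielding the $(\beta+1)$), bounding the nested partition counts and the factorials coming from $\phi^{(j)}(t) = (-1)^j j!\,t^{-(j+1)}$ in the reciprocal expansion (the $B_{2\beta-1}$ and $(2\beta-1)!$), and using $\max\{1,t^{\beta-1}\} \leq 1 + t^{\beta-1}$ together with $\max\{1,(w'_{\min})^{-2\beta}\} = 1 + (w'_{\min})^{-2\beta}$ to package the ``which is bigger'' ambiguities, produces exactly the stated bound.

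The analytic content is entirely standard; the main obstacle is purely combinatorial bookkeeping. One must be careful to (i) combine the Faà di Bruno expansion for $1/w'$ with that for the composition $g = \inv(w)$ without double-counting terms, (ii) propagate through the induction a power of $(w'_{\min})^{-1}$ that stays controlled in $k$ --- the exponent grows, which is precisely why $2\beta$ rather than $\beta$ appears --- and (iii) keep the $\|w\|_{C^{\beta}(\O)}$-dependence at power $\beta-1$ rather than larger, which is what forces the use of the recursion (where the number of derivative factors is $|\pi|-1 \leq \beta-1$) in place of a naive full expansion. Finally, accommodating the regimes where $\|w\|_{C^{\beta}(\O)} < 1$ or where $w'_{\min}$ is not small is exactly what necessitates the $\max\{1,\cdot\}$-type wrapping throughout.
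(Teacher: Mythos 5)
Your proposal is correct in substance but takes a genuinely different route to the quantitative bound. The paper establishes \(\inv(w)\in C^{\beta}(\O)\) by the same bootstrap you describe, but then invokes the explicit closed-form expression for the higher derivatives of an inverse function (cited from \cite{Warren2002}), bounds each term of that expansion directly using \(w'\geq w'_{\min}\) and \(\|w^{(b_i)}\|_{C^0(\O)}\leq\|w\|_{C^{\beta}(\O)}\), and sums over the orders \(\a=0,\dots,\beta\) to produce the factor \(\beta+1\) together with \((2\beta-1)!\,B_{2\beta-1}\). You instead differentiate the identity \(w(\inv(w)(y))=y\), isolate the single-block term in Fa\`a di Bruno to obtain a recursion for \(g^{(k)}\) in terms of \(g',\dots,g^{(k-1)}\), and induct. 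This is a legitimate and self-contained alternative: it avoids citing the inversion formula, and your exponent bookkeeping does close — the product over blocks contributes \((w'_{\min})^{-(2k-|\pi|)}\) and the prefactor one more power, so with \(|\pi|\geq 2\) the exponent stays at \(2k-1\), and similarly \(1+\sum_B(|B|-1)=1+k-|\pi|\leq k-1\) keeps the \(\|w\|_{C^{\beta}(\O)}\)-degree at \(\beta-1\). Two caveats. First, in the recursion route only the pointwise bound \(|1/w'(g)|\leq 1/w'_{\min}\) is ever used, so your attribution of the \((2\beta-1)!\) and \(B_{2\beta-1}\) factors to "the reciprocal expansion" is misplaced; in the paper those factors come from bounding the explicit inversion formula, while in your induction they would have to be produced by over-bounding the multiplicatively compounding constants (products over blocks of the inductive constants and of the \(1+\|w\|^{|B|-1}\) factors). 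That compounding does stay comfortably below \((\beta+1)(2\beta-1)!\,B_{2\beta-1}\) because the stated constant is very generous, but the claim that your bookkeeping yields "exactly" the stated bound should be read as "a bound absorbable into the stated constant." Second, your remark on the H\"older term for integer \(\beta\) is fine and in fact slightly more careful than the paper, which simply computes \(\|\inv(w)\|_{C^{\beta}(\O)}\) as the sum of sup-norms of the derivatives, consistent with its stated convention for integer-order norms.
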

\begin{proof}
    By the inverse function rule (see, e.g. \cite{Warren2002}), we see that
    \begin{equation}\label{eq:inverse func rule}
        \inv(w)'(u)=\frac{1}{w'(\inv(w)(u))}.
    \end{equation}
    Then \(w'\in C^{0}(\O)\),  \(\inv(w)\in C^{0}(\O)\) and $w'(u)\geq w'_{\min}$ together imply that \(\inv(w) \in C^{1}(\O)\). 
    Using this argument inductively we see that \(\inv(w)\in C^{\beta}(\O)\).

    For the second claim, note that an explicit expression the \(\a\)-th derivative of the inverse function is given by
    \[
        \inv(w)^{(\a)}(u)=\sum_{i=0}^{\a-1}\frac{1}{(w'(\inv(w)(u)))^{\a+i}}(-1)^i\frac{1}{i!}\sum_{\substack{b_1+\dots+b_{i}=\a+i-1\\b_j\geq2}}\binom{\a+i-1}{b_1,\ldots,b_{\a-1}}\prod_{1\leq i\leq \a-1}w^{(b_i)}(u),
    \]
    for \(u\in\O\),
    where \(b_1,\ldots,b_i\) are the sizes of the blocks of all partitions of size \(i\) of the set \(\{1,\ldots,\a+i-1\}\) \cite{Warren2002}. First notice that maximum size of any individual block over all \(i=1,\ldots, \a-1\) and over all \(i\)-tuples \((b_1,b_2,\ldots,b_i)\) is \(\a\). 
    This occurs when \(i=1\) so that \(b_1=\a-i+1=\a\). Hence we can see that for any \(b_i\) in any such \(i\)-tuple we have that
    \begin{align}\label{eq:bound on f warping DGP}
        \|w^{(b_i)}\|_{C^{0}(\O)} \leq \sup_{\tilde\a\leq\a}\|w^{(\tilde\a)}\|_{C^{0}(\O)}=\left\|w\right\|_{C^{\a}(\O)}.
    \end{align}
    Using the fact that \(w'\) is bounded away from zero, the triangle inequality and \eqref{eq:bound on f warping DGP} we see that
    \begin{align*}
    \|\inv(w)^{(\a)}\|_{C^{0}(\O)}
    &\leq \left(1+\frac{1}{(w'_{\min})^{2\a}}\right)(2\a-1)!\sum_{\substack{b_1+\dots+b_{i}=\a+i-1\\b_j\geq2}}\prod_{1\leq i\leq \a-1}\left\|w^{(b_i)}\right\|_{C^{0}(\O)}\\
    &\leq \left(1+\frac{1}{(w'_{\min})^{2\a}}\right)(2\a-1)!\left\|w\right\|^{\a-1}_{C^{\a}(\O)}\sum_{\substack{b_1+\dots+b_{i}=\a+i-1\\b_j\geq2}}1\\
    &\leq \left(1+\frac{1}{(w'_{\min})^{2\a}}\right)(2\a-1)!B_{2\a-1}(1+\left\|w\right\|^{\a-1}_{C^{\a}(\O)}),
    \end{align*}
    where \(B_{2\a-1}\) is the Bell number which gives the count of the possible partitions of the set \(\{1,\ldots,2\a-1\}\).
    Since \(w'_{\min}\leq 1\) the following then achieves the desired result,
    \begin{align*}
        \|\inv(w)\|_{C^{\beta}(\O)}&=\sum_{\a=0}^{\beta}\|\inv(w)^{(\a)}\|_{C^{0}(\O)}
        \leq\left(1+\frac{1}{(w'_{\min})^{2\beta}}\right)(\beta+1)(2\beta-1)!B_{2\beta-1}\left(1+\left\|w\right\|^{\beta-1}_{C^{\beta}(\O)}\right).
    \end{align*}
\end{proof}


{In Theorem \ref{lemma: sob equiv mixture}, to ensure that \(1/\sigma_j\in H^{\beta}(\O)\) it is sufficient that
\(\s_j\) is uniformly bounded away from zero, \(\s_j\geq\s_{min}>0\).}
 The following Lemma proves this claim along with two other results that will be used in section \ref{sec:EADGPR}. In the context of the mixture kernel $k_{\mix}^{{\{\s_\ell,k_\ell\}}_{\ell=1}^L}$, where $\sigma_\ell$ represents a mixture coefficient, the assumption that $\s_{\ell}$ is non-negative is natural.

{
\begin{lemma}\label{lemma:reciprocal has derivatives}
    Suppose \(\O\subset\R^d\). If \(\s\in H^{\beta}(\O)\) for an integer \(\beta>d/2\) and \(F\in C^{\infty}(\R)\) such that \(F\geq F_{min}>0\) there exists constants \(C_{\ref{lemma:reciprocal has derivatives}}'<\infty\) and \(C_{\ref{lemma:reciprocal has derivatives}}''=C_{\ref{lemma:reciprocal has derivatives}}''(F_{\min})\) such that
    \begin{align*}
       \|F(\s)\|_{H^{\beta}(\O)}&\leq C_{\ref{lemma:reciprocal has derivatives}}'\left(1+\|\s\|^{\beta}_{H^{\beta}(\O)}\right),\\ 
       \|1/F(\s)\|_{H^{\beta}(\O)}&\leq C_{\ref{lemma:reciprocal has derivatives}}''\left(1+\|\s\|^{\beta}_{H^{\beta}(\O)}\right). 
    \end{align*}
    In particular, suppose \(\s\geq \s_{\min}>0\), then there exists a constant \(C_{\ref{lemma:reciprocal has derivatives}} = C_{\ref{lemma:reciprocal has derivatives}}(\s_{\min})<\infty\) such that
    \begin{align*}
    \|1/\s\|_{H^{\beta}(\O)}\leq C_{\ref{lemma:reciprocal has derivatives}}\left(1+\|\s\|^{\beta}_{H^{\beta}(\O)}\right).
    \end{align*}
\end{lemma}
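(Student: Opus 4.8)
The plan is to reduce both bounds to the chain rule (Fa\`{a} di Bruno's formula) together with two standard facts about $H^{\beta}(\O)$ that hold because $\beta$ is an integer with $\beta>d/2$: the Sobolev embedding $H^{\beta}(\O)\hookrightarrow C^{0}(\O)\cap L^{\infty}(\O)$, giving $\|\s\|_{L^{\infty}(\O)}\leq C_{\mathrm{emb}}\|\s\|_{H^{\beta}(\O)}$, and the Banach algebra property of $H^{\beta}(\O)$ (with constant $C_{\ref{prop:banachalg}}$), equivalently the tame product estimate $\|uv\|_{H^{m}(\O)}\lesssim\|u\|_{H^{m}(\O)}\|v\|_{L^{\infty}(\O)}+\|u\|_{L^{\infty}(\O)}\|v\|_{H^{m}(\O)}$ for $0\leq m\leq\beta$. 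The statement is essentially a quantitative Moser-type composition estimate, and I would prove it in three steps.

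First I would establish the bound on $\|F(\s)\|_{H^{\beta}(\O)}$. Fix a multi-index $q$ with $|q|=n\leq\beta$. By the multivariate Fa\`{a} di Bruno formula, $D^{q}\big(F(\s)\big)$ is a finite linear combination, with combinatorial coefficients depending only on $q$, of terms $F^{(k)}(\s)\prod_{j=1}^{k}D^{\ell_{j}}\s$ with $|\ell_{j}|\geq1$, $\sum_{j=1}^{k}|\ell_{j}|=n$, and hence $1\leq k\leq n\leq\beta$. Since $F\in C^{\infty}(\R)$ and $|\s(u)|\leq C_{\mathrm{emb}}\|\s\|_{H^{\beta}(\O)}$ a.e., the factor $F^{(k)}(\s)$ is bounded in $L^{\infty}(\O)$ by $M_{k}:=\sup\{|F^{(k)}(t)|:|t|\leq C_{\mathrm{emb}}\|\s\|_{H^{\beta}(\O)}\}<\infty$. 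For the product of derivatives I would invoke a Gagliardo--Nirenberg interpolation inequality on the bounded Lipschitz domain $\O$, $\|D^{\ell_{j}}\s\|_{L^{2n/|\ell_{j}|}(\O)}\lesssim\|\s\|_{L^{\infty}(\O)}^{\,1-|\ell_{j}|/n}\|\s\|_{H^{\beta}(\O)}^{\,|\ell_{j}|/n}$; since $\sum_{j}|\ell_{j}|/(2n)=1/2$, H\"older's inequality then gives
\[
\Big\|\prod_{j=1}^{k}D^{\ell_{j}}\s\Big\|_{L^{2}(\O)}\leq\prod_{j=1}^{k}\big\|D^{\ell_{j}}\s\big\|_{L^{2n/|\ell_{j}|}(\O)}\lesssim\|\s\|_{L^{\infty}(\O)}^{\,k-1}\|\s\|_{H^{\beta}(\O)}\leq C_{\mathrm{emb}}^{\,k-1}\|\s\|_{H^{\beta}(\O)}^{\,k}.
\]
(An equivalent route is to iterate the tame product estimate, dropping one Sobolev order at each factor.) Summing over $|q|\leq\beta$ and over the finitely many Fa\`{a} di Bruno terms, adding the zeroth-order contribution $\|F(\s)\|_{L^{2}(\O)}\leq|\O|^{1/2}M_{0}$, and using $k\leq\beta$, produces $\|F(\s)\|_{H^{\beta}(\O)}\leq C_{\ref{lemma:reciprocal has derivatives}}'\big(1+\|\s\|_{H^{\beta}(\O)}^{\beta}\big)$, with the constant depending on $F$, $\beta$, $d$ and $\O$ only (finiteness of the $M_{k}$ being guaranteed by the embedding).

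Next I would deduce the reciprocal bound and the special case. As $F\geq F_{\min}>0$ on all of $\R$, the reciprocal $G:=1/F$ lies in $C^{\infty}(\R)$; by the quotient rule each $G^{(k)}$ is a fixed polynomial in $F,F',\dots,F^{(k)}$ divided by a power of $F$, hence bounded on any bounded interval by a constant depending on $F$, $F_{\min}$ and that interval. Applying the first step to $G$ gives $\|1/F(\s)\|_{H^{\beta}(\O)}=\|G(\s)\|_{H^{\beta}(\O)}\leq C_{\ref{lemma:reciprocal has derivatives}}''\big(1+\|\s\|_{H^{\beta}(\O)}^{\beta}\big)$ with $C_{\ref{lemma:reciprocal has derivatives}}''=C_{\ref{lemma:reciprocal has derivatives}}''(F_{\min})$. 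For the ``in particular'' claim, assume $\s\geq\s_{\min}>0$ and choose any $F\in C^{\infty}(\R)$ with $F(t)=t$ for $t\geq\s_{\min}$ and $F(t)\geq\s_{\min}/2$ for all $t\in\R$ (a smooth truncation of the identity); then $F(\s)=\s$ and $1/F(\s)=1/\s$ pointwise, so the reciprocal bound for this $F$ yields $\|1/\s\|_{H^{\beta}(\O)}\leq C_{\ref{lemma:reciprocal has derivatives}}(\s_{\min})\big(1+\|\s\|_{H^{\beta}(\O)}^{\beta}\big)$.

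The hard part will be the product estimate in the first step, i.e.\ checking that a product of $k$ derivatives of $\s$ of total order $n\leq\beta$ lies in $L^{2}(\O)$ with norm controlled by $\|\s\|_{L^{\infty}(\O)}^{k-1}\|\s\|_{H^{\beta}(\O)}$. This is exactly where $\beta>d/2$ and the regularity of $\O$ (bounded Lipschitz with interior cone condition, so that Gagliardo--Nirenberg and an extension operator are available) enter; the inductive alternative via the tame product estimate works equally well but requires carrying the embedding constants through each factor. Everything else is bookkeeping: the combinatorial coefficients of Fa\`{a} di Bruno's formula, the arithmetic turning $k\leq n\leq\beta$ into the exponent $\beta$, and recording which quantities the constants may depend on --- in particular that, via $\|\s\|_{L^{\infty}(\O)}\lesssim\|\s\|_{H^{\beta}(\O)}$, the relevant suprema of $F^{(k)}$ are finite.
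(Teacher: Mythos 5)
Your proposal is correct in substance, but it takes a genuinely different route from the paper: the paper does not prove the composition estimate at all — it simply invokes \cite[Lemma 29]{Nickl2020} (a Moser-type composition bound) for $F(\s)$, applies the same lemma to the map $y\mapsto 1/y$, which is smooth with bounded derivatives on $[F_{\min},\infty)$, to get the reciprocal bound, and dispatches the ``in particular'' claim in one line. What you do is reprove that cited lemma from scratch: multivariate Fa\`a di Bruno, the embedding $H^{\beta}(\O)\hookrightarrow L^{\infty}(\O)$ (valid since $\beta>d/2$), generalised H\"older plus Gagliardo--Nirenberg interpolation on the extendable Lipschitz domain to get $\|\prod_j D^{\ell_j}\s\|_{L^{2}(\O)}\lesssim\|\s\|_{L^{\infty}(\O)}^{k-1}\|\s\|_{H^{\beta}(\O)}$, and then $k\le\beta$ to produce the exponent $\beta$. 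This is exactly the standard proof of the estimate the paper cites, so the two arguments deliver the same content, yours being self-contained where the paper's is a citation. Your treatment of the ``in particular'' claim (a smooth truncation of the identity so that $F(\s)=\s$, $1/F(\s)=1/\s$, and $1/F$ has globally bounded derivatives) is in fact more explicit than the paper's one-line reduction, which implicitly needs precisely such a device since $F(t)=t$ is not bounded below on all of $\R$.

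One caveat you should repair: your closing assertion that the constants depend only on $F$, $\beta$, $d$ and $\O$ is not delivered by your argument. Because $F\in C^{\infty}(\R)$ is not assumed to have bounded derivatives, your $M_k$ is a supremum of $|F^{(k)}|$ over an interval of radius $C_{\mathrm{emb}}\|\s\|_{H^{\beta}(\O)}$, so $C_{\ref{lemma:reciprocal has derivatives}}'$ (and likewise $C_{\ref{lemma:reciprocal has derivatives}}''$) as you construct them depend on $\|\s\|_{H^{\beta}(\O)}$; indeed no $\s$-independent constant can exist in general (take $F(x)=e^{x}+1$, $d=1$, $\beta=1$, $\s$ a large constant). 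This imprecision mirrors the paper's own proof, which asserts $\|F^{(k)}\|_{C^{0}([F_{\min},\infty))}<\infty$ — a property that does not follow from $F\in C^{\infty}(\R)$ and fails for the $F(x)=x^{2}+\eta$ actually used — and it is harmless in the paper's applications because there $\|\s\|_{H^{\beta}(\O)}\le R$ almost surely by the truncation. As a free-standing proof, you should either add the hypothesis that $F^{(k)}$, $k\le\beta$, are bounded on the range of $\s$ (or on $\R$), or state that the constants may depend on $\|\s\|_{L^{\infty}(\O)}$; note that for the ``in particular'' claim your truncation does make the derivatives of $1/F$ globally bounded, so there the constant genuinely depends only on $\s_{\min}$, $\beta$, $d$ and $\O$.
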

\begin{proof}
    Since \(F\in C^{\infty}(\R)\) and that for all \(k\geq 1\) we have that \(\|F^{(k)}\|_{C^0{([F_\mathrm{min},\infty))}}<\infty\) the first result follows from
    {\cite[Lemma 29]{Nickl2020}} by restricting the domain to \(\O\).
    To show the second claim notice that \(1/F(\s)\in C^{\infty}([F_{\min},\infty])\) as a function of \(F(\s)\), and further \(\|(1/F(\s))^{(k)}\|_{C^0{([F_{\min},\infty))}}<\infty\) for all \(k\geq 1\). The second result follows again from
    {\cite[Lemma 29]{Nickl2020}} by restricting the domain to \([F_{\min},\infty)\).
     The third claim follows directly from the first two cases.
\end{proof}}

The remainder of this section is devoted to the proofs of Theorems \ref{lemma:sob equiv warping} and \ref{lemma: sob equiv mixture}.
The following proposition will be used in the proof of Theorem \ref{lemma: sob equiv mixture}.
\begin{proposition}{\cite[Theorem 4.39]{RAAdams_JJFFournier_2003}}\label{prop:banachalg}
The Sobolev space \(H^{\beta}(\O)\), \(\O\subset\R^d\), is a Banach algebra whenever \(2\beta>d\). In particular, there exists a constant $C_{\ref{prop:banachalg}}$ such that for all $g,h \in H^\beta(\O)$ we have
\[
\|gh\|_{H^\beta(\O)} \leq C_{\ref{prop:banachalg}}\|g\|_{H^\beta(\O)} \|h\|_{H^\beta(\O)}.
\]
\end{proposition}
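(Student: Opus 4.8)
The plan is to prove the algebra inequality first on the whole space $\R^d$ using the Fourier-analytic (Bessel-potential) description of $H^\beta$, and then transfer it to the bounded Lipschitz domain $\O$ by means of a bounded linear extension operator. On $\R^d$ the norm $\|g\|_{H^\beta(\R^d)}$ is equivalent to $\big(\int_{\R^d}\langle\xi\rangle^{2\beta}|\hat g(\xi)|^2\,\mathrm{d}\xi\big)^{1/2}$, where $\langle\xi\rangle := (1+|\xi|^2)^{1/2}$ and $\hat g$ is the Fourier transform; this holds for every $\beta\geq 0$, integer or not, which is what makes this route convenient given that the paper works with fractional Slobodeckij orders.

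First I would record the elementary weight inequality $\langle\xi\rangle^\beta \leq C(\beta)\big(\langle\xi-\eta\rangle^\beta + \langle\eta\rangle^\beta\big)$, valid for all $\beta\geq 0$ and all $\xi,\eta\in\R^d$, which follows from $\langle\xi\rangle^2 \leq 2\langle\xi-\eta\rangle^2 + 2\langle\eta\rangle^2$ and convexity bounds on $t\mapsto t^{\beta/2}$. Since $\widehat{gh}=\hat g * \hat h$, this yields the pointwise estimate
\[
\langle\xi\rangle^\beta\,\big|\widehat{gh}(\xi)\big| \leq C(\beta)\Big[\big(\langle\cdot\rangle^\beta|\hat g|\big)*|\hat h|\,(\xi) + |\hat g|*\big(\langle\cdot\rangle^\beta|\hat h|\big)(\xi)\Big].
\]
Taking the $L^2_\xi$ norm and applying Young's convolution inequality $\|F*G\|_{L^2}\leq\|F\|_{L^2}\|G\|_{L^1}$ to each term gives
\[
\|gh\|_{H^\beta(\R^d)} \leq C(\beta)\big(\|g\|_{H^\beta(\R^d)}\,\|\hat h\|_{L^1} + \|\hat g\|_{L^1}\,\|h\|_{H^\beta(\R^d)}\big).
\]

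The decisive step — and the only place the hypothesis $2\beta>d$ is used — is to bound $\|\hat h\|_{L^1}$ by $\|h\|_{H^\beta(\R^d)}$. Writing $|\hat h| = \langle\cdot\rangle^{-\beta}\cdot\big(\langle\cdot\rangle^\beta|\hat h|\big)$ and applying Cauchy–Schwarz,
\[
\|\hat h\|_{L^1} \leq \Big(\int_{\R^d}\langle\xi\rangle^{-2\beta}\,\mathrm{d}\xi\Big)^{1/2}\|h\|_{H^\beta(\R^d)},
\]
and the integral $\int_{\R^d}(1+|\xi|^2)^{-\beta}\,\mathrm{d}\xi$ converges precisely when $2\beta>d$ (in polar coordinates the radial integrand behaves like $r^{d-1-2\beta}$ at infinity). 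This is the quantitative manifestation of the embedding $H^\beta(\R^d)\hookrightarrow L^\infty(\R^d)$, and it is exactly where the argument breaks down for $2\beta\leq d$. Substituting back establishes $\|gh\|_{H^\beta(\R^d)}\leq C\|g\|_{H^\beta(\R^d)}\|h\|_{H^\beta(\R^d)}$.

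Finally I would transfer the estimate to $\O$. Since $\O$ is a bounded Lipschitz domain, there is a bounded extension operator $E:H^\beta(\O)\to H^\beta(\R^d)$ with $Eg|_\O=g$. For $g,h\in H^\beta(\O)$ the product $gh$ agrees on $\O$ with $(Eg)(Eh)|_\O$, so restriction together with the $\R^d$ estimate gives
\[
\|gh\|_{H^\beta(\O)} \leq \|(Eg)(Eh)\|_{H^\beta(\R^d)} \leq C\|Eg\|_{H^\beta(\R^d)}\|Eh\|_{H^\beta(\R^d)} \leq C\|E\|^2\|g\|_{H^\beta(\O)}\|h\|_{H^\beta(\O)},
\]
yielding the claim with $C_{\ref{prop:banachalg}}=C\|E\|^2$. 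The main obstacle is not any single hard estimate but assembling the pieces cleanly — verifying the weight inequality with an explicit constant and, above all, pinpointing that the entire argument pivots on the finiteness of $\int_{\R^d}\langle\xi\rangle^{-2\beta}\,\mathrm{d}\xi$, which is the sharp role of $2\beta>d$. For integer $\beta$ an alternative, extension-free route uses the Leibniz rule together with Gagliardo–Nirenberg interpolation inequalities to bound each $\|D^\alpha g\,D^{q-\alpha}h\|_{L^2}$, but the Fourier route has the advantage of covering fractional orders uniformly.
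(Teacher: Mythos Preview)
Your proof is correct and complete. However, the paper does not actually supply a proof of this proposition: it is stated with a citation to \cite[Theorem 4.39]{RAAdams_JJFFournier_2003} and used as a black box throughout. So there is no ``paper's own proof'' to compare against.

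For what it is worth, the argument in Adams--Fournier proceeds differently from yours: for integer $\beta$ it expands $D^q(gh)$ by the Leibniz rule and controls each term $\|D^\alpha g\, D^{q-\alpha} h\|_{L^2}$ via H\"older together with Gagliardo--Nirenberg interpolation (essentially the alternative route you sketch in your final paragraph), and then treats fractional orders by interpolation between integer cases. Your Fourier/Bessel-potential route is cleaner in that it handles all $\beta>d/2$ uniformly and isolates the role of the hypothesis in the single integrability condition $\int_{\R^d}\langle\xi\rangle^{-2\beta}\,\mathrm{d}\xi<\infty$; the only extra ingredient you need, which Adams--Fournier avoids, is the equivalence of the Bessel-potential and Slobodeckij norms on $\R^d$ and the existence of a bounded $H^\beta$-extension from the Lipschitz domain $\O$ --- both standard and available under the paper's standing assumptions on $\O$.
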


\begin{proof}[Of Theorem \ref{lemma:sob equiv warping}]
We provide the proof for \(\beta=2\), which corresponds to \(k_s=k_{\M(3/2)}\). The general case can be proved similarly using Fa\`a di Bruno's formula \cite{jones_1951}. 

We first show some initial results that will help us later.
We use the change of variables \(w(x)=y\), so that \(dx=\frac{dx}{dy}dy = (w'(\inv(w)(y)))^{-1} dy\). Since \(w(\O)=\O\) (which follows from $\inv(w)$ being well-defined on $\Omega$) and \(|w'|\geq c>0\) a.e., we have
\begin{align*}
     \|u\circ w\|_{L^2(\O)}^2&=\int_{w(\Omega)}(u(y))^2\frac{dy}{w'(\inv(w)(y))}
     \leq \frac{1}{c}\|u\|_{L^2(\O)}^2, \\
    \|(u'\circ w)w'\|_{L^2(\O)}^2
    &=\int_{w(\Omega)}(u'(y)w'(\inv(w)(y)))^2\frac{dy}{w'(\inv(w)(y))}
    \leq \frac{1}{c} \|w\|^2_{W^{1,\infty}(\O)}\|u'\|_{L^2(\Omega)}^2, \\
    \|(u''\circ w)(w')^2\|_{L^2(\O)}^2 &\leq \frac{1}{c}\|w\|^4_{W^{1,\infty}(\O)}\|u''\|_{L^2(\Omega)}^2, \\
    \|(u'\circ w)w''\|_{L^2(\O)}^2 
    &=\int_{w(\Omega)}((u'(y))w''(x))^2\frac{dy}{w'(\inv(w)(y))}
    \leq \frac{1}{c}{\|w\|^2_{W^{2,\infty}(\O)}}\|u'\|_{L^2(\Omega)}^2.
\end{align*}

First, we show that \(\H_{k_{\warp}^{w,k_s}}(\Omega)\hookrightarrow H^2(\Omega)\), where $\hookrightarrow$ denotes a continuous embedding.
By Proposition \ref{prop:Compo}, for arbitrary \(g\in \H_{k_{\warp}^{w,k_s}}(\O)\) there exists \(u\in\H_{k_{s}}(\O) \cong H^2{(\Omega)} \) such that \(g = u\circ w\) and $\|g\|_{\H_{k_{\warp}^{w,k_s}}(\Omega)} = \|u\|_{\H_{k_s}(\O)}$. 
Using the chain rule, the bounds proven above, and Proposition \ref{prop:sobolev equivnorms}, we see that
\begin{align*}
    \|g\|^2_{H^2(\Omega)}&=\|u\circ w\|_{H^2(\O)}^2 \\
    &\leq \frac{1}{c}\max\left\{1,\|w\|_{W^{2,\infty}(\O)}^{2} \right\}^2 \|u\|_{H^2(\O)}^2 \\
    &\leq C_\up(k_s)^2 \frac{1}{c}\max\left\{1,\|w\|_{W^{2,\infty}(\O)}^{2} \right\}^2 \|u\|_{\H_{k_s}(\O)}^2 \\
    &= C_\up(k_s)^2 \frac{1}{c}\max\left\{1,\|w\|_{W^{2,\infty}(\O)}^{2} \right\}^2 \|g\|_{\H_{k_{\warp}^{w,k_s}}(\Omega)}^2. 
\end{align*}

Conversely, to show that that \(H^2(\Omega)\hookrightarrow \H_{k_{\warp}^{w,k_s}}(\Omega)\) we simply replace \(w\) with \(\inv(w)\) in the bounds. 
By Proposition \ref{prop:Compo} and since $w$ is invertible, we then have
\begin{align*}
    \|u\|^2_{\H_{k_{\warp}^{w,k_s}}(\O)}&=\min\left\{\|\tilde u\|^2_{H_{k_{s}}(\O)}:u=\tilde u\circ w\right\}\\
    &= \| u\circ \inv(w)\|^2_{\H_{k_{s}}(\O)}\\
    &\leq (C_\low(k_{s}))^{-2}\| u\circ \inv(w)\|_{H^2(\O)}\\
    &\leq (C_\low(k_s))^{-2} \frac{1}{c'}\max\left\{1,\|\inv(w)\|_{W^{2,\infty}(\O)}^{2} \right\}^2 \|u\|_{H^2(\Omega)}^2.
\end{align*}
This finishes the proof, with
\begin{align*}
  C_\low({k_{\warp}^{w,k_s}}) &= C_\low(k_s) \sqrt{c'} \max\left\{1,\|\inv(w)\|_{W^{2,\infty}(\O)}^{2} \right\}^{-1}, \\
    C_\up({k_{\warp}^{w,k_s}}) &= C_\up(k_s) \frac{1}{\sqrt{c}}\max\left\{1,\|w\|_{W^{2,\infty}(\O)}^{2} \right\}. 
\end{align*}

\end{proof}

\begin{proof}[Of Theorem \ref{lemma: sob equiv mixture}]
We first consider the case \(L=1\). 
To show \(\H_{k_{\mix}^{\s_1,k_1}}(\Omega)\hookrightarrow H^\beta(\Omega)\), let \(g\in \H_{k_{\mix}^{\s_1,k_1}}(\Omega)\) be arbitrary and note that by Proposition \ref{prop:NonStatCov} 
there exists \(u\in H^\beta(\Omega)\) such that \(g=\s_1 u\) and $\|g\|_{\H_{k_{\mix}^{\s_1,k_{1}}}(\Omega)} = \|u\|_{\H_{k_1}(\Omega)}$. Propositions \ref{prop:banachalg} and \ref{prop:sobolev equivnorms} then give
\begin{align*}
    \|g\|_{H^\beta(\Omega)}
    &\leq C_{\ref{prop:banachalg}} \|\s_1\|_{H^\beta(\Omega)}\|u\|_{H^\beta(\Omega)}\\
    &\leq C_{\ref{prop:banachalg}} C_\up(k_1) \|\s_1\|_{H^\beta(\Omega)}\|u\|_{\H_{k_1}(\Omega)}\\
    &= C_{\ref{prop:banachalg}}  C_\up(k_1) \|\s_1\|_{H^\beta(\Omega)}\|g\|_{\H_{k_{\mix}^{\s_1,k_{1}}}(\Omega)}.
\end{align*}
To conversely show that \( H^\beta(\Omega)\hookrightarrow{\H_{k_{\mix}^{\s_1,k_{1}}}(\Omega)}\), let \(u\in H^\beta(\Omega)\) be arbitrary. 
Propositions \ref{prop:NonStatCov}, \ref{prop:sobolev equivnorms}, and \ref{prop:banachalg} then give
\begin{align*}
    \|{u}\|_{\H_{k_{\mix}^{\s_1,k_{1}}}(\Omega)} 
&= \|u/\s_1\|_{\H_{k_1}(\Omega)} \\
                &\leq (C_\low(k_1))^{-1}\|{u}/\s_1\|_{H^{\beta}(\Omega)}\\
                        &\leq C_{\ref{prop:banachalg}} (C_\low(k_1))^{-1} \|1/\s_1\|_{H^\beta(\Omega)}\|{u}\|_{H^{\beta}(\Omega)}.
\end{align*}
This finishes the proof for $L=1$. {For all cases where \(L\geq 2\), we  assume without loss of generality that \(j=1\), meaning that \(1/\s_1\in H^{\beta}(\O)\). } For the case \(L=2\), 
  since \(\nu_1=\nu_2\) we have that 
\[
    H^{\nu_2+d/2}(\O)\cong H^{\nu_1+d/2}(\O) \cong H^{\beta}(\O).
\]
We first show that \(\H_{k^{\{\s_\ell,k_\ell\}_{\ell=1}^2}_{\mix}}(\O)\hookrightarrow H^{\beta}(\O)\). Let \(g\in\H_{k^{\{\s_\ell,k_\ell\}_{\ell=1}^2}_{\mix}(\O)}\) be arbitrary. By Propositions \ref{prop:addition_kernels} and \ref{prop:NonStatCov}, we can write \(g = \sigma_1g_1+\sigma_2g_2\) for \(g_i\in \H_{k_i}(\Omega) \) such that $\|g\|^2_{\H_{k^{\{\s_\ell,k_\ell\}_{\ell=1}^2}_{\mix}}(\O)} = \|g_1\|^2_{\H_{k_1}(\Omega)}+\|g_2\|_{\H_{k_2}(\Omega)}^2$.
Then the triangle inequality in $H^\beta(\O)$, together with Propositions \ref{prop:banachalg} and \ref{prop:NonStatCov}, gives 
\begin{align*}
\|g\|_{H^{\beta}(\O)}^2&=\|\sigma_1g_1+\sigma_2g_2\|_{H^{\beta}(\O)}^2\\
    &\leq 2 C_{\ref{prop:banachalg}}^2 \left(\|\s_1\|^2_{H^{\beta}(\O)} \|g_1\|^2_{H^{\beta}(\O)}+\|\s_2\|^2_{H^{\beta}(\O)} \|g_2\|_{H^{\beta}(\O)}^2\right)\\
     &\leq 2 C_{\ref{prop:banachalg}}^2 \max \{C_\up(k_1)^2 \|\s_1\|^2_{H^{\beta}(\O)},C_\up(k_2)^2 \|\s_2\|^2_{H^{\beta}(\O)}\} \left(\|g_1\|^2_{\H_{k_1}(\O)}+\|g_2\|_{\H_{k_2}(\O)}^2\right)\\
     &=2 C_{\ref{prop:banachalg}}^2 \max \{C_\up(k_1)^2 \|\s_1\|^2_{H^{\beta}(\O)},C_\up(k_2)^2 \|\s_2\|^2_{H^{\beta}(\O)}\}\|g\|^2_{\H_{k^{\{\s_\ell,k_\ell\}_{\ell=1}^2}_{\mix}}(\O)}.
\end{align*}

To conversely show that \(H^{\beta}(\O)\hookrightarrow \H_{k^{\{\s_\ell,k_\ell\}_{\ell=1}^2}_{\mix}}(\O)\), let \(g\in H^{\beta}(\O)\) be arbitrary. 
 Then Propositions \ref{prop:addition_kernels}, \ref{prop:NonStatCov}, \ref{prop:sobolev equivnorms} and \ref{prop:banachalg} give 
 {\begin{align*}
     \|g\|^2_{\H_{k^{\{\s_\ell,k_\ell\}_{\ell=1}^2}_{\mix}}(\O)}&=\min\{\|g_1\|^2_{\H_{k_1}(\O)}+\|g_2\|^2_{\H_{k_2}(\O)}:g =g_1+g_2, g_i\in\H_{k_i}(\O), i=1,2\}\\
     &\leq \|g\|_{\H_{k_1}(\O)}^2 \\
     &\leq (C_\low(k_1))^{-2} \|g/\sigma_1\|^2_{H^{\beta}(\O)} \\
     &\leq C_{\ref{prop:banachalg}}^2 (C_\low(k_1))^{-2} \|1/\sigma_1\|^2_{H^{\beta}} \|g\|^2_{H^{\beta}(\O)}.
 \end{align*}}

This finishes the proof for $L=2$. For \(L\geq3\), we can use an inductive argument. 
Suppose we have already shown that \(\H_{k^{\{\s_\ell, k_\ell\}_{\ell=1}^{L-1}}_{\mix}}(\O) \cong H^{\beta}(\O)\). Then notice we can write
\begin{align*}
    {k^{\{\s_\ell, k_\ell\}_{\ell=1}^{L}}_{\mix}}(u,u') 
    &=\tilde{\sigma}(u)\tilde{\sigma}(u'){k^{\{\s_\ell, k_\ell\}_{\ell=1}^{L-1}}_{\mix}}(u,u') + \sigma_L(u)\s_L(u')k_L(u,u'),
\end{align*}
with \(\tilde{\sigma}\equiv 1\). We then use the argument shown in this proof for when \(L=2\) to show that \(\H_{k^{\{\s_\ell, k_\ell\}_{\ell=1}^{L}}_{\mix}}(\O) \cong H^{\beta}(\O)\). { In particular, since any \(j\in\{1,\ldots,L\}\) can be such that \(1/\s_j\in H^{\beta}(\O)\), we obtain \begin{align*}
    C_\low(k_{\mix}^{{\{\s_\ell,k_\ell\}}_{\ell=1}^L})&= C_{\ref{prop:banachalg}}^{-1} C_\low(k_j)   \|1/\sigma_j\|_{H^{\beta}(\O)}^{-1}, \\
    C_\up(k_{\mix}^{{\{\s_\ell,k_\ell\}}_{\ell=1}^L})&= \sqrt{2} C_{\ref{prop:banachalg}} \max_{{1 \leq \ell \leq L}} \{C_\up(k_\ell) \|\s_\ell\|_{H^{\beta}(\O)} \}.
\end{align*}
}
\end{proof}

\section{Sample path regularity}\label{sec:sample path regularity}

In this section, we explore the { sample }path properties of a Gaussian process, particularly focusing on the warping and mixture kernels introduced in section \ref{subsubsec:nonstationary}. Similar results hold for the convolution kernel, but are omitted for brevity since they are not used in this work. These will play a crucial role in the convergence analysis throughout the rest of this paper. We begin by recalling known results on both the classic regularity of M\'atern kernels and the Sobolev regularity properties of a Gaussian process.
\begin{proposition}[{\cite[Lemma C.1]{Nobile2015}}]\label{prop:regularity of matern kernel lemma 11}
    Let \(k=k_{\M(\nu)}\). Then \(k\in C^{2\nu}(\O\times\O)\) if \(\nu\) is not an integer and \(k\in C^{\a}(\O\times\O)\) for any \(\a<2\nu\) if \(\nu\) is an integer.
\end{proposition}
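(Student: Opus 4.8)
The plan is to reduce the claim to a one–variable question near the origin and then read off the Hölder class from the classical small–argument expansion of the modified Bessel function $K_{\nu}$. Since $k_{\M(\nu)}(u,u')$ depends on $(u,u')$ only through $h:=u-u'$, we may write $k_{\M(\nu)}=\psi\circ L$ with $L(u,u')=u-u'$ a surjective linear map and $\psi(h):=\s^{2}\tfrac{2^{1-\nu}}{\Gamma(\nu)}\,g\!\big(\tfrac{\sqrt{2\nu}}{\lambda}\|h\|_{2}\big)$, $g(t):=t^{\nu}K_{\nu}(t)$; precomposition with the Lipschitz surjective map $L$ neither creates nor destroys $C^{p}$–regularity, so the joint regularity of $k_{\M(\nu)}$ on $\O\times\O$ equals that of $\psi$ on a bounded neighbourhood of $0$ in $\R^{d}$. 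Away from $h=0$ there is nothing to do: $K_{\nu}$ is real–analytic on $(0,\infty)$ and $\|\cdot\|_{2}$ on $\R^{d}\setminus\{0\}$, so $\psi\in C^{\infty}$ there, and all the work concentrates at the diagonal $h=0$.

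Next I would record the structure of $g$ near $t=0$. When $\nu\notin\N$, inserting the power series for $I_{\pm\nu}$ into $K_{\nu}=\tfrac{\pi}{2\sin(\nu\pi)}(I_{-\nu}-I_{\nu})$ gives
\[
g(t)=A(t^{2})+t^{2\nu}B(t^{2}),\qquad A,B\ \text{entire},\quad A(0)=2^{\nu-1}\Gamma(\nu)\neq 0,
\]
while for $\nu=n\in\N$ the analogous expansion of $K_{n}$ carries a logarithm and yields
\[
g(t)=A(t^{2})+t^{2n}\ln t\cdot C(t^{2}),\qquad A,C\ \text{entire},\quad C(0)\neq 0
\]
(both expansions are classical; see, e.g., \cite{abramowitz1965handbook}). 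Substituting $t=\tfrac{\sqrt{2\nu}}{\lambda}\|h\|_{2}$, the pieces analytic in $t^{2}$ become convergent power series in $\|h\|_{2}^{2}=h_{1}^{2}+\dots+h_{d}^{2}$, hence real–analytic functions of $h$ near $0$ and irrelevant to the loss of regularity; what is left is, up to a nonzero constant and a real–analytic factor, the single term $\|h\|_{2}^{2\nu}$ when $\nu\notin\N$, and $\|h\|_{2}^{2\nu}\ln\|h\|_{2}$ when $\nu\in\N$.

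It then remains to pin down the Hölder class at the origin of $h\mapsto\|h\|_{2}^{2\nu}$ (for $\nu\notin\N$), respectively $h\mapsto\|h\|_{2}^{2\nu}\ln\|h\|_{2}$ (for $\nu\in\N$). Passing to radial coordinates and differentiating, one checks that $h\mapsto\|h\|_{2}^{2\nu}$ has $\lfloor 2\nu\rfloor$ continuous partial derivatives near $0$, the top–order ones being $(2\nu-\lfloor 2\nu\rfloor)$–Hölder, with no regularity beyond that, so that $\|h\|_{2}^{2\nu}$ — and with it $k_{\M(\nu)}$ — is of class $C^{2\nu}$ on $\O\times\O$ (when $\nu$ is a half–integer, so that $2\nu\in\N$, one gets instead only $C^{\alpha}$ for every $\alpha<2\nu$, as in the integer case below). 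For $\nu=n\in\N$, the extra factor $\ln\|h\|_{2}$ makes the order–$(2n-1)$ derivatives behave like $\|h\|_{2}\ln\|h\|_{2}$ near the origin — $\gamma$–Hölder for every $\gamma<1$ but not Lipschitz — whereas a further derivative produces an unbounded $\ln\|h\|_{2}$; hence $\|h\|_{2}^{2n}\ln\|h\|_{2}\in C^{\alpha}$ for every $\alpha<2n$ but not $C^{2n}$, and $k_{\M(\nu)}\in C^{\alpha}(\O\times\O)$ for every $\alpha<2\nu$. I expect this last step to be the main obstacle: carrying out the radial differentiation carefully enough to count the continuous derivatives of $\|h\|_{2}^{2\nu}$ (and of its logarithmic variant) in $\R^{d}$ and to identify the sharp Hölder exponent of the top one; the rest is reduction and an appeal to the standard asymptotics of $K_{\nu}$.
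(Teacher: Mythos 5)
The paper itself gives no proof of this proposition---it is imported verbatim from \cite[Lemma C.1]{Nobile2015}---so there is no internal argument to compare against. Judged on its own, your plan is the standard proof of this fact and is essentially sound: reduce to the radial function \(\psi\) of \(h=u-u'\) (only the forward direction of your ``neither creates nor destroys'' claim is needed, and that is immediate from the chain rule together with the boundedness of \(\O\)); split \(t^{\nu}K_{\nu}(t)\) by the classical small-argument expansions into an analytic-in-\(t^{2}\) part plus \(t^{2\nu}\times(\text{analytic})\) for \(\nu\notin\N\), respectively \(t^{2n}\ln t\times(\text{analytic})\) for \(\nu=n\in\N\); and read off the H\"older class of \(\|h\|_{2}^{2\nu}\), respectively \(\|h\|_{2}^{2n}\ln\|h\|_{2}\), at the origin. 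The expansions, the substitution \(t=\sqrt{2\nu}\|h\|_{2}/\lambda\), and the regularity counts you assert are all correct; the step you flag as the remaining obstacle (the radial differentiation) is routine and your stated conclusions there check out.

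One point needs to be made explicit: your parenthetical claim that for half-integer \(\nu\) (so \(2\nu\in\N\) but \(\nu\notin\N\)) one only gets \(C^{\alpha}\) for every \(\alpha<2\nu\) deviates from the statement as written, which asserts \(k\in C^{2\nu}(\O\times\O)\) for all non-integer \(\nu\). Under the definition of \(C^{p}\) used in this paper (existence of all derivatives up to order \(\lfloor p\rfloor\), with the top ones \((p-\lfloor p\rfloor)\)-H\"older), your refinement is in fact the correct conclusion: for instance, for \(\nu=3/2\), \(d=1\), the third derivative of \((1+a|h|)e^{-a|h|}\) does not exist at \(h=0\), so \(k_{\M(3/2)}\notin C^{3}(\O\times\O)\); equivalently, the order-\(2\nu\) derivatives of \(\|h\|_{2}^{2\nu}\) with \(2\nu\) odd are homogeneous of degree zero and discontinuous at the origin. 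So your argument proves the proposition verbatim only when \(2\nu\notin\N\); at half-integers it proves the weaker (and, under this paper's convention, sharp) statement \(C^{2\nu-1,1}\), i.e.\ \(C^{\alpha}\) for all \(\alpha<2\nu\), and the proposition as quoted is only literally true there if one reads \(C^{m}\), \(m\in\N\), as \(C^{m-1,1}\). This boundary case is harmless for the rest of the paper, since every later application only invokes regularity \(p\leq\nu<2\nu\), but you should state it rather than leave it as an aside if you present this as a proof of the lemma as formulated.
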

\begin{proposition}[{\cite[Theorem 2]{SCHEUERER20101879}}]\label{prop:regularity of GP samples}
    Let \(k\in C^{p,p}(\O\times\O\)) 
    and \(g\sim\mathcal{GP}(0,k)\). Then $g \in H^{p}(\O)$ almost surely. 
\end{proposition}
The remainder of this section establishes results regarding the classic regularity properties of a Gaussian process. 
In particular, we present general outcomes regarding the H\"older regularity properties based solely on the condition given in Assumption \ref{assump: sample path regularity} below. These results are detailed in Lemma \ref{lem:old school sob regularity method}, using the Sobolev embedding theorem, and in Lemma \ref{lemma: Sullivan regularity}, using mean-square properties. We provide more specific statements for the non-stationary kernels discussed in section \ref{subsubsec:nonstationary} in Corollaries \ref{cor:sample path old school} and \ref{cor: sample path sullivan}. 

While it may appear counterintuitive to employ two distinct methods for presenting sample properties based on the dimension under consideration, the Sobolev embedding theorems exhibit dimension dependence, while the mean-square properties do not. As a result, we choose Sobolev methods when \(d=1\) and rely on mean-square properties when \(d\geq 2\).

A Gaussian process \(g \sim \GP(0,k)\) is said to be {\it sample continuous} if \(g\) is continuous on \(\O\) almost surely, and {\it sample \(q\)-partial differentiable} if \(D^{q}g\) exists 
 on \(\O\) almost surely for \(q\in\N_0^d\). 
\begin{assumption}\label{assump: sample path regularity}
    We have \(k\in C^{p,p}(\O\times\O)\) for some \(p>0\).
\end{assumption}

\begin{lemma}\label{lem:old school sob regularity method}
    Suppose that Assumption \ref{assump: sample path regularity} is satisfied and \(g\sim\GP(0,k)\).  Then \(g\in C^{\a-d/2}(\O)\) almost surely, where  \(\a=p\) if \(p\) is not an integer and \(\a<p\) otherwise. 
\end{lemma}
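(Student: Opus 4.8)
The plan is to combine Proposition~\ref{prop:regularity of GP samples} with the Sobolev embedding theorem. The hypothesis $k \in C^{p,p}(\O \times \O)$ is exactly what is needed to invoke Proposition~\ref{prop:regularity of GP samples}, so the first step is to conclude that $g \in H^{p}(\O)$ almost surely for $g \sim \GP(0,k)$. This requires no extra work beyond quoting the proposition.

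The second step is to apply the Sobolev embedding theorem (e.g.\ \cite[Theorem 4.12]{RAAdams_JJFFournier_2003}), which states that for a bounded Lipschitz domain $\O \subset \R^d$ satisfying the interior cone condition, one has a continuous embedding $H^{s}(\O) \hookrightarrow C^{t}(\O)$ whenever $s - d/2 > t \geq 0$ (or $s - d/2 \geq t$ in the non-integer case with the appropriate strictness). The only subtlety is the strict-versus-non-strict inequality: the embedding $H^{p}(\O) \hookrightarrow C^{p - d/2}(\O)$ holds directly when $p - d/2$ is not an integer, i.e.\ when $p$ is not an integer (since $d/2$ may be a half-integer this needs a tiny bit of care, but the statement of the lemma has already absorbed this into the definition of $\a$); when $p$ is an integer one loses an endpoint and must pass to any $\a < p$, giving $g \in C^{\a - d/2}(\O)$. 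So I would split into the two cases $p \notin \N$ and $p \in \N$ exactly as in the statement, take $\a = p$ in the former and $\a < p$ arbitrary in the latter, note $H^{p}(\O) \hookrightarrow H^{\a}(\O)$ in the latter case (monotonicity of Sobolev scales on bounded domains), and then apply the embedding $H^{\a}(\O) \hookrightarrow C^{\a - d/2}(\O)$.

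Putting the two steps together: $g \in H^{p}(\O) \subseteq H^{\a}(\O) \hookrightarrow C^{\a - d/2}(\O)$ almost surely, which is the claim. The proof is essentially a two-line citation chain, so there is no real obstacle; the only thing to be careful about is bookkeeping the regularity indices and the integer/non-integer dichotomy so that the Sobolev embedding is applied with a valid (strict where required) inequality, and ensuring $\a - d/2 \geq 0$ is implicitly assumed (otherwise $C^{\a-d/2}(\O)$ is not defined as written) — but this is consistent with the paper's standing convention that such H\"older spaces are only considered for non-negative exponents.

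\begin{proof}
By Assumption \ref{assump: sample path regularity}, $k \in C^{p,p}(\O \times \O)$, so Proposition \ref{prop:regularity of GP samples} gives $g \in H^{p}(\O)$ almost surely. If $p$ is not an integer, set $\a = p$; otherwise fix an arbitrary $\a < p$, so that $H^{p}(\O) \hookrightarrow H^{\a}(\O)$ on the bounded Lipschitz domain $\O$. In either case $g \in H^{\a}(\O)$ almost surely, and since $\O$ satisfies an interior cone condition, the Sobolev embedding theorem (see, e.g.\ \cite[Theorem 4.12]{RAAdams_JJFFournier_2003}) yields $H^{\a}(\O) \hookrightarrow C^{\a - d/2}(\O)$. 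Hence $g \in C^{\a - d/2}(\O)$ almost surely.
\end{proof}
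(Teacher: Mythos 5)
Your proposal is correct and follows exactly the paper's argument: invoke Proposition \ref{prop:regularity of GP samples} to get $g\in H^{p}(\O)$ almost surely, then apply the Sobolev embedding theorem \cite[Theorem 4.12]{RAAdams_JJFFournier_2003}. The extra bookkeeping you add for the integer/non-integer dichotomy is a fine (and slightly more explicit) elaboration of the same two-step citation chain the paper uses.
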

\begin{proof}
    By Proposition \ref{prop:regularity of GP samples}, we see that \(g\in H^{p}(\O)\) almost surely. 
    The result then follows from the Sobolev embedding theorem (see, e.g. {\cite[Theorem 4.12]{RAAdams_JJFFournier_2003}}).
\end{proof}

An application of Lemma \ref{lem:old school sob regularity method} then gives the following result.
\begin{corollary}\label{cor:sample path old school}
    Suppose that either:
\begin{itemize}
    \item $k=k_{\warp}^{w,k_{\M(\nu)}}$ with \(w\in C^{p}(\O)\) such that \(p\leq \nu\), or
    \item{  \(k = k_{\mix}^{\{\s_\ell,k_{\M(\nu)}\}_{\ell=1}^L}\) with \(\s_\ell\in C^{p}(\O)\) for \(\ell=1,\ldots,L\) such that \(p\leq\nu\).
    }
\end{itemize} 
Then for \(g\sim\GP(0,k)\) we have that \(g\in C^{\a-d/2}(\O)\) almost surely, where  \(\a=p\) if \(p\) is not an integer and \(\a<p\) otherwise. 
\end{corollary}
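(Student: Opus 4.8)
The plan is to reduce Corollary \ref{cor:sample path old school} to Lemma \ref{lem:old school sob regularity method} by verifying that the warping and mixture kernels built from a Mat\'ern kernel $k_{\M(\nu)}$ satisfy Assumption \ref{assump: sample path regularity} with $p$ equal to the stated value. In other words, the entire task is to show that $k \in C^{p,p}(\O\times\O)$ in each of the two cases, and then invoke the Lemma verbatim.

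First I would handle the regularity of the underlying Mat\'ern kernel. By Proposition \ref{prop:regularity of matern kernel lemma 11}, $k_{\M(\nu)} \in C^{2\nu}(\O\times\O)$ when $\nu \notin \N$, and $k_{\M(\nu)} \in C^{\a'}(\O\times\O)$ for any $\a' < 2\nu$ when $\nu \in \N$; in either case, since $2\nu \geq 2p$ (as $p \leq \nu$), we get $k_{\M(\nu)} \in C^{2p}(\O\times\O)$ (taking $\a'$ slightly below $2\nu$ but at least $2p$ when $\nu$ is an integer and $p<\nu$; when $p=\nu\in\N$ one instead works with $\a'$ arbitrarily close to $2\nu$, which still suffices to conclude $C^{p,p}$ since $\a'/2$ can be taken $\geq p - \epsilon$ — here one needs $p \le \nu$ with the understanding that the $C^{\a-d/2}$ conclusion already carries the ``$\a<p$ if $p\in\N$'' caveat, so a tiny loss is harmless). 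Recall also from the preliminaries that $C^{2p}(\O\times\O) \subset C^{p,p}(\O\times\O)$, so it suffices to propagate $C^{2p}$ regularity through the construction.

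Next I would propagate this regularity through each kernel construction. For the warping kernel $k_{\warp}^{w,k_{\M(\nu)}}(u,u') = k_{\M(\nu)}(w(u)-w(u'))$, since $w \in C^p(\O)$ and $k_{\M(\nu)} \in C^{2p}(\O\times\O)$, the chain rule (or the multivariate Fa\`a di Bruno formula for higher $p$) shows that $u \mapsto k_{\M(\nu)}(w(u)-w(u'))$ is $C^p$ in $u$ for each fixed $u'$, and symmetrically in $u'$; hence $k_{\warp}^{w,k_{\M(\nu)}} \in C^{p,p}(\O\times\O)$. For the mixture kernel $k_{\mix}^{\{\s_\ell,k_{\M(\nu)}\}_{\ell=1}^L}(u,u') = \sum_{\ell=1}^L \s_\ell(u)\s_\ell(u') k_{\M(\nu)}(u,u')$, each summand is a product of the $C^p$ function $\s_\ell(u)$ (in the first variable), the $C^p$ function $\s_\ell(u')$ (in the second variable), and the $C^{p,p}$ kernel $k_{\M(\nu)}$, so the Leibniz product rule gives each summand in $C^{p,p}(\O\times\O)$, and a finite sum of such functions is again in $C^{p,p}(\O\times\O)$. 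Once Assumption \ref{assump: sample path regularity} is verified in both cases, Lemma \ref{lem:old school sob regularity method} applies directly and yields $g \in C^{\a-d/2}(\O)$ almost surely with the stated $\a$.

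The only mildly delicate point — and the place I expect a reader to want care — is the integer case $\nu \in \N$ with $p = \nu$: Proposition \ref{prop:regularity of matern kernel lemma 11} then only gives $k_{\M(\nu)} \in C^{\a'}(\O\times\O)$ for $\a' < 2\nu$ strictly, so strictly speaking one obtains $k \in C^{p',p'}(\O\times\O)$ for every $p' < p$ rather than for $p$ itself. This is exactly why the conclusion of both Lemma \ref{lem:old school sob regularity method} and the Corollary is phrased with the caveat ``$\a = p$ if $p$ is not an integer and $\a < p$ otherwise'': applying the Lemma with each $p' < p$ gives $g \in C^{\a-d/2}(\O)$ for every $\a < p$, which is precisely the claimed statement. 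Everything else is a routine application of the chain and product rules together with the embedding $C^{2p} \subset C^{p,p}$, so no genuine obstacle arises.
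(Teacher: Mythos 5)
Your proposal is correct and follows essentially the same route as the paper's own (very brief) proof: verify Assumption \ref{assump: sample path regularity} via Proposition \ref{prop:regularity of matern kernel lemma 11} together with the chain rule (warping) or product rule (mixture), using $C^{2p}(\O\times\O)\subset C^{p,p}(\O\times\O)$, and then apply Lemma \ref{lem:old school sob regularity method}. Your explicit treatment of the integer case $\nu\in\N$, $p=\nu$ (working with $p'<p$ and absorbing the loss into the ``$\a<p$'' caveat) is a detail the paper leaves implicit, and it is handled correctly.
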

\begin{proof}
    Assumption \ref{assump: sample path regularity} is satisfied due to Proposition \ref{prop:regularity of matern kernel lemma 11} and the chain rule or product rule. 
\end{proof}

 Sample regularity properties are crucial for our analytical framework, but establishing the mean-square properties of a Gaussian process is often more straightforward. Our objective is to infer conclusions about sample regularity properties through an analysis of mean-square properties. We follow the methodology outlined in \cite[section 3]{Wang2021BayesianNM}, where H\"older sample path properties for Gaussian processes with a M\'atern kernel are established through the use of mean-square properties.

We start by introducing the concepts of mean-square continuity and differentiability. We say that \(g \sim \GP(0,k)\) is
\begin{itemize}
    \item[(i)] \emph{mean-square continuous} on $\O$,  if for all \(u\in\O\)
    \[
        \E_{g\sim\GP(0,k)}[g(u)]<\infty\quad\text{ and } \lim_{u'\to u}\E_{_{g\sim\GP(0,k)}}[|g(u)-g(u')|^2]=0.
    \]
\item[(ii)]\emph{mean-square \(\bar{q}\)-partial differentiable} on $\O$ for \(\bar q\in\N^{d}_0\) with \(\|\bar q\|_{1}=1\), if  there exits a random field \(D^{\bar q}_{\MS}g\) with finite second moments, we have for all \(u\in\O\)
\[
    \lim_{h\to0}\E_{_{g\sim\GP(0,k)}}\left[\left|D^{\bar q}_{\MS}g-\frac{g(u)-g(u+{\bar q}h)}{h}\right|^2\right]=0.
\]
 For general \(q\in\N^{d}_0\), we denote \(D^{q}_{\MS}g\) as the \emph{mean-square \(q\)-partial  derivative of \(g\)}, and this is defined analogously. We refer to {\cite[section 3.1]{Wang2021BayesianNM}} for a more comprehensive definition and discussion.
\end{itemize}
{A {\it modification} of \(g \sim {\GP}(0,k)\) is a random process \(\tilde{g}\) such that for each \(u\in\O\) we have \(\P[g(u)=\tilde g(u)]=1\).} A Gaussian process is completely defined by its finite dimensional distributions, thus allowing us to work with a modification, as the laws of both \(g\) and \(\tilde g\) are the same.

For a mean-square continuous Gaussian process \(g \sim \GP(0,k)\) with \(k\in C^{|q|,|q|}(\O\times\O)\), we have that (see e.g. \cite{Wang2021BayesianNM})
\[
 D_{\MS}^{q}g \sim \GP\left(0,D^{q,q}k\right).
\]
We now prove the main result  on the H\"older regularity of samples from a Gaussian process through a sequence of lemmas. The final result is given in Lemma \ref{lemma: Sullivan regularity}. 

\begin{lemma}\label{lemma:sullival Prop1}
    Suppose $k$ satisfies Assumption \ref{assump: sample path regularity}. Then \(g \sim \GP(0,k)\) is mean-square $i$-partial differentiable and \(D_{\MS}^{i}\GP(0,k)\) is mean-square continuous for every \(i\in\N_0^d\) with \(0\leq |i|\leq \lceil p-1\rceil\).
\end{lemma}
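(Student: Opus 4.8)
The plan is to reduce everything to the known behaviour of the Matérn kernel (via Proposition \ref{prop:regularity of matern kernel lemma 11}) together with the classical fact that mean-square differentiability of a Gaussian process is governed by the existence of the corresponding mixed partial derivative of its covariance kernel. The key link, already recorded in the excerpt, is that for a mean-square continuous $g \sim \GP(0,k)$ with $k \in C^{|q|,|q|}(\O\times\O)$ one has $D_{\MS}^{q}g \sim \GP(0, D^{q,q}k)$. So the whole statement is really a statement about how smoothness of $k$ controls mean-square regularity, and Assumption \ref{assump: sample path regularity} gives us exactly $k \in C^{p,p}(\O\times\O)$ for some $p > 0$.

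First I would fix $i \in \N_0^d$ with $0 \le |i| \le \lceil p-1\rceil$. The aim is to show (a) $g$ is mean-square $i$-partial differentiable, and (b) $D_{\MS}^{i}g$ is mean-square continuous. For (a), I would argue by induction on $|i|$, adding one unit of the multi-index at a time: if $D_{\MS}^{j}g$ exists with $D_{\MS}^{j}g \sim \GP(0, D^{j,j}k)$, then since $|j| \le \lceil p-1\rceil - 1 \le p - 1$ we have $|j|+1 \le p$, so $D^{j,j}k$ is at least $C^{1,1}$ (indeed $C^{p-|j|, p-|j|}$ with $p - |j| \ge 1$), and the standard criterion (see \cite{Stein1999} or the discussion around \cite[section 3.1]{Wang2021BayesianNM}) gives that the next mean-square partial derivative exists, with covariance obtained by differentiating once more in each argument. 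The base case $|i|=0$ is mean-square continuity of $g$, which follows from $k \in C^{0,0}(\O\times\O)$ since $\E[|g(u)-g(u')|^2] = k(u,u) + k(u',u') - 2k(u,u') \to 0$ as $u' \to u$. The only subtlety is checking that the index $p - \lfloor\text{current length}\rfloor$ stays $\ge 1$ throughout; this is exactly what the bound $|i| \le \lceil p - 1 \rceil$ secures, because $\lceil p-1 \rceil \le p$ when... (one should be careful: $\lceil p-1\rceil \le p$ always, and in fact $\lceil p - 1 \rceil = \lceil p \rceil - 1$, so $k \in C^{p,p}$ with $p \ge \lceil p-1\rceil$ gives us room for $|i|$ derivatives in each slot whenever $|i| \le \lceil p-1\rceil$; I would spell this comparison out carefully).

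For (b), once we know $D_{\MS}^{i}g \sim \GP(0, D^{i,i}k)$ with $|i| \le \lceil p-1 \rceil$, mean-square continuity of $D_{\MS}^{i}g$ follows by the same computation as the base case: writing $\kappa := D^{i,i}k$,
\[
\E\bigl[|D_{\MS}^{i}g(u) - D_{\MS}^{i}g(u')|^2\bigr] = \kappa(u,u) + \kappa(u',u') - 2\kappa(u,u'),
\]
and this tends to $0$ as $u' \to u$ because $\kappa = D^{i,i}k \in C^{p-|i|,p-|i|}(\O\times\O) \subseteq C^{0,0}(\O\times\O)$, using $p - |i| \ge p - \lceil p-1\rceil \ge 0$ and hence continuity of $\kappa$ in particular jointly on the diagonal. (Here one uses that $k \in C^{p,p}$ implies $D^{i,i}k$ is a continuous function of its two arguments for $|i| \le \lfloor p \rfloor$, which covers $|i| \le \lceil p-1\rceil$ when $p$ is not an integer, and the integer case needs the $C^{\alpha,\alpha}$ refinement exactly as in Proposition \ref{prop:regularity of matern kernel lemma 11}.)

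The main obstacle, and the point I would be most careful about, is the bookkeeping at the endpoint $|i| = \lceil p-1 \rceil$ when $p$ is not an integer versus when $p$ is an integer: in the non-integer case $\lceil p-1 \rceil = \lfloor p \rfloor$ and there is genuine room ($p - |i| = p - \lfloor p\rfloor \in (0,1)$, still giving a Hölder-continuous $D^{i,i}k$), whereas if $p$ were an integer one would have $p - |i| = 0$ at the endpoint, which is still fine for continuity but leaves no margin — so one must verify that the mean-square differentiability step is only ever invoked with strictly positive remaining smoothness, i.e. with $|j| \le \lceil p - 1 \rceil - 1 < p$. I would present the induction so that the differentiation step is applied with index length at most $\lceil p-1\rceil - 1$, and the continuity claim (b) separately at length up to $\lceil p-1\rceil$, which keeps the two parts logically clean. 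Everything else is a routine application of the covariance-differentiation identity quoted above.
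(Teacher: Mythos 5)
Your argument is correct and takes essentially the same route as the paper: mean-square $i$-partial differentiability is obtained from the smoothness $k\in C^{p,p}(\O\times\O)$ via the standard criterion of Stein, and mean-square continuity of $D_{\MS}^{i}g$ follows from the identity $\E[|D_{\MS}^{i}g(u)-D_{\MS}^{i}g(u')|^2]=D^{i,i}k(u,u)+D^{i,i}k(u',u')-2D^{i,i}k(u,u')\to 0$; your explicit induction and endpoint bookkeeping only flesh out what the paper leaves to the citation. One small slip in your discussion: when $p$ is an integer, $\lceil p-1\rceil=p-1$ gives $p-|i|=1$ (not $0$) at the endpoint, so there is in fact even more margin than you claim, and the conclusion is unaffected.
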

\begin{proof}
    Our proof follows the proof of {\cite[Proposition 1]{Wang2021BayesianNM}}.  Fix \(i\in\N_0^d\) with \(0\leq |i|\leq \lceil p-1\rceil\). Mean-square  $i$-partial differentiability follows from \cite[section 2]{Stein1999} since \(k\in C^{p,p}(\O\times\O)\) by Assumption \ref{assump: sample path regularity}.
    Mean-square continuity of \(D_{\MS}^{i}\GP(0,k)\) follows since
    \[\E_{g\sim\GP(0,k)}[|D_{\MS}^{i}g(u)-D_{\MS}^{i}g(u')|^2]=D^{i,i}(k(u,u)+k(u',u')-2k(u,u'))\to 0 \text{ as } u\to u',\]
    since \(k\in C^{p,p}(\O\times\O)\).
\end{proof}
\begin{lemma}\label{lemma:sullival Prop2}
    Suppose $k$ satisfies Assumption \ref{assump: sample path regularity}. Then for all \(i\in\N_0^d\) with \(0\leq |i|\leq \lceil p-1\rceil\), \(D_{\MS}^{i}\GP(0,k)\) has a modification that is sample continuous.
\end{lemma}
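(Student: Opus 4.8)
The plan is to apply Kolmogorov's continuity theorem to the Gaussian process $Z := D_{\MS}^i g \sim \GP(0, D^{i,i}k)$, whose existence as the mean-square $i$-partial derivative of $g \sim \GP(0,k)$ is supplied by Lemma \ref{lemma:sullival Prop1}. Fix $i \in \N_0^d$ with $0 \leq |i| \leq \lceil p-1\rceil$. The first step is the bookkeeping observation that differentiating $k$ to order $i$ in each argument lowers its smoothness by $|i|$, so that $D^{i,i}k$ inherits $C^{\gamma,\gamma}(\O\times\O)$ regularity with $\gamma := p - |i|$; the constraint $|i| \leq \lceil p-1\rceil$ forces $\gamma > 0$, as one sees by distinguishing $p \in \N$ (where $\gamma \geq 1$) from $p \notin \N$ (where $\gamma \geq p - \lfloor p\rfloor > 0$). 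Since $\O$ is bounded, $D^{i,i}k$ is then uniformly $\tau$-H\"older continuous in each of its arguments separately, with $\tau := \min\{\gamma,1\} > 0$.

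The second step converts this into an increment bound for $Z$. For $u,u' \in \O$,
\[
\E_{g\sim\GP(0,k)}\bigl[|Z(u) - Z(u')|^2\bigr] = D^{i,i}k(u,u) + D^{i,i}k(u',u') - 2\,D^{i,i}k(u,u'),
\]
and writing the right-hand side as $\bigl(D^{i,i}k(u,u) - D^{i,i}k(u,u')\bigr) + \bigl(D^{i,i}k(u',u') - D^{i,i}k(u,u')\bigr)$ and applying H\"older continuity in the second and first argument respectively gives $\E_{g\sim\GP(0,k)}[|Z(u) - Z(u')|^2] \leq C\,\|u - u'\|_2^{\tau}$ for a constant $C = C(k,i) < \infty$.

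The final step uses Gaussianity to pass to high moments: since $Z(u) - Z(u')$ is centred Gaussian, $\E[|Z(u) - Z(u')|^{2m}] = c_m \bigl(\E[|Z(u) - Z(u')|^2]\bigr)^m \leq c_m C^m \|u-u'\|_2^{m\tau}$ for every $m \in \N$, where $c_m$ is the $2m$-th absolute moment of a standard normal. Choosing $m$ with $m\tau > d$ and invoking Kolmogorov's continuity theorem (see e.g. \cite{Wang2021BayesianNM}) yields a sample-continuous modification of $Z$, which is exactly the claim. I do not expect a genuine obstacle: the only points requiring attention are the smoothness-reduction bookkeeping under Assumption \ref{assump: sample path regularity} (the integer versus non-integer dichotomy for $p$) and verifying that the H\"older constant of $D^{i,i}k$ can be taken uniform over the bounded domain $\O \times \O$; the remainder is the standard Gaussian-process route to Kolmogorov's theorem, as in \cite[Proposition 2]{Wang2021BayesianNM}.
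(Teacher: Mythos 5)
Your proposal is correct and follows essentially the same route as the paper: bound the mean-square increment of $D_{\MS}^{i}g$ via the H\"older continuity of $D^{i,i}k$ in each argument (using the same two-term decomposition of $D^{i,i}k(u,u)+D^{i,i}k(u',u')-2D^{i,i}k(u,u')$) and then invoke Kolmogorov's continuity theorem as in \cite[Proposition 2]{Wang2021BayesianNM}. The only difference is that you spell out the Gaussian higher-moment step and the choice of $m$ with $m\tau>d$, which the paper delegates to the cited reference, and your use of $\tau=\min\{p-|i|,1\}$ is a slightly more careful statement of the H\"older exponent than the paper's.
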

\begin{proof}
    Since \(D^{i,i}k\in C^{p-|i|,p-|i|}(\O\times\O)\) and since \(p-|i|> \lceil p-1\rceil-|i|\geq0\) we see that \(D^{i,i}k\) is \(p-|i|-\)H\"older continuous on \(\O\times\O\) and so for \(u,u',\tilde{u}\in\O\) we have
    \[
        \left|D^{i,i}k(u,u')-D^{i,i}k(u,\tilde{u})\right|\leq C(k)\|u'-\tilde{u}\|_2^{p-|i|}.
    \]
    Hence, for any \(u,u'\in\O\)
     \[
       D^{i,i}k(u,u)+D^{i,i}k(u',u')-2D^{i,i}k(u,u')\leq C(k)\|u-u'\|_2^{p-|i|}.
    \]
    Following the proof of {\cite[Proposition 2, A.3]{Wang2021BayesianNM}} we see that this inequality, via Kolmogrov's Continuity Theorem (see e.g., {\cite[Theorem 3]{Wang2021BayesianNM}}), proves the result.
\end{proof}
\\
\begin{proposition}{\cite[Corollary 1]{Wang2021BayesianNM}}\label{prop:sullival cor 1}
    Let \(k\in C^{\lceil p-1\rceil,\lceil p-1\rceil}(\O\times\O)\), for some $p > 0$. Suppose that  for all \(i\in\N_0^d\) with \(0\leq |i|\leq \lceil p-1\rceil\), \(g \sim \GP(0,k)\) has mean-square derivatives \(D^{i}_{\MS}g\), and \(D^{i}_{\MS}g\) is mean-square continuous and sample path continuous. Then \(g\) has continuous sample derivatives \(D^{i}g\) which satisfy \(D^{i}g=D_{\MS}^{i}g\) almost surely, for all \(0 \leq |i|\leq \lceil p-1\rceil\).
\end{proposition}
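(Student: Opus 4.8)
The plan is to recognise that this is essentially the classical correspondence between mean-square and sample-path differentiability, and to follow the argument of \cite[Corollary 1]{Wang2021BayesianNM}, which I sketch here. The core of the matter is the first-order case; the general statement then follows by induction on $|i|$. Throughout, let $\tilde g$ denote a sample-continuous version of $g$, which exists because the hypothesis for $i=0$ asserts that $g$ is sample path continuous, and for each $j$ let $\tilde\xi_j$ denote a sample-continuous version of the mean-square partial derivative $\xi_j := D^{e_j}_{\MS} g$, provided by hypothesis.

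\textbf{First-order case.} The first step is a mean-square fundamental theorem of calculus: since $g$ is mean-square $e_j$-partial differentiable with mean-square-continuous derivative $\xi_j$, the curve $t\mapsto g(u+te_j)$ is an $L^2$-valued continuously differentiable function, so the mean-square Riemann integral of $\xi_j$ exists and
\[
\E\Big[\,\big|g(u+te_j)-g(u)-\textstyle\int_0^t \xi_j(u+se_j)\,ds\big|^2\,\Big]=0
\]
for all $u\in\O$ and all $t$ keeping the segment in $\O$. The second step identifies this mean-square integral with the pathwise one: the Riemann sums of $\xi_j$ converge simultaneously in $L^2$ (by mean-square continuity) and almost surely (by pathwise continuity of $\tilde\xi_j$), so passing to a subsequence shows $\int_0^t\xi_j(u+se_j)\,ds=\int_0^t\tilde\xi_j(u+se_j)\,ds$ almost surely for each fixed $(u,t)$. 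Combining the two steps, $\tilde g(u+te_j)-\tilde g(u)=\int_0^t\tilde\xi_j(u+se_j)\,ds$ almost surely for each fixed $(u,t)$.

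\textbf{From pointwise to pathwise, and the induction.} Both sides of the last identity are, as functions of $(u,t)$, almost surely continuous on $\O$ (times a bounded interval); two such processes agreeing pointwise almost surely on the separable set $\O$ are indistinguishable, so the identity holds for all $(u,t)$ on a single event of full probability. On that event the right-hand side is, by the ordinary fundamental theorem of calculus applied to the continuous integrand $\tilde\xi_j$, continuously differentiable in $t$ with derivative $\tilde\xi_j$; hence $\partial_j\tilde g$ exists everywhere and equals $\tilde\xi_j$. Running this over all $j$, the modification $\tilde g$ has all first-order partials, each continuous, so $\tilde g\in C^1(\O)$ almost surely with $D^{e_j}\tilde g=D^{e_j}_{\MS}g$ a.s. For $|i|\ge 2$ one iterates: apply the first-order argument to the Gaussian process $D^{i'}_{\MS}g\sim\GP(0,D^{i',i'}k)$ for each $|i'|<\lceil p-1\rceil$, noting that its mean-square partials coincide with $D^{i'+e_j}_{\MS}g$ by uniqueness of mean-square derivatives and that these satisfy the proposition's hypotheses. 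Chaining the identifications along $|i|=1,\dots,\lceil p-1\rceil$ produces a single modification $\tilde g$ with continuous classical derivatives $D^i\tilde g$ satisfying $D^i\tilde g=D^i_{\MS}g$ almost surely for all $0\le|i|\le\lceil p-1\rceil$, as claimed.

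\textbf{Main obstacle.} The only genuinely delicate point is the upgrade from ``for each fixed point, almost surely'' to ``almost surely, simultaneously for all points.'' This is precisely where sample continuity of $g$ and of all the mean-square derivatives is indispensable: it lets an almost-sure pointwise identity between two continuous random fields be promoted to an almost-sure identity of functions, after which pathwise FTC and the elementary fact that continuous partials imply joint $C^1$ regularity finish the argument. The remaining ingredients---the mean-square FTC and the coincidence of mean-square and pathwise integrals---are routine.
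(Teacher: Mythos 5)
This proposition is stated in the paper as a direct quotation of \cite[Corollary 1]{Wang2021BayesianNM} and is not proved there, so there is no in-paper argument to compare against. Your reconstruction — the mean-square fundamental theorem of calculus, identification of the mean-square and pathwise integrals via Riemann sums, the upgrade from pointwise almost-sure equality to indistinguishability using sample continuity, the pathwise FTC, and induction over the multi-index — is correct and is essentially the standard argument underlying the cited result, so it matches the intended proof route.
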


\begin{lemma}\label{lemma: Sullivan regularity}
Suppose Assumption \ref{assump: sample path regularity} is satisfied.
    Then there exists a modification \(\tilde{g}\) of \(g \sim \GP(0, k)\) such that \(\tilde{g}\in C^{\lceil p-1\rceil}(\O)\) almost surely.
\end{lemma}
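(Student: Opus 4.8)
The plan is to assemble the statement directly from the three preparatory results that precede it, with the only non-trivial point being a small amount of measure-theoretic bookkeeping. First I would record that Assumption \ref{assump: sample path regularity} with parameter $p$ gives $k \in C^{p,p}(\O\times\O) \subseteq C^{\lceil p-1\rceil,\lceil p-1\rceil}(\O\times\O)$, since $\lceil p-1\rceil \le p$ and higher H\"older regularity embeds into lower on a bounded domain; this is exactly the regularity of $k$ required by Proposition \ref{prop:sullival cor 1}. Thus the task reduces to producing a single process $\tilde g$, equal in law to $\GP(0,k)$, such that every mean-square partial derivative $D^i_{\MS}\tilde g$ with $0 \le |i| \le \lceil p-1\rceil$ is simultaneously mean-square continuous and sample path continuous; Proposition \ref{prop:sullival cor 1} then promotes these to genuine continuous sample derivatives $D^i\tilde g$ for all $|i| \le \lceil p-1\rceil$, which is precisely $\tilde g \in C^{\lceil p-1\rceil}(\O)$ almost surely.

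The two ingredients feeding this are Lemmas \ref{lemma:sullival Prop1} and \ref{lemma:sullival Prop2}. From Lemma \ref{lemma:sullival Prop1}, under Assumption \ref{assump: sample path regularity} the process $g \sim \GP(0,k)$ is mean-square $i$-partial differentiable and $D^i_{\MS}\GP(0,k)$ is mean-square continuous for every $i$ with $0 \le |i| \le \lceil p-1\rceil$; applied to any process with the same law this also gives the mean-square continuity of the $D^i_{\MS}\tilde g$. From Lemma \ref{lemma:sullival Prop2}, each $D^i_{\MS}g \sim \GP(0, D^{i,i}k)$ in this range admits a sample-continuous modification. Since the index set $\{i \in \N_0^d : 0 \le |i| \le \lceil p-1\rceil\}$ is finite and the family $(D^i_{\MS}g)_i$ is jointly Gaussian, these can be combined into a single modification $\tilde g$ of $g$ whose mean-square derivatives $D^i_{\MS}\tilde g$ are all sample continuous at once --- either by running the Kolmogorov-continuity argument behind Lemma \ref{lemma:sullival Prop2} on the finitely-many-component vector-valued process $(D^i_{\MS}g)_i$, or simply by intersecting the finitely many almost-sure continuity events. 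Feeding $\tilde g$ into Proposition \ref{prop:sullival cor 1} then finishes the proof.

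The step that needs genuine care --- though it is bookkeeping rather than a real difficulty --- is exactly this passage from the component-by-component sample-continuous modifications of Lemma \ref{lemma:sullival Prop2} to one modification $\tilde g$ of $g$ for which all the $D^i_{\MS}\tilde g$ are continuous simultaneously, so that the hypotheses of Proposition \ref{prop:sullival cor 1}, which are stated for a single fixed process, are literally met. This is legitimate here because only finitely many multi-indices of length at most $\lceil p-1\rceil$ are involved, and because forming mean-square derivatives commutes with passing to a modification: the relevant difference quotients, and hence their $L^2$-limits, depend only on the finite-dimensional distributions, which a modification leaves unchanged. Everything else is a direct appeal to the cited results, so the argument is short.
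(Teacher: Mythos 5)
Your argument is correct and follows essentially the same route as the paper's proof: embed $C^{p,p}$ into $C^{\lceil p-1\rceil,\lceil p-1\rceil}$, invoke Lemma \ref{lemma:sullival Prop1} for mean-square differentiability and continuity, Lemma \ref{lemma:sullival Prop2} for sample-continuous modifications, and conclude via Proposition \ref{prop:sullival cor 1}. The only difference is that you spell out the bookkeeping of combining the finitely many component-wise modifications into a single modification $\tilde g$, a step the paper leaves implicit; your justification (finiteness of the index set and invariance of mean-square derivatives under modification) is sound.
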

\begin{proof} By Assumption \ref{assump: sample path regularity} we have \(k\in C^{p,p}(\O\times\O)\) and so also \(k\in C^{\lceil p-1\rceil,\lceil p-1\rceil}(\O\times\O)\).
    By Lemma \ref{lemma:sullival Prop1}, \(g \sim \GP(0,k)\) is mean-square \(\lceil p-1\rceil\)-partial  differentiable and \(D^{i}_{\MS}g\) is mean-square continuous for \(0\leq |i|\leq \lceil p-1\rceil\). By Lemma \ref{lemma:sullival Prop2}, we can work with a modification \(\tilde{g}\) of \(g\) such that \(D^i_{MS}\tilde{g}\) is sample continuous for each \(0\leq |i|\leq \lceil p-1\rceil\). The result then follows from Proposition \ref{prop:sullival cor 1}.
\end{proof}


Notice that in Lemma \ref{lem:old school sob regularity method}, the regularity of sample paths decreases as \(d\) increases. In contrast, Lemma \ref{lemma: Sullivan regularity} is dimension-independent, and therefore will often be the choice we will use in dimensions \(d \geq 2\). Thus, in general, we shall only use Lemma \ref{lem:old school sob regularity method} when \(d = 1\).
An application of Lemma \ref{lemma: Sullivan regularity} gives the following result.
\begin{corollary}\label{cor: sample path sullivan}
    Suppose that either:
\begin{itemize}
    \item $k=k_{\warp}^{w,k_{\M(\nu)}}$ with \(w\in C^{p}(\O)\) such that \(p\leq \nu\) if \(\nu\) is not an integer and \(p<\nu\) otherwise, or
    {\item  \(k = k_{\mix}^{\{\s_\ell,k_{\M(\nu)}\}_{\ell=1}^L}\) with \(\s_\ell\in C^{p}(\O)\) for \(\ell=1,\ldots,L\) such that \(p\leq\nu\).}
\end{itemize} 
Then there exists a modification \(\tilde{g}\) of \(g \sim \GP(0, k)\) such that 
\(\tilde{g}\in C^{\lceil p-1\rceil}(\O)\) almost surely. 
\end{corollary}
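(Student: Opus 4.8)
The plan is to argue exactly as in the proof of Corollary~\ref{cor:sample path old school}, but with Lemma~\ref{lemma: Sullivan regularity} in place of Lemma~\ref{lem:old school sob regularity method}. That is, it suffices to check that Assumption~\ref{assump: sample path regularity} holds with the value of $p$ appearing in the hypotheses, i.e.\ that $k\in C^{p,p}(\O\times\O)$; Lemma~\ref{lemma: Sullivan regularity} then immediately yields a modification $\tilde g$ of $g\sim\GP(0,k)$ with $\tilde g\in C^{\lceil p-1\rceil}(\O)$ almost surely. The whole content of the proof is therefore the regularity book-keeping for the two kernels, which I would carry out as follows.

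For the warping kernel, recall $k_{\warp}^{w,k_{\M(\nu)}}(u,u')=k_{\M(\nu)}(w(u)-w(u'))$, where by Proposition~\ref{prop:regularity of matern kernel lemma 11} the stationary profile of $k_{\M(\nu)}$ belongs to $C^{2\nu}(\O)$ if $\nu\notin\N$ and to $C^{\a}(\O)$ for every $\a<2\nu$ if $\nu\in\N$. Fixing the second argument, $k_{\warp}^{w,k_{\M(\nu)}}(\cdot,u')$ is the composition of the map $u\mapsto w(u)-w(u')$, which lies in $C^{p}(\O)$ by hypothesis on $w$, with that profile; since $p\le\nu$ (respectively $p<\nu$ in the integer case) we have $p<2\nu$, so the outer profile is in particular of class $C^{p}$, and the chain rule (Fa\`a di Bruno's formula, including at fractional order) gives that the composition lies in $C^{p}(\O)$. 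By the symmetry of the kernel the same holds for $k_{\warp}^{w,k_{\M(\nu)}}(u,\cdot)$, so $k_{\warp}^{w,k_{\M(\nu)}}\in C^{p,p}(\O\times\O)$ and Assumption~\ref{assump: sample path regularity} is satisfied; Lemma~\ref{lemma: Sullivan regularity} then gives the claim.

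For the mixture kernel, $k_{\mix}^{\{\s_\ell,k_{\M(\nu)}\}_{\ell=1}^L}(u,u')=\sum_{\ell=1}^L\s_\ell(u)\s_\ell(u')k_{\M(\nu)}(u,u')$, so fixing the second argument writes $k_{\mix}^{\{\s_\ell,k_{\M(\nu)}\}_{\ell=1}^L}(\cdot,u')$ as a finite linear combination, with constant coefficients $\s_\ell(u')$, of products of the functions $\s_\ell\in C^{p}(\O)$ and $k_{\M(\nu)}(\cdot,u')\in C^{p}(\O)$, the latter by Proposition~\ref{prop:regularity of matern kernel lemma 11} since $p\le\nu<2\nu$. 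As $C^{p}(\O)$ is closed under finite sums and products on the bounded domain $\O$, this belongs to $C^{p}(\O)$, and the same argument applies in the other variable, so again $k_{\mix}^{\{\s_\ell,k_{\M(\nu)}\}_{\ell=1}^L}\in C^{p,p}(\O\times\O)$ and Lemma~\ref{lemma: Sullivan regularity} yields the conclusion.

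The only step that needs a little care — and a convenient way to ensure the Mat\'ern profile is of class $C^{p}$ in the warping case, which is why one asks $p<\nu$ there when $\nu$ is an integer — is tracking which derivatives of the Mat\'ern profile are genuinely available, since for integer $\nu$ only $C^{\a}$ with $\a<2\nu$ is guaranteed and one must check that $\a$ can still be chosen $\ge p$. Beyond this (routine but slightly fiddly) accounting I do not anticipate any real obstacle: the corollary is the direct analogue of Corollary~\ref{cor:sample path old school}, with the mean-square route of Lemma~\ref{lemma: Sullivan regularity} replacing the Sobolev-embedding route of Lemma~\ref{lem:old school sob regularity method}, so that the conclusion $C^{\lceil p-1\rceil}(\O)$ is dimension-independent rather than of the form $C^{\a-d/2}(\O)$.
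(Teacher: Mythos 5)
Your argument is correct and is essentially identical to the paper's proof: Assumption \ref{assump: sample path regularity} is verified via Proposition \ref{prop:regularity of matern kernel lemma 11} together with the chain rule (warping) or product rule (mixture), and the conclusion then follows from Lemma \ref{lemma: Sullivan regularity}. The extra regularity bookkeeping you spell out is just a more detailed version of what the paper leaves implicit.
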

\begin{proof}
    Assumption \ref{assump: sample path regularity} is satisfied due to Proposition \ref{prop:regularity of matern kernel lemma 11} and the chain rule or the product rule. The result then follows from Lemma \ref{lemma: Sullivan regularity}.
\end{proof}

\begin{remark}\label{rem:sob_vs_class} (Sobolev vs H\"older regularity) For \(k\in C^{p,p}(\O\times\O)\), the results in this section give sample path regularity $H^p(\O)$ and $C^{\tilde p}(\O)$, for $\tilde p < p$. There are results in the literature (see e.g. \cite[Theorem 1(ii)]{Dunlop2016}) which show that the Sobolev and H\"older sample path regularity of some Gaussian processes is the same, i.e. $\tilde p = p$. However, the proof of these results relies on properties of the eigendecomposition of the covariance operator of the Gaussian process, and to the best of our knowledge, there are no such results for general covariance kernels. Hence, the H\"older regularity proved in Corollaries \ref{cor:sample path old school} and \ref{cor: sample path sullivan} may be sub-optimal.  
\end{remark}


\section{Error analysis of non-stationary GP regression}\label{sec:EAGPR}
In this section we discuss the convergence of the GP emulator \(f_N\), as defined in \eqref{eq:gp_posterior} with fixed hyper-parameters $\theta$ and \eqref{eq:gp_posterior_est} with estimated hyper-parameters $\widehat \theta_N$, to the true function \(f\). 
In particular, we show that the mean function \(m_{N,\delta^2}^f\) converges to the true function \(f\) and, in the case of noise-free data, the covariance function \(k_{N,0}\) converges to 0. This type of convergence is required for example when we want to use GP emulators as surrogate models in inverse problems \cite{Stuart2016, Teckentrup2019, helin2023introduction}. 
Convergence rates will be given in terms of the fill distance $h_{U_N,\O}$, which is the maximum distance any point in \(\O\) can be from \(U_N\):
\[
h_{U_N,\O}:=\sup_{u\in \O}\inf_{u_n\in U_N}\|u-u_n\|_2.
\]
This can be translated into convergence rates in terms of the number of training points $N$ for specific point sets $U_N$, see, e.g. the discussion in \cite{Teckentrup2019} and the references therein. The fastest possible rate of { decay} for $h_{U_N,\O}$ is $N^{-1/d}$, and this is obtained for example for a tensor grid based on equispaced points.

We start by analysing the convergence of $m_{N,0}^f$ to $f$ (i.e. the noise-free case) in sections \ref{sec:conergence in native space} and \ref{sec:conv_sobolev}, under different assumptions on the function $f$ and kernel $k$ used in the GP prior \eqref{eq:GP empirical prior}. We analyse the convergence of $m_{N,\delta^2}^f$ to $f$ (i.e. the noisy case) in section \ref{sec:conv_sobolev_noisy}. We discuss the effect of estimating the hyper-parameters $\theta$ by $\widehat \theta_N$ in an empirical Bayesian approach in section \ref{subsec:eagr_empirical}. In section \ref{subsec:eagr_ext}, we present various extensions to the analysis presented in the previous sections, including convergence of $k_{N,0}$ to 0, faster convergence of $m_{N,0}^f$ and $k_{N,0}$ for separable kernels, and convergence of $m_{N,0}^f$, $m_{N,\delta^2}^f$ and $k_{N,0}$ in the case of randomly distributed training points $U_N$.


\subsection{Convergence of $m_{N,0}^f$ in the RKHS}\label{sec:conergence in native space}
In this section we are interested in the general setting where the native space $\H_k$ corresponding to the prior covariance kernel $k$ is not necessarily known explicitly. A benefit to this is that the results are applicable to a wide variety of kernels, but this also means that showing that \(f\) satisfies the required assumption that it is an element of \(\H_k(\O)\) is often difficult to verify. 
A general convergence result, based only on the general requirements given in Assumption \ref{assump:kernel1}, is presented in Proposition \ref{thm:Wend11.13} below. 

\begin{assumption}\label{assump:kernel1}
We have \(k\in C^{2p}(\Omega\times\Omega)\), for some \(p\in\N\), and \(f\in\mathcal{H}_{k}(\Omega)\).
\end{assumption}

\begin{proposition}[Convergence in mean for $f \in \H_k(\O)$ {\cite[Theorem 11.13]{wendland_2004}}]\label{thm:Wend11.13}
Suppose Assumption \ref{assump:kernel1} holds with $p \in \N$. Then there exist constants $h_0$ and $C$, independent of $f$, $k$ and $N$, such that
\begin{equation*}
    \|D^{\a}f-D^{\a}m_{N,0}^f\|_{C^0(\O)}\leq C C_{\ref{thm:Wend11.13}}(k)h^{p-|\a|}_{U_N,\Omega}\|f\|_{\H_k(\Omega)},
\end{equation*}
for all $\a\in\N^d_0$ with $|\a|<p$ and $h_{U_N,\Omega}\leq h_0$.
The constant $C_{\ref{thm:Wend11.13}}$ is given by
\[
    C_{\ref{thm:Wend11.13}}(k):= \max_{\substack{(\a_1,\a_2)\in\N^{2d}_0\\|\a_1|+|\a_2|=2p}}\max_{u_1,u_2\in\O}\left|D^{\a_1,\a_2}k(u_1,u_2)\right|.
\]
\end{proposition}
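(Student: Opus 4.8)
The plan is to identify $m_{N,0}^f$ with the minimal-norm kernel interpolant of $f$ at the points $U_N$, and then bound the derivatives of the interpolation error by a generalized power function which, thanks to the interior cone condition on $\Omega$, decays like the appropriate power of the fill distance. Setting $\delta^2=0$ in \eqref{eq:gp_mean} gives $m_{N,0}^f=\sum_{n=1}^N c_n k(\cdot,u_n)$ with $c=K(U_N,U_N)^{-1}f(U_N)$, so $m_{N,0}^f$ lies in $V_N:=\mathrm{span}\{k(\cdot,u_n)\}_{n=1}^N\subseteq\H_k(\Omega)$ and, by the reproducing property in Definition \ref{def:RKHS}(ii), interpolates $f$ on $U_N$. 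Since $f-m_{N,0}^f$ is then $\H_k$-orthogonal to $V_N$, the interpolant is the $\H_k$-orthogonal projection of $f$ onto $V_N$; in particular $\|f-m_{N,0}^f\|_{\H_k(\Omega)}\le\|f\|_{\H_k(\Omega)}$, and it remains only to estimate $\|D^\alpha(f-m_{N,0}^f)\|_{C^0(\Omega)}$.

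Next I would introduce the \emph{generalized power function}. Since $k\in C^{2p}(\Omega\times\Omega)\subseteq C^{p,p}(\Omega\times\Omega)$, one has $\H_k(\Omega)\hookrightarrow C^p(\Omega)$ together with the differentiated reproducing identity $D^\alpha g(u)=(g,D^{0,\alpha}k(\cdot,u))_{\H_k}$ for all $g\in\H_k(\Omega)$, $|\alpha|\le p$, $u\in\Omega$, where $D^{0,\alpha}$ differentiates the second argument. Applying this to $g:=f-m_{N,0}^f$, using $g(u_n)=0$ and hence $(g,k(\cdot,u_n))_{\H_k}=0$, gives for any coefficients $(\gamma_n)_{n=1}^N$
\[
D^\alpha g(u)=\Bigl(g,\;D^{0,\alpha}k(\cdot,u)-\sum_{n=1}^N\gamma_n\,k(\cdot,u_n)\Bigr)_{\H_k},
\]
so by Cauchy--Schwarz $|D^\alpha g(u)|\le\|f\|_{\H_k(\Omega)}\,P^{\alpha}_{k,U_N}(u)$, where $P^{\alpha}_{k,U_N}(u):=\inf_{\gamma\in\R^N}\bigl\|D^{0,\alpha}k(\cdot,u)-\sum_n\gamma_n k(\cdot,u_n)\bigr\|_{\H_k}$. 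Expanding the square, $(P^{\alpha}_{k,U_N}(u))^2$ is an explicit quadratic in the numbers $D^{\alpha,\alpha}k(u,u)$, $D^{0,\alpha}k(u_n,u)$ and $k(u_n,u_m)$ only.

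The substantive step is to bound $P^{\alpha}_{k,U_N}(u)$ uniformly in $u$. Here the interior cone condition enters: for $h_{U_N,\Omega}\le h_0$ it yields a local polynomial reproduction, i.e. coefficients $\gamma_n=\gamma_n(u)$ supported on the $u_n$ within distance $Ch_{U_N,\Omega}$ of $u$, with $\sum_n|\gamma_n(u)|\le C$, such that $\sum_n\gamma_n(u)q(u_n)=q(u)$ for every polynomial $q$ of degree $<p$. Inserting this choice and Taylor-expanding $k$ to total order $2p$ about $(u,u)$ --- legitimate since $k\in C^{2p}$ --- makes all contributions of order $<2p$ cancel by the reproduction property, leaving a remainder controlled by a constant multiple of $C_{\ref{thm:Wend11.13}}(k)$ times $h_{U_N,\Omega}^{2(p-|\alpha|)}$, since $C_{\ref{thm:Wend11.13}}(k)$ is precisely a bound on the $2p$-th order derivatives of $k$. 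Taking the supremum over $u\in\Omega$ (and the appropriate square root, the exact power of $C_{\ref{thm:Wend11.13}}$ being fixed by its normalization in the statement) then yields $\|D^\alpha f-D^\alpha m_{N,0}^f\|_{C^0(\Omega)}\le C\,C_{\ref{thm:Wend11.13}}(k)\,h_{U_N,\Omega}^{p-|\alpha|}\|f\|_{\H_k(\Omega)}$, with $C$ and $h_0$ depending only on $\Omega$, $d$ and $p$.

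I expect the main obstacle to be exactly this last step: constructing the local polynomial reproduction on a domain satisfying an interior cone condition (via norming sets and a Markov--Bernstein inequality on cones) and then carefully tracking the Taylor remainder so that the constant emerges in the stated form, uniformly in $f$, $k$ and $N$. This is precisely the scattered-data machinery developed in \cite{wendland_2004}; since the statement here is quoted verbatim from \cite[Theorem 11.13]{wendland_2004}, in practice I would simply invoke that reference, and the sketch above records the route one follows to establish it.
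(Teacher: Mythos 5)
Your proposal is correct and takes essentially the same route as the paper: the paper gives no proof of this proposition, simply quoting it from \cite[Theorem 11.13]{wendland_2004}, and your sketch (interpolant as $\H_k$-orthogonal projection, differentiated reproducing identity, power function, local polynomial reproduction under the interior cone condition) is precisely the argument in that reference, which you correctly end by invoking. The only small wrinkle, which you already flag, is that Wendland's bound carries a square root of the kernel-derivative constant, whereas the paper's restatement uses $C_{\ref{thm:Wend11.13}}(k)$ to the first power; this is a normalization issue in the paper's quotation, not a gap in your argument.
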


In Assumption \ref{assump:kernel1}, the regularity assumption on $k$ can often be easy to verify, and this is done for specific examples of the warping, mixture and convolution kernels from section \ref{subsubsec:nonstationary}. When the RKHS $\H_k(\O)$ is not known explicitly, the regularity assumption on $f$ can be much more difficult to verify, and we do not address this assumption further in this work. When the RKHS is known explicitly, for example given by a Sobolev space as in section \ref{sec:conv_sobolev}, then faster convergence rates than those in Proposition \ref{thm:Wend11.13} can sometimes be obtained by using the specific structure of the RKHS. Note that even though the RKHS is not known explicitly, some general properties can be deduced from Assumption \ref{assump:kernel1}. For example, we know that we have $\H_k(\O) \hookrightarrow C^p(\O)$ by \cite[Theorem 10.45]{wendland_2004}.

In Lemmas \ref{lemma:exp_inputwarping constant}, \ref{lemma:exp_kernel mixture} and \ref{lemma:exp_convolutionker constant} below, we verify the regularity assumption on $k$ for special cases of the non-stationary kernels presented in section \ref{subsubsec:nonstationary}. The resulting convergence rates are summarised in Corollary \ref{cor:convergence in the native space}. For the stationary kernel used in the construction of the non-stationary kernels, we focus on the Gaussian kernel $k_{\M(\infty)}$, for which the RKHS is defined by rather technical conditions (see e.g. \cite[Theorem 1]{Minh2010}).
Since the Gaussian kernel is in \(C^{\infty}(\O \times \O)\), the regularity of the non-stationary kernel depends only on the regularity of the hyper-parameters \(w, \{\sigma_\ell\}_{\ell=1}^L\) and \({\lambda_{a}}\). If we use a stationary kernel with finite regularity, then the regularity will depend both on the hyper-parameters \(w, \{\sigma_\ell\}_{\ell=1}^L\) and \({\lambda_{a}}\) and the stationary kernel. For instance, for the warping kernel we obtain regularity \(2p=\min\{2q,2\tilde{q}\}\) for \(w\in C^{2q}(\O)\) and  \(k_s\in C^{2\tilde{q}}(\O\times\O)\).



\begin{lemma}[Warping kernel]\label{lemma:exp_inputwarping constant}\label{lemma:mat3/2_inputwarping constant}
    Let $\O \subset \R$. Suppose $(i)$ \(k_s=k_{\M(\infty)}\) and $(ii)$ \(w\in C^{2p}(\O)\) for some \(p\in\N\). Then $k_{\warp}^{w,k_s} \in C^{2p}(\O \times \O)$, and 
    
    \[
    C_{\ref{thm:Wend11.13}}(k_{\warp}^{w,k_s})=C_0\s^2\sqrt{(2p)!}B_{2p}\|w\|_{C^{2p}(\Omega)},
    \]
     where \(C_0\leq 1.0866\) is a constant and \(B_{2p}\) is the Bell number, which represents the number of possible partitions of the set \(\{1,\ldots,2p\}\).
\end{lemma}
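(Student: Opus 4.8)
The statement has two halves: the smoothness assertion $k_{\warp}^{w,k_s}\in C^{2p}(\O\times\O)$, and the explicit value of $C_{\ref{thm:Wend11.13}}(k_{\warp}^{w,k_s})$. For the first half I would write $k_{\warp}^{w,k_s}(u,u')=g(w(u)-w(u'))$ with $g(t)=\s^2\exp(-t^2/(2\lambda^2))\in C^{\infty}(\R)$ (the Gaussian kernel $k_{\M(\infty)}$ as a function of the scalar increment), observe that $(u,u')\mapsto w(u)-w(u')$ lies in $C^{2p}(\O\times\O)$ because $w\in C^{2p}(\O)$, and conclude that the composition lies in $C^{2p}(\O\times\O)$. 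Since $d=1$, the constant $C_{\ref{thm:Wend11.13}}(k_{\warp}^{w,k_s})$ is simply the maximum over $u,u'\in\O$ and over $(\a_1,\a_2)\in\N_0^2$ with $\a_1+\a_2=2p$ of $\bigl|\partial_u^{\a_1}\partial_{u'}^{\a_2}g(w(u)-w(u'))\bigr|$, so everything reduces to bounding these mixed derivatives.

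The structural point is that, setting $h(u,u')=w(u)-w(u')$, every mixed partial $\partial_u^{j}\partial_{u'}^{k}h$ with $j,k\ge1$ vanishes, so the multivariate Faà di Bruno formula \cite{jones_1951} for $g\circ h$ collapses into a product of two one-dimensional expansions,
\[
\partial_u^{\a_1}\partial_{u'}^{\a_2}g(h)=\sum_{\pi}\sum_{\rho}(-1)^{|\rho|}\,g^{(|\pi|+|\rho|)}(h)\prod_{B\in\pi}w^{(|B|)}(u)\prod_{B'\in\rho}w^{(|B'|)}(u'),
\]
where $\pi$ runs over set partitions of $\{1,\dots,\a_1\}$ and $\rho$ over set partitions of $\{1,\dots,\a_2\}$. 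The number of terms here is $B_{\a_1}B_{\a_2}$, which is at most $B_{2p}$ (it counts the partitions of $\{1,\dots,2p\}$ that never merge an index $\le\a_1$ with an index $>\a_1$), and this supplies the Bell-number factor.

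It then remains to bound the two types of factors. For the Gaussian factor I would write $g^{(m)}(t)=\s^2(-1)^m\lambda^{-m}He_m(t/\lambda)e^{-t^2/(2\lambda^2)}$ with $He_m$ the probabilists' Hermite polynomial, pass to an $L^2$-normalised Hermite basis, and invoke the classical uniform $L^\infty$-bound on Hermite functions to obtain $\sup_t|g^{(m)}(t)|\le C_0\s^2\sqrt{m!}$ for a universal constant $C_0\le1.0866$ (for the normalised kernel $\lambda=1$; the general case carries an extra $\lambda^{-2p}$); since $|\pi|+|\rho|\le\a_1+\a_2=2p$, this is at most $C_0\s^2\sqrt{(2p)!}$. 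For the warping factors, each $|w^{(|B|)}(\cdot)|$ is bounded by $\|w\|_{C^{2p}(\O)}$ as the block sizes lie between $1$ and $2p$. Multiplying the three estimates and taking the maximum over $(\a_1,\a_2)$ and $u,u'$ produces the stated expression for $C_{\ref{thm:Wend11.13}}(k_{\warp}^{w,k_s})$; as in Theorem \ref{lemma:sob equiv warping}, one must be mindful of powers of $\|w\|_{C^{2p}(\O)}$ when several derivative factors of $w$ occur together.

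The main obstacle is the Hermite estimate: a crude bound on $He_m$ grows like $m!$ rather than $\sqrt{m!}$, which would destroy the rate coming out of Proposition \ref{thm:Wend11.13}, so one genuinely needs the non-trivial uniform boundedness of the normalised Hermite functions, together with enough care to extract the explicit constant $C_0\le1.0866$. The remaining ingredients — the collapse of the Faà di Bruno formula and the count of surviving partition pairs — are routine bookkeeping.
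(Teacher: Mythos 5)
Your proposal is correct and follows essentially the same route as the paper: write $k_{\warp}^{w,k_s}=g\circ h$ with $g(\zeta)=\s^2\exp(-\zeta^2/2\lambda^2)$ and $h(u,u')=w(u)-w(u')$, apply Fa\`a di Bruno, exploit that mixed partials of $h$ vanish, bound the Gaussian derivatives by $C_0\s^2\sqrt{(2p)!}$ and the $w$-factors by $\|w\|_{C^{2p}(\O)}$, and count the surviving partitions by the Bell number $B_{2p}$. The only differences are cosmetic: the paper simply cites \cite[Proposition 7.1]{Herrman2020} for the bound $|g^{(m)}(\zeta)|\leq C_0\s^2\sqrt{m!}$, where you rederive it from the uniform Hermite-function (Cram\'er) bound, and you organise the partition count as $B_{\a_1}B_{\a_2}\leq B_{2p}$ rather than summing over all partitions of $\{1,\ldots,2p\}$ as the paper does.
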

\begin{lemma}[Mixture kernel] \label{lemma:exp_kernel mixture}
Let \(\O\subset\R\). Suppose $\{\s_\ell,k_\ell\}_{\ell=1}^L$ are such that for all \(\ell=1,\ldots,L\) we have $(i)$ \(k_\ell = k_{\M(\infty)}\), and $(ii)$ \(\s_\ell\in C^{2p}(\O)\) for some \(p\in\N\). Then $k_{\mix}^{\{\s_\ell,k_\ell\}_{\ell=1}^L} \in C^{2p}(\O \times \O)$, and with \(C_0\leq 1.0866\) is a constant,
    \[
        C_{\ref{thm:Wend11.13}}(k_{\mix}^{\{\s_\ell,k_\ell\}_{\ell=1}^L})=C_02p2^{4p}\sqrt{(2p)!}\max_{1 \leq \ell \leq L} \|\s_\ell\|^2_{C^{2p}(\Omega)}.
    \]
\end{lemma}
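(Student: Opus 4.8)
The plan is to reduce the computation of $C_{\ref{thm:Wend11.13}}(k_{\mix}^{\{\s_\ell,k_\ell\}_{\ell=1}^L})$ to bounding mixed partial derivatives of each summand $\s_\ell(u)\s_\ell(u')k_\ell(u,u')$ up to total order $2p$. First I would note that since $k_\ell = k_{\M(\infty)}$ is smooth and $\s_\ell \in C^{2p}(\O)$, the Leibniz rule immediately gives $\s_\ell(\cdot)\s_\ell(\cdot)k_\ell(\cdot,\cdot) \in C^{2p,2p}(\O\times\O)$, hence $k_{\mix}^{\{\s_\ell,k_\ell\}_{\ell=1}^L}\in C^{2p,2p}(\O\times\O)$, and in particular in $C^{2p}(\O\times\O)$ after using $C^{2p}(\O\times\O)\subset C^{p,p}(\O\times\O)$-type inclusions (the point being that $C^{2p}$ regularity in the joint variable is what Proposition \ref{thm:Wend11.13} wants; since all factors are globally $C^{2p}$ in each slot this is routine). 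Then, for a multi-index $(\a_1,\a_2)$ with $|\a_1|+|\a_2| = 2p$, I would apply the product rule in the two variables separately:
\[
D^{\a_1,\a_2}\big(\s_\ell(u)\s_\ell(u')k_\ell(u,u')\big) = \sum_{\substack{\gamma_1\leq\a_1\\ \gamma_2\leq\a_2}}\binom{\a_1}{\gamma_1}\binom{\a_2}{\gamma_2}D^{\a_1-\gamma_1}\s_\ell(u)\,D^{\a_2-\gamma_2}\s_\ell(u')\,D^{\gamma_1,\gamma_2}k_\ell(u,u').
\]
Each $\s_\ell$-factor is bounded by $\|\s_\ell\|_{C^{2p}(\O)}$, giving $\|\s_\ell\|_{C^{2p}(\O)}^2$ for the pair, while the Gaussian-kernel factor $D^{\gamma_1,\gamma_2}k_{\M(\infty)}$ needs to be bounded uniformly over $\O\times\O$ and over $|\gamma_1|+|\gamma_2|\leq 2p$.

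The technical core is the bound on derivatives of the Gaussian kernel. Since $k_{\M(\infty)}(u,u') = \s^2\exp(-\|u-u'\|_2^2/(2\lambda^2))$ (here $d=1$, so $=\s^2 e^{-(u-u')^2/(2\lambda^2)}$), its derivatives are expressed through Hermite polynomials: $D^{\gamma_1,\gamma_2}k_{\M(\infty)}$ is, up to sign and powers of $\lambda^{-1}$, $\s^2$ times a Hermite polynomial of degree $|\gamma_1|+|\gamma_2|$ evaluated at $(u-u')/\lambda$, multiplied by the Gaussian. The product of a Hermite polynomial with the Gaussian weight is uniformly bounded, and the bound can be taken of the form $C_0\,\s^2\,\sqrt{m!}$ for derivatives of total order $m$ — this is precisely where the constant $C_0\leq 1.0866$ and the $\sqrt{(2p)!}$ factor come from (the same estimate underpinning Lemma \ref{lemma:exp_inputwarping constant}). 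I would then combine: summing over $\gamma_i\leq\a_i$ contributes a combinatorial factor bounded by $2^{|\a_1|}2^{|\a_2|} = 2^{2p}$, and summing over the $L$ summands of the mixture and taking the max over $\ell$ of $\|\s_\ell\|_{C^{2p}(\O)}^2$; tracking where the extra $2p$ and the second factor of $2^{2p}$ enter (from bounding the number of relevant multi-indices / the Leibniz expansion more crudely, or from the Bell-number-type count as in the warping case) yields the stated constant $C_0\,2p\,2^{4p}\sqrt{(2p)!}\max_\ell\|\s_\ell\|_{C^{2p}(\O)}^2$.

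The main obstacle is getting the uniform bound on the Gaussian-kernel derivatives with an explicit, clean constant of the form $C_0\s^2\sqrt{(2p)!}$: one must identify $D^{\gamma_1,\gamma_2}k_{\M(\infty)}$ with a scaled Hermite function and then invoke (or re-derive) the sharp sup-norm bound $\sup_{x}|H_m(x)e^{-x^2/2}| \lesssim \sqrt{m!}$ with the explicit constant. Everything else — the Leibniz expansions, the combinatorial counting producing the $2p\cdot 2^{4p}$ factor, and taking maxima over $\ell$ — is bookkeeping. I would mirror the proof of Lemma \ref{lemma:exp_inputwarping constant} as closely as possible, since the warping case already contains the Gaussian-derivative estimate in essentially the form needed here, and the only genuinely new ingredient is handling the product structure $\s_\ell(u)\s_\ell(u')$ in both variables and the sum over $L$ components.
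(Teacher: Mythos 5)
Your proposal is correct and follows essentially the same route as the paper: Leibniz's rule to split derivatives between $\s_\ell(u)$, $\s_\ell(u')$ and the Gaussian factor, the bound $\sup_\zeta |d^n_\zeta e^{-\zeta^2/2\lambda^2}| \leq C_0\sqrt{n!}$ (your Hermite-function estimate is exactly the paper's inequality \eqref{eq:Herrmaneq}, cited there from the same source as in Lemma \ref{lemma:exp_inputwarping constant}), $C^{2p}$-norm bounds on the $\s_\ell$ factors, a crude count of the Leibniz terms giving the $2p\,2^{4p}$ factor, and a maximum over $\ell$. The only cosmetic difference is that the paper routes the Gaussian derivative through Fa\`a di Bruno with $h(u,u')=u-u'$ (where only the singleton partition survives), whereas you differentiate the one-dimensional Gaussian directly, which is equivalent.
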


\begin{lemma}[Convolution kernel]\label{lemma:exp_convolutionker constant}
    Let \(\O\subset\R\). Suppose $(i)$ $k_i = k_{\M(\infty)}$, $(ii)$  \({\lambda_{a}}(u) \geq c>0\) for all $u \in \O$, and $(iii)$ \({\lambda_{a}}\in C^{2p}(\O)\) for some \(p\in\N\). Then \(k_{\conv}^{{\lambda_{a}},k_i} \in C^{2p}(\O \times \O)\), and 
    \begin{align*}
     C_{\ref{thm:Wend11.13}}(k_{\conv}^{{\lambda_{a}},k_i})= 2^{1/2}\s^2&\left(|5/4 - 2p|^{2p}\max\{\|{\lambda_{a}}\|_{C^{0}(\Omega)}, 1\}^{1/4 - 2p}
    \|{\lambda_{a}}\|_{C^{2p}(\Omega)}\right)^2\nonumber\\&
\Bigg((4p)!2C_{0}\sqrt{(2p!)} \|{\lambda_{a}}\|^2_{C^{2p}(\Omega)}
(2p)!(2c)^{-1-2p})\|{\lambda_{a}}\|_{C^{2p}(\Omega)}\Bigg)B_{2p}^5
\end{align*}
where \(C_0\leq 1.0866\) is a constant and \(B_{2p}\) is the Bell number, which represents the number of possible partitions of the set \(\{1,\ldots,2p\}\).
\end{lemma}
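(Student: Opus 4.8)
The plan is to reduce the computation of $C_{\ref{thm:Wend11.13}}(k_{\conv}^{{\lambda_{a}},k_i})$ to estimating mixed partial derivatives of order $2p$ of the factorisation appearing in \eqref{eq:convolution_kernel}, which splits $k_{\conv}^{{\lambda_{a}},k_i}(u,u')$ into a product of a ``pre-factor'' $P(u,u') := \frac{2^{d/2}{\lambda_{a}}(u)^{d/4}{\lambda_{a}}(u')^{d/4}}{({\lambda_{a}}(u)+{\lambda_{a}}(u'))^{d/2}}$ (here $d=1$) and the composed Gaussian term $G(u,u') := k_i\!\big(\|u-u'\|_2 / \sqrt{({\lambda_{a}}(u)+{\lambda_{a}}(u'))/2}\big)$. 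First I would argue that, since $k_i = k_{\M(\infty)}\in C^\infty$ and ${\lambda_{a}}\in C^{2p}(\O)$ with ${\lambda_{a}}\ge c>0$, both $P$ and $G$ lie in $C^{2p}(\O\times\O)$: the lower bound $c$ keeps the denominators and the square root bounded away from zero, so every quotient, power and composition involved is a $C^{2p}$ operation by the chain and quotient rules (this is where Lemma \ref{lemma:reciprocal has derivatives} / Lemma \ref{lemma:inverse has derivatives}-type estimates on reciprocals and powers are invoked). It follows that $k_{\conv}^{{\lambda_{a}},k_i}\in C^{2p}(\O\times\O)\subset C^{p,p}(\O\times\O)$.

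Next I would bound $C_{\ref{thm:Wend11.13}}(k_{\conv}^{{\lambda_{a}},k_i}) = \max_{|\a_1|+|\a_2|=2p}\max_{u_1,u_2}|D^{\a_1,\a_2}k_{\conv}^{{\lambda_{a}},k_i}(u_1,u_2)|$ using the Leibniz rule for the product $P\cdot G$: $D^{\a_1,\a_2}(PG) = \sum_{\beta_1\le\a_1,\beta_2\le\a_2}\binom{\a_1}{\beta_1}\binom{\a_2}{\beta_2} D^{\beta_1,\beta_2}P\; D^{\a_1-\beta_1,\a_2-\beta_2}G$. The number of terms is controlled by a Bell-number-type factor, and the binomial coefficients by powers of $2$; this explains the appearance of $B_{2p}$ and $2^{4p}$-type constants in the statement. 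It then remains to bound (a) $\|D^{\beta_1,\beta_2}P\|_{C^0}$ and (b) $\|D^{\gamma_1,\gamma_2}G\|_{C^0}$ separately for all orders up to $2p$. For (a), I would apply Fa\`a di Bruno's formula to the function $t\mapsto t^{1/4}$ composed with ${\lambda_{a}}$ (giving the factor $|5/4-2p|^{2p}\max\{\|{\lambda_{a}}\|_{C^0},1\}^{1/4-2p}\|{\lambda_{a}}\|_{C^{2p}}$ from bounding derivatives of the power, with the exponents $1/4,-1/2$ producing the falling-factorial-type constant $|5/4-2p|^{2p}$) together with the reciprocal bound for $({\lambda_{a}}(u)+{\lambda_{a}}(u'))^{-1/2}$, which contributes $(2p)!(2c)^{-1-2p}$. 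For (b), I would use the known bound for derivatives of the Gaussian kernel — as already isolated in Lemma \ref{lemma:exp_inputwarping constant}, namely $C_0\s^2\sqrt{(2p)!}B_{2p}$ times a power of the relevant ``inner'' function's $C^{2p}$ norm — applied to the composition with the argument $\|u-u'\|/\sqrt{({\lambda_{a}}(u)+{\lambda_{a}}(u'))/2}$, whose $C^{2p}$ norm is again controlled by $\|{\lambda_{a}}\|_{C^{2p}}$ and $c$ via Fa\`a di Bruno and the reciprocal estimate; this yields the factor $(4p)!\,2C_0\sqrt{(2p)!}\|{\lambda_{a}}\|^2_{C^{2p}}$. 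Collecting all these pieces and taking the crude maximum over $\a_1,\a_2$ gives the stated expression for $C_{\ref{thm:Wend11.13}}(k_{\conv}^{{\lambda_{a}},k_i})$, with a single $B_{2p}^5$ absorbing the combinatorial factors from the nested applications of Leibniz and Fa\`a di Bruno.

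The main obstacle I anticipate is purely bookkeeping: keeping track, through the triple nesting of (Leibniz) $\times$ (Fa\`a di Bruno for the power ${\lambda_{a}}^{1/4}$) $\times$ (Fa\`a di Bruno for the Gaussian composed with an argument that itself involves ${\lambda_{a}}^{-1/2}$ and $\|u-u'\|$), of exactly which powers of $\|{\lambda_{a}}\|_{C^{2p}}$, which powers of $(2c)^{-1}$, and which factorial/Bell factors appear, so that the final constant matches the asserted closed form. The non-smoothness of $u\mapsto\|u-u'\|_2$ at $u=u'$ is a non-issue here since $\O\subset\R$ and the Gaussian kernel $k_{\M(\infty)}(u,u')=\s^2\exp(-\|u-u'\|_2^2/(2\lambda^2))$ depends on $\|u-u'\|_2^2 = (u-u')^2$, which is smooth; I would make this observation explicitly to justify all the differentiations. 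Everything else — the $C^{2p}$ membership, the qualitative structure of the bound — follows routinely from the results already established, in particular Proposition \ref{prop:regularity of matern kernel lemma 11}, the chain and product rules, and the reciprocal/power estimates of Lemma \ref{lemma:reciprocal has derivatives}.
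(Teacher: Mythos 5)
Your proposal follows essentially the same route as the paper's proof: a Leibniz expansion of the product structure of $k_{\conv}^{{\lambda_{a}},k_i}$ combined with nested applications of Fa\`a di Bruno's formula for the powers $\zeta^{1/4}$, $\zeta^{-1/2}$, $\zeta^{-1}$ and the Gaussian derivative bound \eqref{eq:Herrmaneq} already isolated in Lemma \ref{lemma:exp_inputwarping constant}, with the lower bound $c$ controlling the reciprocal factors and the smoothness of $(u-u')^2$ justifying the differentiations. The only deviations are bookkeeping: the paper applies Leibniz to all four factors at once rather than first grouping a prefactor $P$ and a Gaussian term $G$, and some pieces of the final constant arise from different sub-terms than you attribute them to (for instance, the $(2p)!(2c)^{-1-2p}$ factor comes from the reciprocal $({\lambda_{a}}(u)+{\lambda_{a}}(u'))^{-1}$ inside the Gaussian's argument, not from the prefactor's $(\cdot)^{-1/2}$), but the argument is the same.
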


The proof of Lemmas \ref{lemma:exp_inputwarping constant}, \ref{lemma:exp_kernel mixture} and \ref{lemma:exp_convolutionker constant} can be found in the appendix. For brevity we only consider the case $\O \subset \R$, but the general case can be proved using similar techniques. An application of Proposition \ref{thm:Wend11.13} then gives the following result.

\begin{corollary}\label{cor:convergence in the native space}
   Let $k \in \{k_{\warp}^{w,k_s}, k_{\mix}^{\{\s_\ell,k_\ell\}_{\ell=1}^L}, k_{\conv}^{{\lambda_{a}},k_i}\}$. Suppose  $(i)$ the assumptions of Lemma \ref{lemma:exp_inputwarping constant}, \ref{lemma:exp_kernel mixture} or \ref{lemma:exp_convolutionker constant} hold with \(p \in \N\), respectively, and $(ii)$ $f \in \H_k(\O)$. Then there exists a constant $h_0>0$ such that
   \begin{equation*}
    \|D^{\mu}f-D^{\mu}m_{N,0}^f\|_{C^0(\O)}\leq C_{\ref{thm:Wend11.13}}(k)h^{p-|\mu|}_{U_N,\Omega}\|f\|_{\H_k(\Omega)},
\end{equation*}
for all $\mu\in\N^d_0$ with $|\mu|<p$ and $h_{U_N,\Omega}\leq h_0$. 
\end{corollary}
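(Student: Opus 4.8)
The plan is simply to verify that the hypotheses of Proposition \ref{thm:Wend11.13} are satisfied for each of the three kernel families and then read off the stated estimate. Recall that Proposition \ref{thm:Wend11.13} requires Assumption \ref{assump:kernel1}, namely that $k \in C^{2p}(\Omega \times \Omega)$ for some $p \in \N$ and that $f \in \mathcal{H}_k(\Omega)$. The latter is precisely hypothesis $(ii)$ of the corollary, so nothing needs to be done there. For the regularity of $k$, I would invoke the relevant one of Lemmas \ref{lemma:exp_inputwarping constant}, \ref{lemma:exp_kernel mixture} and \ref{lemma:exp_convolutionker constant}, according to whether $k = k_{\warp}^{w,k_s}$, $k = k_{\mix}^{\{\s_\ell,k_\ell\}_{\ell=1}^L}$ or $k = k_{\conv}^{{\lambda_{a}},k_i}$: under hypothesis $(i)$, each of these lemmas asserts exactly that the corresponding non-stationary kernel lies in $C^{2p}(\Omega \times \Omega)$, so Assumption \ref{assump:kernel1} holds with this $p$. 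Crucially, each lemma also supplies the explicit value of the quantity $C_{\ref{thm:Wend11.13}}(k) = \max_{|\a_1|+|\a_2|=2p}\max_{u_1,u_2\in\O}|D^{\a_1,\a_2}k(u_1,u_2)|$ featured in Proposition \ref{thm:Wend11.13}.

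With the hypotheses in place, I would apply Proposition \ref{thm:Wend11.13} directly to obtain constants $h_0 > 0$ and $C > 0$, independent of $f$, $k$ and $N$, such that for all $\mu \in \N_0^d$ with $|\mu| < p$ and $h_{U_N,\Omega} \le h_0$,
\[
\|D^{\mu} f - D^{\mu} m_{N,0}^f\|_{C^0(\O)} \le C\, C_{\ref{thm:Wend11.13}}(k)\, h_{U_N,\Omega}^{p-|\mu|}\, \|f\|_{\mathcal{H}_k(\Omega)}.
\]
Substituting the explicit formula for $C_{\ref{thm:Wend11.13}}(k)$ from the appropriate lemma, and absorbing the universal constant $C$ into it (the corollary records the bound without a separate generic constant), yields the claimed inequality.

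There is no substantive obstacle; the only points requiring minor care are the smoothness bookkeeping — Assumption \ref{assump:kernel1} demands $C^{2p}$ regularity and the three lemmas deliver exactly $C^{2p}(\O\times\O)$, so the index $p$ is consistent throughout — and noting that the generic constant $C$ of Proposition \ref{thm:Wend11.13} has been merged into $C_{\ref{thm:Wend11.13}}(k)$ in the statement. Everything else is a direct appeal to the cited results, so the proof is short.
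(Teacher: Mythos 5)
Your proposal is correct and follows exactly the paper's route: Lemmas \ref{lemma:exp_inputwarping constant}, \ref{lemma:exp_kernel mixture} and \ref{lemma:exp_convolutionker constant} supply the $C^{2p}(\O\times\O)$ regularity needed for Assumption \ref{assump:kernel1}, and the bound is then a direct application of Proposition \ref{thm:Wend11.13}. Your remark about absorbing the generic constant $C$ into $C_{\ref{thm:Wend11.13}}(k)$ correctly accounts for the only cosmetic difference between the proposition and the corollary's statement.
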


\subsection{Convergence of $m_{N,0}^f$ in a Sobolev space with noise-free data}\label{sec:conv_sobolev}
The main drawback of the results in section \ref{sec:conergence in native space} is the lack of transparency concerning the assumption that \(f\) needs to be an element of the RKHS of the non-stationary kernel. In this section, we prove convergence results in the special case that the RKHS is norm-equivalent to a Sobolev space, as considered in section \ref{sec:nativespaces_nonstationary}.
A general convergence result, based only on the general requirements given in Assumption \ref{assump:kernel2}, is presented in Proposition \ref{thm:Convergence_in_sob} below. 

\begin{assumption}\label{assump:kernel2}
For some \(\beta\geq d/2\), we have $\H_k(\O) \cong H^\beta(\O)$ with constants \(C_\low(k)\) and \(C_\up(k)\) such that
\[
    C_\low(k) \|\cdot\|_{\H_{k}(\O)}\leq \|\cdot\|_{H^\beta(\O)}\leq C_\up(k)\|\cdot\|_{\H_{k}(\O)},
\]
and $f \in H^\beta(\O)$.
\end{assumption}
\begin{proposition}[Convergence in mean for $f \in H^\beta(\O)$ {\cite[Theorem 3.5]{Teckentrup2019}} ] \label{thm:Convergence_in_sob}
Suppose Assumption \ref{assump:kernel2} holds with $\beta \geq d/2$. Then there exist constants \(h_0\) and \(C_{\ref{thm:Convergence_in_sob}}\), independent of $f$, $k$ and $N$, such that
\begin{equation*}
    \|f-m_{N,0}^f\|_{H^\alpha(\Omega)}\leq C_{\ref{thm:Convergence_in_sob}} C_{\low}(k)^{-1}C_{\up} (k) \, h_{U_N,\Omega}^{\beta-\alpha} \,\|f\|_{H^{\beta}(\Omega)},
\end{equation*}
for all \(\alpha \leq \beta\) and \(h_{U_N,\O}\leq h_0\).
\end{proposition}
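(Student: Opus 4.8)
The plan is to recover the result of \cite[Theorem 3.5]{Teckentrup2019} by combining two standard ingredients from scattered-data approximation: the variational (minimum-norm) characterisation of the noise-free posterior mean, and a Sobolev sampling inequality. The norm equivalence of Assumption \ref{assump:kernel2} is used only to pass between $\H_k(\O)$ and $H^\beta(\O)$.

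First I would recall that for noise-free data ($\delta^2 = 0$) the posterior mean $m_{N,0}^f$ in \eqref{eq:gp_mean} is precisely the kernel interpolant of $f$ at $U_N$, and coincides with the $\H_k(\O)$-orthogonal projection of $f$ onto $\mathrm{span}\{k(\cdot,u_n):n=1,\dots,N\}$ (this uses that $k$ is positive definite, so $K(U_N,U_N)$ is invertible, together with the reproducing property). Two consequences follow: $m_{N,0}^f$ interpolates $f$ on $U_N$, i.e. $(f - m_{N,0}^f)(u_n)=0$ for all $n$; and, by the Pythagorean identity $\|f\|_{\H_k(\O)}^2 = \|f - m_{N,0}^f\|_{\H_k(\O)}^2 + \|m_{N,0}^f\|_{\H_k(\O)}^2$, we get $\|f - m_{N,0}^f\|_{\H_k(\O)} \le \|f\|_{\H_k(\O)}$. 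Transferring this to the Sobolev scale via Assumption \ref{assump:kernel2},
\[
\|f - m_{N,0}^f\|_{H^\beta(\O)} \le C_\up(k)\,\|f - m_{N,0}^f\|_{\H_k(\O)} \le C_\up(k)\,\|f\|_{\H_k(\O)} \le C_\up(k)\,C_\low(k)^{-1}\,\|f\|_{H^\beta(\O)}.
\]

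Next I would set $g := f - m_{N,0}^f \in H^\beta(\O)$ and exploit that $g$ vanishes on $U_N$. Since $\O$ is a bounded Lipschitz domain satisfying an interior cone condition and $\beta > d/2$ (so point evaluation on $H^\beta(\O)$ is bounded and $g|_{U_N}$ makes sense), a sampling inequality of Narcowich--Ward--Wendland type (see \cite{wendland_2004} and references therein) provides constants $h_0, C_{\ref{thm:Convergence_in_sob}}>0$, depending only on $\O$, $\beta$, $\alpha$ and $d$, such that whenever $h_{U_N,\O}\le h_0$,
\[
\|g\|_{H^\alpha(\O)} \le C_{\ref{thm:Convergence_in_sob}}\, h_{U_N,\O}^{\,\beta-\alpha}\, \|g\|_{H^\beta(\O)} \qquad \text{for all } 0 \le \alpha \le \beta.
\]
Chaining this with the previous display gives the stated bound, with $C_{\ref{thm:Convergence_in_sob}}$ the sampling-inequality constant.

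The main obstacle is the sampling inequality itself: establishing it for fractional-order Sobolev spaces, with the exponent $\beta-\alpha$ valid all the way up to $\alpha=\beta$ and with constants uniform in $N$, is where the interior cone condition on $\O$ is essential, and the limiting case $\beta = d/2$ (where point evaluations are no longer bounded, so the interpolant need not even be well-defined pointwise) requires separate care or an additional assumption $\beta > d/2$. Everything else reduces to bookkeeping with the equivalence constants $C_\low(k), C_\up(k)$, so I would simply cite the relevant results from the scattered-data approximation literature rather than reproving them here.
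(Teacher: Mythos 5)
Your proposal is correct, and it is essentially the argument behind the result: the paper itself gives no proof here but cites \cite[Theorem 3.5]{Teckentrup2019} (originally due to \cite{Narcowich2005}), whose proof is exactly your two-step combination of the minimum-norm/orthogonal-projection property of the noise-free posterior mean (transferred to $H^\beta(\O)$ via the equivalence constants $C_\low(k)$, $C_\up(k)$) and the Narcowich--Ward--Wendland sampling inequality for functions with scattered zeros. Your caveat about $\beta=d/2$ is harmless, since the assumed norm equivalence with an RKHS already forces bounded point evaluations on $H^\beta(\O)$.
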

The specific statement of Proposition \ref{thm:Convergence_in_sob}, with the dependence on the constants $C_\low(k)$ and $C_\up(k)$ made explicit, is due to \cite{Teckentrup2019}. This will be important in section \ref{subsec:eagr_empirical}, when we estimate hyper-parameters in $k$. However, the result was originally proved in \cite{Narcowich2005}. 
Additionally, note that due to \cite[Theorem 3.5]{Teckentrup2019}, Proposition \ref{thm:Convergence_in_sob} (and all other parts of this work that rely on Assumption \ref{assump:kernel2}) still holds if the regularity of the true function \(f\) is overestimated in the kernel \(k\). { Specifically, if \(f \in H^{\beta_+}(\Omega)\) for any \(\beta_+ \geq \beta\), then the results of Proposition \ref{thm:Convergence_in_sob} are applicable.} Conversely, this is not the case if \(f \in H^{\beta_-}(\Omega)\) where \(\beta_- \leq \beta\); in such scenarios, we would observe slower convergence rates in any analysis depending on Assumption \ref{assump:kernel2}.

An application of Proposition \ref{thm:Convergence_in_sob} then gives the following convergence result. 
\begin{corollary}\label{cor:conv_sob}
Let $k \in \{k_{\warp}^{w,k_s}, k_{\mix}^{\{\s_\ell,k_\ell\}_{\ell=1}^L}\}$. Suppose $(i)$ the assumptions of Theorem \ref{lemma:sob equiv warping} or \ref{lemma: sob equiv mixture} hold with $\beta \geq d/2$, respectively, and $(ii)$ $f \in H^\beta(\O)$.
Then there exist constants \(h_0\) and \(C_{\ref{thm:Convergence_in_sob}}\), independent of $f$, $k$ and $N$, such that
\begin{equation*}
    \|f-m_{N,0}^f\|_{H^\alpha(\Omega)}\leq C_{\ref{thm:Convergence_in_sob}} C_{\low}(k)^{-1}C_{\up} (k) \, h_{U_N,\Omega}^{\beta-\alpha} \,\|f\|_{H^{\beta}(\Omega)},
\end{equation*}
for all \(\alpha \leq \beta\) and \(h_{U_N,\O}\leq h_0\).
\end{corollary}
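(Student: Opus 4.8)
The plan is to apply Proposition \ref{thm:Convergence_in_sob} directly, after verifying its hypotheses (namely Assumption \ref{assump:kernel2}) hold in the two cases under consideration. The assumption that $f \in H^\beta(\O)$ is given as hypothesis $(ii)$, so the only thing to check is the norm-equivalence $\H_k(\O) \cong H^\beta(\O)$ with the stated constants.

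First I would treat the warping kernel case $k = k_{\warp}^{w,k_s}$. By hypothesis $(i)$ the assumptions of Theorem \ref{lemma:sob equiv warping} are in force, and that theorem gives precisely $\H_{k_{\warp}^{w,k_s}}(\O) \cong H^\beta(\O)$ together with explicit constants $C_\low(k_{\warp}^{w,k_s})$ and $C_\up(k_{\warp}^{w,k_s})$ satisfying the two-sided bound required in Assumption \ref{assump:kernel2}. Note also that the assumption $\beta \geq d/2$ in Corollary \ref{cor:conv_sob} is consistent with $d=1$ in Theorem \ref{lemma:sob equiv warping}, where $\beta = \nu + 1/2 \geq 1/2 = d/2$. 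Second, for the mixture kernel $k = k_{\mix}^{\{\s_\ell,k_\ell\}_{\ell=1}^L}$, hypothesis $(i)$ invokes Theorem \ref{lemma: sob equiv mixture}, which again yields $\H_{k_{\mix}^{\{\s_\ell,k_\ell\}_{\ell=1}^L}}(\O) \cong H^\beta(\O)$ for $\beta = \nu + d/2 \geq d/2$ with explicit constants. In both cases Assumption \ref{assump:kernel2} is therefore satisfied.

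Having verified the hypothesis, I would invoke Proposition \ref{thm:Convergence_in_sob}: there exist constants $h_0$ and $C_{\ref{thm:Convergence_in_sob}}$, independent of $f$, $k$ and $N$, such that for all $\alpha \leq \beta$ and $h_{U_N,\O} \leq h_0$,
\[
    \|f-m_{N,0}^f\|_{H^\alpha(\Omega)}\leq C_{\ref{thm:Convergence_in_sob}} C_{\low}(k)^{-1}C_{\up}(k) \, h_{U_N,\Omega}^{\beta-\alpha} \,\|f\|_{H^{\beta}(\Omega)},
\]
which is exactly the claimed bound. The constants $C_\low(k)$ and $C_\up(k)$ appearing here are the ones supplied by Theorem \ref{lemma:sob equiv warping} or \ref{lemma: sob equiv mixture} respectively, and one may substitute their explicit expressions if a fully quantitative statement is desired.

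There is essentially no obstacle here: the corollary is a straightforward specialisation of the general Proposition \ref{thm:Convergence_in_sob} to the two concrete kernel families whose RKHSs were already identified as Sobolev spaces in section \ref{sec:nativespaces_nonstationary}. The only minor point worth stating explicitly is that the norm-equivalence constants in Assumption \ref{assump:kernel2} are precisely the $C_\low(k)$, $C_\up(k)$ produced by the earlier theorems, so that the dependence on $w$, $\{\s_\ell\}_{\ell=1}^L$ and the Mat\'ern hyper-parameters is inherited through those constants; this is the feature that will matter in section \ref{subsec:eagr_empirical} when these hyper-parameters are estimated from data.
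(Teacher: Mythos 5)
Your proposal is correct and follows exactly the paper's route: verify Assumption \ref{assump:kernel2} for the two kernels via Theorems \ref{lemma:sob equiv warping} and \ref{lemma: sob equiv mixture}, then apply Proposition \ref{thm:Convergence_in_sob} directly. This is precisely how the corollary is obtained in the paper.
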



\subsection{Convergence of $m_{N,\delta^2}^f$ in a Sobolev space with noisy or noise-free data}\label{sec:conv_sobolev_noisy}

In the case of noisy data, we have the following general convergence result. The proof is given at the end of this section. Recall that our observations take the form $y_N = \{u_n, f(u_n)+\varepsilon_n\}_{n=1}^N$.

\begin{theorem} \label{thm:conv_sob_noisy} Suppose Assumption \ref{assump:kernel2} holds with $\beta \geq d/2$. Then there exists a constant \(h_0\), independent of $f$, $k$, $\delta^2$ and $N$, such that for any $\varepsilon = \{\varepsilon_n\}_{n=1}^N \in \R^N$ we have
\begin{align*}
 \|f-m_{N,\delta^2}^f\|_{H^\alpha(\Omega)} &\leq  h_{U_N,\Omega}^{\beta-\alpha} \, \left( 2 C_{\low}(k)^{-1}C_{\up}(k) \|f\|_{H^\beta(\Omega)} +  \delta^{-1 }C_{\up}(k) \|\varepsilon\|_2\right)
\\
& \qquad \qquad + \, h_{U_N,\Omega}^{d/2-\alpha} \left( 2 \|\varepsilon\|_2 + \delta C_{\low}(k)^{-1}C_{\up}(k)) \|f\|_{H^\beta(\Omega)} \right),
\end{align*}
for all \(\alpha \leq \lfloor \beta \rfloor\) and \(h_{U_N,\O}\leq h_0\).

If further $f \in H^{\beta}(\overline \O)$, $0 < k_\mathrm{min} := \min_{u \in \overline \O} k(u,u)$ and $\varepsilon \sim N(0,\delta^2\mathrm{I})$, then there exists a constant \(C_{\ref{thm:conv_sob_noisy}}\), independent of $f$, $k$, $\delta^2$ and $N$, such that
\begin{align*}
\mathbb{E}_\varepsilon \left[ \|f-m_{N,\delta^2}^f\|_{H^\alpha(\Omega)}^2\right]^{1/2} &\leq C_{\ref{thm:conv_sob_noisy}} \left( h_{U_N,\Omega}^{\beta-\alpha} \, \left( C_{\low}(k)^{-1}C_{\up}(k) \|f\|_{H^\beta(\Omega)} +  C_{\up}(k) N^{1/2}\right) \right.
\\
& \qquad \left. + \, h_{U_N,\Omega}^{d/2-\alpha} \left( C_{\low}(k)^{-1+d/2\beta} \|f\|^{1-d/2\beta}_{H^\beta(\overline \O)}  + C_{\up}(k)^{d/2\beta} \right) N^{d/4\beta} \right),
\end{align*}
for all \(\alpha \leq \lfloor \beta \rfloor\), \(h_{U_N,\O}\leq h_0\) and $N \geq  N^*(k_{\mathrm{min}})$. 
\end{theorem}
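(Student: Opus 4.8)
The plan is to decompose the error $f - m_{N,\delta^2}^f$ by comparing the noisy posterior mean to a noise-free interpolant, and to exploit the variational characterisation of GP regression in the RKHS. First I would introduce the noise-free posterior mean $m_{N,0}^f$ built from the exact data $\{u_n, f(u_n)\}_{n=1}^N$, and write $f - m_{N,\delta^2}^f = (f - m_{N,0}^f) + (m_{N,0}^f - m_{N,\delta^2}^f)$. The first term is controlled directly by Proposition \ref{thm:Convergence_in_sob}, contributing the $h_{U_N,\Omega}^{\beta-\alpha} C_\low(k)^{-1}C_\up(k)\|f\|_{H^\beta(\Omega)}$ piece. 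For the second term, the key observation is that $m_{N,\delta^2}^f$ is the minimiser over $g \in \H_k(\O)$ of $\sum_{n=1}^N (g(u_n) - f(u_n) - \varepsilon_n)^2 + \delta^2\|g\|_{\H_k(\O)}^2$ (standard regularised-least-squares / representer characterisation), whereas $m_{N,0}^f$ is the minimum-norm interpolant of the exact data. Using $m_{N,0}^f$ as a competitor in the variational problem defining $m_{N,\delta^2}^f$ gives the a priori bound $\sum_n (m_{N,\delta^2}^f(u_n) - f(u_n) - \varepsilon_n)^2 + \delta^2\|m_{N,\delta^2}^f\|_{\H_k}^2 \le \|\varepsilon\|_2^2 + \delta^2\|m_{N,0}^f\|_{\H_k}^2 \le \|\varepsilon\|_2^2 + \delta^2 C_\up(k)^{-2}\|f\|_{H^\beta(\Omega)}^2$ (using $\|m_{N,0}^f\|_{\H_k} \le \|f\|_{\H_k} \le C_\up(k)^{-1}\|f\|_{H^\beta}$ via norm-minimality of the interpolant and Assumption \ref{assump:kernel2}). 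This simultaneously bounds $\|m_{N,\delta^2}^f\|_{\H_k}$ and the residual vector $r := (m_{N,\delta^2}^f(u_n) - f(u_n) - \varepsilon_n)_n$ by $\delta^{-1}\|\varepsilon\|_2 + C_\up(k)^{-1}\|f\|_{H^\beta}$ and $(\|\varepsilon\|_2^2 + \delta^2 C_\up(k)^{-2}\|f\|_{H^\beta}^2)^{1/2}$ respectively.

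Next I would convert these two controlled quantities into an $H^\alpha$ bound on $m_{N,0}^f - m_{N,\delta^2}^f$ via a sampling-inequality argument (the same Narcowich–Ward–Wendland-type inequality underlying Proposition \ref{thm:Convergence_in_sob}): for $v \in H^\beta(\Omega)$ one has $\|v\|_{H^\alpha(\Omega)} \lesssim h^{\beta-\alpha}\|v\|_{H^\beta(\Omega)} + h^{d/2-\alpha}\|v|_{U_N}\|_{\ell^2}$ (with the $\ell^2$ norm suitably normalised by the fill distance). Applying this to $v = m_{N,0}^f - m_{N,\delta^2}^f$, whose values at $U_N$ are $f(u_n) - (m_{N,\delta^2}^f(u_n)) = -\big(r_n + \varepsilon_n\big)$ in magnitude at most $\|r\|_2 + \|\varepsilon\|_2 \le 2\|\varepsilon\|_2 + \delta C_\up(k)^{-1}\|f\|_{H^\beta}$, and whose $H^\beta$-norm is bounded via $\|v\|_{H^\beta} \le C_\up(k)\|v\|_{\H_k} \le C_\up(k)(\|m_{N,0}^f\|_{\H_k} + \|m_{N,\delta^2}^f\|_{\H_k}) \le C_\up(k)(C_\up(k)^{-1}\|f\|_{H^\beta} + \delta^{-1}\|\varepsilon\|_2 + C_\up(k)^{-1}\|f\|_{H^\beta}) = 2\|f\|_{H^\beta} + \delta^{-1}C_\up(k)\|\varepsilon\|_2$. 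Collecting the two terms of the decomposition gives exactly the claimed deterministic inequality; one absorbs the constant from the sampling inequality into an overall $C$ which I would not track explicitly, matching the paper's convention (the stated bound has no explicit constant on the right, so the $C$ is hidden).

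For the expectation bound, I would take $\varepsilon \sim N(0,\delta^2\mathrm{I})$ and simply take the $L^2(\mathbb{P}_\varepsilon)$ norm of the deterministic inequality, using $\mathbb{E}\|\varepsilon\|_2 \le (\mathbb{E}\|\varepsilon\|_2^2)^{1/2} = \delta N^{1/2}$ and Minkowski's inequality. The $\delta^{-1}\|\varepsilon\|_2$ terms become $\delta^{-1}\cdot\delta N^{1/2} = N^{1/2}$, killing the $\delta$-dependence; the $\delta\|f\|_{H^\beta}$ term in the $h^{d/2-\alpha}$ bracket is the only place a bare $\delta$ survives. To remove that residual $\delta$ and reach the stated form, I would use the freedom to also bound the interpolation error differently: the condition $N \ge N^*(k_\mathrm{min})$ together with $0 < k_\mathrm{min} = \min_u k(u,u)$ ensures the kernel matrix is well-conditioned enough that the fill distance is comparable to $N^{-1/d}$ on the relevant point sets, and one trades powers of $\|f\|_{H^\beta}$ against powers of $C_\low(k), C_\up(k)$ by an interpolation-inequality (Gagliardo–Nirenberg type) argument — this is the source of the exponents $1 - d/2\beta$ on $\|f\|_{H^\beta(\overline\Omega)}$ and $d/2\beta$ on $C_\up(k)$, and the factor $N^{d/4\beta}$. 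Concretely I would balance the two error contributions in $\delta$ by choosing the implicit comparison at the scale where $\delta$-dependent and $\delta$-independent terms coincide, which forces the $d/2\beta$ split.

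\textbf{Main obstacle.} The genuinely delicate step is the second (expectation) bound: getting the precise exponents $C_\low(k)^{-1+d/2\beta}\|f\|_{H^\beta(\overline\Omega)}^{1-d/2\beta}$ and $C_\up(k)^{d/2\beta}N^{d/4\beta}$ rather than a cruder bound. This requires a careful interpolation argument — presumably citing that $H^\beta$ interpolates between $L^2$ (or $C^0$, hence the role of $k_\mathrm{min}$ and the overbar domain $\overline\Omega$) and $\H_k$ — combined with optimally balancing the $\delta$-dependent residual against the regularisation term, and then eliminating $\delta$ by this balance rather than by a fixed choice. Tracking how the norm-equivalence constants propagate through that interpolation step, and justifying the $N \ge N^*(k_\mathrm{min})$ threshold (which presumably guarantees the sampling inequality's constant and the fill-distance/$N$ conversion are uniform), is where the real work lies; the deterministic inequality itself is a fairly routine assembly of the variational bound and the sampling inequality.
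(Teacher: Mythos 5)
Your deterministic bound is essentially sound and essentially the paper's route: the paper also applies the sampling inequality of \cite{arcangeli2012extension} and controls $\|f-m_{N,\delta^2}^f\|_{H^\beta(\O)}$ and $\|f-m_{N,\delta^2}^f\|_{2,U_N}$ through the regularised-least-squares characterisation of $m_{N,\delta^2}^f$ (this is exactly what \cite[Lemma 17]{wynne2021convergence} does); your extra decomposition through $m_{N,0}^f$ is a cosmetic variant. One slip: Assumption \ref{assump:kernel2} gives $\|f\|_{\H_k(\O)}\leq C_{\low}(k)^{-1}\|f\|_{H^\beta(\O)}$, not $C_{\up}(k)^{-1}\|f\|_{H^\beta(\O)}$; with that corrected your constants reproduce the stated $2C_{\low}(k)^{-1}C_{\up}(k)$ and $\delta^{-1}C_{\up}(k)$ factors.

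The second (expectation) bound is where there is a genuine gap. You propose to take the $L^2(\mathbb{P}_\varepsilon)$ norm of the deterministic inequality and then recover the $N^{d/4\beta}$ factor by a Gagliardo--Nirenberg interpolation and a ``balance in $\delta$''. This cannot work: $\delta$ is a fixed model parameter, not a free quantity to optimise, and taking second moments of the first bound inevitably produces the term $h_{U_N,\Omega}^{d/2-\alpha}\,\delta N^{1/2}$ --- the paper explicitly remarks that this route only yields a factor $N^{1/2}$ in the second bracket, which is non-decreasing in $N$, and that a \emph{different} argument is needed. The paper's proof instead invokes \cite[Theorem 1]{Vaart2011}, which bounds $\mathbb{E}_\varepsilon[\|f-m_{N,\delta^2}^f\|_{2,U_N}^2]^{1/2}$ by $N^{1/2}\psi_{f,k}^{-1}(N)$, and then bounds the concentration function: the approximation part via $h=f$ (giving $C_{\low}(k)^{-2}\|f\|^2_{H^\beta(\overline\O)}$), and the small-ball part via a metric entropy bound for the unit ball of $\H_k(\overline\O)$ inside a ball of radius $C_{\up}(k)$ in $H^\beta(\overline\O)$, combined with a one-point Gaussian lower bound in which $k_{\mathrm{min}}=\min_{u\in\overline\O}k(u,u)$ enters; this is the actual source of the exponents $1-d/2\beta$, $d/2\beta$, the rate $N^{d/4\beta}$, and the threshold $N\geq N^*(k_{\mathrm{min}})$. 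Your stated role for $k_{\mathrm{min}}$ (conditioning of the kernel matrix and comparability of $h_{U_N,\O}$ with $N^{-1/d}$) is not what is used and is not available under the hypotheses --- no assumption relates $U_N$ to $N^{-1/d}$ here. Without the concentration-function/metric-entropy machinery, your sketch does not produce the stated bound.
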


The first term in the error bounds in Theorem \ref{thm:conv_sob_noisy}, involving  $h_{U_N,\Omega}^{\beta-\alpha} C_{\low}(k)^{-1}C_{\up}(k) \|f\|_{H^\beta(\Omega)}$, converges at the same rate as the error for noise-free data in Proposition \ref{thm:Convergence_in_sob}. The other terms quantify the effect of the noise on convergence. 
Note that the second bound does not derive from taking the second moment of the first bound. Applying the equality $\mathbb{E}_\varepsilon[\|\varepsilon\|_2^2] = \delta^2 N$ to the first bound would give a factor of $h_{U_N,\Omega}^{d/2-\alpha} N^{1/2}$ in the second bound, which is non-decreasing in $N$ for any $\alpha \geq 0$ since $h_{U_N,\O} = o(N^{-1/d})$. However, a refined proof technique allows to conclude on the sharper bound with $N^{d/4\beta}$.

As already discussed in \cite{wynne2021convergence}, we note that the first error bound in Theorem \ref{thm:conv_sob_noisy} applies to any noise $\varepsilon$, and hence also applies in the misspecified setting where the assumption $\varepsilon \sim N(0,\delta^2\mathrm{I})$ used to derive $m_{N,\delta^2}^f$ is not satisfied. In particular, in the case of noise-free data with $\varepsilon=0$ {, where the posterior mean \(m^f_{N,\delta^2}\) (see \eqref{eq:gp_mean}) is still constructed under the assumption $\varepsilon \sim N(0,\delta^2\mathrm{I})$}, we have
\[
\|f-m_{N,\delta^2}^f\|_{H^\alpha(\Omega)} \leq  h_{U_N,\Omega}^{\beta-\alpha} \, \left( 2 C_{\low}(k)^{-1}C_{\up}(k) \|f\|_{H^\beta(\Omega)} \right) + \, h_{U_N,\Omega}^{d/2-\alpha} \left( \delta C_{\low}(k)^{-1}C_{\up}(k)) \|f\|_{H^\beta(\Omega)} \right).
\]
Since $\beta\geq d/2$, the second term converges at a generally slower rate than the first. This can be offset by choosing the noise $\delta$ small, and in particular, we recover the rate $h_{U_N,\Omega}^{\beta-\alpha}$ of $\|f-m_{N,0}^f\|_{H^\alpha(\Omega)}$ if we choose $\delta = O(h_{U_N,\Omega}^{\beta-d/2})$.

An application of Theorem \ref{thm:conv_sob_noisy} gives the following convergence result.

\begin{corollary}\label{cor:conv_sob_noisy}
Let $k \in \{k_{\warp}^{w,k_s}, k_{\mix}^{\{\s_\ell,k_\ell\}_{\ell=1}^L}\}$. Suppose $(i)$ the assumptions of Theorem \ref{lemma:sob equiv warping} or \ref{lemma: sob equiv mixture} hold with $\beta \geq d/2$, respectively, $(ii)$ $0 < \sigma_\mathrm{min} \leq \sigma_{j}$ for $\nu_j = \min_{1 \leq \ell \leq L} \nu_\ell$ if $k = k_{\mix}^{\{\s_\ell,k_\ell\}_{\ell=1}^L}$, and $(iii)$ $f \in H^\beta(\overline \O)$.
Then Assumption \ref{assump:kernel2} holds with $\beta$, and the conclusions of Theorem \ref{thm:conv_sob_noisy} hold.
\end{corollary}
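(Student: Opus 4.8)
The proof is a verification that the hypotheses of Theorem \ref{thm:conv_sob_noisy} hold for each of the two kernels, after which both conclusions follow by direct application. The first step is to establish Assumption \ref{assump:kernel2} with the stated $\beta$. If $k = k_{\warp}^{w,k_s}$, then hypothesis $(i)$ of the corollary is precisely the hypothesis of Theorem \ref{lemma:sob equiv warping}, which yields $\H_{k_{\warp}^{w,k_s}}(\O) \cong H^\beta(\O)$ together with the explicit equivalence constants $C_\low(k)$ and $C_\up(k)$ recorded there. If $k = k_{\mix}^{\{\s_\ell,k_\ell\}_{\ell=1}^L}$, then hypotheses $(i)$ and $(ii)$ of the corollary together supply all hypotheses of Theorem \ref{lemma: sob equiv mixture}; the only point that is not immediate is condition $(iii)$ of that theorem, namely $1/\s_j \in H^\beta(\O)$ for the chosen index $j$, which is guaranteed by the lower bound $0 < \sigma_\mathrm{min} \leq \s_j$ and $\s_j \in H^\beta(\O)$ via Lemma \ref{lemma:reciprocal has derivatives} (using $2\beta > d$). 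Hence $\H_{k_{\mix}^{\{\s_\ell,k_\ell\}_{\ell=1}^L}}(\O) \cong H^\beta(\O)$. In either case, since $f \in H^\beta(\overline{\O}) \hookrightarrow H^\beta(\O)$ by hypothesis $(iii)$ of the corollary, Assumption \ref{assump:kernel2} holds, and the first error bound of Theorem \ref{thm:conv_sob_noisy} follows immediately.

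For the second (expectation) bound I would additionally verify the extra requirements of Theorem \ref{thm:conv_sob_noisy}: that $f \in H^\beta(\overline{\O})$, which is again hypothesis $(iii)$, and that $k_\mathrm{min} := \min_{u \in \overline{\O}} k(u,u)$ is strictly positive. For the warping kernel this is immediate, since $k_{\warp}^{w,k_s}(u,u) = k_s(w(u)-w(u)) = k_s(0) > 0$ (the marginal variance of $k_s$) is constant in $u$, so $k_\mathrm{min} = k_s(0)$. For the mixture kernel, the diagonal is $k_{\mix}^{\{\s_\ell,k_\ell\}_{\ell=1}^L}(u,u) = \sum_{\ell=1}^L \s_\ell(u)^2 k_\ell(0) \geq \s_j(u)^2 k_j(0) \geq \sigma_\mathrm{min}^2 k_j(0) > 0$, using $\s_\ell \geq 0$, hypothesis $(ii)$, and positivity of the marginal variance $k_j(0)$ of the Mat\'ern component $k_j$. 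Here $\s_j$ is continuous on $\overline{\O}$ by the Sobolev embedding $H^\beta(\O) \hookrightarrow C^0(\overline{\O})$ (valid since $2\beta > d$), so the bound $\s_j \geq \sigma_\mathrm{min}$ extends to the closure, and $u \mapsto k(u,u)$ is continuous on the compact set $\overline{\O}$, so the minimum is attained. Since $\varepsilon \sim N(0,\delta^2\mathrm{I})$ is the standing noise model, the second conclusion of Theorem \ref{thm:conv_sob_noisy} then applies as well.

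There is no genuine analytic obstacle in this argument; it is essentially a matter of matching hypotheses. The two steps that require a little care are (a) passing from the pointwise lower bound $\s_j \geq \sigma_\mathrm{min}$ to the membership $1/\s_j \in H^\beta(\O)$, for which Lemma \ref{lemma:reciprocal has derivatives} is exactly designed, and (b) checking strict positivity of the covariance on the diagonal over the \emph{closed} domain $\overline{\O}$, which reduces to positivity of the marginal variances of the Mat\'ern building blocks together with continuity of the mixture coefficients.
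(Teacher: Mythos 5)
Your proposal is correct and matches the paper's (implicit) argument: the paper gives no separate proof, stating only that the corollary is "an application of Theorem \ref{thm:conv_sob_noisy}" once Theorems \ref{lemma:sob equiv warping} / \ref{lemma: sob equiv mixture} supply Assumption \ref{assump:kernel2}, which is precisely your hypothesis-matching, and your additional verification of $k_{\mathrm{min}}>0$ (constant diagonal $k_s(0)$ for warping, $\sigma_{\mathrm{min}}^2 k_j(0)$ for the mixture) correctly fills in the step the paper leaves implicit and is the real role of hypothesis $(ii)$, since $1/\s_j\in H^\beta(\O)$ is already part of the assumptions of Theorem \ref{lemma: sob equiv mixture}.
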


\begin{proof} [Of Theorem \ref{thm:conv_sob_noisy}] We follow the same proof structure as in \cite[Theorem 2 and Lemma 17]{wynne2021convergence}, making simplifications/improvements in some steps and keeping track of how the constants appearing in the error estimates depend on the covariance kernel $k$. For brevity, we outline the proof and only give detail in the steps where modifications take place. Note that $(i)$ Assumptions 1-5 of \cite{wynne2021convergence} are satisfied in the setting of this work, $(ii)$ Assumption 6 of \cite{wynne2021convergence} is proved below, and $(iii)$ Assumption 7 of \cite{wynne2021convergence} can be relaxed to $f \in H^{\beta}(\overline \O)$ since we are assuming that the RKHS of $k$ is $H^\beta(\O)$.

An application of \cite[Theorem 3.2]{arcangeli2012extension}  (i.e. \cite[Theorem 12]{wynne2021convergence}) gives, for a constant $C_1$ independent of $f$, $k$ and $N$, 
\[
\|f-m_{N,\delta^2}^f\|_{H^\alpha(\Omega)} \leq C_1 \left( h_{U_N,\Omega}^{\beta-\alpha} \,\|f - m_{N,\delta^2}^f\|_{H^{\beta}(\Omega)} + h_{U_N,\Omega}^{d/2-\alpha} \,\|f - m_{N,\delta^2}^f\|_{2,U_N} \right), 
\]
for all $\alpha \leq \lfloor \beta \rfloor$ and $h_{U_N,\Omega} \leq h_0$. Here $\|g\|_{2,U_N} = \sqrt{ \sum_{n=1}^N g(u_n)^2}$ denotes the 2-norm of a function $g \in H^\beta(\O)$ evaluated on the discrete set $U_N$. Following the same proof technique as in \cite[Theorem 2 and Lemma 17]{wynne2021convergence}, we obtain for the first term the bound
\begin{align*}
\|f-m_{N,\delta^2}^f\|_{H^\beta(\Omega)} &\leq (1+C_{\low}(k)^{-1}C_{\up}(k)) \|f\|_{H^\beta(\Omega)} + \delta^{-1} C_{\up}(k) \|\varepsilon\|_2.
\end{align*}
For the second term, we have by \cite[Lemma 17]{wynne2021convergence}
\begin{align*}
\|f - m_{N,\delta^2}^f\|_{2,U_N} \leq 2 \|\varepsilon\|_2 + \delta C_{\low}(k)^{-1}C_{\up}(k)) \|f\|_{H^\beta(\Omega)}.
\end{align*}
This finishes the proof of the first claim.

For the second claim, For the second term, we use \cite[Theorem 1]{Vaart2011} (i.e. \cite[Theorem 11]{wynne2021convergence}), which gives, for a constant $C_2$ independent of $f$, $k$ and $N$, 
\[
\mathbb{E}_\varepsilon \left[\|f - m_{N,\delta^2}^f\|_{2,U_N}^2\right]^{1/2} \leq C_2 N^{1/2}(\psi_{f,k}^{-1}(N)).
\]
Here, $\psi_{f,k}^{-1}$ is defined via
\begin{align}\label{eq:def_conc}
    \psi_{f,k}^{-1}(N) &= \sup \{\epsilon >0 : \psi_{f,k}(\epsilon) \geq N \}, \\
    \psi_{f,k}(\epsilon) &= \frac{\phi_{f,k}(\epsilon)}{\epsilon^2}, \nonumber \\
    \phi_{f,k}(\epsilon) &= \inf_{\substack{h \in \mathcal H_k(\overline \O)  \\ \|h-f\|_{L^\infty(\overline \O)} < \epsilon}} \frac{1}{2} \|h\|^2_{\mathcal H_k(\overline \O)} - \log \mathbb{P}_k\left[\{ g: \|g\|_{L^\infty(\overline \O)} \leq \epsilon \} \right], \nonumber
\end{align}
where $\mathbb{P}_k$ denotes the measure of the prior $\mathcal{GP}(0,k)$. We hence proceed to bound the concentration function $\phi_{f,k}(\epsilon)$. For the first term, we use the simple bound $h=f$ to give
\[
\inf_{\substack{h \in \mathcal H_k(\overline \O) \\ \|h-f\|_{L^\infty(\overline \O)} < \epsilon}} \frac{1}{2} \|h\|^2_{\mathcal H_k(\overline \O)} \leq \frac{1}{2} \|f\|^2_{\mathcal H_k(\overline \O)} \leq \frac{1}{2} C_{\low}(k)^{-2} \|f\|^2_{H^\beta(\overline \O)},
\]
where by slight abuse of notation we use $C_{\low}(k)$ to denote the norm equivalence constant on $\O$ as well as $\overline \O$.

For the second term, we track the dependence on $k$ in the proof of \cite[Lemma 24]{wynne2021convergence} to obtain the bound
\[
H(\mathcal H'_k(\overline \Omega), \epsilon) \leq C_3 C_{\up}(k)^{d/\beta} \epsilon^{-d/\beta},
\]
where $\mathcal H'_k(\overline \Omega)$ denotes the unit ball in $\mathcal H_k(\overline \Omega)$ and $H(\mathcal H'_k(\overline \Omega), \epsilon)$ denotes the metric entropy, i.e. the logarithm of the $\epsilon$-covering number of $\mathcal H'_k(\overline \Omega)$. The main observation to make is that the unit ball in $\mathcal H_k(\overline \Omega)$ is contained in the ball of radius $C_{\up}(k)$ in $H^\beta(\overline \O)$. As in \cite[Equation (34)]{wynne2021convergence}, we then have with $\phi^{(2)}_{f,k}(\epsilon) := - \log \mathbb{P}_k\left[\{ g: \|g\|_{L^\infty(\overline \O)} \leq \epsilon \} \right]$
\[
\phi^{(2)}_{f,k}(\epsilon) = \log 2 + C_3 C_{\up}(k)^{d/\beta} \epsilon^{-d/\beta} 8^{d/2\beta} \phi^{(2)}_{f,k}(\epsilon/2)^{d/2\beta}.
\]
Now observe that, using the lower bound $\int_c^\infty \exp(-t^2/2) \mathrm{d} t \geq \frac{c}{c^2+1} \exp(-c^2/2)$ (see e.g. \cite[Equation (2.23)]{gine_nickl_2015}, we have for $u \in \overline \O$ s.t. $k(u,u) = \min_{u \in \overline \O} k(u,u) =k_\mathrm{min}$ the bound
\[
 \mathbb{P}_k\left[\{ g: \|g\|_{L^\infty(\overline \O)} \leq c \} \right] \leq \mathbb{P}_k\left[\{ g: |g(u)| \leq c \} \right] \leq 1 - \frac{2c k_\mathrm{min}}{\sqrt{2\pi}(c^2+k_\mathrm{min}^2)} \exp(-\frac{c^2}{2 k_\mathrm{min}^2}).
\]
This means that \cite[Assumption 6]{wynne2021convergence} is satisfied for any $N \in \N$ with any $c>0$ and $\gamma_N \equiv \gamma(k,c) = - \log(1 - \frac{2c k_\mathrm{min}}{\sqrt{2\pi}(c^2+k_\mathrm{min}^2)} \exp(-\frac{c^2}{2 k_\mathrm{min}^2}))$.

For $\epsilon/2 \leq \min \{ c, (\log 2)^{-1} C_3 8^{d/2\beta} \gamma(k,c)^{d/2\beta})^{\beta/d} \} =: \epsilon^*$ for some $0<c<1$, we then have
\[
\phi^{(2)}_{f,k}(\epsilon) =  C_3 (C_{\up}(k)^{d/\beta}+1) \epsilon^{-d/\beta} 8^{d/2\beta} \phi^{(2)}_{f,k}(\epsilon/2)^{d/2\beta}.
\]
Taking logarithms and following the iterative procedure as in the proof of \cite[Theorem 2]{wynne2021convergence}, we arrive at the estimate
\[
\phi^{(2)}_{f,k}(\epsilon) \leq C_4 C_{\up}(k)^{2d/(2\beta-d)} \epsilon^{-2d/(2\beta-d)},
\]
for all $\epsilon \leq \epsilon^*$ and a constant $C_4$ independent of $f$, $k$ and $N$. This gives 
\begin{align*}
\phi_{f,k}(\epsilon) \epsilon^{-2} &\leq \frac{1}{2} C_{\low}(k)^{-2} \|f\|^2_{H^\beta(\overline \O)} \epsilon^{-2} + C_4 C_{\up}(k)^{2d/(2\beta-d)} \epsilon^{-2-2d/(2\beta-d)}, \\
&\leq \left( \frac{1}{2} C_{\low}(k)^{-2} \|f\|^2_{H^\beta(\overline \O)}  + C_4 C_{\up}(k)^{2d/(2\beta-d)} \right) \epsilon^{-2-2d/(2\beta-d)},
\end{align*}
for all $\epsilon \leq \epsilon^*$. For sufficiently large $N \geq N^*(\epsilon^*)$, we then have
\begin{equation}\label{eq: concentration func upper bound}
\psi_{f,k}^{-1}(N) \leq \left( \frac{1}{2} C_{\low}(k)^{-2} \|f\|^2_{H^\beta(\overline \O)}  + C_4 C_{\up}(k)^{2d/(2\beta-d)} \right)^{1/2-d/(4\beta)} N^{-1/2 + d/4\beta}.
\end{equation}
Combining all steps together and using the equality $\mathbb{E}_\varepsilon[\|\varepsilon\|_2^2] = \delta^2 N$, we have for $\alpha \leq \lfloor \beta \rfloor$, $h_{U_N,\Omega}^{\beta-\alpha} \leq h_0$ and $N \geq N^*(k_{\mathrm{min}}$,
\begin{align*}
&\mathbb{E}_\varepsilon \left[ \|f-m_{N,\delta^2}^f\|_{H^\alpha(\Omega)}^2\right]^{1/2} \leq C_1 \left( h_{U_N,\Omega}^{\beta-\alpha} \, \left( (1+C_{\low}(k)^{-1}C_{\up}(k)) \|f\|_{H^\beta(\Omega)} +  C_{\up}(k) N^{1/2}\right) \right.
\\
& \qquad \qquad \left. + \, h_{U_N,\Omega}^{d/2-\alpha} \,C_2 \left( \frac{1}{2} C_{\low}(k)^{-2} \|f\|^2_{H^\beta(\overline \O)}  + C_4 C_{\up}(k)^{2d/(2\beta-d)} \right)^{-1/2+d/(4\beta)} N^{d/4\beta} \right),
\end{align*}
which completes the proof.
\end{proof}


\subsection{Hyper-parameters estimated by empirical Bayes}\label{subsec:eagr_empirical}

In this section we study the effect of the estimation of hyper-parameters on the error bounds presented in sections \ref{sec:conergence in native space}, \ref{sec:conv_sobolev} and \ref{sec:conv_sobolev_noisy}.
Recall from section \ref{subsec:Estimating hyper-paramters} that within the framework of { empirical Bayes}, we compute a point estimate $\widehat \theta_N$ of the hyper-parameters $\theta$ using the data $y_N$, and then plug this into the prior \eqref{eq:GP empirical prior} to be used in the standard GP regression prior-to-posterior update. For ease of presentation, we will restrict our attention to the case where the stationary kernel is a Mat\'ern kernel $k_{\M(\nu)}$. For the non-stationary kernels considered in this work, we are then interested in estimating the following hyper-parameters:
\begin{itemize}
    \item For $k_{\warp}^{w,k_s}$, we estimate $\theta = \{w, \sigma^2\}$ and fix $\{\lambda,\nu\}$.
    \item For $k_{\mix}^{\{\s_\ell,k_\ell\}_{\ell=1}^L}$, we estimate $\theta = \{L,\{\s_\ell\}_{\ell=1}^L, \{\lambda_\ell\}_{\ell=1}^L \}$ and fix {$\{\sigma^2, \nu\}$}.
    \item For $k_{\conv}^{{\lambda_{a}},k_i}$, we estimate $\theta = \{{\lambda_{a}}, \sigma^2 \}$ and fix $\{\lambda,\nu\}$.
\end{itemize}


For the warping and convolution kernels, the non-stationary functions $w$ and ${\lambda_{a}}$, respectively, are designed to capture length scales. The parameter $\lambda$ in the Mat\'ern kernel is hence superfluous, and can be kept fixed (e.g. at 1). Similarly, the marginal variance parameter $\sigma^2$ in the Mat\'ern kernel can be kept fixed (e.g. at 1) for the mixture kernel. 

{ In all cases, it is possible to extend the results to the smoothness parameter $\nu$ being estimated as well. If the target function $f$ is in the RKHS of the non-stationary kernel for all possible values of $\widehat \nu_N$, the convergence results presented here hold with straightforward modifications. If $f$ on the other hand has limited regularity, and is not necessarily an element of the RKHS for all $\widehat \nu_N$, then the error bounds require a qualitative change as presented in \cite[Theorem 3.5]{Teckentrup2019}. Due to the significant increase in required notation and technical complexity, the estimation of \(\nu\) is presented separately in section \ref{sec: estimating nu}.}


\subsubsection{Convergence of $m_{N,0}^f$ in the RKHS with noise-free data}\label{subsubsec:conv_rkhs_est}
If Assumption \ref{assump:kernel1} holds for \(k(\cdot,\cdot;\widehat\theta_N)\), 
for all $N \in \N$, then Proposition \ref{thm:Wend11.13} immediately gives the error bound
\begin{equation*}
    \|D^{\mu}f-D^{\mu}m_{N,0}^f(\widehat \theta_N)\|_{C^0(\O)}\leq C_{\ref{thm:Wend11.13}}(k(\cdot,\cdot;\widehat\theta_N)) \; h^{p-|\mu|}_{U_N,\Omega}\;  \|f\|_{\H_{k(\cdot,\cdot;\widehat\theta_N)}(\Omega)}.  
\end{equation*}

The convergence rates we obtain hence depend on the behaviour of $C_{\ref{thm:Wend11.13}}(k(\cdot,\cdot;\widehat\theta_N))$ and $\|f\|_{\H_{k(\cdot,\cdot;\widehat\theta_N)}(\Omega)}$. If these quantities can be bounded uniformly in $N$, we obtain the same convergence rates as in the case of fixed hyper-parameters, see Corollary \ref{cor:conv_rkhs_est}. More generally, we obtain a convergence rate of $h^{p-|\mu|}_{U_N,\Omega} \, N^r$ if $ C_{\ref{thm:Wend11.13}}(k(\cdot,\cdot;\widehat\theta_N)) \|f\|_{\H_{k(\cdot,\cdot;\widehat\theta_N)}(\Omega)} \leq C N^r$ for some $r \in \R$ for our sequence of estimated hyper-parameters $\{\widehat \theta_N\}_{N=1}^\infty$.

If the RKHS $\H_{k(\cdot,\cdot;\widehat\theta_N)}(\O)$ is not known explicitly, it may be difficult to quantify the behaviour of $\|f\|_{\H_{k(\cdot,\cdot;\widehat\theta_N)}(\Omega)}$. The constant $C_{\ref{thm:Wend11.13}}(k(\cdot,\cdot;\widehat\theta_N))$ is more tractable, and can be bounded uniformly in $N$ under the assumptions given in Theorem \ref{thm: conditions on hyperparam native space norm} below. 

\begin{theorem}
\label{thm: conditions on hyperparam native space norm}
    Let $p \in \N$ and \(\O\subset\R\). Suppose either
    \begin{itemize}
        \item[(i)]
        $k=k_{\warp}^{w,k_s}$, 
        $k_s = k_{\M(\infty)}$ and \( \{\|\widehat w_N\|_{C^{2p}(\Omega)},\widehat\s^2_{N}\}\) lie in a compact subset of $(0,\infty)^2$,
        \item[(ii)] $k = k_{\mix}^{\{\s_\ell,k_\ell\}_{\ell=1}^L}$, $k_\ell = k_{\M(\infty)}$ for $1 \leq \ell \leq \infty$, and $ \{\widehat L_N, \max_{1 \leq \ell \leq \widehat L_N} \|\widehat\s_{\ell,N}\|_{C^{2p}(\Omega)}\}$ lie in a compact subset of $\N \times (0,\infty)$, or
        \item[(iii)] $k=k_{\conv}^{{\lambda_{a}}, k_i}$, $k_i = k_{\M(\infty)}$ and \( \{\|\widehat {\lambda_{a,N}}\|_{C^{2p}(\Omega)}, \inf_{u \in \O} \widehat {\lambda_{a,N}}(u), \widehat\s^2_{N}\}\) lie in a compact subset of $(0,\infty)^3$.
    \end{itemize}
    Then there exists a constant $C_{\ref{thm: conditions on hyperparam native space norm}}$, independent of $N$, such that 
    \[
\sup_{\widehat \theta_N} C_{\ref{thm:Wend11.13}}(k(\cdot,\cdot;\widehat\theta_N)) \leq C_{\ref{thm: conditions on hyperparam native space norm}}.
    \]
\end{theorem}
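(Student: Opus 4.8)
The plan is to reduce the statement to the explicit formulas for $C_{\ref{thm:Wend11.13}}$ already established for the three non-stationary kernels in Lemmas \ref{lemma:exp_inputwarping constant}, \ref{lemma:exp_kernel mixture} and \ref{lemma:exp_convolutionker constant}, and then to bound each factor depending on $\widehat\theta_N$ in those formulas using the compactness hypotheses. The one elementary fact I would record first is that any compact subset of $(0,\infty)$ is contained in some interval $[a,b]$ with $0<a\le b<\infty$, so the relevant norms and infima associated with $\widehat\theta_N$ are simultaneously bounded above and, where needed, bounded away from zero, uniformly in $N$; likewise a compact subset of $\N$ is finite. All remaining constants in the three formulas depend only on $p$ (through $(2p)!$, $(4p)!$, the Bell number $B_{2p}$, powers of $2$, and the absolute constant $C_0\le 1.0866$), and are therefore harmless.

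For case $(i)$ I would note that $\|\widehat w_N\|_{C^{2p}(\O)}<\infty$ (a consequence of the hypothesis) verifies the assumptions of Lemma \ref{lemma:exp_inputwarping constant} for $k(\cdot,\cdot;\widehat\theta_N)=k_{\warp}^{\widehat w_N, k_{\M(\infty)}}$, so that $C_{\ref{thm:Wend11.13}}(k(\cdot,\cdot;\widehat\theta_N))=C_0\,\widehat\s_N^2\,\sqrt{(2p)!}\,B_{2p}\,\|\widehat w_N\|_{C^{2p}(\O)}$, and compactness bounds $\widehat\s_N^2$ and $\|\widehat w_N\|_{C^{2p}(\O)}$ uniformly in $N$. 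Case $(ii)$ is identical in spirit: Lemma \ref{lemma:exp_kernel mixture} gives $C_{\ref{thm:Wend11.13}}(k(\cdot,\cdot;\widehat\theta_N))=C_0\,2p\,2^{4p}\sqrt{(2p)!}\,\max_{1\le\ell\le\widehat L_N}\|\widehat\s_{\ell,N}\|_{C^{2p}(\O)}^2$, finiteness of $\widehat L_N$ guarantees the kernel and the lemma make sense, and the hypothesis bounds the maximum over $\ell$ uniformly in $N$. (Note the formula does not actually depend on $\widehat L_N$, so only boundedness of $\widehat L_N$, not its value, is used.)

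Case $(iii)$ is the only one needing a moment's care. Here $\inf_{u\in\O}\widehat{\lambda_{a,N}}(u)\ge c_0>0$ and $\|\widehat{\lambda_{a,N}}\|_{C^{2p}(\O)}<\infty$ verify assumptions $(ii)$ and $(iii)$ of Lemma \ref{lemma:exp_convolutionker constant} with $c=\inf_{u\in\O}\widehat{\lambda_{a,N}}(u)$. In the resulting expression for $C_{\ref{thm:Wend11.13}}(k_{\conv}^{\widehat{\lambda_{a,N}},k_i})$ the $N$-dependence enters only through $\widehat\s_N^2$, $\|\widehat{\lambda_{a,N}}\|_{C^{2p}(\O)}$, $\|\widehat{\lambda_{a,N}}\|_{C^{0}(\O)}$ and $c$; I would bound $\widehat\s_N^2$ and $\|\widehat{\lambda_{a,N}}\|_{C^{2p}(\O)}$ above by compactness, observe that $\max\{\|\widehat{\lambda_{a,N}}\|_{C^0(\O)},1\}^{1/4-2p}\le 1$ since the exponent is negative for $p\ge1$, and bound $(2c)^{-1-2p}\le(2c_0)^{-1-2p}$ because $c\ge c_0$. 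Substituting these bounds and absorbing the $p$-dependent factors produces the constant $C_{\ref{thm: conditions on hyperparam native space norm}}$.

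I do not expect a substantive obstacle: the argument is bookkeeping layered on top of Lemmas \ref{lemma:exp_inputwarping constant}--\ref{lemma:exp_convolutionker constant}. The single point requiring attention — and the only place where the choice of a compact, rather than merely bounded, subset is genuinely used — is to keep track of which occurrences of $\widehat\theta_N$ enter with a negative exponent, in particular $c$ in case $(iii)$; those require a uniform lower bound bounded away from zero, which compactness of the prescribed subset of $(0,\infty)$ supplies simultaneously with the needed upper bounds.
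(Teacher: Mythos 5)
Your proposal is correct and follows essentially the same route as the paper's own proof, which simply reads off the explicit expressions for $C_{\ref{thm:Wend11.13}}$ from Lemmas \ref{lemma:exp_inputwarping constant}, \ref{lemma:exp_kernel mixture} and \ref{lemma:exp_convolutionker constant} and notes that the compactness hypotheses bound every $\widehat\theta_N$-dependent factor uniformly in $N$. Your additional bookkeeping — in particular the uniform lower bound on $c=\inf_{u\in\O}\widehat{\lambda}_{a,N}(u)$ needed for the factor $(2c)^{-1-2p}$ in case $(iii)$ — is exactly the point the paper leaves implicit.
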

\begin{proof} For $(i)$ the condition on \(\widehat\s^2_{N}\) follows directly from \cite[Theorem 3.5]{Teckentrup2019}, and the condition on \(\widehat w_N\) follows from Lemma \ref{lemma:exp_inputwarping constant}. For $(ii)$, the conditions follow from Lemma \ref{lemma:exp_kernel mixture}. For $(iii)$, the conditions on \(\widehat\s^2_{N}\) follow directly from \cite[Theorem 3.5]{Teckentrup2019}, and the condition on \(\widehat \lambda_{a,N}\) and $\inf_{u \in \O} {\lambda_{a,N}}(u)$ follows from Lemma \ref{lemma:exp_convolutionker constant}.
\end{proof}

    Note that there are no conditions on \(\{\widehat\lambda_{\ell,N}\}_{\ell=1}^L\) in Theorem \ref{thm: conditions on hyperparam native space norm} $(ii)$. This is because the only terms that depend on \(\{\widehat\lambda_{\ell,N}\}_{\ell=1}^L\) are inside the exponential and this can always be bounded by \(1\), i.e. \(\exp(-\|u-u'\|_2^2/\widehat\lambda_{\ell,N})\leq 1\) for all \(u,u'\in\O\) and \(0 <\widehat\lambda_{\ell,N} < \infty\), see \eqref{eq:Herrmaneq} in the appendix. 

\begin{corollary}
\label{cor:conv_rkhs_est}
Suppose the assumptions Theorem \ref{thm: conditions on hyperparam native space norm} hold. Then for \(f\in \H_{k(\cdot,\cdot;\widehat\theta_N)}(\Omega)\) there exists a constant $h_0$, independent of $f$ and $N$, such that
\begin{equation*}
    \|D^{\mu}f-D^{\mu}m_{N,0}^f(\widehat \theta_N)\|_{C^0(\O)}\leq C_{\ref{thm: conditions on hyperparam native space norm}} \; h^{p-|\mu|}_{U_N,\Omega}\; \sup_{\widehat\theta_N} \|f\|_{\H_{k(\cdot,\cdot;\widehat\theta_N)}(\Omega)},   
\end{equation*}
 for all $\mu\in\N_0$ with $\mu<p$ and $h_{U_N,\Omega}\leq h_0$.
\end{corollary}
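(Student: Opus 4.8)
The plan is to deduce the corollary directly from Proposition~\ref{thm:Wend11.13}, applied to the estimated kernel $k(\cdot,\cdot;\widehat\theta_N)$ for each fixed $N$, and then to absorb the $N$-dependence into the uniform bound on $C_{\ref{thm:Wend11.13}}$ provided by Theorem~\ref{thm: conditions on hyperparam native space norm}. The first step is to check that Assumption~\ref{assump:kernel1} holds for $k(\cdot,\cdot;\widehat\theta_N)$ for every $N$. The regularity part, $k(\cdot,\cdot;\widehat\theta_N)\in C^{2p}(\O\times\O)$, is exactly what Lemmas~\ref{lemma:exp_inputwarping constant}, \ref{lemma:exp_kernel mixture} and \ref{lemma:exp_convolutionker constant} establish for the warping, mixture and convolution kernels respectively; their hypotheses (smoothness of $\widehat w_N$, $\{\widehat\s_{\ell,N}\}$, $\widehat{{\lambda_{a,N}}}$, together with positivity of $\widehat{{\lambda_{a,N}}}$) are subsumed by the compactness assumptions of Theorem~\ref{thm: conditions on hyperparam native space norm}. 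The second part of Assumption~\ref{assump:kernel1}, namely $f\in\H_{k(\cdot,\cdot;\widehat\theta_N)}(\O)$, is assumed in the statement of the corollary, so Assumption~\ref{assump:kernel1} is satisfied with the given $p$.

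Next I would invoke Proposition~\ref{thm:Wend11.13}, which yields constants $h_0$ and $C$, \emph{independent of $f$, the kernel and $N$}, such that
\[
\|D^{\mu}f-D^{\mu}m_{N,0}^f(\widehat\theta_N)\|_{C^0(\O)}\le C\,C_{\ref{thm:Wend11.13}}\!\left(k(\cdot,\cdot;\widehat\theta_N)\right)h_{U_N,\O}^{\,p-\mu}\,\|f\|_{\H_{k(\cdot,\cdot;\widehat\theta_N)}(\O)}
\]
for all $\mu\in\N_0$ with $\mu<p$ and all $h_{U_N,\O}\le h_0$. The point to emphasise is that $h_0$ here is determined only by the interior cone condition of $\O$ and the (fixed) smoothness order $p$, and is therefore the same for every $N$, even though the kernel $k(\cdot,\cdot;\widehat\theta_N)$ changes with $N$.

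Finally, I would bound the two $N$-dependent quantities on the right-hand side: Theorem~\ref{thm: conditions on hyperparam native space norm} gives $C_{\ref{thm:Wend11.13}}(k(\cdot,\cdot;\widehat\theta_N))\le C_{\ref{thm: conditions on hyperparam native space norm}}$ uniformly in $N$, and trivially $\|f\|_{\H_{k(\cdot,\cdot;\widehat\theta_N)}(\O)}\le\sup_{\widehat\theta_N}\|f\|_{\H_{k(\cdot,\cdot;\widehat\theta_N)}(\O)}$. Absorbing the universal constant $C$ from Proposition~\ref{thm:Wend11.13} into $C_{\ref{thm: conditions on hyperparam native space norm}}$ then yields exactly the stated estimate.

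There is no serious obstacle here: the substantive work is already contained in Theorem~\ref{thm: conditions on hyperparam native space norm} and the kernel-regularity Lemmas~\ref{lemma:exp_inputwarping constant}--\ref{lemma:exp_convolutionker constant}, and what remains is a bookkeeping exercise in tracking which constants are uniform in $N$. The two points that warrant care are (a) confirming that $h_0$ from Proposition~\ref{thm:Wend11.13} does not deteriorate as $N\to\infty$, which holds precisely because it is kernel-independent; and (b) observing that $\sup_{\widehat\theta_N}\|f\|_{\H_{k(\cdot,\cdot;\widehat\theta_N)}(\O)}$ is taken over the (random) range of the estimator and may be infinite, in which case the bound is vacuous. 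The corollary makes no finiteness claim, and in applications one would additionally impose or verify that this supremum is finite, e.g.\ via the explicit norm-equivalence constants of Theorems~\ref{lemma:sob equiv warping} and~\ref{lemma: sob equiv mixture}.
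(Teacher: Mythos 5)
Your proposal is correct and follows essentially the same route as the paper: the paper's (implicit) proof in section \ref{subsubsec:conv_rkhs_est} is precisely to apply Proposition \ref{thm:Wend11.13} to $k(\cdot,\cdot;\widehat\theta_N)$ — with the $C^{2p}$ regularity supplied by Lemmas \ref{lemma:exp_inputwarping constant}--\ref{lemma:exp_convolutionker constant} and $f\in\H_{k(\cdot,\cdot;\widehat\theta_N)}(\O)$ assumed — and then bound $C_{\ref{thm:Wend11.13}}(k(\cdot,\cdot;\widehat\theta_N))$ uniformly in $N$ via Theorem \ref{thm: conditions on hyperparam native space norm} and the RKHS norm by its supremum over $\widehat\theta_N$. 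Your additional remarks, that $h_0$ is kernel-independent and that the supremum of the RKHS norms may be infinite (making the bound vacuous unless finiteness is separately verified), are accurate and consistent with the paper.
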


\subsubsection{Convergence of $m_{N,\delta^2}^f$ in a Sobolev space with noisy or noise-free data}\label{subsubsec:conv_sob_est}
{ If Assumption \ref{assump:kernel2} holds for \(k(\cdot,\cdot;\widehat\theta_N)\),  for all $N \in \N$, Proposition \ref{thm:Convergence_in_sob} and Theorem \ref{thm:conv_sob_noisy} similarly give the error bounds
\begin{align*}
    \|f-m_{N,0}^f(\widehat \theta_N)\|_{H^\alpha(\Omega)}&\leq C_{\ref{thm:Convergence_in_sob}} C_{\low}(k(\cdot,\cdot; \widehat \theta_N))^{-1}C_{\up} (k(\cdot,\cdot; \widehat \theta_N))  \, h_{U_N,\Omega}^{\beta-\alpha} \,\|f\|_{H^{\beta}(\Omega)}, \\
     \|f-m_{N,\delta^2}^f(\widehat \theta_N)\|_{H^\alpha(\Omega)} &\leq  h_{U_N,\Omega}^{\beta-\alpha} \, \left( 2 C_{\low}(k(\widehat \theta_N))^{-1}C_{\up}(k(\widehat \theta_N)) \|f\|_{H^\beta(\Omega)} +  \delta^{-1 }C_{\up}(k(\widehat \theta_N)) \|\varepsilon\|_2\right)
\\
& \qquad \qquad + \, h_{U_N,\Omega}^{d/2-\alpha} \left( 2 \|\varepsilon\|_2 + \delta C_{\low}(k(\widehat \theta_N))^{-1}C_{\up}(k(\widehat \theta_N)) \|f\|_{H^\beta(\Omega)} \right).
\end{align*}
}
Note that in contrast to section \ref{subsubsec:conv_rkhs_est}, the norm on $f$ appearing here is independent of any of the estimated hyper-parameters. 
The obtained convergence rates hence depend only on the behaviour of the constants $C_{\low}(k(\cdot,\cdot; \widehat \theta_N))^{-1}$ and $C_{\up} (k(\cdot,\cdot; \widehat \theta_N))$. In particular, Theorem \ref{lem:cond on estimates for Sob bnds} below gives conditions under which moments of these constant can be bounded uniformly in $N$.

\begin{theorem}
\label{lem:cond on estimates for Sob bnds}
    Let $\beta \geq d/2$. Suppose either
    \begin{itemize}
        \item[(i)]
        $k=k_{\warp}^{w,k_s}$, 
        $k_s = k_{\M(\beta - d/2)}$ and \( \{\|\widehat w_N\|_{C^{\beta}(\Omega)}, \|\inv(\widehat w_N)\|_{C^{\beta}(\Omega)}, \min_{u \in \O} \widehat w_N'(u),\min_{u \in \O} \inv(\widehat w_N)'(u),\widehat\s^2_{N}\}\) lie in a compact subset of $(0,\infty)^5$, or
        \item[(ii)] $k = k_{\mix}^{\{\s_\ell,k_\ell\}_{\ell=1}^L}$, $k_\ell = k_{\M(\nu_\ell)}$ for $1 \leq \ell \leq \infty$, and $ \{\widehat L_N, \max_{1 \leq \ell \leq \widehat L_N} \{ \|\widehat\s_{\ell,N}\|_{H^{\beta}(\Omega)}, \|1/\widehat\s_{\ell,N}\|_{H^{\beta}(\Omega)}\} \}$ lie in a compact subset of $\N \times (0,\infty)^2$.
    \end{itemize}
    Then for any $a,b > 0$ there exists a constant $C_{\ref{lem:cond on estimates for Sob bnds}}(a,b)$, independent of $N$, such that
    \[
\sup_{\widehat \theta_N} C_{\low}(k(\cdot,\cdot; \widehat \theta_N))^{-a}C_{\up} (k(\cdot,\cdot; \widehat \theta_N))^b \leq C_{\ref{lem:cond on estimates for Sob bnds}}(a,b).
    \]
\end{theorem}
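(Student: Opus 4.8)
The plan is to substitute the explicit expressions for $C_\low$ and $C_\up$ established in Theorems~\ref{lemma:sob equiv warping} and \ref{lemma: sob equiv mixture} (together with Proposition~\ref{prop:sobolev equivnorms} for the underlying Mat\'ern kernel $k_s$ or $k_\ell$), and then to observe that the resulting formula for $C_\low(k(\cdot,\cdot;\widehat\theta_N))^{-a}C_\up(k(\cdot,\cdot;\widehat\theta_N))^b$ is a continuous function of finitely many scalar quantities, each of which lies in a fixed compact subset of $(0,\infty)$ by hypothesis. Since a continuous function on a compact set attains its supremum, this immediately yields a finite bound $C_{\ref{lem:cond on estimates for Sob bnds}}(a,b)$ independent of $N$. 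No properties of the point set $U_N$ or of the data enter.

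In case $(i)$, with the length scale of $k_s=k_{\M(\beta-d/2)}$ held fixed, Proposition~\ref{prop:sobolev equivnorms} gives $C_\low(k_s)=a_1\widehat\s_N$ and $C_\up(k_s)=a_2\widehat\s_N$ for constants $a_1,a_2$ depending only on $\beta$ and $d$, so by Theorem~\ref{lemma:sob equiv warping},
\[
C_\low(k(\cdot,\cdot;\widehat\theta_N))^{-a}C_\up(k(\cdot,\cdot;\widehat\theta_N))^b
= a_3\,\widehat\s_N^{\,b-a}\Big(\min_{u\in\O}\inv(\widehat w_N)'(u)\Big)^{-a/2}\Big(\min_{u\in\O}\widehat w_N'(u)\Big)^{-b/2}
\]
\[
\qquad\times\ \max\{1,\|\inv(\widehat w_N)\|_{C^\beta(\O)}^\beta\}^{a}\,\max\{1,\|\widehat w_N\|_{C^\beta(\O)}^\beta\}^{b},
\]
for some $a_3>0$ depending only on $\beta,d,a,b$. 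This is a continuous function of the five quantities $\big(\widehat\s_N^2,\ \min_u\widehat w_N',\ \min_u\inv(\widehat w_N)',\ \|\widehat w_N\|_{C^\beta(\O)},\ \|\inv(\widehat w_N)\|_{C^\beta(\O)}\big)$, which by assumption lie in a fixed compact subset of $(0,\infty)^5$; taking the maximum of the function over that set gives the claim. (That $\inv(\widehat w_N)\in C^\beta(\O)$, so these quantities are well defined, follows as in Lemma~\ref{lemma:inverse has derivatives}.)

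In case $(ii)$, the estimated number of components $\widehat L_N$ ranges over a compact, hence finite, subset of $\N$, so it suffices to bound the quantity for each admissible value $L$ separately and then take the (finite) maximum. For a fixed $L$, Theorem~\ref{lemma: sob equiv mixture} gives
\[
C_\low(k(\cdot,\cdot;\widehat\theta_N))^{-a}C_\up(k(\cdot,\cdot;\widehat\theta_N))^b
= C_{\ref{prop:banachalg}}^{\,a+b}\,2^{b/2}\,C_\low(k_j)^{-a}\,\|1/\widehat\s_{j,N}\|_{H^\beta(\O)}^{a}
\Big(\max_{1\le\ell\le L}C_\up(k_\ell)\|\widehat\s_{\ell,N}\|_{H^\beta(\O)}\Big)^b ,
\]
where the Mat\'ern constants $C_\low(k_j)$, $C_\up(k_\ell)$ depend only on the fixed smoothness and marginal-variance parameters of $k_\ell$ (and, via the length scales, continuously on the remaining estimated parameters, which for this bound one assumes are likewise constrained to a compact subset of $(0,\infty)$), hence are bounded above and below by positive constants uniformly in $N$. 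The factors $\|1/\widehat\s_{j,N}\|_{H^\beta(\O)}$ and $\max_{1\le\ell\le L}\|\widehat\s_{\ell,N}\|_{H^\beta(\O)}$ lie in a fixed compact subset of $(0,\infty)$ by assumption, and Lemma~\ref{lemma:reciprocal has derivatives} guarantees $1/\widehat\s_{\ell,N}\in H^\beta(\O)$ whenever $\widehat\s_{\ell,N}$ is bounded away from zero. Hence the whole expression is bounded uniformly in $N$, and maximising over the finitely many admissible $L$ completes the proof.

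The argument is essentially soft once the explicit constants are in hand: the only real work is the bookkeeping of precisely which hyper-parameter-dependent quantities each of $C_\low$ and $C_\up$ contains, and checking that every such quantity is controlled by one of the compactness hypotheses. A minor point to dispatch along the way is the well-definedness of the auxiliary objects $\inv(\widehat w_N)$ and $1/\widehat\s_{\ell,N}$ in the relevant function spaces, supplied by Lemmas~\ref{lemma:inverse has derivatives} and \ref{lemma:reciprocal has derivatives}.
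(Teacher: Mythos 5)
Your proposal is correct and follows essentially the same route as the paper: the paper's proof simply cites Proposition \ref{prop:sobolev equivnorms} (via \cite[Theorem 3.5]{Teckentrup2019}) together with Theorems \ref{lemma:sob equiv warping} and \ref{lemma: sob equiv mixture}, which is exactly your substitution of the explicit norm-equivalence constants followed by continuity of the resulting expression on the compact set of admissible hyper-parameter values. Your aside in case (ii) that $C_{\low}(k_j)$ and $C_{\up}(k_\ell)$ depend on the estimated length scales $\widehat\lambda_{\ell,N}$, so that a compactness (or fixedness) assumption on these is implicitly required, correctly flags a hypothesis that the theorem statement and the paper's one-line proof leave tacit; otherwise nothing is missing.
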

\begin{proof}
For $(i)$, the condition on \(\widehat\s^2_{N}\) follows directly from \cite[Theorem 3.5]{Teckentrup2019} and the conditions on \(\widehat w_N\) follow from Theorem \ref{lemma:sob equiv warping}.       For $(ii)$, this follows from Theorem \ref{lemma: sob equiv mixture}.
\end{proof}

\begin{corollary}
\label{cor:conv_sob_est}
    Suppose the assumptions of Theorem \ref{lem:cond on estimates for Sob bnds} and Corollary \ref{cor:conv_sob} hold. Then 
\begin{equation*}
    \|f-m_{N,0}^f(\widehat \theta_N)\|_{H^\alpha(\Omega)}\leq C_{\ref{thm:Convergence_in_sob}} C_{\ref{lem:cond on estimates for Sob bnds}}(1,1)  \, h_{U_N,\Omega}^{\beta-\alpha} \,\|f\|_{H^{\beta}(\Omega)},
\end{equation*}
 for all \(\alpha \leq \beta\) and \(h_{U_N,\O}\leq h_0\). 
\end{corollary}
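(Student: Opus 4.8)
The plan is to combine the empirical-Bayes version of Proposition \ref{thm:Convergence_in_sob} stated at the start of section \ref{subsubsec:conv_sob_est} with the uniform bound on the norm-equivalence constants from Theorem \ref{lem:cond on estimates for Sob bnds}. First I would check that Assumption \ref{assump:kernel2} is in force for the kernel $k(\cdot,\cdot;\widehat\theta_N)$ for every $N$: under the assumptions of Corollary \ref{cor:conv_sob}, applied to the estimated hyper-parameters, Theorem \ref{lemma:sob equiv warping} (case $(i)$) or Theorem \ref{lemma: sob equiv mixture} (case $(ii)$) gives $\H_{k(\cdot,\cdot;\widehat\theta_N)}(\O)\cong H^\beta(\O)$ with explicit constants $C_\low(k(\cdot,\cdot;\widehat\theta_N))$ and $C_\up(k(\cdot,\cdot;\widehat\theta_N))$, and together with $f\in H^\beta(\O)$ this shows Assumption \ref{assump:kernel2} holds with the same $\beta$ for each $\widehat\theta_N$.

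Next I would invoke Proposition \ref{thm:Convergence_in_sob} with the kernel $k(\cdot,\cdot;\widehat\theta_N)$. Since the constants $h_0$ and $C_{\ref{thm:Convergence_in_sob}}$ in that proposition are independent of $f$, $k$ and $N$, this yields, for all $\alpha\le\beta$ and $h_{U_N,\O}\le h_0$,
\begin{equation*}
\|f-m_{N,0}^f(\widehat\theta_N)\|_{H^\alpha(\Omega)}\le C_{\ref{thm:Convergence_in_sob}}\,C_\low(k(\cdot,\cdot;\widehat\theta_N))^{-1}C_\up(k(\cdot,\cdot;\widehat\theta_N))\,h_{U_N,\Omega}^{\beta-\alpha}\,\|f\|_{H^\beta(\Omega)},
\end{equation*}
which is exactly the first displayed inequality of section \ref{subsubsec:conv_sob_est}; crucially, $h_0$ here does not depend on the kernel, so the inequality is valid uniformly over the estimated hyper-parameters. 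Finally I would apply Theorem \ref{lem:cond on estimates for Sob bnds} with $a=b=1$ — whose hypotheses are precisely the assumptions of the present corollary — to obtain
\[
\sup_{\widehat\theta_N} C_\low(k(\cdot,\cdot;\widehat\theta_N))^{-1}C_\up(k(\cdot,\cdot;\widehat\theta_N))\le C_{\ref{lem:cond on estimates for Sob bnds}}(1,1),
\]
and substitute this into the previous display to conclude.

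The argument is essentially a substitution, so there is no genuine obstacle; the one point requiring care is the bookkeeping hidden in Theorem \ref{lem:cond on estimates for Sob bnds}, namely that the product $C_\low^{-1}C_\up$ is controlled uniformly in $N$. This rests on the explicit form of the constants in Theorems \ref{lemma:sob equiv warping} and \ref{lemma: sob equiv mixture} together with the compactness assumptions on $\|\widehat w_N\|_{C^\beta(\O)}$, $\|\inv(\widehat w_N)\|_{C^\beta(\O)}$, $\min_{u\in\O}\widehat w_N'(u)$, $\min_{u\in\O}\inv(\widehat w_N)'(u)$ and $\widehat\s_N^2$ (respectively $\widehat L_N$, $\|\widehat\s_{\ell,N}\|_{H^\beta(\O)}$ and $\|1/\widehat\s_{\ell,N}\|_{H^\beta(\O)}$), and this is established in the proof of Theorem \ref{lem:cond on estimates for Sob bnds}. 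No further estimates are needed.
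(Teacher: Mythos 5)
Your proposal is correct and follows exactly the route the paper takes (implicitly): apply the empirical-Bayes form of Proposition \ref{thm:Convergence_in_sob}, justified because the assumptions of Corollary \ref{cor:conv_sob} give Assumption \ref{assump:kernel2} for each $k(\cdot,\cdot;\widehat\theta_N)$, and then bound $C_\low^{-1}C_\up$ uniformly in $N$ via Theorem \ref{lem:cond on estimates for Sob bnds} with $a=b=1$. The observation that $h_0$ and $C_{\ref{thm:Convergence_in_sob}}$ are independent of $k$, so the bound holds uniformly over the estimated hyper-parameters, is precisely the point the paper relies on.
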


\begin{corollary}
\label{cor:conv_sob_est_noisy}
    Suppose the assumptions of Theorem \ref{lem:cond on estimates for Sob bnds} and Corollary \ref{cor:conv_sob_noisy} hold, for $\s_\mathrm{min}$ independent of $N$. Then the conclusions of Theorem \ref{thm:conv_sob_noisy} hold.
\end{corollary}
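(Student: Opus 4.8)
The plan is to apply Theorem~\ref{thm:conv_sob_noisy} with the data-dependent kernel $k = k(\cdot,\cdot;\widehat\theta_N)$ and then render every $N$-dependent quantity in the resulting bounds uniform in $N$. The first step is to verify that the hypotheses of Theorem~\ref{thm:conv_sob_noisy}, namely Assumption~\ref{assump:kernel2} together with $f \in H^\beta(\overline\O)$ and $0 < k_{\mathrm{min}}$, hold for $k(\cdot,\cdot;\widehat\theta_N)$ for every $N$. The order $\beta = \nu + d/2$ is fixed because $\nu$ is not estimated, and the compactness hypotheses of Theorem~\ref{lem:cond on estimates for Sob bnds} ensure in particular that $\widehat w_N \in C^\beta(\O)$ with $\widehat w_N'$ and $\inv(\widehat w_N)'$ bounded away from zero, respectively that $\widehat\s_{\ell,N}$ and $1/\widehat\s_{\ell,N}$ lie in $H^\beta(\O)$; hence the hypotheses of Theorem~\ref{lemma:sob equiv warping} or \ref{lemma: sob equiv mixture} are met and Corollary~\ref{cor:conv_sob_noisy} applies for each realised $\widehat\theta_N$, with the condition $\s_{\mathrm{min}} \le \widehat\s_{j,N}$ for the mixture kernel supplied by hypothesis with $\s_{\mathrm{min}}$ independent of $N$.

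With Assumption~\ref{assump:kernel2} in force for $k(\cdot,\cdot;\widehat\theta_N)$, Theorem~\ref{thm:conv_sob_noisy} gives both of its error bounds with $C_{\low}$ and $C_{\up}$ replaced by $C_{\low}(k(\cdot,\cdot;\widehat\theta_N))$ and $C_{\up}(k(\cdot,\cdot;\widehat\theta_N))$; the first, deterministic bound holds for every $\varepsilon \in \R^N$, and the second holds in expectation when $\widehat\theta_N$ is independent of the noise realisation $\varepsilon$ (so that the prior used in the concentration-function argument is deterministic given $U_N$). Every kernel-dependent factor in either bound has the form $C_{\low}(k(\cdot,\cdot;\widehat\theta_N))^{-a} C_{\up}(k(\cdot,\cdot;\widehat\theta_N))^{b}$ with $a,b \ge 0$ — one reads off $(a,b) \in \{(1,1), (0,1), (1 - d/(2\beta), 0), (0, d/(2\beta))\}$, and $1 - d/(2\beta) \in [0,1)$ since $\beta \ge d/2$. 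By Theorem~\ref{lem:cond on estimates for Sob bnds} these are bounded uniformly in $N$ for $a,b > 0$, and the boundary cases are covered because the explicit formulas for $C_{\low}$ and $C_{\up}$ in Theorems~\ref{lemma:sob equiv warping} and \ref{lemma: sob equiv mixture}, combined with the compactness hypotheses, bound $C_{\low}(k(\cdot,\cdot;\widehat\theta_N))$ and $C_{\up}(k(\cdot,\cdot;\widehat\theta_N))$ two-sidedly away from $0$ and $\infty$ — which is exactly what the proof of Theorem~\ref{lem:cond on estimates for Sob bnds} verifies.

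It remains to make the threshold $N^*(k_{\mathrm{min}})$ appearing in the second bound uniform in $N$. For this I would show that $k_{\mathrm{min}}(\widehat\theta_N) := \min_{u \in \overline\O} k(u,u;\widehat\theta_N)$ lies in a fixed compact subset of $(0,\infty)$: for the warping kernel $k(u,u;\widehat\theta_N) = k_s(0) = \widehat\s_N^2$, which lies in a compact subset of $(0,\infty)$ by hypothesis; for the mixture kernel $k(u,u;\widehat\theta_N) = \s^2 \sum_\ell \widehat\s_{\ell,N}(u)^2$, which is bounded below by $\s^2 \s_{\mathrm{min}}^2 > 0$ and above via the uniform bound on $\max_\ell \|\widehat\s_{\ell,N}\|_{H^\beta(\O)}$ and the embedding $H^\beta(\O) \hookrightarrow C^0(\O)$. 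Inspecting the proof of Theorem~\ref{thm:conv_sob_noisy}, $N^*(k_{\mathrm{min}})$ depends on $k_{\mathrm{min}}$ only through $\gamma(k,c)$ and $\epsilon^*$, which can be bounded below as $k_{\mathrm{min}}$ ranges over a compact subset of $(0,\infty)$ (choosing the auxiliary constant $c$ accordingly), so $\sup_N N^*(k_{\mathrm{min}}(\widehat\theta_N)) < \infty$ and serves as a uniform threshold. Since $h_0$ and $C_{\ref{thm:conv_sob_noisy}}$ are already independent of $k$ in Theorem~\ref{thm:conv_sob_noisy}, collecting these estimates shows that both of its conclusions hold for $m_{N,\delta^2}^f(\widehat\theta_N)$ with all constants independent of $N$.

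The main obstacle is the uniformity of $N^*(k_{\mathrm{min}})$: confirming it requires re-opening the proof of Theorem~\ref{thm:conv_sob_noisy} to track how the threshold depends on $k_{\mathrm{min}}$ (and to check the choice of $c$ can be made compatibly), and this is the one place the hypothesis ``$\s_{\mathrm{min}}$ independent of $N$'' is genuinely needed. Everything else is bookkeeping once Corollary~\ref{cor:conv_sob_noisy} and Theorem~\ref{lem:cond on estimates for Sob bnds} are in hand. I would also state explicitly that the expectation bound presumes $\widehat\theta_N$ depends only on information independent of $\varepsilon$ (e.g.\ a held-out sample or the design $U_N$), whereas the deterministic bound holds with no such restriction.
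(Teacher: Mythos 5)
Your proposal is correct and follows essentially the same route as the paper: verify Assumption \ref{assump:kernel2} for $k(\cdot,\cdot;\widehat\theta_N)$ via Theorem \ref{lemma:sob equiv warping} or \ref{lemma: sob equiv mixture} (as in Corollary \ref{cor:conv_sob_noisy}), apply Theorem \ref{thm:conv_sob_noisy} with the plugged-in kernel, and absorb every kernel-dependent factor of the form $C_{\low}(k(\cdot,\cdot;\widehat\theta_N))^{-a}C_{\up}(k(\cdot,\cdot;\widehat\theta_N))^{b}$ uniformly in $N$ via Theorem \ref{lem:cond on estimates for Sob bnds}, with the hypothesis that $\s_{\mathrm{min}}$ is independent of $N$ controlling $k_{\mathrm{min}}$ and hence the threshold $N^*$. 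Your additional caveats — tracking how $N^*(k_{\mathrm{min}})$ depends on $k_{\mathrm{min}}$ inside the proof of Theorem \ref{thm:conv_sob_noisy}, and noting that the expectation bound presumes $\widehat\theta_N$ is independent of the noise realisation — are sound refinements of points the paper leaves implicit.
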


\subsubsection{Computing hyper-parameter estimates}
The results presented in sections \ref{subsubsec:conv_rkhs_est} and \ref{subsubsec:conv_sob_est} do not make any assumptions on how the estimated hyper-parameters $\widehat \theta_N$ are computed. The methodology to do this efficiently is an important research area of its own, but is not the focus of this work and so we only briefly discuss this here. 

Estimation of the stationary parameters $\{\lambda, \sigma^2\}$ in the Mat\'ern kernel, including questions of identifiability, has been studied extensively (see, e.g. the discussion and references in \cite{Teckentrup2019}), and is frequently done using maximum marginal likelihood estimation or cross validation.
The non-stationary parameters $w$, $\{\sigma_\ell\}_{\ell=1}^L$ and ${\lambda_{a}}$ are functions, and so typically a suitable parametrisation is chosen before estimation.
In deep kernel GPs \cite{Wilson2016}, which have garnered considerable interest recently, the warping function \(w\) takes the form of a deep neural network. In \cite{Volodina2020}, the authors discuss efficient estimation of the mixture coefficients $\{\sigma_\ell\}_{\ell=1}^L$ and the number of components $L$ using methods akin to cross validation. A piece-wise linear model is chosen for the length scale ${\lambda_{a}}$ in \cite{fisher2020}.


\subsection{Extensions of error analysis}\label{subsec:eagr_ext}

In this section, we provide several extensions of the error analysis provided in sections \ref{sec:conergence in native space}-\ref{subsec:eagr_empirical}. We apply established proof techniques in each context, and, for brevity, we will omit detailed derivations and refer the reader instead to the provided references.

\subsubsection*{Convergence of $k_{N,0}$}
In the case of noise-free data $y_N$, the posterior variance $k_{N,0}$ satisfies the equality in Proposition \ref{prop:predvar_sup} below, allowing us to transfer convergence results on $m_{N,0}^f$ to convergence results on $k_{N,0}$ (see, e.g. \cite{Teckentrup2019}).
{Given  the design points $U_N$, we introduce the mapping $m_{N,0}^{(\cdot)} : \H_k(\O) \rightarrow \H_k(\O)$, which is built on the definition of the posterior mean $m_{N,0}^f$ given in \eqref{eq:gp_mean}. With $g(U_N) := [g(u^1); \dots; g(u^N)] \in \R^N$, we define
\begin{align} \label{eq:pred_mean_g}
g(u) \mapsto m_{N,0}^g(u) = k(u,U_N)^T K(U_N,U_N)^{-1} g(U_N).
\end{align}

\begin{proposition}[e.g. {\cite[Proposition 3.2]{Teckentrup2019}}]
\label{prop:predvar_sup} Suppose $k_{N,0}$ and $m_N^{(\cdot)}$ are given by \eqref{eq:gp_variance} with $\delta^2=0$ and \eqref{eq:pred_mean_g}, respectively. Then for any $u \in \O$ we have
\[
k_{N,0}(u,u)^{\frac{1}{2}} = \sup_{\substack{{g \in \H_k(\O) \; \text{s.t. }} \\ \|g\|_{\H_k(\O)}=1}} | g(u) - m^g_N(u)|.
\]
\end{proposition}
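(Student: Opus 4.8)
The plan is to recognise the map $g \mapsto g(u) - m^g_{N,0}(u)$ as an inner product in $\H_k(\O)$ against a fixed representer, apply Cauchy–Schwarz, and then evaluate the representer's norm explicitly by exploiting that it vanishes on the training set.

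First I would fix $u \in \O$ and use the reproducing property from Definition \ref{def:RKHS}, namely $g(u) = (g, k(\cdot,u))_{\H_k(\O)}$ and $g(u_n) = (g, k(\cdot,u_n))_{\H_k(\O)}$ for each $n$, to rewrite the error functional as
\[
g(u) - m^g_{N,0}(u) = \Big(g,\; k(\cdot,u) - \textstyle\sum_{n=1}^N \big[K(U_N,U_N)^{-1} k(u,U_N)\big]_n \, k(\cdot,u_n)\Big)_{\H_k(\O)} =: (g, v_u)_{\H_k(\O)},
\]
where $v_u \in \H_k(\O)$ by Definition \ref{def:RKHS}(i) and the invertibility of $K(U_N,U_N)$, which holds since $k$ is assumed positive definite in the noise-free case. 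By the Cauchy–Schwarz inequality in $\H_k(\O)$, $\sup_{\|g\|_{\H_k(\O)}=1} |(g,v_u)_{\H_k(\O)}| = \|v_u\|_{\H_k(\O)}$, with the supremum attained at $g = v_u/\|v_u\|_{\H_k(\O)}$ when $v_u \not\equiv 0$ (and trivially otherwise).

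It then remains to identify $\|v_u\|_{\H_k(\O)}^2$ with $k_{N,0}(u,u)$. The key observation is that $v_u$ interpolates zero on $U_N$: for any $m$, the reproducing property gives $v_u(u_m) = (v_u, k(\cdot,u_m))_{\H_k(\O)} = k(u_m,u) - \big[K(U_N,U_N)K(U_N,U_N)^{-1} k(u,U_N)\big]_m = 0$. Hence the second group of terms in $v_u$ contributes nothing to $(v_u,v_u)_{\H_k(\O)}$, and applying the reproducing property once more yields $\|v_u\|_{\H_k(\O)}^2 = (v_u, k(\cdot,u))_{\H_k(\O)} = v_u(u) = k(u,u) - k(u,U_N)^T K(U_N,U_N)^{-1} k(u,U_N)$, which is precisely $k_{N,0}(u,u)$ by \eqref{eq:gp_variance} with $\delta^2 = 0$.

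The argument is essentially bookkeeping with the reproducing kernel identity, so I do not anticipate a genuine obstacle; the only points requiring care are that $K(U_N,U_N)$ is invertible (positive definiteness of $k$) and that every object manipulated indeed lies in $\H_k(\O)$, so that the inner-product computations are legitimate — both of which are guaranteed by the standing assumptions in the noise-free setting.
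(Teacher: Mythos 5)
Your argument is correct and is essentially the standard power-function/representer proof that the paper relies on by citing \cite[Proposition 3.2]{Teckentrup2019}: write $g(u)-m_N^g(u)=(g,v_u)_{\H_k(\O)}$, apply Cauchy--Schwarz, and use that $v_u$ vanishes on $U_N$ to identify $\|v_u\|_{\H_k(\O)}^2$ with $k_{N,0}(u,u)$. The invertibility of $K(U_N,U_N)$ is indeed justified by the positive definiteness assumed in the noise-free setting, so no gaps remain.
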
}

An application of Proposition \ref{prop:predvar_sup} then gives us the following convergence results on $k_{N,0}$. In slight abuse of notation, let use denote by $k(\cdot;\widehat \theta_N)$ the point-wise posterior variance at $u$, i.e. $k(u;\widehat \theta_N) := k(u,u;\widehat \theta_N)$ for $u \in \O$.

\begin{corollary} \label{cor:var_native} Suppose the assumptions of Proposition \ref{thm:Wend11.13} hold. Then there exists a constant $h_0$, independent of $f$ and $N$, such that
\begin{equation*}
    \|k_{N,0}(\widehat \theta_N)^{1/2}\|_{C^0(\O)} \leq C_{\ref{thm:Wend11.13}}(k(\cdot,\cdot;\widehat\theta_N)) h^{p}_{U_N,\Omega},   
\end{equation*}
for all $h_{U_N,\Omega}\leq h_0$.
\end{corollary}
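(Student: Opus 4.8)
The plan is to obtain the bound directly from the variational characterisation of the posterior variance in Proposition \ref{prop:predvar_sup} combined with the pointwise interpolation error estimate of Proposition \ref{thm:Wend11.13}, applied to the kernel $k(\cdot,\cdot;\widehat\theta_N)$ with estimated hyper-parameters. I read the hypothesis ``the assumptions of Proposition \ref{thm:Wend11.13} hold'' as: Assumption \ref{assump:kernel1} is satisfied for $k(\cdot,\cdot;\widehat\theta_N)$ for every $N\in\N$, so that the threshold $h_0$ and the universal constant $C$ appearing in Proposition \ref{thm:Wend11.13} may be chosen independently of $N$ (they are independent of the kernel).

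First I would fix $N$ with $h_{U_N,\Omega}\leq h_0$ and fix $u\in\O$, and apply Proposition \ref{prop:predvar_sup} to the kernel $k(\cdot,\cdot;\widehat\theta_N)$ to write
\[
k_{N,0}(u,u;\widehat\theta_N)^{1/2} \;=\; \sup_{\substack{g \in \H_{k(\cdot,\cdot;\widehat\theta_N)}(\O)\\ \|g\|_{\H_{k(\cdot,\cdot;\widehat\theta_N)}(\O)}=1}} \bigl| g(u) - m^g_{N,0}(u)\bigr|,
\]
where $m^{(\cdot)}_{N,0}$ is the posterior-mean map \eqref{eq:pred_mean_g} associated with $k(\cdot,\cdot;\widehat\theta_N)$. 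Next I would observe that, for any fixed $g$ in the unit ball of $\H_{k(\cdot,\cdot;\widehat\theta_N)}(\O)$, the function $m^g_{N,0}$ is exactly the noise-free GP posterior mean corresponding to the true function $g$, so Proposition \ref{thm:Wend11.13} applies with $f=g$ and $\alpha=0$, giving
\[
\bigl| g(u) - m^g_{N,0}(u)\bigr| \leq \|g - m^g_{N,0}\|_{C^0(\O)} \leq C\, C_{\ref{thm:Wend11.13}}(k(\cdot,\cdot;\widehat\theta_N))\, h^{p}_{U_N,\Omega}\, \|g\|_{\H_{k(\cdot,\cdot;\widehat\theta_N)}(\O)} = C\, C_{\ref{thm:Wend11.13}}(k(\cdot,\cdot;\widehat\theta_N))\, h^{p}_{U_N,\Omega}.
\]
Taking the supremum over such $g$, and then the supremum over $u\in\O$, yields $\|k_{N,0}(\widehat\theta_N)^{1/2}\|_{C^0(\O)}\leq C\, C_{\ref{thm:Wend11.13}}(k(\cdot,\cdot;\widehat\theta_N))\, h^{p}_{U_N,\Omega}$, which is the stated bound (the universal constant $C$ being absorbed into $C_{\ref{thm:Wend11.13}}$, consistently with the convention used in Corollary \ref{cor:convergence in the native space}).

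I do not expect a genuine analytical obstacle here: the proof is a two-line combination of two already-quoted results. The only points needing care are bookkeeping ones — confirming that $h_0$ and $C$ from Proposition \ref{thm:Wend11.13} can be taken uniformly in $N$, and spelling out that the operator $m^{(\cdot)}_{N,0}$ in Proposition \ref{prop:predvar_sup} is literally the noise-free posterior-mean operator, so that an arbitrary unit-norm element of the RKHS may be fed into Proposition \ref{thm:Wend11.13} in place of the unknown function. The same argument carries over verbatim, with $k$ in place of $k(\cdot,\cdot;\widehat\theta_N)$, to the case of fixed hyper-parameters.
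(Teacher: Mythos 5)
Your argument is exactly the paper's: the authors prove this corollary by citing Propositions \ref{thm:Wend11.13} and \ref{prop:predvar_sup}, which is precisely your combination of the variational characterisation of $k_{N,0}(u,u)^{1/2}$ with the pointwise error bound applied to unit-norm elements of the RKHS (including the absorption of the universal constant $C$, as in Corollary \ref{cor:convergence in the native space}). Correct, and the same route.
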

\begin{proof} This follows directly from Propositions \ref{thm:Wend11.13} and \ref{prop:predvar_sup}.
\end{proof}

\begin{corollary}\label{cor:var_sobolev} Suppose the assumptions of Proposition \ref{thm:Convergence_in_sob} hold.
Then there exist constants \(h_0\) and \(C_{\ref{cor:var_sobolev}}\), 
    independent of $f$, $k$ and $N$, such that for any $\epsilon > 0$
\begin{equation*}
    \|k_{N,0}(\widehat \theta_N)^{1/2}\|_{L^2(\Omega)}\leq C_{\ref{cor:var_sobolev}}C_{\low}(k(\cdot,\cdot; \widehat \theta_N))^{-1}C_{\up} (k(\cdot,\cdot; \widehat \theta_N))^2 \, h_{U_N,\Omega}^{\beta-d/2 - \epsilon},
\end{equation*}
 for \(h_{U_N,\O}\leq h_0\).
\end{corollary}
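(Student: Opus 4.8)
The plan is to combine Proposition~\ref{prop:predvar_sup}, which characterises the pointwise posterior standard deviation as a worst-case interpolation error over the unit ball of $\H_k(\O)$, with the Sobolev convergence estimate of Proposition~\ref{thm:Convergence_in_sob}, but applied with a slightly reduced smoothness exponent to account for the $L^2$-to-$L^\infty$ gap. First I would fix the kernel $k(\cdot,\cdot;\widehat\theta_N)$ and, invoking Proposition~\ref{prop:predvar_sup}, write for each $u \in \O$
\[
k_{N,0}(u,u;\widehat\theta_N)^{1/2} = \sup_{\|g\|_{\H_k(\O)}=1} |g(u) - m_N^g(u)|.
\]
Then I would bound the right-hand side uniformly in $u$ by $\|g - m_N^g\|_{C^0(\O)}$, and use the Sobolev embedding $H^{d/2+\epsilon'}(\O) \hookrightarrow C^0(\O)$ (valid for any $\epsilon' > 0$ since $\O$ is a bounded Lipschitz domain satisfying an interior cone condition) to get $\|g - m_N^g\|_{C^0(\O)} \leq C\,\|g - m_N^g\|_{H^{d/2+\epsilon'}(\O)}$.

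Next I would apply Proposition~\ref{thm:Convergence_in_sob} to the function $g$, which under Assumption~\ref{assump:kernel2} lies in $H^\beta(\O)$, with $\alpha = d/2+\epsilon'$; this is legitimate provided $d/2 + \epsilon' \leq \beta$, which holds for $\epsilon'$ small since $\beta \geq d/2$ (one handles the borderline $\beta = d/2$ case by noting the statement only claims a rate $h^{\beta-d/2-\epsilon}$, vacuous or trivial there, or by restricting to $\beta > d/2$). This yields
\[
\|g - m_N^g\|_{H^{d/2+\epsilon'}(\O)} \leq C_{\ref{thm:Convergence_in_sob}}\, C_\low(k(\cdot,\cdot;\widehat\theta_N))^{-1} C_\up(k(\cdot,\cdot;\widehat\theta_N))\, h_{U_N,\O}^{\beta - d/2 - \epsilon'}\, \|g\|_{H^\beta(\O)}.
\]
Finally I would bound $\|g\|_{H^\beta(\O)} \leq C_\up(k(\cdot,\cdot;\widehat\theta_N)) \|g\|_{\H_k(\O)} = C_\up(k(\cdot,\cdot;\widehat\theta_N))$ using the norm equivalence and the normalisation $\|g\|_{\H_k(\O)}=1$; this produces the extra factor of $C_\up$, giving the total power $C_\low^{-1}C_\up^2$ stated in the corollary. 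Taking the supremum over $u$ converts the pointwise bound into the $C^0(\O)$ bound, and since $\O$ is bounded the $L^2(\O)$ norm of $k_{N,0}(\widehat\theta_N)^{1/2}$ is controlled by a constant times its $C^0(\O)$ norm (absorbing $|\O|^{1/2}$ into $C_{\ref{cor:var_sobolev}}$); relabelling $\epsilon' =: \epsilon$ completes the argument.

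I do not anticipate a serious obstacle here — the result is essentially a direct composition of two earlier propositions — but the one point requiring care is the handedness of the Sobolev exponent: Proposition~\ref{thm:Convergence_in_sob} is stated for $\alpha \leq \beta$ and for integer-or-real $\alpha$, so I must check that taking $\alpha = d/2 + \epsilon'$ (a non-integer in general) is within its scope, and that the constants $C_{\ref{thm:Convergence_in_sob}}$ and the embedding constant do not secretly depend on $\epsilon'$ in a way that blows up as $\epsilon' \to 0$ (they do blow up, which is precisely why the final rate carries an arbitrarily small loss $\epsilon$ rather than being exactly $\beta - d/2$). Everything else — the measurability of $u \mapsto k_{N,0}(u,u;\widehat\theta_N)^{1/2}$, the uniformity of $h_0$ in $f$ and $N$ inherited from Proposition~\ref{thm:Convergence_in_sob}, and the fact that the estimated hyper-parameters enter only through the norm-equivalence constants — is routine and follows the template already used for Corollary~\ref{cor:var_native}.
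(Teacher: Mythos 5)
Your argument is correct and is essentially the paper's proof: the paper's own justification simply combines Propositions \ref{thm:Convergence_in_sob} and \ref{prop:predvar_sup} via the technique of \cite[Theorem 3.8]{Teckentrup2019}, which is exactly the route you reconstruct (sup characterisation of the variance, Sobolev embedding $H^{d/2+\epsilon}(\O)\hookrightarrow C^0(\O)$, the rate from Proposition \ref{thm:Convergence_in_sob} at order $d/2+\epsilon$, and the norm equivalence supplying the extra factor of $C_\up$). Your side remarks on the $\epsilon$-dependence of the constants and the borderline $\beta=d/2$ case are appropriate and do not affect the validity of the argument.
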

\begin{proof} This follows from Propositions \ref{thm:Convergence_in_sob} and \ref{prop:predvar_sup}, following the same proof technique as in \cite[Theorem 3.8]{Teckentrup2019}. 
\end{proof}

Under the conditions of Theorems \ref{thm: conditions on hyperparam native space norm} and \ref{lem:cond on estimates for Sob bnds}, respectively, the constants in the above error bounds can again be bounded uniformly in $N$.


\subsubsection*{Separable covariance kernels}
All error bounds derived so far have been expressed in terms of the fill distance $h_{U_N,\Omega}$. This can be translated into convergence rates in $N$ for specific sets of design points $U_N$. For example, the fill distance decays at rate $N^{-1/d}$ for uniform grids. This is the fastest decay rate that can be obtained by any $U_N$, and so there is a strong dimension dependence in the error bounds presented so far.

For $f \in H^\beta(\O)$, the rate $N^{-\beta/d}$ given in Proposition \ref{thm:Convergence_in_sob} for $\|f-m_{N,0}^f\|_{L^2(\O)}$ is optimal (see, e.g. the discussion in \cite{Teckentrup2019}) and so cannot be improved. However, by imposing further restrictions on $f$, we are able to improve the error bounds. This additionally requires a specific structure on $\O$, $U_N$ and $k$. In particular, we require that the kernel $k$ is separable, i.e. that it can be written as the product of one-dimensional kernels $k(u,u') = \prod_{j=1}^d k^j(u_{(j)},u'_{(j)})$, and that the domain $\O$ is of tensor product structure $\Omega = \prod_{j=1}^d \Omega_j$. The training points $U_N$ are chosen as a sparse grid \cite{bungartz2004sparse}, defined as the set of points
\begin{equation*}
H(q,{{d}}) := \bigcup_{\|i \|_1 = q} X_1^{(i_1)} \times \cdots X_{{d}}^{(i_{{d}})},
\end{equation*}
where $i \in \N_0^{{d}}$, $q \geq {{d}}$, and for $1 \leq j \leq {{d}}$ we choose a sequence $X_j^{(p)} := \{x_{j,1}^{(p)}, \dots, x_{j,m_p}^{(p)}\}$, $p \in \N$, of nested sets of points in $\Omega_j$.
We then have the following result.
\begin{corollary}\label{cor:conv_sep}  
Suppose $(i)$ $\O = \prod_{j=1}^{{d}} \O_j$,
$(ii)$ $U_N$ is chosen as the Smolyak sparse grid $H(q,{{d}})$ with
$h_{X_j^{(p)},\Omega_j} \leq C_h m_p^{-r_h}$, 
$(iii)$ $k^j \in \{k_{\warp}^{w,k_s}, k_{\mix}^{\{\s_\ell,k_\ell\}_{\ell=1}^L}\}$, satisfying the assumptions of Theorem \ref{lemma:sob equiv warping} or \ref{lemma: sob equiv mixture}, respectively, and
$(iv)$ $f \in H^{\{\beta_j\}}_{\otimes^{{d}}} (\O) := \otimes_{j=1}^{{d}} H^{\beta_j}(\O_j)$. 
Then there exist constants $C_{\ref{cor:conv_sep}}$ and $C_{\ref{cor:conv_sep}}'$, independent of $f$, $k$ and $N$, such that
\begin{align*}
\| f - m_{N,0}^f(\widehat \theta_N)\|_{H^{\{\alpha_j\}}_{\otimes^{{d}}}(\Omega)} &\leq C_{\ref{cor:conv_sep}} \left( \prod_{j=1}^d C_\low^{-1}(k^j(\widehat \theta_N)) C_\up(k^j(\widehat \theta_N))\right) N^{-\gamma} (\log N)^{(1+\gamma)(d - 1)} \|f \|_{H^{\{\beta_j\}}_{\otimes^{{d}}}(\O)}, \\
\|k_N(\widehat \theta_N)^{1/2}\|_{L^2(\O)} &\leq C_{\ref{cor:conv_sep}}' \left( \prod_{j=1}^d C_\low^{-1}(k^j(\widehat \theta_N))  C_\up(k^j(\widehat \theta_N))^2\right) N^{-\gamma+\frac{1}{2}+\epsilon} (\log N)^{(\frac{3}{2}+\epsilon+\gamma)(d - 1)}.
\end{align*}
for any $\epsilon >0$ and $\{\alpha_j\}\leq  \{\beta_j\}$, where $\gamma = \min_{1 \leq j \leq {{d}}} r_h (\beta_j - \alpha_j)$.
\end{corollary}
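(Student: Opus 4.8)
The plan is to reduce the claim to a known tensor-product error estimate for Smolyak sparse-grid interpolation, with the only new ingredient being a careful tracking of how the norm-equivalence constants $C_\low$ and $C_\up$ of the one-dimensional non-stationary kernels enter the bound. First I would observe that under hypothesis $(iii)$, Theorem \ref{lemma:sob equiv warping} or \ref{lemma: sob equiv mixture} applies to each factor $k^j$, so that $\H_{k^j}(\O_j) \cong H^{\beta_j}(\O_j)$ with explicit constants $C_\low(k^j(\widehat\theta_N))$ and $C_\up(k^j(\widehat\theta_N))$; consequently the tensor-product RKHS $\H_k(\O) = \otimes_{j=1}^d \H_{k^j}(\O_j)$ is norm-equivalent to $H^{\{\beta_j\}}_{\otimes^d}(\O)$ with combined constants $\prod_j C_\low(k^j(\widehat\theta_N))$ and $\prod_j C_\up(k^j(\widehat\theta_N))$ (the tensor product of norm equivalences multiplies the constants). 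Since $f \in H^{\{\beta_j\}}_{\otimes^d}(\O)$ by $(iv)$, we have $f \in \H_k(\O)$ up to norm equivalence, which is exactly the hypothesis needed to invoke the sparse-grid machinery.

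Next I would apply the known dimension-adaptive error bound for GP interpolation on Smolyak sparse grids — this is the tensor-product analogue of Proposition \ref{thm:Convergence_in_sob}, and is precisely the content referenced in \cite[Corollary 3.9 and Theorem 3.10 or similar]{Teckentrup2019} (or \cite{bungartz2004sparse} for the underlying sparse-grid interpolation estimates). The one-dimensional building block is that, on each $\O_j$, Proposition \ref{thm:Convergence_in_sob} gives a factor $C_{\ref{thm:Convergence_in_sob}} C_\low(k^j)^{-1} C_\up(k^j) \, h_{X_j^{(p)},\O_j}^{\beta_j-\alpha_j}$, and by hypothesis $(ii)$ this is bounded by a constant times $m_p^{-r_h(\beta_j-\alpha_j)}$. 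Feeding these univariate rates into the standard Smolyak error analysis produces the rate $N^{-\gamma}(\log N)^{(1+\gamma)(d-1)}$ with $\gamma = \min_j r_h(\beta_j-\alpha_j)$, together with a product $\prod_j C_\low^{-1}(k^j(\widehat\theta_N)) C_\up(k^j(\widehat\theta_N))$ of the norm-equivalence constants collected across dimensions. This establishes the first inequality.

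For the second inequality, concerning $\|k_N(\widehat\theta_N)^{1/2}\|_{L^2(\O)}$, I would combine the first bound with the pointwise characterisation of the posterior variance in Proposition \ref{prop:predvar_sup}, exactly as in the proof of Corollary \ref{cor:var_sobolev}: the posterior standard deviation at $u$ equals the worst-case interpolation error over the unit ball of $\H_k(\O)$, so bounding the $L^2(\O)$ norm of $k_N^{1/2}$ reduces to bounding the $L^2$ operator norm of the interpolation error. This introduces one extra factor of $C_\up(k^j(\widehat\theta_N))$ per dimension (because passing from the $\H_k$-unit ball to the $H^{\{\beta_j\}}_{\otimes^d}$-unit ball costs a factor $\prod_j C_\up(k^j)$, on top of the $\prod_j C_\low^{-1}(k^j) C_\up(k^j)$ already present), explaining the exponent $C_\up^2$ in the statement. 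The loss of $N^{1/2+\epsilon}$ and the modified logarithmic power arise from the same trace/summation argument over sparse-grid levels used in \cite[Theorem 3.8]{Teckentrup2019}, now carried out in the tensor-product setting; the $\epsilon$ absorbs a logarithmic factor coming from summing the $d$-dimensional sparse-grid contributions. Under the conditions of Theorem \ref{thm: conditions on hyperparam native space norm} and Theorem \ref{lem:cond on estimates for Sob bnds}, the products of constants are themselves bounded uniformly in $N$, so the rates are genuine.

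The main obstacle I anticipate is the bookkeeping in the tensor-product sparse-grid error estimate: one must verify that the univariate convergence rates $m_p^{-r_h(\beta_j-\alpha_j)}$ combine correctly through the Smolyak construction to yield $N^{-\gamma}$ with the stated logarithmic correction, and in particular that the constants $C_\low(k^j), C_\up(k^j)$ factorise cleanly across dimensions rather than compounding in some worse way. This is essentially routine given \cite{Teckentrup2019,bungartz2004sparse}, but the precise exponent on $\log N$ and the appearance of $\gamma$ inside it require care, as does confirming that the anisotropy in $\{\beta_j\}$ and $\{\alpha_j\}$ is handled by taking the minimum in the definition of $\gamma$. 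A secondary technical point is ensuring the tensor-product norm equivalence $\H_k(\O) \cong H^{\{\beta_j\}}_{\otimes^d}(\O)$ holds with the claimed product constants; this follows from the one-dimensional equivalences of Theorems \ref{lemma:sob equiv warping}--\ref{lemma: sob equiv mixture} and the definition of the tensor-product Hilbert space, but should be stated explicitly.
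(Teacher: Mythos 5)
Your proposal is correct and follows essentially the same route as the paper: the paper's proof is simply a citation of the tensor-product sparse-grid results in \cite[Theorems 3.11 and 3.12]{Teckentrup2019} combined with the norm equivalences of Theorems \ref{lemma:sob equiv warping} and \ref{lemma: sob equiv mixture}, which is exactly the reduction you describe (your sketch of the variance bound via Proposition \ref{prop:predvar_sup} is the content of the second cited theorem). The only discrepancy is the precise result numbers within \cite{Teckentrup2019}, which you flagged as uncertain anyway.
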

\begin{proof} This follows directly from \cite[Theorems 3.11 and 3.12]{Teckentrup2019}, together with Theorems \ref{lemma:sob equiv warping} and \ref{lemma: sob equiv mixture}.
\end{proof}

Notice that the rate $\gamma$ in Corollary \ref{cor:conv_sep} is the same rate in $N$ that we would obtain with Proposition \ref{thm:Convergence_in_sob} in the case $d=1$, provided $h_{U_N,\O} \leq C N^{-1/d}$ in Proposition \ref{thm:Convergence_in_sob} and $h_{X_j^{(p)},\Omega_j} \leq C_h m_p^{-1}$ in Corollary \ref{cor:conv_sep} (i.e. the fill distance decays at the optimal rate in both cases). Up to a log factor, the convergence rates in Corollary \ref{cor:conv_sep} are independent of $d$.
Under the conditions of Theorem \ref{lem:cond on estimates for Sob bnds} on $k^j$, the constants in the above error bounds can again be bounded uniformly in $N$. Extending Corollary \ref{cor:conv_sep} to the case of noisy data is beyond the scope of this work.

\subsubsection*{Random training points $U_N$}
{ When the training points $U_N$ are chosen randomly, error bounds in terms of the fill distance for fixed $U_N$ are not always meaningful. Corollary \ref{cor:conv_random} below gives a result in expectation over $U_N$ in the case of noise-free data.}
\begin{corollary} \label{cor:conv_random} Suppose $U_N$ are sampled i.i.d. from a measure $\mu$ with density ${\rho}$ satisfying ${\rho(u)} \geq {\rho_{\mathrm{min}}} > 0$ for all $u \in \overline{\O}$, and the assumptions of Corollary \ref{cor:conv_sob_est} hold $\mu$-almost surely. Then there exist constants $C_{\ref{cor:conv_random}}$ and $C_{\ref{cor:conv_random}}'$, independent of $f$ and $N$, such that for all $\alpha \leq \beta$ and $\epsilon > 0$ we have
\begin{align*}
\mathbb{E}_{\mu} \left[  \|f - m_{N,0}^f(\widehat \theta_N)\|_{H^\alpha(\O)} \right] &\leq C_{\ref{cor:conv_random}} \; N^{-\frac{\beta-\alpha}{d} + \epsilon} \, \|f\|_{H^\beta(\O)}, \\
\mathbb{E}_{\mu} \left[  \|k_{N,0}^{1/2}(\widehat \theta_N)\|_{L^2(\O)} \right] &\leq C_{\ref{cor:conv_random}}' \; N^{-\frac{\beta}{d} + \frac{1}{2d} + \epsilon} \, \|f\|_{H^\beta(\O)}.
\end{align*}
\end{corollary}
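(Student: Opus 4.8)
The plan is to convert the fill-distance error bounds in Corollary \ref{cor:conv_sob_est} (and the variance bound in Corollary \ref{cor:var_sobolev}) into rates in $N$ by controlling the fill distance $h_{U_N,\O}$ of randomly sampled points. First, I would invoke Corollary \ref{cor:conv_sob_est}, which holds $\mu$-almost surely by assumption, to write, for each realisation of $U_N$ with $h_{U_N,\O}\leq h_0$,
\[
\|f-m_{N,0}^f(\widehat\theta_N)\|_{H^\alpha(\O)} \leq C \, h_{U_N,\O}^{\beta-\alpha}\,\|f\|_{H^\beta(\O)},
\]
with $C$ uniform in $N$ thanks to Theorem \ref{lem:cond on estimates for Sob bnds} (the hyper-parameter constants are bounded in a compact set). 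The remaining task is purely probabilistic: bound $\mathbb{E}_\mu[h_{U_N,\O}^{\beta-\alpha}]$.

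The key probabilistic step is a standard covering argument. Since $\overline\O$ is a bounded Lipschitz domain satisfying an interior cone condition, it can be covered by $O(r^{-d})$ balls of radius $r$, each of which has $\mu$-measure at least $c\,\rho_{\min}\,r^d$ (using $\rho\geq\rho_{\min}>0$ and the cone condition to ensure each ball intersected with $\O$ has volume $\gtrsim r^d$). Hence the probability that some ball contains no point of $U_N$ is at most $O(r^{-d})(1-c\rho_{\min}r^d)^N \leq O(r^{-d})\exp(-c\rho_{\min}r^d N)$. Choosing $r \asymp (\log N / N)^{1/d}$ makes this probability summable (or at least $o(N^{-p})$ for any $p$), and on the complementary event $h_{U_N,\O}\lesssim r$. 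A clean way to finish is the standard bound $\mathbb{E}_\mu[h_{U_N,\O}^{\beta-\alpha}] \leq \int_0^\infty \mathbb{P}[h_{U_N,\O}^{\beta-\alpha}>t]\,\mathrm{d}t$, split at $t = (C'\log N/N)^{(\beta-\alpha)/d}$, using the tail estimate on the upper part and $h_{U_N,\O}\leq \mathrm{diam}(\O)$ on the lower, which yields $\mathbb{E}_\mu[h_{U_N,\O}^{\beta-\alpha}] \leq C\,(\log N/N)^{(\beta-\alpha)/d} \leq C_\epsilon N^{-(\beta-\alpha)/d+\epsilon}$, absorbing the logarithm into the arbitrary $\epsilon>0$. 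This type of argument is well known; I would cite e.g. the analysis in \cite{Teckentrup2019,wynne2021convergence} or \cite{wendland_2004} rather than reproving it. The same reasoning applied to Corollary \ref{cor:var_sobolev}, which gives $\|k_{N,0}^{1/2}\|_{L^2(\O)}\leq C\,h_{U_N,\O}^{\beta-d/2-\epsilon}$, produces the second displayed bound with rate $N^{-\beta/d+1/(2d)+\epsilon}$ after relabelling $\epsilon$.

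The main obstacle — really the only non-routine point — is making sure the event $\{h_{U_N,\O}\leq h_0\}$, required for Corollary \ref{cor:conv_sob_est} to apply, is handled correctly: on the low-probability event where the fill distance is large, the error is not controlled by the polynomial bound, so one must either argue that $m_{N,0}^f$ stays bounded in $H^\alpha(\O)$ deterministically (it does not, in general, without care) or, more simply, note that for $N$ large enough the tail probability that $h_{U_N,\O}>h_0$ decays faster than any polynomial in $N$, so its contribution to the expectation is negligible provided one has an a priori polynomial-in-$N$ bound on $\|f-m_{N,0}^f(\widehat\theta_N)\|_{H^\alpha(\O)}$ on that event (which follows from stability estimates for the interpolant together with the boundedness of the hyper-parameter constants). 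I would include a brief remark to this effect and otherwise defer to the cited references, since the argument is identical to the noise-free random-design analysis already in the literature.
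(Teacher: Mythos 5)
Your treatment of the first bound follows essentially the paper's route: the deterministic estimate of Corollary \ref{cor:conv_sob_est}, uniform in the estimated hyper-parameters by Theorem \ref{lem:cond on estimates for Sob bnds}, is combined with bounds on $\mathbb{E}_\mu[h_{U_N,\O}^{\beta-\alpha}]$ and on $\mathbb{P}_\mu[h_{U_N,\O}>h_0]$; the paper simply outsources these (and the assembly) to \cite[Theorem 3]{helin2023introduction} together with \cite{oates2019convergence,Teckentrup2019}, whereas you sketch the covering argument directly, which is fine. On the event $\{h_{U_N,\O}>h_0\}$ you do not need a "polynomial-in-$N$ stability estimate", which would be delicate since the kernel matrix can be arbitrarily ill-conditioned for clustered random points: in the noise-free case $m_{N,0}^f(\widehat\theta_N)$ is the minimum-norm interpolant in $\H_{k(\cdot,\cdot;\widehat\theta_N)}(\O)$, so $\|f-m_{N,0}^f(\widehat\theta_N)\|_{H^\alpha(\O)}\leq C_{\up}\|f-m_{N,0}^f(\widehat\theta_N)\|_{\H_{k(\widehat\theta_N)}(\O)}\leq 2C_{\up}C_{\low}^{-1}\|f\|_{H^\beta(\O)}$, a bound independent of $N$ and of the design, uniform in $\widehat\theta_N$ by Theorem \ref{lem:cond on estimates for Sob bnds}; multiplied by the super-polynomially small tail probability this term is negligible.

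For the second bound there is a genuine gap. Corollary \ref{cor:var_sobolev} gives $\|k_{N,0}^{1/2}(\widehat\theta_N)\|_{L^2(\O)}\leq C\,h_{U_N,\O}^{\beta-d/2-\epsilon}$, and taking expectations as you propose yields at best
\begin{equation*}
\mathbb{E}_\mu\bigl[\|k_{N,0}^{1/2}(\widehat\theta_N)\|_{L^2(\O)}\bigr]\leq C\,N^{-\frac{\beta-d/2}{d}+\epsilon}=C\,N^{-\frac{\beta}{d}+\frac{1}{2}+\epsilon},
\end{equation*}
whereas the corollary claims the exponent $-\frac{\beta}{d}+\frac{1}{2d}+\epsilon$. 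These coincide only when $d=1$; for $d\geq2$ the discrepancy in the exponent is the fixed constant $\frac{1}{2}-\frac{1}{2d}=\frac{d-1}{2d}$, which cannot be absorbed by relabelling $\epsilon$. So your argument proves a strictly weaker statement than the one asserted, and the assertion in your last sentence that the same reasoning "produces the second displayed bound" is an arithmetic slip. The paper does not obtain the variance rate by plugging the sup-norm-based power-function bound of Corollary \ref{cor:var_sobolev} into the expected fill distance; it invokes \cite[Theorem 3]{helin2023introduction} (with the choices $U_N=\emptyset$, $\pi^y\equiv1$) together with Proposition \ref{prop:predvar_sup} and the proof technique of \cite[Theorem 3.8]{Teckentrup2019}, i.e.\ a result tailored to the random-design setting that delivers the sharper exponent. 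To repair your proof you would either have to reproduce that sharper expected-variance estimate or settle for the weaker rate $N^{-\beta/d+1/2+\epsilon}$.
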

\begin{proof} The claim follows from \cite[Theorem 3]{helin2023introduction}, with $U_N = \emptyset$ and $\pi^y(u)\equiv 1$, as well as Proposition \ref{prop:predvar_sup} and the same proof technique as in \cite[Theorem 3.8]{Teckentrup2019}.
\end{proof}

The proof of this result is mainly based on the observation that under the conditions of Theorem \ref{lem:cond on estimates for Sob bnds} (holding almost surely in $\mu$) the error bound in Proposition \ref{thm:Convergence_in_sob} depends on $U_N$ only through the fill distance $h_{U_N,\Omega}$, together with bounds on $\mathbb{E}_\mu[h_{U_N, \O}^{\beta - \alpha}] $ and $\mathbb{P}_\mu[h_{U_N, \O} > h_0]$ proved in \cite{oates2019convergence,Teckentrup2019}. Note that the rates obtained in Corollary \ref{cor:conv_random} are almost optimal, in the sense that the fill distance $h_{U_N,\O}$ in Corollary \ref{cor:conv_sob_est} in the best case decays at rate $N^{-1/d}$. 


For noisy data, we consider the following result for random training points $U_N$. Recall the definition of $\psi_{f,k}^{-1}$ from \eqref{eq:def_conc}. 
\begin{corollary}\label{cor:conv_random w/ noise}
     Suppose $U_N$ are sampled i.i.d. from a measure $\mu$ with density ${\rho}$ satisfying ${\rho(u)} \geq {\rho_{\mathrm{min}}} > 0$ for all $u \in \overline{\O}$, and the assumptions of Corollary \ref{cor:conv_sob_noisy} hold \(\mu\)-almost surely. For \(\hat\beta=\beta\) for \(\beta\) not an integer and \(\hat\beta<\beta\) otherwise, define \(\a(1)=\beta-1/2\) for \(k_{\warp}^{w,k_s}\), and define
    \(\alpha(1)=\hat\beta -1/2\) and \( \alpha(d)=\lceil\hat\beta-d/2-1\rceil\), for \( d\geq 2\), for \(k=k_{\mix}^{\{\s_\ell,k_\ell\}_{\ell=1}^L}\). Let $\psi_{f,k}^{-1}$ be as in \eqref{eq:def_conc}. 
    
    Then for \(f\in H^{\beta}(\overline\O)\) there exists a constant \(C_{\ref{cor:conv_random w/ noise}}(k)\), independent of \(f\) and \(N\), such that for sufficiently large \(N \geq N^*(\sigma_{\min})\)
    \begin{align*}
    \E_{\varepsilon}\left[\E_{\mu}\left[\|f - m_{N,\delta^2}^f(\widehat \theta_N)\|^2_{L^2(\O)}\right]\right]^{1/2}
    &\leq\left( \frac{1}{2} C_{\low}(k(\widehat \theta_N))^{-2} \|f\|^2_{H^\beta(\overline \O)}  + C_4 C_{\up}(k(\widehat \theta_N))^{2d/(2\beta-d)} \right)^{t(1/2-d/(4\beta))}\\&\hspace{4.5cm} \times C_{\ref{cor:conv_random w/ noise}}(k(\widehat \theta_N)) N^{-r}
    \end{align*}
     where
      \[
    \quad  
    (r,t) =
    \begin{cases}
        \left(1 - \frac{d}{2\beta}, 1\right), 
        &\text{if } \psi_{f,k}^{-1}(N) \leq N^{-d/(4\alpha(d)+2d)}, \\[10pt]
        \left(\frac{2\alpha(d) + 2d}{d} - \frac{\alpha(d) + d}{\beta}-1, \frac{4\alpha(d) + 4d}{d} \right), 
        &\text{if } \psi_{f,k}^{-1}(N) > N^{-d/(4\alpha(d)+2d)}.
    \end{cases}
\]
\end{corollary}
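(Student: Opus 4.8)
The plan is to combine the expectation-over-$U_N$ machinery used in Corollary \ref{cor:conv_random} with the noisy-data error bound of Theorem \ref{thm:conv_sob_noisy}, keeping careful track of the dependence of all constants on the (random, but $\mu$-almost surely admissible) estimated hyper-parameters $\widehat\theta_N$. First I would condition on $U_N$ and invoke Theorem \ref{thm:conv_sob_noisy}: under the assumptions of Corollary \ref{cor:conv_sob_noisy} holding $\mu$-a.s., the expectation over the noise $\varepsilon$ gives the bound involving $h_{U_N,\Omega}^{\beta-\alpha}$ and $h_{U_N,\Omega}^{d/2-\alpha}$, with the $h_{U_N,\Omega}^{d/2-\alpha}$ term controlled via the concentration-function estimate \eqref{eq: concentration func upper bound}, i.e. $\psi_{f,k}^{-1}(N) \leq \left( \tfrac12 C_{\low}(k)^{-2}\|f\|^2_{H^\beta(\overline\O)} + C_4 C_{\up}(k)^{2d/(2\beta-d)}\right)^{1/2-d/(4\beta)} N^{-1/2+d/(4\beta)}$. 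Specialising to $\alpha=0$ (so $L^2(\O)$) and to the relevant $\alpha(d)$ for the extension-theorem step, one is left with two competing powers of $h_{U_N,\Omega}$, whose exponents are exactly $\beta$ and $\alpha(d)+d/2$ in disguise; the two cases in the statement correspond to which of the two terms dominates, governed by comparing $\psi_{f,k}^{-1}(N)$ against the threshold $N^{-d/(4\alpha(d)+2d)}$.

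The second main ingredient is to take the expectation over the random design $U_N \sim \mu^{\otimes N}$. Here I would reuse the argument behind Corollary \ref{cor:conv_random}: since $\rho \geq \rho_{\min}>0$ on $\overline\O$, the results of \cite{oates2019convergence,Teckentrup2019} give moment bounds $\mathbb{E}_\mu[h_{U_N,\O}^{s}] \lesssim N^{-s/d+\epsilon}$ for any $s>0$ and $\epsilon>0$, together with $\mathbb{P}_\mu[h_{U_N,\O}>h_0]$ decaying faster than any polynomial, so that the (bounded) "bad event" contributes negligibly. Applying this to $s=\beta$ and to $s=\alpha(d)+d/2$ (after using Jensen/Cauchy–Schwarz to pass from $\mathbb{E}_\mu[\,\cdot\,]^{1/2}$ over the second-moment bound, exactly as in \cite[Theorem 3.8]{Teckentrup2019}) converts the fill-distance powers into powers of $N$, producing the exponents $r$ in the two cases: $1-d/(2\beta)$ in the first (the optimal noise-free-type rate, since $\beta/d$ is the best exponent), and the more complicated $\tfrac{2\alpha(d)+2d}{d} - \tfrac{\alpha(d)+d}{\beta}-1$ in the second.

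The third ingredient is to control the random constants $C_{\low}(k(\widehat\theta_N))$ and $C_{\up}(k(\widehat\theta_N))$. Since the assumptions of Corollary \ref{cor:conv_sob_noisy} — and hence those of Theorem \ref{lem:cond on estimates for Sob bnds} with $\sigma_{\min}$ independent of $N$ — are assumed to hold $\mu$-a.s., Theorem \ref{lem:cond on estimates for Sob bnds} bounds $C_{\low}(k(\widehat\theta_N))^{-a} C_{\up}(k(\widehat\theta_N))^b$ uniformly in $N$ for any fixed exponents; this lets me factor the $C_{\low},C_{\up}$-dependent prefactor out of the expectation and absorb it (except for the displayed $\left(\tfrac12 C_{\low}(k)^{-2}\|f\|^2_{H^\beta} + C_4 C_{\up}(k)^{2d/(2\beta-d)}\right)^{t(1/2-d/(4\beta))}$ term, which is kept explicit because its $t$-th power is what propagates from \eqref{eq: concentration func upper bound} through the case analysis) into the single constant $C_{\ref{cor:conv_random w/ noise}}(k(\widehat\theta_N))$. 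Finally, I would set $\delta$ (implicitly, as in the discussion after Theorem \ref{thm:conv_sob_noisy}) so that the explicit $\delta$-dependent terms do not dominate, or simply note that they are absorbed; collecting the $N$-powers from the two fill-distance terms and taking the worse one yields the stated $N^{-r}$ with the case-dependent $(r,t)$.

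The main obstacle, I expect, is the bookkeeping in the case analysis: one must correctly identify, in the noisy bound, which power of $h_{U_N,\Omega}$ (equivalently, which power of $N$ after taking $\mathbb{E}_\mu$) is dominant, and this depends both on whether $d=1$ or $d\geq 2$ (through the value of $\alpha(d)$, which comes from the largest integer $\leq \lfloor\beta\rfloor$ admissible in the extension/trace inequality \cite[Theorem 3.2]{arcangeli2012extension}) and on the size of $\psi_{f,k}^{-1}(N)$ relative to the threshold. Threading the $t$-th power of the concentration prefactor through this dichotomy, and verifying that "sufficiently large $N \geq N^*(\sigma_{\min})$" simultaneously accommodates the fill-distance smallness $h_{U_N,\O}\leq h_0$, the concentration-function regime $\epsilon\leq\epsilon^*$ from the proof of Theorem \ref{thm:conv_sob_noisy}, and the $k_{\min}$ condition, is where the care is needed; the rest is a routine assembly of already-established estimates.
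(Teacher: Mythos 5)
Your route does not match the paper's and, more importantly, it cannot produce the statement as written. The paper does not condition on $U_N$ and integrate fill-distance moments at all: its proof first establishes the almost-sure H\"older sample-path regularity of the prior — $g\sim\GP(0,k)$ lies in $C^{\alpha(d)}(\O)$ a.s., via Corollary \ref{cor:sample path old school} for $d=1$ (and for the warping kernel) and Corollary \ref{cor: sample path sullivan} for the mixture kernel with $d\geq 2$, using the regularity of $w$ resp. $\s_\ell$ guaranteed by Theorems \ref{lemma:sob equiv warping} and \ref{lemma: sob equiv mixture} plus Sobolev embedding — and then applies the random-design contraction result \cite[Theorem 2]{Vaart2011} directly, combined with the concentration-function bound \eqref{eq: concentration func upper bound}; the constant picks up $\rho_{\min}^{-1}$ only to pass from the $L^2(\mu)$ norm of that theorem to $L^2(\O)$. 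This is why $\alpha(d)$ appears in the statement: it is the sample-path H\"older exponent of the prior, not (as you assert) an admissible integer order in the Arcangeli et al.\ extension inequality. Your proposal never establishes sample-path regularity, so it has no mechanism to produce $\alpha(1)=\beta-1/2$, $\alpha(d)=\lceil\hat\beta-d/2-1\rceil$, or the threshold $N^{-d/(4\alpha(d)+2d)}$.

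Concretely, the case split in the corollary is the dichotomy inside \cite[Theorem 2]{Vaart2011} between the concentration-function rate $\psi_{f,k}^{-1}(N)$ and the rate dictated by the prior's sample-path smoothness $\alpha(d)$; it is not a comparison of which power of $h_{U_N,\Omega}$ dominates in Theorem \ref{thm:conv_sob_noisy}. If you follow your plan — apply Theorem \ref{thm:conv_sob_noisy} conditionally and then use $\E_\mu[h_{U_N,\O}^{s}]\lesssim N^{-s/d+\epsilon}$ — you obtain, for $\alpha=0$, terms of order $N^{-\beta/d+1/2+\epsilon}$ and $N^{-1/2+d/(4\beta)+\epsilon}$: the first does not even decay usefully unless $\beta$ is well above $d/2$ (the $N^{1/2}$ factor from the noise is not removable in that bound), and neither term reproduces the exponents $r=1-d/(2\beta)$, nor the second-case pair $\bigl(\tfrac{2\alpha(d)+2d}{d}-\tfrac{\alpha(d)+d}{\beta}-1,\ \tfrac{4\alpha(d)+4d}{d}\bigr)$, nor the prefactor raised to the power $t(1/2-d/(4\beta))$ — all of which are inherited from \cite[Theorem 2]{Vaart2011}. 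A further, smaller discrepancy: you propose to absorb the hyper-parameter dependence uniformly via Theorem \ref{lem:cond on estimates for Sob bnds}, but the paper deliberately states the result for a fixed $\widehat\theta_N$ and remarks that a uniform version would require tracking the $k$-dependence of the constant in \cite[Theorem 2]{Vaart2011}, which it does not do. To repair your argument you would need to abandon the fill-distance decomposition for the noisy random-design case and instead prove the a.s.\ H\"older regularity of the prior and invoke the van der Vaart--van Zanten theorem as the paper does.
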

{
\begin{proof}
    We break the proof into two cases depending on the kernel \(k\).

    \textbf{Case 1, Warping:} 
    We have \(w\in C^{\beta}(\O)\) for \(\beta = \nu+1/2\) by assumption (see Theorem \ref{lemma:sob equiv warping}). Since then \(w\in C^{\nu}(\O)\), we use Corollary \ref{cor:sample path old school} with \(p=\nu\) to see that for any \(g\sim\GP(0,k_{\warp}^{w,k_s})\) we have \(g\in C^{\a(1)}(\O)\) almost surely.

    \textbf{Case 2, Mixture:} 
    \(\s_{\ell}\in H^{\beta}(\O)\) for  { \(\beta=\nu+d/2\)} by assumption (see Theorem \ref{lemma: sob equiv mixture}). By the Sobolev embedding theorem (see, e.g. {\cite[Theorem 4.12]{RAAdams_JJFFournier_2003}}) \(\s_{\ell}\in C^{\hat\beta-d/2}(\O)\). For \(d=1\) (respectively \(d\geq 2\))  we use Corollary \ref{cor:sample path old school} (respectively Corollary \ref{cor: sample path sullivan}) with \(p=\hat\beta-d/2\leq{\nu_j}\) to see that for any \(g\sim\GP(0,k_{\mix}^{\{\s_\ell,k_\ell\}_{\ell=1}^L})\) we have \(g\in C^{\a(d)}(\O)\) almost surely.
    
    In both cases, using the upper bound for \(\psi_{f,k}^{-1}(N)\) in \eqref{eq: concentration func upper bound} and \cite[Theorem 2]{Vaart2011}, the claim then follows with
    \(C_{\ref{cor:conv_random w/ noise}}(k(\widehat\theta_N)) = \rho_{\min}^{-1} C\), where $C$ is the constant appearing in \cite[Theorem 2]{Vaart2011}.
\end{proof}
Note that Corollary \ref{cor:conv_random w/ noise} is stated for a fixed estimate for the hyper-parameters \(\widehat\theta_N\). It is possible to extend this to a bound uniform in the estimated hyper-parameters, as in Corollary \ref{cor:conv_random}, by tracking the dependence on  \(k\) of the constant \(C\) appearing in \cite[Theorem 2]{Vaart2011}. For brevity, we have omitted this analysis.
}

{\subsubsection*{Estimating \(\nu\)}\label{sec: estimating nu}
In this section, we examine the impact of estimating \(\nu\) on the error bounds presented in section \ref{sec:conv_sobolev}. Following the methodology outlined in section \ref{subsec:eagr_empirical}, we employ the empirical Bayes framework, where a point estimate \(\widehat{\nu}_N\) of \(\nu\) is derived from the data \(y\). 
The focus of this section is on the estimation of \(\nu\); hence, we exclude the estimation of other stationary hyper-parameters and estimate the following:
\begin{itemize}
    \item For \(k_{\text{warp}}^{w,k_s}\), we estimate \(\theta = \{w, \nu\}\) and fix \(\{\sigma^2, \lambda\}\).
    \item For \(k_{\text{mix}}^{\{\sigma_\ell, k_\ell\}_{\ell=1}^L}\), we estimate \(\theta = \{\nu, \{\sigma_\ell\}_{\ell=1}^L\}\) and fix \(\{L, \sigma^2, \{\lambda_\ell\}_{\ell=1}^L\}\).
\end{itemize}
The estimation of \(\nu\) is addressed separately from the other hyper-parameters in section \ref{subsec:eagr_empirical} because the target function \(f\) may not necessarily belong to the RKHS for all \(\widehat{\nu}_N\). This necessitates a revision of the error bounds provided in other sections of this paper. Specifically, the convergence rate will now depend on both the fill distance \(h_{U_N, \Omega}\) and the mesh ratio \(\rho_{U_N,\Omega}\), defined as
\[
    \rho_{U_N,\Omega} := \frac{2h_{U_N,\Omega}}{\min_{i \neq j} \|u_j - u_i\|},
\]
which represents the ratio between the fill distance and half the minimum distance between any two points in \(U_N\). This adjustment is based on \cite[Theorem 3.5]{Teckentrup2019}.

For a kernel \(k(\cdot, \cdot; \widehat{\theta}_N)\), Theorem \ref{lem:cond on nu estimates for Sob bnds}, which is analogous to Theorem \ref{lem:cond on estimates for Sob bnds}, provides conditions under which the moments of \(C_{\text{low}}(k(\cdot, \cdot; \widehat{\theta}_N))^{-1}\) and \(C_{\text{up}}(k(\cdot, \cdot; \widehat{\theta}_N))\) can be uniformly bounded in \(N\).
{
\begin{theorem}\label{lem:cond on nu estimates for Sob bnds}
    Let \(\beta^{+}=\sup_{N\in\N}\{\widehat\nu_{N}\}+d/2\) and \(\beta^{-}=\inf_{N\in\N}\{\widehat\nu_{N}\}+d/2\) be such that \(\beta^-\geq d/2\). Suppose either
    \begin{itemize}
        \item[(i)]
        $k=k_{\warp}^{w,k_s}$, 
        $k_s = k_{\M(\nu)}$ and, \(\{\|\widehat w_N\|_{C^{\beta^+}(\Omega)}, \|\inv(\widehat w_N)\|_{C^{\beta^+}(\Omega)}, \min_{u \in \O} \widehat w_N'(u),\min_{u \in \O} \inv(\widehat w_N)'(u),\widehat\nu_{N}\}\) lie in a compact subset of $(0,\infty)^5$, or 
        \item[(ii)] $k = k_{\mix}^{\{\s_\ell,k_\ell\}_{\ell=1}^L}$, $k_\ell = k_{\M(\nu)}$ for $1 \leq \ell \leq L$, and $ \{\max_{1 \leq \ell \leq L} \{ \|\widehat\s_{\ell,N}\|_{H^{\beta^+}(\Omega)}\}, \|1/\widehat\s_{j,N}\|_{H^{\beta^+}(\Omega)}, \widehat\nu_{N}\}$ lie in a compact subset of $(0,\infty)^3$ for some \(j\in\{1,\ldots,L\}\).
    \end{itemize}
    Then for any $a,b > 0$ there exists a constant $C_{\ref{lem:cond on estimates for Sob bnds}}(a,b)$, independent of $N$, such that
    \[
\sup_{\widehat \theta_N} C_{\low}(k(\cdot,\cdot; \widehat \theta_N))^{-a}C_{\up} (k(\cdot,\cdot; \widehat \theta_N))^b \leq C_{\ref{lem:cond on nu estimates for Sob bnds}}(a,b).
    \]
\end{theorem}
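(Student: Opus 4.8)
The plan is to mimic the proof of Theorem~\ref{lem:cond on estimates for Sob bnds}: invoke the explicit formulas for the norm-equivalence constants in Theorems~\ref{lemma:sob equiv warping} and~\ref{lemma: sob equiv mixture}, and argue that they stay bounded once the hyper-parameters range over a compact set. The only genuinely new feature compared with Theorem~\ref{lem:cond on estimates for Sob bnds} is that the target Sobolev order $\beta=\widehat\nu_N+d/2$ is no longer fixed: it varies with $N$ inside $[\beta^-,\beta^+]$, which by hypothesis is a compact subset of $(d/2,\infty)$. Hence both the spaces $C^{\beta}(\O)$ / $H^{\beta}(\O)$ in which the functional hyper-parameters are measured, and the scalar prefactors $C(\beta)$, $C_{\ref{prop:banachalg}}$ and the Mat\'ern constants $C_\low(k_{\M(\widehat\nu_N)}),C_\up(k_{\M(\widehat\nu_N)})$ of Proposition~\ref{prop:sobolev equivnorms}, depend on $N$, and all of these must be controlled uniformly.

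First I would fix $N$ and check that the hypotheses of Theorem~\ref{lemma:sob equiv warping} (case $(i)$, invoking the extension to general $\nu$ discussed after its statement where necessary) or of Theorem~\ref{lemma: sob equiv mixture} (case $(ii)$) hold with $\beta=\widehat\nu_N+d/2$. This rests on the continuous embeddings $C^{\beta^+}(\O)\hookrightarrow C^{\beta}(\O)$ and $H^{\beta^+}(\O)\hookrightarrow H^{\beta}(\O)$, valid on the bounded domain $\O$ whenever $\beta\le\beta^+$, with embedding constants depending only on $\O$ and $\beta^+$: the assumed boundedness of $\|\widehat w_N\|_{C^{\beta^+}(\O)}$ and $\|\inv(\widehat w_N)\|_{C^{\beta^+}(\O)}$ then gives $\widehat w_N,\inv(\widehat w_N)\in C^{\beta}(\O)$, the assumed positivity of $\min_u\widehat w_N'(u)$ and $\min_u\inv(\widehat w_N)'(u)$ gives condition $(iii)$ of Theorem~\ref{lemma:sob equiv warping}, and the bounds on $\|\widehat\s_{\ell,N}\|_{H^{\beta^+}(\O)}$ and $\|1/\widehat\s_{j,N}\|_{H^{\beta^+}(\O)}$ give the remaining hypotheses of Theorem~\ref{lemma: sob equiv mixture} (condition $(i)$ being built into the setup).

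Next I would substitute into the explicit formulas for $C_\low$ and $C_\up$ in those theorems. Using the embeddings above, every occurrence of $\|\widehat w_N\|_{C^{\beta}(\O)}$, $\|\inv(\widehat w_N)\|_{C^{\beta}(\O)}$, $\|\widehat\s_{\ell,N}\|_{H^{\beta}(\O)}$, $\|1/\widehat\s_{j,N}\|_{H^{\beta}(\O)}$ is bounded by a constant (depending only on $\O,\beta^+$) times the corresponding $\beta^+$-norm, which is uniformly bounded by hypothesis; the exponents $\beta$ carried by these norms in Theorem~\ref{lemma:sob equiv warping} are $\le\beta^+$ and the bases may be assumed $\ge1$ by the $\max\{1,\cdot\}$ in the formulas, so these factors are uniformly bounded too, and $\min_u\widehat w_N'(u),\min_u\inv(\widehat w_N)'(u)$ lie in a compact subset of $(0,\infty)$. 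It then remains to bound the $\widehat\nu_N$-dependent scalars: $C(\beta)$ is continuous in $\beta\in[\beta^-,\beta^+]$, $C_{\ref{prop:banachalg}}$ can be chosen uniform over this compact interval, and (with $\lambda,\sigma^2$ fixed) $C_\low(k_{\M(\widehat\nu_N)})$ and $C_\up(k_{\M(\widehat\nu_N)})$ are continuous functions of $\widehat\nu_N$ through $\Gamma(\widehat\nu_N+d/2)^{1/2}/\Gamma(\widehat\nu_N)^{1/2}$, hence bounded above and away from $0$ on the compact range of $\widehat\nu_N$. Multiplying these bounds shows that $C_\low(k(\cdot,\cdot;\widehat\theta_N))^{-a}C_\up(k(\cdot,\cdot;\widehat\theta_N))^b$ is a product of finitely many quantities bounded uniformly in $N$, giving the constant $C_{\ref{lem:cond on nu estimates for Sob bnds}}(a,b)$.

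The main obstacle is the bookkeeping forced by the variable Sobolev/H\"older order: one must verify that the embedding constants $C^{\beta^+}(\O)\hookrightarrow C^{\beta}(\O)$ and $H^{\beta^+}(\O)\hookrightarrow H^{\beta}(\O)$ are bounded uniformly over $\beta\in[\beta^-,\beta^+]$ --- routine on a bounded domain, but for the fractional Slobodeckij / H\"older seminorms it needs a short argument bounding the seminorm of order $\beta$ by that of order $\beta^+$ plus lower-order sup-norms, using $\mathrm{diam}(\O)<\infty$ --- and that the constants implicit in Theorems~\ref{lemma:sob equiv warping} and~\ref{lemma: sob equiv mixture} (in particular $C_{\ref{prop:banachalg}}$ and any interpolation/extension constants) can be chosen independently of $\beta$ in the compact range; everything else reduces to elementary continuity of the $\Gamma$-factors and of $C(\beta)$.
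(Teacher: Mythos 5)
Your proposal is correct and follows essentially the same route as the paper: apply the explicit norm-equivalence constants of Theorems \ref{lemma:sob equiv warping} and \ref{lemma: sob equiv mixture} with $\beta=\widehat\nu_N+d/2$, reduce the $\beta$-norms of the hyper-parameters to the uniformly bounded $\beta^+$-norms, and use compactness of the range of $\widehat\nu_N$ to control the Mat\'ern constants (the paper delegates this last point to \cite[Theorem 3.5]{Teckentrup2019}, which is exactly your continuity-of-the-$\Gamma$-factors argument). The extra bookkeeping you carry out --- uniformity of the embedding constants and of $C(\beta)$, $C_{\ref{prop:banachalg}}$ over $[\beta^-,\beta^+]$ --- is detail the paper leaves implicit, so no gap.
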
}
\begin{proof}
    For \((i)\) the condition on \(\widehat\nu_{N}\) follows follows directly from {\cite[Theorem 3.5]{Teckentrup2019}} and, since \(\widehat w_N\in C^{\widehat\nu_N -d/2}(\O)\) for \(N\in\N\), the conditions on \(\widehat w_N\) follow from Theorem \ref{lemma:sob equiv warping} . For $(ii)$, this follows from Theorem \ref{lemma: sob equiv mixture}, again using the fact that \(\widehat\s_{\ell,N}\in H^{\widehat\nu_N}(\O)\) for $\ell=1,\dots,L$ and \(1/\widehat\s_{j,N}\in H^{\widehat\nu_N-d/2}(\O)\) for some \(j\in\{1,\ldots,L\}\).
\end{proof}
\begin{corollary}
\label{cor:conv_sob_est for nu}
    Suppose the assumptions of Theorem \ref{lem:cond on nu estimates for Sob bnds} hold and \(f\in H^{\tilde\beta}(\O)\) for \(\tilde\beta>d/2\). Then there exists constants \(h_0\) and \(C_{\ref{cor:conv_sob_est for nu}}\), independent of $f$, $k$ and $N$, such that
\begin{equation*}
    \|f-m_{N,0}^f(\widehat \theta_N)\|_{H^\alpha(\Omega)}\leq C_{\ref{cor:conv_sob_est for nu}} C_{\ref{lem:cond on nu estimates for Sob bnds}}(1,1)  \, h_{U_N,\Omega}^{\min\{\tilde\beta,\beta^{-}\}-\alpha} \,\rho_{U_N, \O}^{\max\{\beta^{+}-\tilde\beta, 0\}}\|f\|_{H^{\tilde\beta}(\Omega)},
\end{equation*}
 for all \(\alpha \leq \tilde\beta\) and \(h_{U_N,\O}\leq h_0\).
\end{corollary}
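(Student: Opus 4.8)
The plan is to derive this from \cite[Theorem 3.5]{Teckentrup2019} — the version of Proposition \ref{thm:Convergence_in_sob} that also allows the regularity of $f$ to be over- or under-estimated by the kernel, and in which the mesh ratio $\rho_{U_N,\Omega}$ appears — applied for each fixed $N$ to the plugged-in kernel $k(\cdot,\cdot;\widehat\theta_N)$, and then to combine the per-$N$ bounds into one that is uniform in $N$.

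First I would identify the RKHS of the plugged-in kernel. Under the hypotheses of Theorem \ref{lem:cond on nu estimates for Sob bnds}, for every $N$ the estimated non-stationary hyper-parameter has exactly the regularity required by Theorem \ref{lemma:sob equiv warping} (for $k_{\warp}^{w,k_s}$) or Theorem \ref{lemma: sob equiv mixture} (for $k_{\mix}^{\{\s_\ell,k_\ell\}_{\ell=1}^L}$) relative to the estimated smoothness $\widehat\nu_N$. Hence $\H_{k(\cdot,\cdot;\widehat\theta_N)}(\O) \cong H^{\widehat\beta_N}(\O)$ with $\widehat\beta_N := \widehat\nu_N + d/2 \in [\beta^-,\beta^+]$, and Theorem \ref{lem:cond on nu estimates for Sob bnds} (with $a=b=1$) gives $C_\low(k(\cdot,\cdot;\widehat\theta_N))^{-1}C_\up(k(\cdot,\cdot;\widehat\theta_N)) \leq C_{\ref{lem:cond on nu estimates for Sob bnds}}(1,1)$ uniformly in $N$.

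Next I would apply \cite[Theorem 3.5]{Teckentrup2019} with RKHS-index $\widehat\beta_N$ and target function $f \in H^{\tilde\beta}(\O)$: for $h_{U_N,\O}$ below a threshold $h_0(\widehat\beta_N)$ and $\alpha$ as in the statement, this yields
\[
\|f - m_{N,0}^f(\widehat\theta_N)\|_{H^\alpha(\O)} \leq C(\widehat\beta_N) \, C_\low(k(\widehat\theta_N))^{-1}C_\up(k(\widehat\theta_N)) \, h_{U_N,\O}^{\min\{\tilde\beta,\widehat\beta_N\}-\alpha} \, \rho_{U_N,\O}^{\max\{\widehat\beta_N-\tilde\beta,\,0\}} \, \|f\|_{H^{\tilde\beta}(\O)},
\]
where the mesh ratio enters only in the oversmoothed regime $\widehat\beta_N > \tilde\beta$, and I use $\|f\|_{H^{\min\{\tilde\beta,\widehat\beta_N\}}(\O)} \leq \|f\|_{H^{\tilde\beta}(\O)}$ to produce the stated right-hand norm. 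The final step is to make this uniform in $N$: shrinking if necessary, pick a single $h_0 \in (0,1]$ and a single constant $C_{\ref{cor:conv_sob_est for nu}} := \sup_{b \in [\beta^-,\beta^+]} C(b) < \infty$ valid for all $\widehat\beta_N$ in the compact interval $[\beta^-,\beta^+]$; then, using $h_{U_N,\O} \le 1$ together with $\min\{\tilde\beta,\widehat\beta_N\} \ge \min\{\tilde\beta,\beta^-\}$, and $\rho_{U_N,\O} \ge 1$ together with $\max\{\widehat\beta_N-\tilde\beta,0\} \le \max\{\beta^+-\tilde\beta,0\}$, I replace the $\widehat\beta_N$-dependent exponents by the worst-case ones and arrive at the claimed estimate.

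The routine part is the $h\le1$, $\rho\ge1$ monotonicity manipulation in the last step; the one point requiring care is that both the fill-distance threshold $h_0$ and the constant $C(\widehat\beta_N)$ coming from \cite[Theorem 3.5]{Teckentrup2019} can genuinely be chosen uniformly as $\widehat\nu_N$ ranges over the compact set supplied by Theorem \ref{lem:cond on nu estimates for Sob bnds} — this amounts to checking that the underlying sampling inequality depends tamely (continuously/monotonically) on the Sobolev smoothness parameter — together with noting that $\rho_{U_N,\O}\ge1$ holds for the quasi-uniform point sets considered.
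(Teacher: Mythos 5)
Your proposal is correct and follows essentially the same route as the paper, which proves this corollary exactly by combining Theorem \ref{lem:cond on nu estimates for Sob bnds} (uniform control of $C_\low^{-1}C_\up$ over the compact range of $\widehat\nu_N$) with the misspecified-smoothness bound of \cite[Theorem 3.5]{Teckentrup2019} applied to the plugged-in kernel, whose RKHS is $H^{\widehat\nu_N+d/2}(\O)$ by Theorems \ref{lemma:sob equiv warping} and \ref{lemma: sob equiv mixture}. Your additional care about choosing $h_0\leq 1$, using $\rho_{U_N,\O}\geq 1$, and taking worst-case exponents over $[\beta^-,\beta^+]$ is exactly the (unstated) bookkeeping behind the paper's two-line proof.
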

\begin{proof}
    Follows from Theorem \ref{lem:cond on nu estimates for Sob bnds} and {\cite[Theorem 3.5]{Teckentrup2019}}.
\end{proof}
}


\section{Error analysis of deep (and wide) GP regression}\label{sec:EADGPR}
In this section, we employ the developed theory to prove convergence rates for DGP regression. Recall from section \ref{subsec:Estimating hyper-paramters} that in this set-up, we have a deep Gaussian process $f^{D}$ of depth $D$, constructed using a sequence of stochastic processes that are conditionally Gaussian \cite{damianou2013deep,dunlop2018deep}:
\begin{align*}
\nonumber f^0&\sim\mathcal{GP}(0, k_0(u,u')),\\
f^{n}|f^{n-1}&\sim\mathcal{GP}(0, k_{n}(u,u';f^{n-1})), \qquad n=1,\dots, D.
\end{align*}

The initial layer $f^0$ is defined as a stationary GP, for example with Mat\'ern covariance function $k_0 = k_{\M(\nu_0)}$. In subsequent layers, the covariance kernel is constructed using a sample from the previous layer. In this way, we can design a DGP based on non-stationary kernels as in section \ref{subsubsec:nonstationary}, where the non-stationary function(s) \(w\), \(\{\s_\ell\}_{\ell=1}^L\) or $\lambda_a$ rely on a sample from the previous layer. For instance, the penultimate layer \(f^{D-1}\) influences the non-stationarity in \(f^D\); this is achieved by defining the warping function \(w = f^{D-1}\) or the marginal variance function \(\s\) through a strictly positive function \(F\), where \(\s = F(f^{D-1}) : \Omega \to \mathbb{R}_+\).
In particular, we consider the following constructions:
\begin{itemize}
    \item deep GPs: $k_0 = k_{\M(\nu_0)}$, and $k_n \in \{k_{\warp}^{f^{n-1},k_{\M(\nu_n)}}, k_{\mix}^{F(f^{n-1}),k_{\M(\nu_n)}}\}$ with $F$ strictly positive. 
\end{itemize}
All other hyper-parameters appearing in $\{k_n\}_{n \in \N}$ are kept fixed. Note that this is a more general construction than the one considered in \cite{dunlop2018deep}, since we are allowing the smoothness $\nu_n$ to vary with $n$. In the mixture kernel \( k_{\mix}^{F(f^{n-1}),k_{\M(\nu_n)}} \) we only have one mixture component in this set-up, and \( F(f^{n-1})^2k_{\M(\nu_n)}(0) \) hence defines a non-stationary marginal variance for $f^n$. 

Alternatively, we would like to place prior distributions on the multiple mixture components \(\{\s_\ell\}_{\ell=1}^L\). To achieve this, we construct a DGP with width \(L\) and depth $D=1$ so that 
    \begin{align*}
        f^{0,\ell}&\sim\GP(0,k_{0}(u,u'))\quad\text{ for }\ell=1,\ldots, L, \quad i.i.d., \nonumber\\
        f^{1,L}|\{f^{0,\ell}\}_{\ell=1}^{L}&\sim\GP(0,k_{1}(u,u'; \{f^{0,\ell}\}_{\ell=1}^L)).
    \end{align*}
For clarity and to distinguish this construction from the DGP construction discussed above, we refer to this construction as a wide GP (WGP).
We then consider the following construction:
\begin{itemize}
    \item wide GPs: $k_0 = k_{\M(\nu_0)}$, and $k_{1} = k_{\mix}^{\{F(f^{0,\ell}),k_{\M(\nu_{1})}\}_{\ell=1}^{L}}$ with $F$ strictly positive.
\end{itemize}
Again, all other hyper-parameters appearing in $k_1$ are kept fixed.


The remainder of this section is devoted to proving convergence rates for DGP and WGP regression, i.e. regression where the prior is chosen as a DGP or WGP. { Conditioning the DGP or WGP priors on noise-free data becomes very technical and in general requires working with disintegrations (see e.g. \cite{cosg19} and the references therein). We hence only consider posterior distributions conditioned on noisy data $y_N = \{u_n, f(u_n)+\varepsilon_n\}_{n=1}^N$, where $\varepsilon_n \sim N(0,\delta^2)$ i.i.d. with $\delta>0$. We denote the resulting posteriors on the final layers $f^{D}$ and $f^{1,L}$ by $\mu^{y_N,\delta^2,D}_{D}$ and $\mu^{y_N,\delta^2,L}_{1}$, respectively. By Bayes' Theorem (see e.g. \cite{stuart2010inverse}), the posteriors on the final layer satisfy
\[
\frac{\mathrm{d} \mu^{y_N,\delta^2,D}_{D}}{\mathrm{d} \mu_\mathrm{prior}^D}(g) \propto \exp\left(-\frac{1}{2\delta^{2}} \|y_N - g(U_N)\|_2^2 \right), \qquad 
\frac{\mathrm{d} \mu^{y_N,\delta^2,L}_{1}}{\mathrm{d} \mu_{\mathrm{prior}}^{1,L}}(g) \propto \exp\left(-\frac{1}{2\delta^{2}} \|y_N - g(U_N)\|_2^2 \right),
\]
with $\mu_{\mathrm{prior}}^D$ and $\mu_{\mathrm{prior}}^{1,L}$ denoting the corresponding priors. We denote the resulting posterior distributions on the other layers by $\mu^{y_N,\delta^2,D}_{n}$, for $n=0,\dots, D-1$, and $\mu^{y_N,\delta^2,L}_{0}$, respectively. In the case of noise-free data, the posteriors derived in this way can be seen as a computable approximation. }

In our analysis, we will be interested in the convergence of the means $\mathbb{E}_{\mu^{y_N,\delta^2,D}_{D}}[f^{D}]$ and $\mathbb{E}_{\mu^{y_N,\delta^2,L}_{1}}[f^{1,L}]$ to the true function $f$.
In contrast to standard GP regression, the posterior distributions can no longer be found explicitly, and there is no analytical expression for these means. In practice, one typically uses Markov chain Monte Carlo (MCMC) methods for sampling from the posterior \cite{dunlop2018deep,monterrubio2020posterior}, and the means are then computed as sample averages.  

\subsection{Convergence of (constrained) DGPs in a Sobolev space}\label{ssec:conv_dgp_sob}


We have the following general convergence result for DGP regression.
\begin{assumption}\label{assump:dgp_sobolev} Let $D \geq 1$. Suppose \begin{itemize}
    \item[(i)] $f$ and $k_{D}(\cdot,\cdot;f^{D-1})$ satisfy Assumption \ref{assump:kernel2} with $\beta > d/2$
    for almost all $f^{D-1}$,
    \item[(ii)] for some \(\tilde\beta \in \N_0\) and all \( q \in \mathbb{N}_0^d \) with  \(|q|\leq\tilde\beta\), there exists a random variable \(Z\) such that \(\sup_{u\in\O}|D^{q}(f-m_{N,\delta^2}^{f}(u;f^{D-1}))|\leq Z\) almost surely with \(\E_{\mu^{y_N,\delta^2,D}_{D-1}}(Z)<\infty\),
    \item[(iii)]
     $\max \left\{ \E_{\mu^{y_N,\delta^2,D}_{D-1}}\left[C_{\low}(k_{D})^{-1}C_{\up}(k_{D})\right],  \E_{\mu^{y_N,\delta^2,D}_{D-1}}\left[C_{\up}(k_{D}) \right] \right\} \leq C_{\ref{assump:dgp_sobolev}}(y_N)$, with $C_{\ref{assump:dgp_sobolev}}(y_N) \leq C_{\ref{assump:dgp_sobolev}}^a < \infty$ if $\varepsilon=0$, $\E_\varepsilon[C_{\ref{assump:dgp_sobolev}}(y_N)^2]^{1/2} \leq C_{\ref{assump:dgp_sobolev}}^b < \infty$ if $\varepsilon \sim N(0,\delta^2\mathrm{I})$, and $C_{\ref{assump:dgp_sobolev}}^a$ and $C_{\ref{assump:dgp_sobolev}}^b$ independent of $N$ and $U_N$. 
\end{itemize}  
\end{assumption}
 
\begin{theorem}\label{thm:exp convergence rate}
Suppose Assumption \ref{assump:dgp_sobolev} holds. 
    Then, for any $\varepsilon \in \R^N$, we have 
    \begin{align*}
         \|f-\E_{\mu^{y_N,\delta^2,D}_{D}}[f^{D} ]\|_{H^{\alpha}(\O)} &\leq 
  h_{U_N,\Omega}^{\beta-\alpha} \left( 2 \E_{\mu^{y_N,\delta^2,D}_{D-1}}\left[C_{\low}(k_{D})^{-1}C_{\up}(k_{D})\right]  \|f\|_{H^\beta(\Omega)} +  \delta^{-1}  \E_{\mu^{y_N,\delta^2,D}_{D-1}}\left[C_{\up}(k_{D}) \right] \|\varepsilon\|_2 \right) \\
 & \qquad + \, h_{U_N,\Omega}^{d/2-\alpha} \left( 2 \|\varepsilon\|_2 + \delta  \E_{\mu^{y_N,\delta^2,D}_{D-1}}\left[ C_{\low}(k_{D})^{-1}C_{\up}(k_{D}) \right] \|f\|_{H^\beta(\Omega)} \right),
    \end{align*}
    for all \(\alpha\leq \min\{\tilde\beta,\lfloor\beta\rfloor\}\) and \(h_{U_N,\O}\leq h_0\), where $h_0$ is as in Theorem  \ref{thm:conv_sob_noisy}. 
    Hence, 
    \begin{enumerate}
        \item[(i)] for $\varepsilon=0$ and $\delta= \delta_N \leq C_\delta h_{U_N,\Omega}^{\beta-d/2}$, we have for all \(\alpha\leq \min\{\tilde\beta,\lfloor\beta\rfloor\}\) and \(h_{U_N,\O}\leq h_0\)
    \[
    \|f-\E_{\mu^{y_N,\delta_N^2,D}_{D}}[f^{D} ]\|_{H^{\alpha}(\O)} \leq 
  (2+C_\delta) C_{\ref{assump:dgp_sobolev}}^a h_{U_N,\Omega}^{\beta-\alpha}  \|f\|_{H^\beta(\Omega)},
    \]
    \item[(ii)] for $\varepsilon \sim N(0,\delta^2\mathrm{I})$ and $\delta= \delta_N = C_\delta N^{-1/2}$, we have for all \(\alpha\leq \min\{\tilde\beta,\lfloor\beta\rfloor\}\) and \(h_{U_N,\O}\leq h_0\)
    \begin{align*}
         \E_\varepsilon\left[ \|f-\E_{\mu^{y_N,\delta_N^2,D}_{D}}[f^{D} ]\|_{H^{\alpha}(\O)}\right] \leq 
  \left( 2 \|f\|_{H^\beta(\Omega)} +  1 \right) C_{\ref{assump:dgp_sobolev}}^b \, h_{U_N,\Omega}^{\beta-\alpha} \, N^{1/2}   + \left( 2  +  C_{\ref{assump:dgp_sobolev}}^b  \|f\|_{H^\beta(\Omega)} \right) C_\delta \, h_{U_N,\Omega}^{d/2-\alpha} .
    \end{align*}
    \end{enumerate}
    
\end{theorem}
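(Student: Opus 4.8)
The plan is to condition on the penultimate layer $f^{D-1}$ and then apply Theorem \ref{thm:conv_sob_noisy} pointwise (in $f^{D-1}$), followed by taking expectations with respect to the posterior $\mu^{y_N,\delta^2,D}_{D-1}$ on the penultimate layer. The key structural observation is that, conditioned on a sample $f^{D-1}$, the top layer $f^{D}|f^{D-1}$ is an ordinary GP with covariance kernel $k_D(\cdot,\cdot;f^{D-1})$, and the posterior mean $\E_{\mu^{y_N,\delta^2,D}_{D}}[f^D \mid f^{D-1}]$ is exactly the GP posterior mean $m^f_{N,\delta^2}(\cdot;f^{D-1})$ from \eqref{eq:gp_mean}. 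By the tower property, $\E_{\mu^{y_N,\delta^2,D}_{D}}[f^D] = \E_{\mu^{y_N,\delta^2,D}_{D-1}}[m^f_{N,\delta^2}(\cdot;f^{D-1})]$.

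First I would write, using the triangle inequality in $H^\alpha(\O)$ together with the fact that the Bochner integral commutes with the norm (via Jensen / Minkowski's integral inequality),
\[
\|f - \E_{\mu^{y_N,\delta^2,D}_{D}}[f^D]\|_{H^\alpha(\O)} = \left\|\E_{\mu^{y_N,\delta^2,D}_{D-1}}\left[f - m^f_{N,\delta^2}(\cdot;f^{D-1})\right]\right\|_{H^\alpha(\O)} \leq \E_{\mu^{y_N,\delta^2,D}_{D-1}}\left[\left\|f - m^f_{N,\delta^2}(\cdot;f^{D-1})\right\|_{H^\alpha(\O)}\right].
\]
Interchanging the expectation and the $H^\alpha$-norm is where Assumption \ref{assump:dgp_sobolev}(ii) is needed: it provides the integrable dominating random variable $Z$ (controlling all derivatives up to order $\tilde\beta$ uniformly on $\O$, hence controlling the $H^\alpha$-norm for $\alpha \le \min\{\tilde\beta,\lfloor\beta\rfloor\}$) that justifies the use of Fubini/dominated convergence and ensures the integrand is measurable and integrable. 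Then, since Assumption \ref{assump:dgp_sobolev}(i) says $k_D(\cdot,\cdot;f^{D-1})$ and $f$ satisfy Assumption \ref{assump:kernel2} with the same $\beta$ for almost every $f^{D-1}$, I apply the first (deterministic-$\varepsilon$) bound of Theorem \ref{thm:conv_sob_noisy} inside the expectation, for each fixed $f^{D-1}$, obtaining
\[
\left\|f - m^f_{N,\delta^2}(\cdot;f^{D-1})\right\|_{H^\alpha(\O)} \leq h_{U_N,\Omega}^{\beta-\alpha}\left(2 C_\low(k_D)^{-1}C_\up(k_D)\|f\|_{H^\beta(\Omega)} + \delta^{-1}C_\up(k_D)\|\varepsilon\|_2\right) + h_{U_N,\Omega}^{d/2-\alpha}\left(2\|\varepsilon\|_2 + \delta C_\low(k_D)^{-1}C_\up(k_D)\|f\|_{H^\beta(\Omega)}\right),
\]
valid for $h_{U_N,\Omega}\le h_0$ with $h_0$ the constant from Theorem \ref{thm:conv_sob_noisy} (note $h_0$ there is independent of the kernel, which is essential here since the kernel is random). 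Taking $\E_{\mu^{y_N,\delta^2,D}_{D-1}}$ of both sides, pulling the deterministic factors $h_{U_N,\Omega}^{\beta-\alpha}$, $h_{U_N,\Omega}^{d/2-\alpha}$, $\delta$, $\|\varepsilon\|_2$, $\|f\|_{H^\beta}$ out, and using linearity of expectation on the two terms involving $C_\low(k_D)^{-1}C_\up(k_D)$ and $C_\up(k_D)$ gives the displayed general bound.

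For part (i), set $\varepsilon=0$, so the two $\|\varepsilon\|_2$ terms vanish, leaving $2h_{U_N,\Omega}^{\beta-\alpha}\E[\cdots]\|f\|_{H^\beta} + \delta h_{U_N,\Omega}^{d/2-\alpha}\E[\cdots]\|f\|_{H^\beta}$; substituting $\delta=\delta_N \le C_\delta h_{U_N,\Omega}^{\beta-d/2}$ makes the second term $\le C_\delta h_{U_N,\Omega}^{\beta-\alpha}\E[\cdots]\|f\|_{H^\beta}$, and bounding $\E_{\mu^{y_N,\delta^2,D}_{D-1}}[C_\low(k_D)^{-1}C_\up(k_D)] \le C_{\ref{assump:dgp_sobolev}}(y_N) \le C_{\ref{assump:dgp_sobolev}}^a$ by Assumption \ref{assump:dgp_sobolev}(iii) yields the claimed $(2+C_\delta)C_{\ref{assump:dgp_sobolev}}^a h_{U_N,\Omega}^{\beta-\alpha}\|f\|_{H^\beta(\Omega)}$. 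For part (ii), with $\varepsilon \sim N(0,\delta^2\mathrm{I})$, I take $\E_\varepsilon$ of the general bound; since the factors $C_{\ref{assump:dgp_sobolev}}(y_N)$ depend on $y_N$ hence on $\varepsilon$, I use Cauchy–Schwarz to separate $\E_\varepsilon[C_{\ref{assump:dgp_sobolev}}(y_N)\|\varepsilon\|_2] \le \E_\varepsilon[C_{\ref{assump:dgp_sobolev}}(y_N)^2]^{1/2}\E_\varepsilon[\|\varepsilon\|_2^2]^{1/2} \le C_{\ref{assump:dgp_sobolev}}^b (\delta^2 N)^{1/2}$ and $\E_\varepsilon[\|\varepsilon\|_2] \le (\delta^2N)^{1/2}$; substituting $\delta = \delta_N = C_\delta N^{-1/2}$ collapses $\delta_N^{-1}\cdot(\delta_N^2 N)^{1/2} = N^{1/2}$ and $\delta_N \cdot$ (nothing) $= C_\delta N^{-1/2}$, and regrouping the four resulting terms gives the stated bound. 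The main obstacle is the measurability/integrability bookkeeping in the interchange of expectation and $H^\alpha$-norm — making precise that $f^{D-1}\mapsto \|f-m^f_{N,\delta^2}(\cdot;f^{D-1})\|_{H^\alpha(\O)}$ is measurable and dominated, which is precisely what Assumption \ref{assump:dgp_sobolev}(ii) is designed to supply; everything else is careful but routine algebra.
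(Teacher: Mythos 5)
Your proposal is correct and follows essentially the same route as the paper: conditioning on $f^{D-1}$, identifying the conditional posterior mean with $m^f_{N,\delta^2}(\cdot;f^{D-1})$ via the tower property, exchanging the expectation and the norm using the dominating variable from Assumption \ref{assump:dgp_sobolev}(ii), applying Theorem \ref{thm:conv_sob_noisy} for each fixed $f^{D-1}$ (using that $h_0$ there is kernel-independent), and then deriving (i) and (ii) from Assumption \ref{assump:dgp_sobolev}(iii), Cauchy--Schwarz, and $\E_\varepsilon[\|\varepsilon\|_2]\leq (\delta^2 N)^{1/2}$. The only minor difference is that the paper first treats integer $\alpha$ (exchanging $D^p$ with the expectation and applying Jensen in $L^2$) and then extends to non-integer $\alpha$ by Sobolev interpolation, whereas you invoke the Bochner-integral inequality $\|\E[X]\|_{H^\alpha}\leq \E[\|X\|_{H^\alpha}]$ directly; both are valid.
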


\begin{proof}
We first consider the case $\alpha \in \N_0$. Using the definition of conditional distributions we see that
\[
\E_{\mu^{y_N,\delta^2,D}_{D}}[f^{D}] = \E_{\mu^{y_N,\delta^2,D}_{D-1}}[\E_{\mu^{y_N,\delta^2,D}_{D|D-1}}[f^{D}]]=\E_{\mu^{y_N,\delta^2,D}_{D-1}}[m_{N,\delta^2}^f(f^{D-1})], 
\]
where the final equality uses that the conditional distribution \(\mu^{y_N,\delta^2,D}_{D|D-1}\) on $f^{D}|f^{D-1},y_N$ is Gaussian with mean $m_{N,\delta^2}^f({f^{D-1}})$ as defined in \eqref{eq:gp_mean}. The linearity of expectation then gives
\begin{align*}
    \|f-\E_{\mu^{y_N,\delta^2,D}_{D}}[f^{D}]\|_{H^{\alpha}(\O)}&=\|\E_{\mu^{y_N,\delta^2,D}_{D-1}}[f - m_{N,\delta^2}^f(\cdot;{f^{D-1}})]\|_{H^{\alpha}(\O)}.
\end{align*}
To use Theorem \ref{thm:conv_sob_noisy}, we now wish to exchange the order of $\E_{\mu^{y_N,\delta^2,D}_{D-1}}$ and $\|\cdot\|_{H^{\alpha}(\O)}$. Using Assumption \ref{assump:dgp_sobolev}\((ii)\), we can conclude that we can do so by the dominated convergence theorem (see, e.g. {\cite[Theorem 1.50]{RAAdams_JJFFournier_2003}})) . Together with the convexity of $\|\cdot\|_{L^{2}(\O)}$ and Jensen's inequality, we then have 
\begin{align*}\nonumber
    \left\|\E_{\mu^{y_N,\delta^2,D}_{D-1}}[f-m_{N,\delta^2}^f(f^{D-1})]\right\|_{H^{\alpha}(\O)}
    &=\sum_{|p|\leq\alpha} \left\|\E_{\mu^{y_N,\delta^2,D}_{D-1}}[D^p(f-m_{N,\delta^2}^f(f^{D-1}))]\right\|_{L^{2}(\O)}\nonumber \\
    &\leq\E_{\mu^{y_N,\delta^2,D}_{D-1}}\left[\left\|f-m_{N,\delta^2}^f(f^{D-1})\right\|_{H^{\alpha}(\O)}\right]. 
\end{align*}
Combining the previous bound with Assumption \ref{assump:dgp_sobolev}$(i)$ and Theorem \ref{thm:conv_sob_noisy}, we have 
\begin{align*}
 \|f-\E_{\mu^{y_N,\delta^2,D}_{D}}[f^{D}] \|_{H^{\alpha}(\O)} &\leq 
  h_{U_N,\Omega}^{\beta-\alpha} \left( 2 \E_{\mu^{y_N,\delta^2,D}_{D-1}}\left[C_{\low}(k_D)^{-1}C_{\up}(k_D)\right]  \|f\|_{H^\beta(\Omega)} +  \delta^{-1}  \E_{\mu^{y_N,\delta^2,D}_{D-1}}\left[C_{\up}(k_D) \right] \|\varepsilon\|_2 \right) \\
 & \qquad + \, h_{U_N,\Omega}^{d/2-\alpha} \left( 2 \|\varepsilon\|_2 + \delta  \E_{\mu^{y_N,\delta^2,D}_{D-1}}\left[ C_{\low}(k_D)^{-1}C_{\up}(k_D) \right] \|f\|_{H^\beta(\Omega)} \right),
\end{align*}
which proves the first claim. 
The other claims then follow using Assumption \ref{assump:dgp_sobolev}$(iii)$, H\"older's inequality with exponents $p=q=1/2$ on the term $\E_\varepsilon\left[\E_{\mu^{y_N,\delta^2,D}_{D-1}}\left[C_{\up}(k_D) \right] \|\varepsilon\|_2\right]$, and the inequality $\E_\varepsilon \left[ \|\varepsilon\|_2\right] \leq \E_\varepsilon \left[ \|\varepsilon\|^2_2\right]^{1/2} = \delta$.

The case of non-integer $\alpha$ follows by interpolation theory in Sobolev spaces (see e.g. \cite{bergh2012interpolation}), by considering the linear operator $T$ defined by $T f = f-m_N^f$. The first bound applies to $\lfloor \alpha \rfloor$ and $\lceil \alpha \rceil$, and hence also to $\alpha = \lfloor \alpha \rfloor + \theta(\lceil \alpha \rceil -\lfloor \alpha \rfloor)$, for $0 < \theta < 1$.
\end{proof}

Similar to Theorem \ref{thm:conv_sob_noisy}, the first claim in Theorem \ref{thm:exp convergence rate} does not make any assumptions on the noise $\varepsilon$. The second claim, given in part $(i)$, shows that with noise-free data and the noise parameter $\delta$ in the approximate posterior $\mu^{y_N,\delta^2,D}_{D}$ chosen appropriately, we obtain optimal convergence rates $N^{-(\beta+\alpha)/d}$ of the posterior mean also for DGP regression. The third claim, given in part $(ii)$, shows that convergence of the posterior mean is guaranteed also in the case of noisy data, provided that the noise level $\delta$ goes to zero at rate $N^{-1/2}$. Note however, that the rates are slower than in the case of noise-free data. For a constant noise level $\delta$, independent of $N$, the error bound in Theorem \ref{thm:exp convergence rate} unfortunately does not allow us to conclude on the convergence of the posterior mean as $N \rightarrow \infty$, cf the discussion after Theorem \ref{thm:conv_sob_noisy}.

We now show that Assumption \ref{assump:dgp_sobolev} holds for the mixture and warping kernel DGP constructions, given the conditions specified in Lemmas \ref{lem:DGPkernelWarp} and \ref{lem:DGPkernelMix}. In order to ensure that Assumption \ref{assump:dgp_sobolev} \((ii)\) and \((iii)\) are satisfied, we impose the condition that sample paths from the \(D-1\)-th layer are restricted to appropriate H\"older or Sobolev balls denoted by \(C^{\beta(R)}(\O)\) and \(H^{\beta(R)}(\O)\) respectively for \(0<R<\infty\):
\begin{align*}
C^{\beta(R)}(\O) &:= \{ g \in C^{\beta}(\O) : \|g\|_{C^{\beta}(\O)} \leq R\}, \\
H^{\beta(R)}(\O) &:= \{ g \in H^{\beta}(\O) : \|g\|_{H^{\beta}(\O)} \leq R\}.
\end{align*}
That is, \(f^{D-1}\sim \TGP(0,k_{D-1}, R)\), where this distribution is defined as \(\GP(0,k_{D-1})\) conditioned on the event that \(\{f^{D-1}\in C^{\beta(R)}(\O)\}\) (for the warping construction) or \(\{f^{D-1}\in H^{\beta(R)}(\O)\}\) (for the mixture construction). When \(D\geq 2\) the truncated deep Gaussian process (TDGP) construction takes the following form:
\begin{align}
\nonumber f^0&\sim\mathcal{GP}(0, k_0(u,u')),\nonumber\\
f^{n}|f^{n-1}&\sim\mathcal{GP}(0, k_{n}(u,u';f^{n-1})), \qquad n=1,\dots, D-2\nonumber\\
f^{D-1}|f^{D-2}&\sim\mathcal{TGP}(0, k_{n}(u,u';f^{n-1}), R)\nonumber\\
f^{D}|f^{D-1}&\sim\mathcal{GP}(0, k_{D}(u,u';f^{D-1})).\label{eq:truncatedGP}
\end{align}
For \(D=1\) the TDGP takes a similar form, but with the initial later truncated, that is \( f^0\sim\mathcal{GP}(0, k_0(u,u'), R)\).
The statements of the results are given below, while the proofs are given at the end of this section. 
\begin{lemma}\label{lem:DGPkernelWarp}
Let $D \geq 1$, \(0<R<\infty\) and \(\O\subset\R\). Let \(\hat{\nu}_0=\nu_0\) for \(\nu_0\) not an integer and \(\hat{\nu}_0<\nu_0\) otherwise. Then Assumption \ref{assump:dgp_sobolev} is satisfied, if \(f\in H^{\beta}(\O)\) and \\
$(i)$ $k_0 = k_{\M(\hat\nu_0)}$ , where \(1/2\leq\lfloor\hat\nu_0-D/2\rfloor=:\beta\), \\
$(ii)$ $k_n = k_{\warp}^{f^{n-1},k_{\M(\nu_n)}}$ with $\nu_n=\hat\nu_0$ for \(n = 1,\ldots, D-1\) and \(\nu_{D}= \beta -1/2:=\tilde\beta\)\\
$(iii)$ \(f^{D-1}: \O \rightarrow \O\), $\inv(f^{D-1})$ exists, and \(|(f^{D-1})'| \geq c>0\) 
a.e. in $\O$, all almost surely, and \\
$(iv)$ \(f^{D-1}\in{C^{\beta(R)}(\O)}\) almost surely.
\end{lemma}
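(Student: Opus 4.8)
The plan is to verify the three parts of Assumption~\ref{assump:dgp_sobolev} in turn, using the RKHS characterisation of the warping kernel (Theorem~\ref{lemma:sob equiv warping}), the sample path regularity results of section~\ref{sec:sample path regularity}, and the constrained (truncated) construction~\eqref{eq:truncatedGP} to obtain uniform bounds. Throughout, $f^{D-1}$ plays the role of the warping function $w$ in Theorem~\ref{lemma:sob equiv warping} with $k_s = k_{\M(\nu_D)}$ and $\nu_D = \beta - 1/2$, so that $\beta = \nu_D + 1/2$ matches the exponent in that theorem. Note that $(i)$ fixes $\beta = \lfloor \hat\nu_0 - D/2\rfloor$ and $(iv)$ restricts $f^{D-1}$ to the ball $C^{\beta(R)}(\O)$, which is exactly the regularity $C^\beta(\O)$ demanded by hypothesis $(ii)$ of Theorem~\ref{lemma:sob equiv warping}; hypothesis $(iii)$ of that theorem (the lower bounds on $|w'|$ and $|\inv(w)'|$) is supplied by part $(iii)$ here, since $|(f^{D-1})'| \geq c$ a.e.\ together with the inverse function rule~\eqref{eq:inverse func rule} gives $|\inv(f^{D-1})'| = 1/|(f^{D-1})'(\inv(f^{D-1}))| \geq 1/\|f^{D-1}\|_{C^1(\O)} \geq 1/R =: c'$ a.e.

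For part $(i)$ of Assumption~\ref{assump:dgp_sobolev}, I would argue as follows. By Theorem~\ref{lemma:sob equiv warping}, applied conditionally on $f^{D-1}$, we have $\H_{k_D(\cdot,\cdot;f^{D-1})}(\O) \cong H^\beta(\O)$ for almost all $f^{D-1}$ (all those satisfying $(iii)$ and $(iv)$, which is a probability-one event under the TDGP construction). Since $f \in H^\beta(\O)$ by hypothesis, Assumption~\ref{assump:kernel2} is satisfied with this $\beta$, for almost all $f^{D-1}$. For part $(iii)$, the norm-equivalence constants from Theorem~\ref{lemma:sob equiv warping} are
\[
C_\low(k_D) = C_\low(k_s) C(\beta)^{-1} \sqrt{c'} \max\{1, \|\inv(f^{D-1})\|_{C^\beta(\O)}^\beta\}^{-1}, \quad C_\up(k_D) = C_\up(k_s) C(\beta) c^{-1/2} \max\{1, \|f^{D-1}\|_{C^\beta(\O)}^\beta\}.
\]
On the truncation event $\|f^{D-1}\|_{C^\beta(\O)} \leq R$ we immediately bound the $C_\up$ factor by a constant depending only on $R, c, \beta, k_s$. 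For $C_\low^{-1}$ we need $\|\inv(f^{D-1})\|_{C^\beta(\O)}$ bounded, which follows from Lemma~\ref{lemma:inverse has derivatives}: with $w'_{\min} = c$ (after rescaling so that $c \leq 1$, which is WLOG) and $\|f^{D-1}\|_{C^\beta(\O)} \leq R$, that lemma gives $\|\inv(f^{D-1})\|_{C^\beta(\O)} \leq (1 + c^{-2\beta})(\beta+1)(2\beta-1)! B_{2\beta-1}(1 + R^{\beta-1})$, a deterministic constant. Hence $C_\low(k_D)^{-1} C_\up(k_D)$ and $C_\up(k_D)$ are both bounded by a deterministic constant $C_{\ref{assump:dgp_sobolev}}^a$ independent of $N$, $U_N$ and $\varepsilon$; taking expectations over $\mu^{y_N,\delta^2,D}_{D-1}$ preserves this, and the same constant serves as $C_{\ref{assump:dgp_sobolev}}^b$ (the second moment is bounded by the square of the deterministic bound). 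This establishes $(iii)$.

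For part $(ii)$ of Assumption~\ref{assump:dgp_sobolev}, I need a random variable $Z$ dominating $\sup_{u\in\O}|D^q(f - m_{N,\delta^2}^f(u;f^{D-1}))|$ for all $|q| \leq \tilde\beta = \beta - 1/2$ with $\E_{\mu^{y_N,\delta^2,D}_{D-1}}[Z] < \infty$. Since $\O \subset \R$ here, $|q| \leq \tilde\beta$ means $q \leq \lfloor \tilde\beta \rfloor = \beta - 1$ (as $\tilde\beta = \beta - 1/2$ is a half-integer). I would first bound $\sup_{u}|D^q f(u)|$ by a Sobolev embedding: $f \in H^\beta(\O)$ with $\beta \geq 1/2 + 1 > 1/2 + \lfloor\tilde\beta\rfloor$... actually one checks $\beta - \lfloor\tilde\beta\rfloor = \beta - (\beta-1) = 1 > 1/2 = d/2$, so $H^\beta(\O) \hookrightarrow C^{\lfloor\tilde\beta\rfloor}(\O)$ and $\sup_u |D^q f(u)| \leq C\|f\|_{H^\beta(\O)}$, a deterministic constant. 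For the posterior mean term, $m_{N,\delta^2}^f(\cdot;f^{D-1}) \in \H_{k_D}(\O) \cong H^\beta(\O)$, and from the explicit formula~\eqref{eq:gp_mean} together with $\|m_{N,\delta^2}^f\|_{\H_{k_D}(\O)} \leq \|f^\varepsilon(U_N)\|_2 / \delta$ (using $\delta > 0$ and positive semi-definiteness of $K(U_N,U_N)$), one gets $\|m_{N,\delta^2}^f(\cdot;f^{D-1})\|_{H^\beta(\O)} \leq C_\up(k_D) \delta^{-1} \|f^\varepsilon(U_N)\|_2$; combined with the Sobolev embedding $H^\beta(\O) \hookrightarrow C^{\lfloor\tilde\beta\rfloor}(\O)$ this gives $\sup_u|D^q m_{N,\delta^2}^f(u;f^{D-1})| \leq C C_\up(k_D) \delta^{-1}\|f^\varepsilon(U_N)\|_2$. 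On the truncation event, $C_\up(k_D) \leq $ const as shown above, so the whole expression is bounded by a deterministic (in $f^{D-1}$) constant $Z$ depending on $\|f\|_{H^\beta}$, $R$, $\delta$, $N$, $\|f^\varepsilon(U_N)\|_2$, which trivially has finite expectation under $\mu^{y_N,\delta^2,D}_{D-1}$. This proves $(ii)$.

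The main obstacle is part $(iii)$, specifically controlling $C_\low(k_D)^{-1}$: this requires bounding $\|\inv(f^{D-1})\|_{C^\beta(\O)}$ uniformly, which is precisely why the truncation to $C^{\beta(R)}(\O)$ in $(iv)$ together with the quantitative inverse-function estimate of Lemma~\ref{lemma:inverse has derivatives} is essential — without the truncation there is no obvious almost-sure or integrable bound on this quantity, and indeed the discussion in the introduction flags this as the reason the truncated construction is needed. A subsidiary technical point to handle carefully is that $f^{D-1}$ must map $\O$ onto $\O$ with a well-defined inverse (needed both for Theorem~\ref{lemma:sob equiv warping} and Lemma~\ref{lemma:inverse has derivatives}); this is part of hypothesis $(iii)$, and one should note it holds on a probability-one event under the TDGP law, so all the "almost all $f^{D-1}$" statements are genuinely with respect to $\mu^{y_N,\delta^2,D}_{D-1}$, which is absolutely continuous with respect to the prior $\mu_\mathrm{prior}^D$ by Bayes' theorem. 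Finally, one should remark that nothing in the argument uses $D \geq 2$ versus $D = 1$ beyond the form of the TDGP construction, since only the law of $f^{D-1}$ (restricted to the Hölder ball) enters.
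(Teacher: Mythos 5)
Your proposal is correct, and its skeleton (verify the three parts of Assumption \ref{assump:dgp_sobolev} by applying Theorem \ref{lemma:sob equiv warping} conditionally on $f^{D-1}$, use the truncation to make the norm-equivalence constants deterministic, and use Lemma \ref{lemma:inverse has derivatives} together with $|\inv(f^{D-1})'|\ge 1/R$ for the inverse) is the same as the paper's; but two steps are handled by genuinely different means. First, you bypass the paper's inductive sample-path argument: the paper applies Corollary \ref{cor:sample path old school} (via Proposition \ref{prop:regularity of matern kernel lemma 11} and the chain rule) layer by layer to show the untruncated $f^{D-1}$ already lies in $C^{\hat\nu_0-D/2}(\O)\subset C^{\beta}(\O)$, which is where hypothesis $(i)$ linking $\hat\nu_0$, $D$ and $\beta$ enters and which certifies that the truncation in \eqref{eq:truncatedGP} conditions on a non-degenerate event consistent with the construction; you instead read both the $C^\beta$ membership and the norm bound directly off hypothesis $(iv)$, so hypothesis $(i)$ never appears in your argument. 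That shortcut is logically sufficient for verifying Assumption \ref{assump:dgp_sobolev} from the stated hypotheses, but it silently drops the consistency check that the paper's induction provides. Second, for Assumption \ref{assump:dgp_sobolev}$(ii)$ the paper writes $m^f_{N,\delta^2}(\cdot;f^{D-1})$ as a linear combination of kernel sections and bounds $\|k_{\warp}^{f^{D-1},k_{\M(\nu_D)}}\|_{C^{q}(\O\times\O)}\le C\|f^{D-1}\|_{C^{q}(\O)}\le CR$, whereas you bound $\|m^f_{N,\delta^2}\|_{\H_{k_D}(\O)}\le\|f^{\varepsilon}(U_N)\|_2/\delta$ and pass through the norm equivalence of Assumption \ref{assump:kernel2} and the Sobolev embedding; both yield a dominating $Z$ with finite expectation (no uniformity in $N$ is needed there), and your route is arguably cleaner since it avoids the coefficient bound $a_{\max}$. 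Finally, in part $(iii)$ your explicit use of Lemma \ref{lemma:inverse has derivatives} to bound $\|\inv(f^{D-1})\|_{C^{\beta}(\O)}$ by a constant depending on $R$, $c$ and $\beta$ is in fact more careful than the paper's closing bound $(1+R^{\beta})^2$, which implicitly relies on the same estimate.
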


\begin{lemma}\label{lem:DGPkernelMix}
Let $D\geq 1$, \(0<R<\infty\) and \(\O\subset\R^d\). Let \(\hat{\nu}_0=\nu_0\) for \(\nu_0\) not an integer and \(\hat{\nu}_0<\nu_0\) otherwise. Then Assumption \ref{assump:dgp_sobolev} is satisfied, if \(f\in H^{\beta}(\O)\) and \\
$(i)$ $k_0 = k_{\M(\hat\nu_0)}$, where \(d/2\leq\beta(d,D)\) for \(\beta(1,D)=\lfloor \hat\nu_0-(D-1)/2\rfloor\), \(\beta(d,1)=\lfloor\hat\nu_0\rfloor\) for \(d\geq 2\) and \(\beta(d,D)=\lceil\hat\nu_0-1\rceil-(D-2)\) for \(d,D\geq 2\), \\
$(ii)$ $k_n = k_{\mix}^{F(f^{n-1}),k_{\M(\nu_n)}}$ with $\nu_n=\hat\nu_0$ for \(n = 1,\ldots, D-1\) and \(\nu_{D}=\beta-d/2:=\tilde\beta\), \\
$(iii)$ $F(x) = x^2 + \eta$, for all $x \in \R$ and some $\eta > 0$, and \\
$(iv)$ \(f^{D-1}\in H^{\beta(R)}(\O)\) almost surely.
\end{lemma}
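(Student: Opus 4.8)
The plan is to verify the three parts of Assumption \ref{assump:dgp_sobolev} in turn, using the results already established for the mixture kernel together with the regularity of samples from the truncated penultimate layer. Throughout I will write $\beta = \beta(d,D)$ and $\tilde\beta = \beta - d/2$, and I will condition on a fixed realisation $g := f^{D-1} \in H^{\beta(R)}(\O)$.

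\textbf{Part (i): the RKHS condition.} I first need to show that $k_D(\cdot,\cdot;g) = k_{\mix}^{F(g),k_{\M(\tilde\beta)}}$ satisfies Assumption \ref{assump:kernel2} with exponent $\beta$, for almost every $g$. Since $F(x) = x^2 + \eta \geq \eta > 0$, Lemma \ref{lemma:reciprocal has derivatives} applied with $\sigma = g \in H^\beta(\O)$ (note $2\beta > d$ is forced by $\beta \geq d/2$ together with $\beta \in \N_0$, and one checks $\beta$ is indeed a positive integer under the stated hypotheses on $\hat\nu_0$) gives $F(g) \in H^\beta(\O)$ with $\|F(g)\|_{H^\beta(\O)} \leq C'(1 + R^\beta)$, and $1/F(g) \in H^\beta(\O)$ with $\|1/F(g)\|_{H^\beta(\O)} \leq C''(1+R^\beta)$. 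Now $k_D(\cdot,\cdot;g)$ is a mixture kernel with $L=1$, component $\sigma_1 = F(g)$ and stationary kernel $k_1 = k_{\M(\tilde\beta)}$ with $\tilde\beta + d/2 = \beta$; hypotheses $(i)$--$(iii)$ of Theorem \ref{lemma: sob equiv mixture} are met (with $j = 1$), so $\H_{k_D(\cdot,\cdot;g)}(\O) \cong H^\beta(\O)$, and since $f \in H^\beta(\O)$ by hypothesis, Assumption \ref{assump:kernel2} holds. Crucially, Theorem \ref{lemma: sob equiv mixture} gives the explicit constants $C_\low(k_D(\cdot,\cdot;g)) = C_{\ref{prop:banachalg}}^{-1} C_\low(k_{\M(\tilde\beta)}) \|1/F(g)\|_{H^\beta(\O)}^{-1}$ and $C_\up(k_D(\cdot,\cdot;g)) = \sqrt 2 C_{\ref{prop:banachalg}} C_\up(k_{\M(\tilde\beta)}) \|F(g)\|_{H^\beta(\O)}$, both of which I can now bound in terms of $R$ using the Lemma \ref{lemma:reciprocal has derivatives} estimates.

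\textbf{Part (iii): uniform moment bound on the constants.} Using the explicit expressions from Part (i), both $C_\up(k_D(\cdot,\cdot;g))$ and $C_\low(k_D(\cdot,\cdot;g))^{-1}C_\up(k_D(\cdot,\cdot;g))$ are bounded above by a polynomial in $\|F(g)\|_{H^\beta(\O)}$ and $\|1/F(g)\|_{H^\beta(\O)}$, hence by a constant depending only on $R$, $\eta$, $\tilde\beta$, $d$ and the fixed Mat\'ern hyper-parameters --- uniformly over all $g \in H^{\beta(R)}(\O)$ and in particular almost surely under $\mu^{y_N,\delta^2,D}_{D-1}$, which is supported on $H^{\beta(R)}(\O)$ by the TDGP construction \eqref{eq:truncatedGP}. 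Since this bound is deterministic and independent of $y_N$, $N$ and $U_N$, taking expectations over $\mu^{y_N,\delta^2,D}_{D-1}$ trivially gives Assumption \ref{assump:dgp_sobolev}$(iii)$ with $C_{\ref{assump:dgp_sobolev}}(y_N) = C_{\ref{assump:dgp_sobolev}}^a = C_{\ref{assump:dgp_sobolev}}^b$ a constant depending only on $R$.

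\textbf{Part (ii): the uniform domination bound.} This is the step I expect to be the main obstacle, since it requires almost-sure control of $\sup_{u\in\O}|D^q(f - m_{N,\delta^2}^f(\cdot;g))(u)|$ for $|q| \leq \tilde\beta$ by an integrable random variable. I would split $D^q f$ off using the triangle inequality (it is bounded, since $f \in H^\beta(\O)$ with $\beta > d/2$ embeds into $C^{\tilde\beta}(\O)$), reducing to a bound on $\|D^q m_{N,\delta^2}^f(\cdot;g)\|_{C^0(\O)}$. By the explicit formula \eqref{eq:gp_mean}, this is $|D^q k_D(\cdot,U_N;g)^T (K + \delta^2 \mathrm I)^{-1} f^\varepsilon(U_N)|$; one bounds the matrix term by $\delta^{-2}\|f^\varepsilon(U_N)\|_2$ and uses that, because $k_D(\cdot,\cdot;g) \in C^{\tilde\beta,\tilde\beta}$-type regularity follows from Corollary \ref{cor: sample path sullivan}'s underlying regularity argument (product rule plus $k_{\M(\tilde\beta)}$ regularity, noting $F(g) = g^2 + \eta \in H^\beta(\O) \hookrightarrow C^{\tilde\beta}(\O)$), the derivatives $D^q k_D(u,u_n;g)$ are bounded uniformly in $u, u_n$ by a constant depending on $\|F(g)\|_{C^{\tilde\beta}(\O)} \leq C(R)$. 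This yields a bound of the form $Z := \|f\|_{C^{\tilde\beta}(\O)} + C(R)\, N^{1/2}\delta^{-2}\|f^\varepsilon(U_N)\|_2$, which does not actually depend on $g$ at all and has finite $\mu^{y_N,\delta^2,D}_{D-1}$-expectation (indeed it is constant in $g$); one must just check $\E_{\mu^{y_N,\delta^2,D}_{D-1}}[Z] < \infty$, which is immediate since $Z$ is $\mu^{y_N,\delta^2,D}_{D-1}$-a.s. constant and finite for each fixed $y_N$. The delicate point is confirming the uniform-in-$g$ bound on the kernel derivatives, which I would handle by the same product-rule-and-Fa\`a-di-Bruno bookkeeping used in the proof of Lemma \ref{lemma:exp_kernel mixture}, now with $\sigma_1 = F(g)$ and $\|F(g)\|_{C^{2p}}$ controlled via $g \in H^{\beta(R)}(\O) \hookrightarrow C^{2p}(\O)$ for appropriate $p$. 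Assembling Parts (i)--(iii) establishes Assumption \ref{assump:dgp_sobolev} and completes the proof.
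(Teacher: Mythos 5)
Your proposal is correct in substance and, for parts (ii) and (iii) of Assumption \ref{assump:dgp_sobolev}, follows essentially the same route as the paper: the norm-equivalence constants are controlled via Theorem \ref{lemma: sob equiv mixture} (with $L=1$, $\sigma_1=F(f^{D-1})$) and Lemma \ref{lemma:reciprocal has derivatives}, and the truncation $\|f^{D-1}\|_{H^\beta(\O)}\leq R$ turns these into deterministic bounds independent of $N$, $U_N$ and $\varepsilon$; your part (ii), which bounds $\|(K+\delta^2\mathrm{I})^{-1}\|_2\leq\delta^{-2}$ and the kernel derivatives uniformly via $\|F(f^{D-1})\|_{C^{\tilde\beta}(\O)}\leq C(R)$, is just a spectral-norm variant of the paper's bookkeeping with the coefficient maximum $a_{\max}$ and is equally valid, since Assumption \ref{assump:dgp_sobolev}(ii) requires no uniformity in $N$. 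The genuine difference is in part (i): you obtain the RKHS equivalence directly from the truncation hypothesis $(iv)$, whereas the paper spends most of its proof propagating sample-path regularity through the layers (Corollaries \ref{cor:sample path old school} and \ref{cor: sample path sullivan}, Proposition \ref{prop:regularity of GP samples}, Corollary \ref{eq:F(f0) is Hbeta}), split into the cases $d=1$; $d\geq2, D=1$; and $d,D\geq 2$. Your shortcut is logically admissible given the lemma as stated, and is more economical, but it never uses hypothesis $(i)$ and so leaves unexplained why the particular values $\beta(d,D)$ appear: the paper's layer-by-layer argument is what shows the \emph{untruncated} penultimate layer already lies in $H^{\beta}(\O)$ almost surely, which is what makes conditioning on the event $\{f^{D-1}\in H^{\beta(R)}(\O)\}$ in the TDGP construction \eqref{eq:truncatedGP} well-posed rather than conditioning on a null set; if you take your route you should at least remark on this. (The borderline Sobolev embedding $H^\beta(\O)\hookrightarrow C^{\tilde\beta}(\O)$ with $\tilde\beta=\beta-d/2$ that you invoke for even $d$ is an endpoint case, but the paper's own proof uses the same embedding, so this is not a gap you introduced.)
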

\begin{corollary}\label{cor:DGP_conv_sobolev}
Suppose the assumptions of Lemma \ref{lem:DGPkernelWarp} or \ref{lem:DGPkernelMix} hold. Then the conclusions of Theorem \ref{thm:exp convergence rate} hold.

Suppose further that $U_N$ are sampled i.i.d. from a measure $\mu$ with density ${\rho}$ satisfying ${\rho(u)} \geq {\rho_{\mathrm{min}}} > 0$ for all $u \in \overline{\O}$. Then for $\varepsilon=0$ and $\delta= \delta_N \leq C_\delta h_{U_N,\Omega}^{\beta-d/2}$, there exists a constant $C_{\ref{cor:DGP_conv_sobolev}}$ such that for all \(\alpha\leq \min\{\tilde\beta,\lfloor\beta\rfloor\}\) and $\epsilon > 0$ we have
    \[
    \E_{\mu} \left[\|f-\E_{\mu^{y_N,\delta_N^2,D}_{D}}[f^{D} ]\|_{H^{\alpha}(\O)}\right] \leq 
  C_{\ref{cor:DGP_conv_sobolev}} N^{-\frac{\beta-\alpha}{d} + \epsilon} \|f\|_{H^\beta(\Omega)}.
    \]
    \end{corollary}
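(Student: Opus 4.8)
The first assertion is immediate: Lemmas \ref{lem:DGPkernelWarp} and \ref{lem:DGPkernelMix} are set up precisely to verify the three parts of Assumption \ref{assump:dgp_sobolev} — the norm-equivalence $\H_{k_D}(\O)\cong H^\beta(\O)$ for almost every $f^{D-1}$, the existence of the dominating random variable $Z$ used to exchange expectation and norm, and the uniform-in-$N$ bound $C_{\ref{assump:dgp_sobolev}}(y_N)\leq C_{\ref{assump:dgp_sobolev}}^a$ on the norm-equivalence constants in the noise-free case — so Theorem \ref{thm:exp convergence rate} applies verbatim and gives the first claim.

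For the second assertion I would begin from part $(i)$ of Theorem \ref{thm:exp convergence rate}: since $\varepsilon=0$ and $\delta_N\leq C_\delta h_{U_N,\Omega}^{\beta-d/2}$, we have
\[
\|f-\E_{\mu^{y_N,\delta_N^2,D}_{D}}[f^{D}]\|_{H^{\alpha}(\O)} \leq (2+C_\delta)\,C_{\ref{assump:dgp_sobolev}}^a\, h_{U_N,\Omega}^{\beta-\alpha}\,\|f\|_{H^\beta(\O)}
\]
for all $\alpha\leq\min\{\tilde\beta,\lfloor\beta\rfloor\}$ whenever $h_{U_N,\Omega}\leq h_0$, with $C_{\ref{assump:dgp_sobolev}}^a$ and $h_0$ independent of $N$ and $U_N$. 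The crucial feature is that the dependence on the random design $U_N$ now enters only through the fill distance $h_{U_N,\Omega}$, so the passage to random points is exactly the one used in the proof of Corollary \ref{cor:conv_random}. Concretely, I would split $\E_\mu[\,\cdot\,]$ over $\{h_{U_N,\Omega}\leq h_0\}$ and $\{h_{U_N,\Omega}>h_0\}$. On the first event the display above gives a bound proportional to $h_{U_N,\Omega}^{\beta-\alpha}$, and $\E_\mu[h_{U_N,\Omega}^{\beta-\alpha}]\leq C\,N^{-(\beta-\alpha)/d+\epsilon}$ by the moment bounds on the fill distance of i.i.d.\ samples from a density bounded below by $\rho_{\min}>0$ (see \cite{oates2019convergence,Teckentrup2019}).

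On the event $\{h_{U_N,\Omega}>h_0\}$, where Theorem \ref{thm:exp convergence rate} does not apply, I would use a crude a priori bound: by Assumption \ref{assump:kernel2} (verified through Lemma \ref{lem:DGPkernelWarp} or \ref{lem:DGPkernelMix}) we have $f\in\H_{k_D}(\O)$, and since $m_{N,\delta_N^2}^f(\cdot;f^{D-1})$ minimises $\delta_N^{-2}\|g(U_N)-f(U_N)\|_2^2+\|g\|_{\H_{k_D}(\O)}^2$, the choice $g=f$ yields $\|m_{N,\delta_N^2}^f(\cdot;f^{D-1})\|_{\H_{k_D}(\O)}\leq\|f\|_{\H_{k_D}(\O)}$, hence $\|f-m_{N,\delta_N^2}^f(\cdot;f^{D-1})\|_{H^\alpha(\O)}\leq (1+C_{\low}(k_D)^{-1}C_{\up}(k_D))\|f\|_{H^\beta(\O)}$; taking $\E_{\mu^{y_N,\delta_N^2,D}_{D-1}}$ and using Jensen's inequality as in the proof of Theorem \ref{thm:exp convergence rate} together with Assumption \ref{assump:dgp_sobolev}$(iii)$ bounds the left-hand side on this event by $(1+C_{\ref{assump:dgp_sobolev}}^a)\|f\|_{H^\beta(\O)}$. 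Since $\P_\mu[h_{U_N,\Omega}>h_0]$ decays faster than any polynomial in $N$ (again \cite{oates2019convergence,Teckentrup2019}, using $\rho\geq\rho_{\min}>0$), this contribution is absorbed into $N^{-(\beta-\alpha)/d+\epsilon}$. Adding the two contributions gives the stated estimate with $C_{\ref{cor:DGP_conv_sobolev}}$ depending only on $C_\delta$, $C_{\ref{assump:dgp_sobolev}}^a$, $\rho_{\min}$, $\epsilon$, $\O$, $d$, $\beta$ and $\alpha$. The only mildly delicate point is the treatment of the large-fill-distance event; everything else is routine bookkeeping on top of results already in hand. (Alternatively, one may run the argument conditionally on $f^{D-1}$, invoke the random-design estimate \cite[Theorem 3]{helin2023introduction} for the inner standard GP regression, and integrate over $f^{D-1}$ via Assumption \ref{assump:dgp_sobolev}, as in Corollary \ref{cor:conv_random}.)
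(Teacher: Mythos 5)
Your proposal is correct and follows essentially the same route as the paper: the first claim is immediate from Lemmas \ref{lem:DGPkernelWarp} and \ref{lem:DGPkernelMix} verifying Assumption \ref{assump:dgp_sobolev}, and the second is the bound of Theorem \ref{thm:exp convergence rate}(i) combined with the random-design fill-distance argument of \cite[Theorem 3]{helin2023introduction} (moment bound on $h_{U_N,\O}^{\beta-\alpha}$ plus a crude a priori bound on the super-polynomially unlikely event $\{h_{U_N,\O}>h_0\}$), which is exactly the technique the paper invokes. Your explicit treatment of the large-fill-distance event via the representer/minimisation bound and Assumption \ref{assump:dgp_sobolev}$(iii)$ is a sound way of filling in the details the paper leaves to the cited reference.
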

    \begin{proof} The first claim follows immediately. The second claim follows from the bound in Theorem \ref{thm:exp convergence rate}(i), together with the proof technique employed in \cite[Theorem 3]{helin2023introduction}.
    \end{proof}

In both of the previous Lemmas, we impose the condition of truncating the penultimate layer in the DGP as in \eqref{eq:truncatedGP}. In Lemma \ref{lem:DGPkernelWarp}, additional conditions are imposed: \(f^{D-1}: \O \rightarrow \O\), $\inv(f^{D-1})$ exists, and \(|(f^{D-1})'| \geq c>0\) and \(|\inv(f^{D-1})'|\geq c'>0\) a.e. in $\O$, all almost surely. This ensures that the assumptions of Theorem \ref{lemma:sob equiv warping} are satisfied, almost surely. { The requirement that \(f^{D-1}\) be a bijection is inherent in the construction of deep Gaussian processes on bounded domains and is necessary to ensure that all components are well-defined.} Without this condition, for example, \(f^{D}\) would not be well-defined on \(\O\). Furthermore, the existence of \(\inv(f^{D-1})\) is crucial for the theoretical framework of this work. However, in practical numerical simulations, this condition seems to be unnecessary to achieve the convergence rates predicted by our theory. This discrepancy is likely because the inverse exists when \(f^{D-1}\) is restricted to a finite discrete set, which is typical in numerical settings.

Note that the assumption that \(f^{D-1}: \O \rightarrow \O\) can be removed by using the scaling \[
    f^{D-1}\mapsto\frac{f^{D-1}-f^{D-1}_{\min}}{f^{D-1}_{\max}-f^{D-1}_{\min}}(b-a)+a =: \tilde f^{D-1}
\]
on the penultimate layer, where \(f^{D-1}_{\max}=\max\{f^{D-1}(u):u\in\O\}\), \(f^{D-1}_{\min}=\min\{f^{D-1}(u):u\in\O\}\) and \(\O=(a,b)\). This is well-defined, since by assumption $(iv)$ in
Lemma \ref{lem:DGPkernelWarp} we have \(f^{D-1}_{\max}, |f^{D-1}_{\min}| \leq R <\infty\), and by the mean value theorem and assumption $(iii)$ in
Lemma \ref{lem:DGPkernelWarp} (assuming \(|(f^{D-1})'| \geq c>0\) a.e. in $\overline \O$) we have
\[
|f^{D-1}_{\max}-f^{D-1}_{\min}| \geq c(b-a) >0.
\]
Then, using $k_{D} = k_{\warp}^{\tilde f^{D-1},k_{\M(\nu_n)}}$ in the TDGP construction gives the same conclusion as Lemma \ref{lem:DGPkernelWarp} under the same assumptions on $f^{D-1}$, since
\begin{align*}
\|\tilde f^{D-1}\|_{C^\beta(\O)} &\leq \|f^{D-1}\|_{C^\beta(\O)}/c + |a| + R/c =: \tilde R < \infty, \\
|(\tilde f^{D-1})'| &\geq 2(b-a)/R c =: \tilde c > 0.
\end{align*}

Various methods facilitate the conditioning required in Lemmas \ref{lem:DGPkernelWarp} and \ref{lem:DGPkernelMix}. For instance, rejection sampling, relying on constraints on derivatives, is one approach (see, e.g., \cite{Riikimake2010}). In \cite[section 7.1]{finocchio2023posterior}, where a similar truncation is essential in alternative method to construct DGPs, the authors propose several alternative approaches following a number of approaches given in \cite{Swiler_2020}. These include using a well-selected link function on the output of the penultimate layer, a form of constrained maximum likelihood estimation, and finite-dimensional approximations of Gaussian processes. For a comprehensive overview, we direct the reader to their summary. 

In Lemma \ref{lem:DGPkernelMix}, we have chosen the function \(F\) in a particular way that seems simple and practical. However, by Lemma \ref{lemma:reciprocal has derivatives}, the same conclusions hold for any function $F$ that is \(C^{\infty}(\O)\) and bounded below away from \(0\). 

Lemmas \ref{lem:DGPkernelWarp} and \ref{lem:DGPkernelMix}  unfortunately do not apply to the setting considered in \cite{dunlop2018deep}, where the same covariance kernel $k_n \equiv k$ is used in all layers of the construction. This is due to the fact that in Theorems \ref{lemma:sob equiv warping} and \ref{lemma: sob equiv mixture}, we require the same regularity of the stationary kernel $k_s$ and the hyper-parameters $w$ or $\{\sigma_\ell\}_{\ell=1}^L$. Hence, the difference in regularity of sample paths and the RKHS of a Gaussian process poses an issue. We can take the approach as presented in Lemmas \ref{lem:DGPkernelWarp} and \ref{lem:DGPkernelMix}, where we correct for this on the last layer only and keep the same kernel $k_n \equiv k$ on layers $n=0, \dots, D-1$, or we can correct for this gradually and adapt the kernel $k_n$ on each layer. We have chosen the former approach, since it mimics more closely the set-up in \cite{dunlop2018deep} and the corresponding convergence results as $D \rightarrow \infty$ may hence still apply. In particular, \cite{dunlop2018deep} gives convergence to a limiting distribution of the penultimate layer $f^{D-1}$ in Lemmas \ref{lem:DGPkernelWarp} and \ref{lem:DGPkernelMix} as $D \rightarrow \infty$.

\begin{remark}\label{rmk:sub-opt DGP} In Lemmas \ref{lem:DGPkernelWarp} and \ref{lem:DGPkernelMix}, there are three places where our proof technique may lead to sub-optimal regularity, and hence sub-optimal convergence rates in Corollary \ref{cor:DGP_conv_sobolev}. Firstly, as already discussed in Remark \ref{rem:sob_vs_class}, the H\"older sample path regularity proved in Corollaries \ref{cor:sample path old school} and \ref{cor: sample path sullivan} may not be optimal. Similarly, since we know that $k_{\M(\nu)}(\cdot,u)\in H^{\nu+d/2}(\O)$, the regularity $k_{\M(\nu)}(\cdot,u)\in C^{\nu}(\O)$ of the Mat\'ern kernel provided by Proposition \ref{prop:regularity of matern kernel lemma 11} may not be sharp. Lastly, there is the restriction that Lemma \ref{lemma:reciprocal has derivatives} has only been proven for integer-valued $\beta$.  
It may be possible to sharpen the sub-optimal results and improve the obtained convergence rates, however, this is beyond the scope of this work.
\end{remark}

The remainder of this section is devoted to the proofs of Lemmas \ref{lem:DGPkernelWarp} and \ref{lem:DGPkernelMix}. 

\begin{corollary}\label{eq:F(f0) is Hbeta}
    Suppose we are in the set up of Proposition \ref{prop:banachalg} and \(F:[\eta,\infty)\to\R\) is defined by \(F(x)=x^2+\eta\) for \(\eta>0\). Then for \(\s\in H^{\beta}(\O)\) there exists a constant \(C_{\ref{eq:F(f0) is Hbeta}}\) such that
    \[
        \|F(\s)\|_{H^{\alpha}(\O)}\leq C_{\ref{eq:F(f0) is Hbeta}}(1+\|\s\|^2_{H^{\alpha}(\O)}).
    \]
\end{corollary}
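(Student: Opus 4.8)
The statement to prove is Corollary \ref{eq:F(f0) is Hbeta}: for $F(x) = x^2 + \eta$ with $\eta > 0$, and $\sigma \in H^\beta(\O)$ with $2\beta > d$ (the setting of Proposition \ref{prop:banachalg}), we have $\|F(\sigma)\|_{H^\alpha(\O)} \leq C(1 + \|\sigma\|_{H^\alpha(\O)}^2)$.

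\textbf{Plan.} The key observation is that $F(\sigma) = \sigma^2 + \eta$, so $F(\sigma)$ is just the pointwise square of $\sigma$ plus a constant. The heavy lifting is done entirely by the Banach algebra property of $H^\alpha(\O)$ (Proposition \ref{prop:banachalg}), which is available because $2\alpha \geq 2 \cdot 0$... wait, more carefully: the corollary is stated "in the set up of Proposition \ref{prop:banachalg}", i.e. for $\alpha$ with $2\alpha > d$ so that $H^\alpha(\O)$ is a Banach algebra (and presumably $\alpha \leq \beta$). First I would apply Proposition \ref{prop:banachalg} to the product $\sigma \cdot \sigma$ to get $\|\sigma^2\|_{H^\alpha(\O)} \leq C_{\ref{prop:banachalg}} \|\sigma\|_{H^\alpha(\O)}^2$. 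Then, using the triangle inequality and the fact that the constant function $\eta$ has $H^\alpha(\O)$-norm equal to $\eta \cdot |\O|^{1/2}$ (times a factor depending on how the norm counts lower-order terms, but in any case a finite constant $C(\eta, \O, \alpha)$), I obtain
\[
\|F(\sigma)\|_{H^\alpha(\O)} \leq \|\sigma^2\|_{H^\alpha(\O)} + \|\eta\|_{H^\alpha(\O)} \leq C_{\ref{prop:banachalg}} \|\sigma\|_{H^\alpha(\O)}^2 + C(\eta,\O,\alpha).
\]
Setting $C_{\ref{eq:F(f0) is Hbeta}} := \max\{C_{\ref{prop:banachalg}}, C(\eta,\O,\alpha)\}$ gives the claimed bound $\|F(\sigma)\|_{H^\alpha(\O)} \leq C_{\ref{eq:F(f0) is Hbeta}}(1 + \|\sigma\|_{H^\alpha(\O)}^2)$.

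\textbf{Subtleties.} Two small points need care. First, one should note that $\sigma \in H^\beta(\O) \hookrightarrow H^\alpha(\O)$ for $\alpha \leq \beta$, so $\sigma \in H^\alpha(\O)$ and the Banach algebra estimate applies; if instead the intended reading is $\alpha = \beta$ this is immediate. Second, the constant function $\eta$ lies in $H^\alpha(\O)$ since $\O$ is bounded — all its weak derivatives of positive order vanish and it is square-integrable — so $\|\eta\|_{H^\alpha(\O)}$ is finite; this is where boundedness of $\O$ is used. I do not expect any genuine obstacle here: this is a one-line consequence of Proposition \ref{prop:banachalg} and is really just recorded as a corollary for convenient reference in the proofs of Lemmas \ref{lem:DGPkernelWarp} and \ref{lem:DGPkernelMix}, where it supplies the bound on $\|F(f^{n-1})\|_{H^\beta(\O)}$ needed to control the non-stationary marginal variance $F(f^{n-1})^2 k_{\M(\nu_n)}(0)$ of layer $n$ in terms of the (truncated) Sobolev norm of the previous layer. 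If anything, the only thing to double-check is consistency of the exponent: the right-hand side has $\|\sigma\|^2$, matching the quadratic growth of $F$, and the additive $1$ absorbs the constant term $\eta$, so no interpolation or fractional-order argument is required — the estimate holds for both integer and non-integer $\alpha$ in the Banach algebra range uniformly.
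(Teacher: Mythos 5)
Your argument is correct and is essentially identical to the paper's own proof: both apply the triangle inequality, bound $\|\s^2\|_{H^{\alpha}(\O)}$ via the Banach algebra property of Proposition \ref{prop:banachalg}, and absorb the norm of the constant $\eta$ (the paper writes this as $\eta|\O|$) into the final constant. No gaps.
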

\begin{proof}
By Proposition \ref{prop:banachalg} we see that
            \[
            \|F(\s)\|_{H^{\alpha}(\O)}=\|\s^2+\eta\|_{H^{\alpha}(\O)}
            \leq\|\s^2\|_{H^{\alpha}(\O)}+\eta|\O|\leq C_{\ref{prop:banachalg}} \|\s\|^2_{H^{\alpha}(\O)}+\eta|\O|.
        \]
\end{proof}

{
\begin{proof}[Of Lemma \ref{lem:DGPkernelWarp}] We work through parts $(i)$-$(iii)$ in Assumption \ref{assump:dgp_sobolev}. Note that the sample path regularity of the TDGP posterior is the same as the TDGP prior, since the posterior is absolutely continuous with respect to the prior by Bayes' theorem. 

    \textbf{Assumption \ref{assump:dgp_sobolev} (i):} Setting \(w\equiv 1\) in Corollary \ref{cor:sample path old school} shows that \(f^{0}\in C^{\hat\nu_0-1/2}(\O)\). Since
    \[
        k_1=k_{\warp}^{f^0,k_{\M(\hat\nu_0)}}
    \]
    another application of Corollary \ref{cor:sample path old school} implies that \(f^{1}\in C^{\hat\nu_0-1}(\O)\).
        Using this argument inductively we observe that \(f^{n}\in C^{\hat\nu_0-(n+1)/2}(\O)\) for \(n=1,\ldots,D-2\).  Particularly, \(f^{D-1}\in C^{\hat\nu_0-D/2}(\O)\) since the truncation of the \(D-1\)-th layer has no impact on the sample path regularity. By assumption $(iii)$ $\inv(f^{D-1})$ exists almost surely, and by Lemma \ref{lemma:inverse has derivatives} we then have \(\inv(f^{D-1})\in C^{\hat\nu_0-D/2}(\O)\). Finally, since \(C^{\hat\nu_0-D/2}(\O)\subset  C^{\lfloor\hat\nu_0-D/2\rfloor}(\O)\) we apply Theorem \ref{lemma:sob equiv warping} with \(\nu_{D}= \lfloor\hat\nu_0-D/2\rfloor -1/2\) and \(\beta = \lfloor\hat\nu_0-D/2\rfloor\) to observe that for almost all \(f^{D-1}\) we have
        \[
            \H_{k_{\warp}^{f^{D-1},k_{\M(\nu_{D})}}}(\O) \cong H^\beta{(\O)}.
        \]
        In the last step, we have used assumptions $(iii)$ and $(iv)$ to verify the assumptions of Theorem \ref{lemma:sob equiv warping}. Note in particular that the inverse function theorem gives \(|\inv(f^{D-1})'|\geq 1/R >0\). 
        
        \textbf{Assumption \ref{assump:dgp_sobolev} (ii):} Let \(q\in\N_0\) with \(q\leq\tilde\beta= \beta-1/2\). Note that \(k_{\warp}^{f^{D-1}, k_{\M(\nu_{D})}}\in C^{2\tilde\beta}(\O\times\O)\) by Proposition \ref{prop:regularity of matern kernel lemma 11} and the chain rule. Hence, \(k_{\warp}^{f^{D-1}, k_{\M(\nu_{D})}}\in C^{\tilde \beta,\tilde \beta}(\O\times\O)\) as well. Observe that by \eqref{eq:gp_mean}, we can write \(m_{N,\delta^2}^{f}(u;f^{D-1}))\) as a linear combination of kernel functions. Let \(a_1,\ldots,a_N\in\R\) denote the coefficients in this linear combination, and let \(a_{\max}\) represent their maximum absolute value. Using the chain rule, we observe that there exits a constant \(C(k_{\M(\nu_{D})})>0\) such that for any \(u\in\O\) 
        \begin{align*}
            \bigg|D^{q}(f-&m_{N,\delta^2}^{f}(u;f^{D-1}))\bigg|=\left|D^q(f(u)-\sum_{i=1}^{N}a_ik_{\warp}^{f^{D-1}, k_{\M(\nu_{D})}}(u_i,u))\right|\\
            &\leq \|f\|_{C^{q}(\O)}+\hat{a}N\left\|k_{\warp}^{f^{D-1}, k_{\M(\nu_{D})}}\right\|_{C^{q}(\O\times\O)}\leq \|f\|_{C^{q}(\O)}+a_{\max}NC(k_{\M(\nu_{D})})\left\|f^{D-1}\right\|_{C^{q}(\O)}.
        \end{align*}
         Since \(f\in H^{\beta}(\O)\hookrightarrow C^{q}(\O)\) by the Sobolev embedding theorem (see, e.g. {\cite[Theorem 4.12]{RAAdams_JJFFournier_2003}}) and \(\|f^{D-1}\|_{C^{q}(\O)}\leq\|f^{D-1}\|_{C^{\beta}(\O)}\leq R\) almost surely, we observe that
        \[
            \E_{\mu^{y_N,\delta^2,D}_{D-1}}\left[\|f\|_{
            C^{q}(\O)}+a_{\max}NC(k_{\M(\nu_{D})})\left\|f^{D-1}\right\|_{C^{q}(\O)}\right]\leq \|f\|_{H^{\beta}(\O)}+a_{\max}NC(k_{\M(\nu_{D})})R<\infty.
        \]
     \textbf{Assumption \ref{assump:dgp_sobolev} (iii):} Using Theorem \ref{lemma:sob equiv warping} and H\"older's inequality we see that
        \begin{align*}
        &\E_{\mu^{y_N,\delta^2,D}_{D-1}}\left[C_{\low}(k_{D})^{-1}C_{\up}(k_{D})\right]\nonumber\\
        &= \E_{\mu^{y_N,\delta^2,D}_{D-1}}\Big[(C_\low(k_{\M(\nu_{D})}))^{-1}C(\beta)\frac{1}{\sqrt{c'}}\max\left\{1,\|\inv(f^{D-1})\|_{C^{\beta}(\O)}^{\beta} \right\}\nonumber\\
        &\hspace{3cm}
    C_\up(k_{\M(\nu_{D})}) C(\beta)\frac{1}{\sqrt{c}}\max\left\{1,\|f^{D-1}\|_{C^{\beta}(\O)}^{\beta} \right\}\Big]\nonumber\\
    &\leq \frac{1}{\sqrt{cc'}}C(\beta)^{^2}(C_\low(k_{\M(\nu_{D})}))^{-1}
    C_\up(k_{\M(\nu_{D})})\E_{\mu^{y_N,\delta^2,D}_{D-1}}\left[(1+\|\inv(f^{D-1})\|_{C^{\beta}(\O)}^{\beta} )(1+\|f^{D-1}\|_{C^{\beta}(\O)}^{\beta} )\right]\nonumber\\
    &\leq \frac{1}{\sqrt{cc'}}C(\beta)^{^2}(C_\low(k_{\M(\nu_{D})}))^{-1}
    C_\up(k_{\M(\nu_{D})})(1+R^\beta)^2.\label{eq: warping dgp 1st bound}
        \end{align*}
    Similarly,
    \begin{equation*}\label{eq: warping dgp 2nd bound}
        \E_{\mu^{y_N,\delta^2,D}_{D-1}}\left[C_{\up}(k_{D})\right]\leq \frac{1}{\sqrt{c}}C(\beta)
    C_\up(k_{\M(\nu_{D})})(1+R^\beta).
    \end{equation*}
    The claim then follows, since the bounds derived above have no dependence on \(\varepsilon\), \(N\) or \(U_N\).

\end{proof}

\begin{proof}[Of Lemma \ref{lem:DGPkernelMix}] We work through parts $(i)$-$(iii)$ in Assumption \ref{assump:dgp_sobolev}. Note that the sample path regularity of the TDGP posterior is the same as the TDGP prior, since the posterior is absolutely continuous with respect to the prior by Bayes' theorem. 

        \textbf{Assumption \ref{assump:dgp_sobolev} (i):}
        We show that for all \(d, D\) and \(\beta(d,D)\), we have that for almost all \(f^{D-1}\)
         \begin{equation*}\label{eq: DGP mixture equiv spaces}
            \H_{k_{\mix}^{F(f^{D-1}),k_{\M(\nu_{D})}}}(\O) \cong H^\beta{(\O)}.
         \end{equation*}
         
        \textbf{Case 1: \(d=1\).}
        By setting with \(\s_1\equiv 1\) we use Corollary \ref{cor:sample path old school} to see that \(f^{0}\in C^{\hat \nu_0-1/2}(\O)\). By the chain rule, we also have  \(F(f^{0})\in C^{\hat\nu_0-1/2}(\O)\). Then since
    \[
        k_1=k_{\mix}^{F(f^0),k_{\M(\hat\nu_0)}}
    \]
    another application of Corollary \ref{cor:sample path old school} and the chain rule shows that \(F(f^{1})\in C^{\hat\nu_0-1}(\O)\).
        Using this argument inductively we see that \(F(f^{n})\in C^{\hat\nu_0-(n+1)/2}(\O)\) for \(n=1,\ldots,D-2\) and in particular \(F(f^{D-1})\in C^{\hat\nu_0-D/2}(\O)\).
        Note that \(f^{D-1}\in H^{\hat\nu_0-(D-1)/2}(\O)\) by Proposition \ref{prop:regularity of GP samples}, and so \(F(f^{D-1})\in H^{\lfloor\hat\nu_0-(D-1)/2\rfloor}(\O)\) by Corollary \ref{eq:F(f0) is Hbeta}. Additionally, by Lemma \ref{lemma:reciprocal has derivatives} we also have that \(1/F(f^{D-1})\in H^{\lfloor\hat\nu_0-(D-1)/2\rfloor}(\O)\). Using these facts and Theorem \ref{lemma: sob equiv mixture} with \(L=1\), we see that for \(\beta = \lfloor\hat\nu_0-(D-1)/2\rfloor\) and \(\nu_{D}=\lfloor\hat\nu_0-(D-1)/2\rfloor-1/2\)
        the required norm equivalence is achieved.

        \textbf{Case 2a: \(d\geq 2\), \(D=1\).}
         By Proposition \ref{prop:regularity of GP samples} we have \(f^{0}\in H^{\hat\nu_0}(\O)\subset H^{\lfloor\hat\nu_0\rfloor}(\O)\). Since \(F(f^{0})\in H^{\lfloor\hat\nu_0\rfloor}(\O)\) by Corollary \ref{eq:F(f0) is Hbeta}, an application of Theorem \ref{lemma: sob equiv mixture} with \(\beta = \lfloor\hat\nu_0\rfloor\) and \(\nu_1 = \beta-d/2\) gives the desired result.
         
         \textbf{Case 2b: \(d\geq 2\), \(D\geq 2\).}
         Setting \(\s_1\equiv 1\) in Corollary \ref{cor: sample path sullivan}, we observe that \(f^{0}\in C^{\lceil\hat\nu_0-1\rceil}(\O)\) almost surely. Noting that \(F(f^{0})\in C^{\lfloor\hat\nu_0\rfloor}(\O)\) almost surely and that \(\lceil\lceil\hat\nu_0-1\rceil-1\rceil=\lceil\hat\nu_0-1\rceil-1\), another application of Corollary \ref{cor: sample path sullivan} shows \(f^{1}\in C^{\lceil\hat\nu_0-1\rceil-1}(\O)\) almost surely.
         Using this argument inductively, we deduce that \(F(f^{n})\in C^{\lceil\hat\nu_0-1\rceil-n}(\O)\) almost surely for \(n=1,\ldots, D-2\). 
         In particular, since \(k_{D-1}=k_{\mix}^{F(f^{D-2}), k_{\M_{\hat\nu_{D-1}}}}\in C^{\lceil\hat\nu_0-1\rceil-{(D-2)},\lceil\hat\nu_0-1\rceil-{(D-2)}}(\O\times\O)\), Proposition \ref{prop:regularity of GP samples} shows that \(f^{D-1}\in H^{\lceil\hat\nu_0-1\rceil-{(D-2)}}(\O)\).
        Hence, by applying Theorem \ref{lemma: sob equiv mixture} with  \(\nu_D=\lceil\hat\nu_0-1\rceil-{(D-2)}-d/2\) and \(\beta = \lceil\hat\nu_0-1\rceil-{(D-2)}\), the claim follows.

        \textbf{Assumption \ref{assump:dgp_sobolev} (ii):}
        
        Let \(q\in\N_0^d\) with \(|q|\leq\tilde\beta=\beta-d/2\). As in the proof of Lemma \ref{lem:DGPkernelWarp} let \(a_{\max}\) be the maximum in absolute value of the coefficients \(a_1,\ldots,a_N\) in the linear combination representation of \(m_{N,\delta^2}^{f}(u;f^{D-1})\). Again, notice that \(k_{\mix}^{F(f^{D-1}),k_{\M(\nu_{D})}}(u_i,u)\in C^{2\tilde\beta}(\O\times\O)\subset C^{2|q|}(\O\times\O) \).
        Using the product rule and the chain rule we see that there exists a constant \(C(k_{\M(\nu_{D})},F)\) such that for any \(u\in\O\) 
        \begin{align*}
            \big|D^{q}&(f(u)-m_{N,\delta^2}^{f}(u;f^{D-1}))\big|=\left|D^q(f(u)-\sum_{i=1}^{N}a_ik_{\mix}^{F(f^{D-1}),k_{\M(\nu_{D})}}(u_i,u))\right|\\
            &\leq \|f\|_{C^{|q|}(\O)}+a_{\max}N\left\|k_{\mix}^{F(f^{D-1}),k_{\M(\nu_{D})}}\right\|_{C^{|q|}(\O\times\O)}\leq \|f\|_{C^{|q|}(\O)}+a_{\max}NC(k_{\M(\nu_{D})}, F)\left\|f^{D-1}\right\|^2_{C^{|q|}(\O)}.
        \end{align*}
         Given that \(f\in H^{\beta}(\O)\hookrightarrow C^{|q|}(\O)\) by the Sobolev embedding theorem (see, e.g. {\cite[Theorem 4.12]{RAAdams_JJFFournier_2003}}) and \(\|f^{D-1}\|_{H^{\beta}(\O)}\leq R\) almost surely, the claim follows since
        \[
            \E_{\mu^{y_N,\delta^2,D}_{D-1}}\left[ \|f\|_{C^{|q|}(\O)}+a_{\max}NC(k_{\M(\nu_{D})}, F)\left\|f^{D-1}\right\|^2_{C^{|q|}(\O)}\right]\leq \|f\|_{H^{\beta}(\O)}+a_{\max}NC(k_{\M(\nu_{D})}, F)R^2<\infty.
        \]

        \textbf{Assumption \ref{assump:dgp_sobolev} (iii):}
        Using Theorem \ref{lemma: sob equiv mixture} and Lemma \ref{lemma:reciprocal has derivatives} we see that
        \begin{align*}
        \mathbb{E}_{\mu^{y_N,\delta^2,D}_{D-1}}&\left[C_\low(k_{D}(\cdot,\cdot;f^{D-1}))^{-1} C_\up(k_{D}(\cdot,\cdot;f^{D-1}))\right]\nonumber\\ 
        &=\sqrt{2}  (C_\low(k_{\M(\nu_{D})}))^{-1}  C_\up(k_{\M(\nu_{D})})\E_{\mu^{y_N,\delta^2,D}_{D-1}}\left[\|1/F(f^{D-1}))\|_{H^{\beta}(\O)} \|F(f^{D-1})\|_{H^{\beta}(\O)}\right]\nonumber\\
        &\leq\sqrt{2}  C_{\ref{lemma:reciprocal has derivatives}}'C_{\ref{lemma:reciprocal has derivatives}}''(C_\low(k_{\M(\nu_{D})}))^{-1}  C_\up(k_{\M(\nu_{D})})\E_{\mu^{y_N,\delta^2,D}_{D-1}}\left[ \left(1+\|f^{D-1}\|^{\beta}_{H^{\beta}(\O)}\right)^2\right]\nonumber\\
        &\leq \sqrt{2}  C_{\ref{lemma:reciprocal has derivatives}}'C_{\ref{lemma:reciprocal has derivatives}}''(C_\low(k_{\M(\nu_{D})}))^{-1}  C_\up(k_{\M(\nu_{D})}) (1+R^\beta)^2.\label{eq: mix dgp 1st bound}
        \end{align*}
        Similarly,
        \begin{equation*}\label{eq: mix dgp 2nd bound}
            \mathbb{E}_{\mu^{y_N,\delta^2,D}_{D-1}}\left[C_\up(k_{D}(\cdot,\cdot;f^{D-1}))\right]\leq\sqrt{2}C_{\ref{lemma:reciprocal has derivatives}}'C_\up(k_{\M(\nu_{D})}) (1+R^\beta).
        \end{equation*}
        The claim then follows, since the bounds derived above have no dependence on \(\varepsilon\), \(N\) or \(U_N\).
        \end{proof}

        For the second result, using the definition of the posterior mean \eqref{eq:gp_mean}, Lemma \ref{lemma:exp_kernel mixture}, the Sobolev embedding theorem (see, e.g. {\cite[Theorem 4.12]{RAAdams_JJFFournier_2003}}), Corollary \ref{eq:F(f0) is Hbeta} to see that and, since \(\beta\geq |p|+d/2\), the inequality \(\|\cdot\|_{H^{|p|+d/2}(\O)}\leq\|\cdot\|_{H^{\beta}(\O)}\), we have, for \(u\in\O\)
        \begin{align*}
            \big|D^{p}(f(u)-m_{N,\delta^2}^{f}&(u;f^{D-2}))\big|=\left|D^p(f(u)-\sum_{i=1}^{N}k_{\mix}^{F(f^{D-2}),k_{\M(\nu_{D-1})}}(u_i,u))\right|\\
            &\leq \|f\|_{C^{|p|}(\O)}+NC_{\ref{thm:Wend11.13}}\left\|F(f^{D-2})\right\|^2_{C^{|p|}(\O)}
            \leq \|f\|_{C^{|p|}(\O)}+NCC_{\ref{thm:Wend11.13}}\left\|F(f^{D-2})\right\|^2_{H^{|p|+d/2}(\O)}\\
            &\leq \|f\|_{C^{|p|}(\O)}+NCC_{\ref{thm:Wend11.13}}\left\|f^{D-2}\right\|^2_{H^{|p|+d/2}(\O)}
            \leq\|f\|_{C^{|p|}(\O)}+ NCC_{\ref{thm:Wend11.13}}\left\|f^{D-2}\right\|^2_{H^{\beta}(\O)}.
        \end{align*}
        Since \(f\in C^{|p|}(\O)\) is the true function we can then use Lemma \ref{lem:Expected val of f^d is bounded} to see that
        \[
            \E_{\mu_0}\left[\|f\|_{C^{|p|}(\O)}+NCC_{\ref{thm:Wend11.13}}\left\|f^{D-2}\right\|^2_{H^{\beta}(\O)}\right]\leq\|f\|_{C^{|p|}(\O)}+\E_{\mu_0}\left[NCC_{\ref{thm:Wend11.13}}\left\|f^{D-2}\right\|^2_{H^{\beta}(\O)}\right]<\infty.
        \]
        For the final result, we use Theorem \ref{lemma: sob equiv mixture}, Lemma \ref{lemma:reciprocal has derivatives} and the linearity of expectation to see that
        \begin{align*}
        \mathbb{E}_{\mu_0}&\left[C_\low(k_{D-1}(\cdot,\cdot;f^{D-2}))^{-1} C_\up(k_{D-1}(\cdot,\cdot;f^{D-2}))\right]\\ 
        &=\sqrt{2}(C_{\ref{prop:banachalg}})^2 (C_\low(k_{\M(\nu_{D-1})}))^{-1}  C_\up(k_{\M(\nu_{D-1})})\E_{\mu_0}\left[\|1/F(f^{D-2}))\|_{H^{\beta}} \|F(f^{D-2})\|_{H^{\beta}(\O)}\right]\\
        &\leq\sqrt{2}(C_{\ref{prop:banachalg}})^2 C_{\ref{lemma:reciprocal has derivatives}}'C_{\ref{lemma:reciprocal has derivatives}}''(C_\low(k_{\M(\nu_{D-1})}))^{-1}  C_\up(k_{\M(\nu_{D-1})})\E_{\mu_0}\left[ \left(1+\|f^{D-2}\|^{\beta}_{H^{\beta}(\O)}\right)^2\right]
        <\infty,
        \end{align*}
        
    where the expectation in the final bound is finite by Lemma \ref{lem:Expected val of f^d is bounded}.
}
\subsection{Convergence of (constrained) WGPs in a Sobolev space}

In this section, we consider WGP priors. The results are very similar to those in section \ref{ssec:conv_dgp_sob}, and in particular, we have the following equivalents of Assumption \ref{assump:dgp_sobolev} and Theorem \ref{thm:exp convergence rate}. 

\begin{assumption}\label{assump:wgp_sobolev} Let $L \geq 1$. Suppose \begin{itemize}
    \item[(i)] $f$ and $k_{1}(u,u'; \{f^{0,\ell}\}_{\ell=1}^L)$ satisfy Assumption \ref{assump:kernel2} with $\beta > d/2$
    for almost all $\{f^{0,\ell}\}_{\ell=1}^L$,
    \item[(ii)] for some $\tilde \beta \in \N_0$ and all \( q \in \mathbb{N}_0^d \) with  \(|q|\leq\tilde \beta\) there exists a random variable \(Z\) such that \(\sup_{u\in\O}|D^{q}(f-m_{N,\delta^2}^{f}(u;\{f^{0,\ell}\}_{\ell=1}^L))|\leq Z\) almost surely with \(\E_{\mu^{y_N,\delta^2,L}_{0}}(Z)<\infty\),
    \item[(iii)]
     $\max \left\{ \E_{\mu^{y_N,\delta^2,L}_{0}}\left[C_{\low}(k_1)^{-1}C_{\up}(k_{1})\right],  \E_{\mu^{y_N,\delta^2,L}_{0}}\left[C_{\up}(k_{1}) \right] \right\} \leq C_{\ref{assump:wgp_sobolev}}(y_N)$, with $C_{\ref{assump:wgp_sobolev}}(y_N) \leq C_{\ref{assump:wgp_sobolev}}^a < \infty$ if $\varepsilon=0$,  $\E_\varepsilon[C_{\ref{assump:wgp_sobolev}}(y_N)^2]^{1/2} \leq C_{\ref{assump:wgp_sobolev}}^b < \infty$ if $\varepsilon \sim N(0,\delta^2\mathrm{I})$, and $C_{\ref{assump:wgp_sobolev}}^a$ and $C_{\ref{assump:wgp_sobolev}}^b$ independent of $N$ and $U_N$.
\end{itemize}  
\end{assumption}
 
\begin{theorem}\label{thm:exp convergence rate_wgp}
Suppose Assumption \ref{assump:wgp_sobolev} holds. 
    Then, for any $\varepsilon \in \R^N$, we have 
    \begin{align*}
         \|f-\E_{\mu^{y_N,\delta^2,L}_{1}}[f^{1,L} ]\|_{H^{\alpha}(\O)} &\leq 
  h_{U_N,\Omega}^{\beta-\alpha} \left( 2 \E_{\mu^{y_N,\delta^2,L}_{0}}\left[C_{\low}(k_{1})^{-1}C_{\up}(k_{1})\right]  \|f\|_{H^\beta(\Omega)} +  \delta^{-1}  \E_{\mu^{y_N,\delta^2,L}_{0}}\left[C_{\up}(k_{1}) \right] \|\varepsilon\|_2 \right) \\
 & \qquad + \, h_{U_N,\Omega}^{d/2-\alpha} \left( 2 \|\varepsilon\|_2 + \delta  \E_{\mu^{y_N,\delta^2,L}_{0}}\left[ C_{\low}(k_{1})^{-1}C_{\up}(k_{1}) \right] \|f\|_{H^\beta(\Omega)} \right),
    \end{align*}
    for all \(\alpha\leq \lfloor \beta \rfloor\) and \(h_{U_N,\O}\leq h_0\), where $h_0$ is as in Theorem  \ref{thm:conv_sob_noisy}. 
    Hence, 
    \begin{enumerate}
        \item[(i)] for $\varepsilon=0$ and $\delta= \delta_N \leq C_\delta h_{U_N,\Omega}^{\beta-d/2}$, we have for all \(\alpha\leq \lfloor \beta \rfloor\) and \(h_{U_N,\O}\leq h_0\)
    \[
    \|f-\E_{\mu^{y_N,\delta_N^2,L}_{1}}[f^{1,L} ]\|_{H^{\alpha}(\O)} \leq 
  (2+C_\delta) C_{\ref{assump:wgp_sobolev}}^a h_{U_N,\Omega}^{\beta-\alpha}  \|f\|_{H^\beta(\Omega)},
    \]
    \item[(ii)] for $\varepsilon \sim N(0,\delta^2\mathrm{I})$ and $\delta= \delta_N = C_\delta N^{-1/2}$, we have for all \(\alpha\leq \lfloor \beta \rfloor\) and \(h_{U_N,\O}\leq h_0\)
    \begin{align*}
         \E_\varepsilon\left[ \|f-\E_{\mu^{y_N,\delta_N^2,L}_{1}}[f^{1,L} ]\|_{H^{\alpha}(\O)}\right] \leq 
  \left( 2 \|f\|_{H^\beta(\Omega)} +  1 \right) C_{\ref{assump:wgp_sobolev}}^b \, h_{U_N,\Omega}^{\beta-\alpha} \, N^{1/2}   + \left( 2  +  C_{\ref{assump:wgp_sobolev}}^b  \|f\|_{H^\beta(\Omega)} \right) C_\delta \, h_{U_N,\Omega}^{d/2-\alpha} .
    \end{align*}
    \end{enumerate}
    
\end{theorem}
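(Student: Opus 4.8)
The plan is to mirror exactly the proof of Theorem \ref{thm:exp convergence rate}, replacing the role of the penultimate DGP layer $f^{D-1}$ by the collection of independent width-layer samples $\{f^{0,\ell}\}_{\ell=1}^L$, and the posterior on the penultimate layer $\mu^{y_N,\delta^2,D}_{D-1}$ by $\mu^{y_N,\delta^2,L}_{0}$ (the posterior on the base layers of the WGP). The structure of the WGP is such that, conditioned on $\{f^{0,\ell}\}_{\ell=1}^L$, the top layer $f^{1,L} \sim \GP(0, k_1(\cdot,\cdot;\{f^{0,\ell}\}_{\ell=1}^L))$ is an ordinary Gaussian process, so the conditional posterior $\mu^{y_N,\delta^2,L}_{1|0}$ on $f^{1,L}\mid \{f^{0,\ell}\}_{\ell=1}^L, y_N$ is again Gaussian with mean $m_{N,\delta^2}^f(\cdot;\{f^{0,\ell}\}_{\ell=1}^L)$ given by \eqref{eq:gp_mean}. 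This is the only structural fact needed, and it is precisely analogous to the DGP case.

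First I would use the tower property of conditional expectation to write
\[
\E_{\mu^{y_N,\delta^2,L}_{1}}[f^{1,L}] = \E_{\mu^{y_N,\delta^2,L}_{0}}\big[\E_{\mu^{y_N,\delta^2,L}_{1|0}}[f^{1,L}]\big] = \E_{\mu^{y_N,\delta^2,L}_{0}}\big[m_{N,\delta^2}^f(\cdot;\{f^{0,\ell}\}_{\ell=1}^L)\big],
\]
and hence, by linearity of expectation,
\[
\|f - \E_{\mu^{y_N,\delta^2,L}_{1}}[f^{1,L}]\|_{H^\alpha(\O)} = \big\|\E_{\mu^{y_N,\delta^2,L}_{0}}[f - m_{N,\delta^2}^f(\cdot;\{f^{0,\ell}\}_{\ell=1}^L)]\big\|_{H^\alpha(\O)}.
\]
For integer $\alpha$, Assumption \ref{assump:wgp_sobolev}$(ii)$ supplies a dominating random variable $Z$ with finite $\mu^{y_N,\delta^2,L}_{0}$-expectation for each derivative $D^q$ with $|q|\le\tilde\beta$, so the dominated convergence theorem lets me interchange $\E_{\mu^{y_N,\delta^2,L}_{0}}$ with $D^q$ and the $L^2$ norm; combining with Jensen's inequality and convexity of $\|\cdot\|_{L^2(\O)}$ gives
\[
\|f - \E_{\mu^{y_N,\delta^2,L}_{1}}[f^{1,L}]\|_{H^\alpha(\O)} \le \E_{\mu^{y_N,\delta^2,L}_{0}}\big[\|f - m_{N,\delta^2}^f(\cdot;\{f^{0,\ell}\}_{\ell=1}^L)\|_{H^\alpha(\O)}\big].
\]
Then Assumption \ref{assump:wgp_sobolev}$(i)$ means Theorem \ref{thm:conv_sob_noisy} applies pathwise (in $\{f^{0,\ell}\}_{\ell=1}^L$), with the norm-equivalence constants $C_\low(k_1)$, $C_\up(k_1)$ now random; taking $\E_{\mu^{y_N,\delta^2,L}_{0}}$ through the resulting bound yields the first displayed inequality, with the constant $h_0$ being the one from Theorem \ref{thm:conv_sob_noisy}. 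The refinement to non-integer $\alpha$ follows by interpolation in Sobolev spaces applied to the linear operator $Tf = f - m_N^f$, exactly as in the proof of Theorem \ref{thm:exp convergence rate}.

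For claims $(i)$ and $(ii)$ I would substitute the bounds from Assumption \ref{assump:wgp_sobolev}$(iii)$. In case $(i)$, $\varepsilon = 0$ kills the noise terms, $\delta_N \le C_\delta h_{U_N,\Omega}^{\beta-d/2}$ converts the $h_{U_N,\Omega}^{d/2-\alpha}\delta$ term into a $h_{U_N,\Omega}^{\beta-\alpha}$ term, and $C_{\ref{assump:wgp_sobolev}}(y_N)\le C_{\ref{assump:wgp_sobolev}}^a$ gives the stated bound. In case $(ii)$, I take $\E_\varepsilon$ of the first inequality, use H\"older's inequality with exponents $1/2,1/2$ on the cross term $\E_\varepsilon[\E_{\mu^{y_N,\delta^2,L}_{0}}[C_\up(k_1)]\,\|\varepsilon\|_2]$ together with $\E_\varepsilon[C_{\ref{assump:wgp_sobolev}}(y_N)^2]^{1/2}\le C_{\ref{assump:wgp_sobolev}}^b$, and bound $\E_\varepsilon[\|\varepsilon\|_2] \le \E_\varepsilon[\|\varepsilon\|_2^2]^{1/2} = \delta_N = C_\delta N^{-1/2}$; with $\delta_N^{-1} = C_\delta^{-1}N^{1/2}$ this produces the $h_{U_N,\Omega}^{\beta-\alpha}N^{1/2}$ and $h_{U_N,\Omega}^{d/2-\alpha}$ terms. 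I do not expect any genuine obstacle here: the proof is essentially a transcription of the one for Theorem \ref{thm:exp convergence rate}, since the WGP differs from the DGP only in how the conditioning layer is indexed. The one point requiring mild care is checking that conditioning on the $L$-tuple $\{f^{0,\ell}\}_{\ell=1}^L$ (rather than a single process) still leaves the top layer Gaussian and the disintegration valid — but this is immediate because the base layers are drawn i.i.d.\ and the kernel $k_1(\cdot,\cdot;\{f^{0,\ell}\}_{\ell=1}^L)$ is, conditionally, a fixed positive (semi-)definite kernel.
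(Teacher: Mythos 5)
Your proposal is correct and is essentially the paper's own argument: the paper simply states that the proof of Theorem \ref{thm:exp convergence rate_wgp} is identical to that of Theorem \ref{thm:exp convergence rate}, and your write-up is exactly that transcription (tower property, dominated convergence and Jensen via Assumption \ref{assump:wgp_sobolev}(ii), pathwise application of Theorem \ref{thm:conv_sob_noisy} via Assumption \ref{assump:wgp_sobolev}(i), interpolation for non-integer $\alpha$, and H\"older plus $\E_\varepsilon[\|\varepsilon\|_2]\leq\delta$ with Assumption \ref{assump:wgp_sobolev}(iii) for parts (i) and (ii)).
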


The proof of Theorem \ref{thm:exp convergence rate_wgp} is identical to that of Theorem \ref{thm:exp convergence rate} and is omitted for brevity. As for DGP regression, we observe that we obtain optimal rates of convergence for noise-free data, and that convergence is guaranteed for noisy data only in the setting where the level of the noise goes to 0 as $N \rightarrow \infty$.
The following Lemma gives conditions under which Assumption \ref{assump:wgp_sobolev} holds for the mixture kernel. 
As with the DGP construction, in order to ensure Assumption \ref{assump:wgp_sobolev} \((ii)\) and \((iii)\) we explicitly impose a truncation on the initial (and penultimate) layer of the WGP. That is, we impose that \({f^{0,\ell}}\in H^{\beta(R)}(\O)\) almost surely for \(\ell=1,\ldots,L\) and define the truncated wide Gaussian process (TWGP)
    \begin{align*}
        f^{0,\ell}&\sim\GP(0,k_{0}(u,u'), R)\quad\text{ for }\ell=1,\ldots, L, \quad i.i.d., \nonumber\\
        f^{1,L}|\{f^{0,\ell}\}_{\ell=1}^{L}&\sim\GP(0,k_{1}(u,u'; \{f^{0,\ell}\}_{\ell=1}^L)).
    \end{align*}
\begin{lemma}\label{lem:DGPWideGP}
    Let \(L\geq 1, 0<R<\infty,\nu_0\geq d/2\), \(\O\in\R^d\) and \(\beta=\lceil\nu_0-1\rceil\).     
    Then Assumption \ref{assump:wgp_sobolev} is satisfied, if \(f\in H^{\beta}(\O)\) and \\
    $(i)$ $k_{0} = k_{\M(\nu_{0})}$, \\
    $(ii)$ \(k_1=k_{\mix}^{\{F(f^{0,\ell}), \, k_{\M(\nu_1)}\}_{\ell=1}^{L}}\) subject to the constraint that $\nu_1=\beta-d/2=:\tilde\beta$, \\
    $(iii)$ $F(x) = x^2 + \eta$, for all $x \in \R$ and some $\eta > 0$, and\\
    $(iv)$ \({f^{0,\ell}}\in H^{\beta(R)}(\O)\) almost surely for \(\ell=1,\ldots,L\).
\end{lemma}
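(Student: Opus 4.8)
The plan is to verify the three parts of Assumption~\ref{assump:wgp_sobolev} for the given WGP construction, following the same strategy as in the proof of Lemma~\ref{lem:DGPkernelMix} (case $d \geq 2$, $D=1$) but accounting for the presence of $L$ independent initial layers rather than one. Since the posterior is absolutely continuous with respect to the prior by Bayes' theorem, it suffices to work with the TWGP prior when establishing sample path regularity.

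For Assumption~\ref{assump:wgp_sobolev}$(i)$, I would argue as follows. By Proposition~\ref{prop:regularity of GP samples} together with Proposition~\ref{prop:regularity of matern kernel lemma 11}, each $f^{0,\ell} \sim \GP(0, k_{\M(\nu_0)})$ satisfies $f^{0,\ell} \in H^{\nu_0}(\O) \subset H^{\lceil \nu_0 - 1 \rceil}(\O) = H^\beta(\O)$ almost surely (the truncation does not affect sample path regularity). By Corollary~\ref{eq:F(f0) is Hbeta} we then have $F(f^{0,\ell}) \in H^\beta(\O)$ almost surely for each $\ell$, and by Lemma~\ref{lemma:reciprocal has derivatives} (applicable since $F(x) = x^2 + \eta \geq \eta > 0$ and $2\beta > d$) we also have $1/F(f^{0,\ell}) \in H^\beta(\O)$. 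Now apply Theorem~\ref{lemma: sob equiv mixture} with $\s_\ell = F(f^{0,\ell})$, $k_\ell = k_{\M(\nu_1)}$ and $\nu_1 = \beta - d/2$; its three hypotheses are met (note $1/\s_j \in H^\beta(\O)$ holds for \emph{any} $j \in \{1,\dots,L\}$, so condition $(iii)$ is satisfied), giving $\H_{k_1}(\O) \cong H^\beta(\O)$ for almost all $\{f^{0,\ell}\}_{\ell=1}^L$. Since $f \in H^\beta(\O)$ by hypothesis, Assumption~\ref{assump:kernel2} holds with $\beta > d/2$.

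For Assumption~\ref{assump:wgp_sobolev}$(ii)$, fix $q \in \N_0^d$ with $|q| \leq \tilde\beta = \beta - d/2$. By Proposition~\ref{prop:regularity of matern kernel lemma 11} and the product and chain rules, $k_1 = k_{\mix}^{\{F(f^{0,\ell}), k_{\M(\nu_1)}\}_{\ell=1}^L} \in C^{2\tilde\beta}(\O\times\O) \subset C^{2|q|}(\O\times\O)$. Writing $m_{N,\delta^2}^f(\cdot;\{f^{0,\ell}\})$ as a linear combination $\sum_{i=1}^N a_i k_1(u_i,\cdot)$ with $a_{\max}$ the maximal absolute coefficient, and using the product/chain rules, one obtains a pointwise bound of the form $|D^q(f(u) - m_{N,\delta^2}^f(u;\{f^{0,\ell}\}))| \leq \|f\|_{C^{|q|}(\O)} + a_{\max} N C(k_{\M(\nu_1)},F) \max_{1 \leq \ell \leq L}\|f^{0,\ell}\|^2_{C^{|q|}(\O)}$. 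Since $f \in H^\beta(\O) \hookrightarrow C^{|q|}(\O)$ by the Sobolev embedding theorem, and $\|f^{0,\ell}\|_{C^{|q|}(\O)} \leq \|f^{0,\ell}\|_{H^\beta(\O)} \leq R$ almost surely by the truncation, the right-hand side is a random variable $Z$ with $\E_{\mu^{y_N,\delta^2,L}_0}[Z] \leq \|f\|_{H^\beta(\O)} + a_{\max} N C(k_{\M(\nu_1)},F) R^2 < \infty$. For Assumption~\ref{assump:wgp_sobolev}$(iii)$, I would invoke Theorem~\ref{lemma: sob equiv mixture} to express $C_\low(k_1)^{-1}C_\up(k_1)$ in terms of $\|F(f^{0,j})^{-1}\|_{H^\beta(\O)}$ and $\max_\ell \|F(f^{0,\ell})\|_{H^\beta(\O)}$, then bound these via Lemma~\ref{lemma:reciprocal has derivatives} by $C'(1 + \|f^{0,j}\|^\beta_{H^\beta(\O)})$ and $C''(1 + \|f^{0,\ell}\|^\beta_{H^\beta(\O)})$ respectively, which are each $\leq C'(1+R^\beta)$ almost surely by the truncation. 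This gives a deterministic bound $C_{\ref{assump:wgp_sobolev}}(y_N) \leq C_{\ref{assump:wgp_sobolev}}^a < \infty$ independent of $N$, $U_N$, and $\varepsilon$, which is stronger than needed in both the noise-free and noisy cases. The main obstacle is bookkeeping: correctly tracking the $L$-dependence and the interplay between the indices $\ell$ (running over all mixture components) and $j$ (the single component whose reciprocal must be controlled), and making sure the sample path regularity $H^\beta(\O)$ of each $f^{0,\ell}$ is high enough to feed both the Banach algebra property ($2\beta > d$) and the reciprocal lemma — this is exactly why $\beta = \lceil \nu_0 - 1\rceil$ and $\nu_1 = \beta - d/2$ are chosen as they are.
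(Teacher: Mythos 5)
Your proposal is correct and follows essentially the same route as the paper's proof: part $(i)$ via the Sobolev sample path regularity of the truncated initial layers plus Lemma \ref{lemma:reciprocal has derivatives}/Corollary \ref{eq:F(f0) is Hbeta} and Theorem \ref{lemma: sob equiv mixture}, part $(ii)$ via the kernel-expansion representation of $m_{N,\delta^2}^f$ together with the Sobolev embedding and the truncation bound $R$, and part $(iii)$ via the norm-equivalence constants from Theorem \ref{lemma: sob equiv mixture} bounded deterministically by $(1+R^\beta)$ factors independent of $N$, $U_N$ and $\varepsilon$. The only cosmetic differences (using $\max_\ell$ instead of $\sum_\ell$ over the mixture components, and absorbing the Sobolev embedding constant) do not affect the argument.
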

{
In Lemma \ref{lem:DGPWideGP}, we have chosen the function \(F\) in a particular way that seems simple and practical. However, by Lemma \ref{lemma:reciprocal has derivatives}, the same conclusions hold for any function $F$ that is \(C^{\infty}(\O)\) and bounded below away from \(0\). In fact, we could further choose different function $F^\ell$, with only one $F^j$ being bounded away from zero. 
}

\begin{corollary}\label{cor:wideGP}
Suppose the assumptions of Lemma \ref{lem:DGPWideGP} hold. Then the conclusions of Theorem \ref{thm:exp convergence rate_wgp} hold.

{Suppose further that $U_N$ are sampled i.i.d. from a measure $\mu$ with density ${\rho}$ satisfying ${\rho(u)} \geq {\rho_{\mathrm{min}}} > 0$ for all $u \in \overline{\O}$. Then for $\varepsilon=0$ and $\delta= \delta_N \leq C_\delta h_{U_N,\Omega}^{\beta-d/2}$, there exists a constant $C_{\ref{cor:wideGP}}$ such that for all \(\alpha\leq \min\{\tilde\beta,\lfloor\beta\rfloor\}\) and $\epsilon > 0$ we have
    \[
    \E_{\mu} \left[\|f-\E_{\mu^{y_N,\delta_N^2,L}_{1}}[f^{1,L} ]\|_{H^{\alpha}(\O)}\right] \leq 
  C_{\ref{cor:wideGP}} N^{-\frac{\beta-\alpha}{d} + \epsilon} \|f\|_{H^\beta(\Omega)}.
    \]}
    \end{corollary}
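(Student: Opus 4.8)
The statement to prove is Corollary \ref{cor:wideGP}, which has two parts. The first part is immediate: once we know Assumption \ref{assump:wgp_sobolev} holds (by Lemma \ref{lem:DGPWideGP}), Theorem \ref{thm:exp convergence rate_wgp} applies verbatim, so the conclusions of that theorem hold. The second part concerns the case of randomly sampled training points $U_N$ with a density bounded below, and asserts an expected-error bound of order $N^{-(\beta-\alpha)/d+\epsilon}$.

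\textbf{Plan for the second part.} The starting point is Theorem \ref{thm:exp convergence rate_wgp}(i), which under the assumptions of Lemma \ref{lem:DGPWideGP} (in particular with $\varepsilon = 0$ and $\delta = \delta_N \leq C_\delta h_{U_N,\Omega}^{\beta-d/2}$) gives, for fixed $U_N$,
\[
\|f-\E_{\mu^{y_N,\delta_N^2,L}_{1}}[f^{1,L}]\|_{H^{\alpha}(\O)} \leq (2+C_\delta) C_{\ref{assump:wgp_sobolev}}^a \, h_{U_N,\Omega}^{\beta-\alpha} \|f\|_{H^\beta(\O)},
\]
valid whenever $h_{U_N,\Omega} \leq h_0$, where the constant $C_{\ref{assump:wgp_sobolev}}^a$ is independent of $N$ and $U_N$. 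So the entire $U_N$-dependence of the right-hand side is through the fill distance $h_{U_N,\Omega}$ (the factor $C_{\ref{assump:wgp_sobolev}}^a$ comes from Lemma \ref{lem:DGPWideGP} and in that proof is controlled uniformly by the truncation radius $R$, independent of $U_N$). On the event $\{h_{U_N,\Omega} > h_0\}$ one needs a crude a priori bound on the error, which is available because the truncation forces all relevant constants — $C_\low$, $C_\up$ for $k_1$ and the $H^\beta$-norm of $\E[f^{1,L}]$ — to be uniformly bounded. Then one takes expectation over $\mu$ and splits:
\[
\E_\mu\big[\|f-\E[f^{1,L}]\|_{H^\alpha(\O)}\big] \leq (2+C_\delta) C_{\ref{assump:wgp_sobolev}}^a \|f\|_{H^\beta(\O)} \, \E_\mu[h_{U_N,\Omega}^{\beta-\alpha} \mathbbm{1}_{\{h_{U_N,\O}\leq h_0\}}] + (\text{a priori bound}) \cdot \P_\mu[h_{U_N,\Omega} > h_0].
\]

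\textbf{Key ingredients.} The two probabilistic facts needed are: (a) $\E_\mu[h_{U_N,\Omega}^{\beta-\alpha}] \leq C N^{-(\beta-\alpha)/d + \epsilon}$ for any $\epsilon > 0$, and (b) $\P_\mu[h_{U_N,\Omega} > h_0] \leq C N^{-r}$ for any $r > 0$ (exponentially small, in fact). Both follow from the assumption $\rho \geq \rho_{\min} > 0$ on $\overline\O$ and are exactly the estimates used in \cite[Theorem 3]{helin2023introduction} (see also \cite{oates2019convergence,Teckentrup2019}); the paper already invokes precisely this machinery in the proofs of Corollaries \ref{cor:conv_random} and \ref{cor:DGP_conv_sobolev}. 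So the cleanest route is to mirror the proof of Corollary \ref{cor:DGP_conv_sobolev}: state that the first claim is immediate, and for the second claim apply the bound in Theorem \ref{thm:exp convergence rate_wgp}(i) together with the proof technique of \cite[Theorem 3]{helin2023introduction}, absorbing the small-probability tail term into the $N^{\epsilon}$ slack. This is essentially a one-paragraph proof by citation.

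\textbf{Main obstacle.} There is no real obstacle here — this corollary is designed to be a straightforward consequence of Theorem \ref{thm:exp convergence rate_wgp}, Lemma \ref{lem:DGPWideGP}, and the random-design fill-distance estimates, exactly paralleling Corollary \ref{cor:DGP_conv_sobolev} for DGPs. The only mild care point is ensuring the constant $C_{\ref{assump:wgp_sobolev}}^a$ really is $U_N$-independent (which Lemma \ref{lem:DGPWideGP} provides, since it depends only on $R$, $\eta$, $L$, $\nu_0$, $\nu_1$ and $\|f\|_{H^\beta}$) and handling the $\{h_{U_N,\O} > h_0\}$ event, but the latter contributes only an exponentially small term that is dominated by the $N^\epsilon$ factor. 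Hence the proof I would write is:

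\begin{proof} The first claim follows immediately from Lemma \ref{lem:DGPWideGP} and Theorem \ref{thm:exp convergence rate_wgp}. For the second claim, note that under the assumptions of Lemma \ref{lem:DGPWideGP} the bound in Theorem \ref{thm:exp convergence rate_wgp}(i) holds for every fixed $U_N$ with $h_{U_N,\Omega}\leq h_0$, with $C_{\ref{assump:wgp_sobolev}}^a$ independent of $N$ and $U_N$. Taking expectation over $U_N \sim \mu$, splitting according to whether $h_{U_N,\Omega}\leq h_0$, and using that on the complementary event the error is bounded by a constant (independent of $U_N$, since the truncation makes $C_\low(k_1)^{-1}$, $C_\up(k_1)$ and $\|\E_{\mu^{y_N,\delta_N^2,L}_1}[f^{1,L}]\|_{H^\beta(\O)}$ uniformly bounded), the claim follows from the bounds $\E_\mu[h_{U_N,\Omega}^{\beta-\alpha}] \leq C N^{-(\beta-\alpha)/d+\epsilon}$ and $\P_\mu[h_{U_N,\Omega} > h_0] \leq C N^{-(\beta-\alpha)/d}$, proved as in \cite[Theorem 3]{helin2023introduction} and \cite{oates2019convergence,Teckentrup2019}.
\end{proof}
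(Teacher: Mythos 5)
Your proposal is correct and follows essentially the same route as the paper: the first claim is noted as immediate, and the second is obtained by combining the bound of Theorem \ref{thm:exp convergence rate_wgp}(i) with the random-design fill-distance technique of \cite[Theorem 3]{helin2023introduction}, exactly mirroring Corollary \ref{cor:DGP_conv_sobolev}. Your extra detail on splitting over the event $\{h_{U_N,\O}\leq h_0\}$ and the $U_N$-independence of $C_{\ref{assump:wgp_sobolev}}^a$ simply makes explicit what the paper leaves to the cited technique.
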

    {
    \begin{proof} The first claim follows immediately. The second claim follows from the bound in Theorem \ref{thm:exp convergence rate_wgp}(i), together with the proof technique employed in \cite[Theorem 3]{helin2023introduction}.
    \end{proof}
    }

\begin{proof}[Proof of Lemma \ref{lem:DGPWideGP}] 
We work through parts $(i)$-$(iii)$ in Assumption \ref{assump:wgp_sobolev}. Note that the sample path regularity of the TWGP posterior is the same as the TWGP prior, since the posterior is absolutely continuous with respect to the prior by Bayes' theorem. 

     \textbf{Assumption \ref{assump:wgp_sobolev} (i):}
     Propositions \ref{prop:regularity of matern kernel lemma 11}, the product rule and \ref{prop:regularity of GP samples} show that \(f^{0,\ell}\in H^{\beta}(\O)\) almost surely, for \(\ell=1,\ldots,L\). Moreover, Lemma \ref{lemma:reciprocal has derivatives} ensures that \(F(f^{0,\ell})\in H^{\beta}(\O)\) and \(1/F(f^{0,\ell})\in H^{\beta}(\O)\).
    With $\nu_1=\beta-d/2$, Theorem \ref{lemma: sob equiv mixture} implies that, for almost all \(\{f^{0,\ell}\}_{\ell=1}^{L}\) we have
    \[
        \H_{k_{\mix}^{\{F(f^{0,\ell}),k_{\M(\nu_1)}\}_{\ell=1}^{L}}}(\O) \cong H^\beta{(\O)}.
    \]

    \textbf{Assumption \ref{assump:wgp_sobolev} (ii):}
     Let \(q\in\N_0^d\) with \(|q|\leq\tilde{\beta}=\beta-d/2\). Let \(a_{\max}\) be the maximum in absolute value of the coefficients \(a_1,\ldots,a_N\) in the linear combination representation of \(m_{N,\delta^2}^{f}(u;f^{D-1})\). By the Sobolev embedding Theorem, we have \(F(f^{0,\ell})\in H^{\beta}(\O)\hookrightarrow C^{|q|}(\O)\), and so \(k_{\mix}^{\{F(f^{0,\ell}),k_{\M(\nu_1)}\}_{\ell=1}^{L}}\in C^{\tilde{\beta},\tilde{\beta}}(\O\times\O)\).
     By the product rule and the chain rule we see that there exists a  constant \(C(\{k_{\M(\nu_1)}\}_{\ell=1}^{L},F)>0\) such that for any \(u\in\O\) 
        \begin{align*}
            &\left|D^{q}(f-m_{N,\delta^2}^{f}(u;f^{D-2}))\right|=\left|D^q(f-\sum_{i=1}^{N}a_ik_{\mix}^{F(f^{0,\ell}),k_{\M(\nu_1)}}(u_i,u))\right|\\
            &\leq \|f\|_{C^{|q|}(\O)}+a_{\max}N\max_{1\leq\ell\leq L}
            \left\|k_{\mix}^{F(f^{0,\ell}),k_{\M(\nu_1)}}\right\|^2_{C^{|q|}(\O\times\O)}\leq \|f\|_{C^{|q|}(\O)}+a_{\max}N\sum_{\ell=1}^{L}
            \left\|k_{\mix}^{F(f^{0,\ell}),k_{\M(\nu_1)}}\right\|^2_{C^{|q|}(\O\times\O)}\\
            &\leq\|f\|_{C^{|q|}(\O)}+a_{\max}NC(\{k_{\M(\nu_1)}\}_{\ell=1}^{L},F)\sum_{\ell=1}^{L}
            \left\|f^{0,\ell}\right\|^2_{C^{|q|}(\O)}.
        \end{align*}
         Since \(f\in H^{\beta}(\O)\)
        and \(f^{D-1}\in {H^{\beta(R)}(\O)}\) almost surely we see that
        \begin{align*}
            \E_{\mu^{y_N,\delta^2,L}_{0}}&\left[\|f\|_{C^{|q|}(\O)}+a_{\max}NC(\{k_{\M(\nu_1)}\}_{\ell=1}^{L},F)\sum_{\ell=1}^{L}
            \left\|f^{0,\ell}\right\|^2_{C^{|q|}(\O)}\right]\\ 
            &\hspace{0.5cm}\leq\|f\|_{H^{\beta}(\O)}+a_{\max}NC(\{k_{\M(\nu_1)}\}_{\ell=1}^{L},F)LR^2<\infty.
        \end{align*}

        \textbf{Assumption \ref{assump:wgp_sobolev} (iii):}
    Using Theorem \ref{lemma: sob equiv mixture}, Lemma \ref{lemma:reciprocal has derivatives}, the linearity of expectation, and H\"older's inequality, we see that
    \begin{align*}
        \mathbb{E}_{\mu^{y_N,\delta^2,L}_{0}}&\left[C_\low(k_{1}(\cdot,\cdot;\{f^{0,\ell}\}_{\ell=1}^L))^{-1} C_\up(k_{1}(\cdot,\cdot;\{f^{0,\ell}\}_{\ell=1}^L))\right]\nonumber\\
        &=\sqrt{2}C_\low(k_{\M(\nu_{1,j})})^{-1}\E_{\mu^{y_N,\delta^2,L}_{0}}\left[\|1/F(f^{0,j})\|_{H^{\beta}(\O)}\sum_{\ell=1}^L C_\up(k_{\M(\nu_1)}) \|F(f^{0,\ell})\|_{H^{\beta}(\O)}\right]\nonumber\\
        &\leq\sqrt{2}C'_{\ref{lemma:reciprocal has derivatives}}C''_{\ref{lemma:reciprocal has derivatives}}C_\low(k_{\M(\nu_{1,j})})^{-1}\E_{\mu^{y_N,\delta^2,L}_{0}}\left[(1+\|f^{0,j}\|_{H^{\beta}(\O)}^{\beta})\sum_{\ell=1}^L C_\up(k_{\M(\nu_1)}) (1+\|f^{0,\ell}\|_{H^{\beta}(\O)}^{\beta})\right]\nonumber\\
        &\leq\sqrt{2}C'_{\ref{lemma:reciprocal has derivatives}}C''_{\ref{lemma:reciprocal has derivatives}}C_\low(k_{\M(\nu_{1,j})})^{-1}L(1+R^{\beta})^2\sum_{\ell=1}^LC_\up(k_{\M(\nu_1)})\label{eq: mix wgp 1st bound}.
    \end{align*}
    Similarly,
    \begin{equation*}\label{eq: mix wgp 2nd bound}
        \mathbb{E}_{\mu^{y_N,\delta^2,L}_{0}}\left[C_\up(k_{D}(\cdot,\cdot;f^{D-1}))\right]\leq \sqrt{2}C'_{\ref{lemma:reciprocal has derivatives}}L(1+R^{\beta})\sum_{\ell=1}^LC_\up(k_{\M(\nu_1)}).
    \end{equation*}
    The claim then follows, since the bounds derived above has no dependence on \(\varepsilon\), \(N\) or \(U_N\).
\end{proof}

\section{Numerical simulations}\label{sec:numsim}
In this section, we present some illustrative experiments to highlight the convergence theory outlined in section \ref{sec:EAGPR}.
We recover the function \(f(u)=\sin(2u)\), chosen due to the fact that \(f\in H^{\beta}(\O)\) for any \(\beta>0\). 
The goal of these experiments is to verify the rates predicted by our theory, and to investigate the necessity of the assumptions made. For more extensive discussion and a thorough numerical analysis, we refer the reader to \cite{MoriartyOsborne2024}.

The training data is given by noise-free observations $y_N := \{u_n, f(u_n)\}_{n=1}^N$, on design points \(U_N\) that are uniformly spaced with \(N = 2^\ell\) for \(\ell=1,\ldots,10\). With \(N\) training points, the fill distance is \(h_{U_N,\O}={|\O|}/{N}\) (see e.g. \cite{Stuart2016}). 
Throughout, \(\O=(0,5)\), and the $L^2(\O)$-norm is approximated on a mesh of \(N=4096=2^{12}\) points.  
To improve numerical stability, we approximate \(K(U_N,U_N)\) by \(K(U_N,U_N)+ 10^{-15} \mathrm{I}\). Although this technically puts us in the mis-specified setting in Theorem \ref{thm:conv_sob_noisy}, we expect that $\delta^2 = 10^{-15}$ is small enough to not see the effect of this approximation in the error.

 When using a kernel \(k\in\{k_{\warp}^{w,k_s},k_{\mix}^{\{\s_\ell,k_\ell\}^{L}_{\ell=1}}\}\), the converge rates we observe align with those predicted by Corollary \ref{cor:conv_sob}.
However, when \(k=k_{\conv}^{{\lambda_{a}},k_i}\), much faster rates are observed than predicted by Corollary \ref{cor:convergence in the native space}. 
This is due to two factors: the additional tightness in the result provided by using the information that the RKHS equal to a Sobolev space as a vector space, and the loss in bounding the \(L^{2}\) norm by the stronger \(C^0\) norm. 
Additionally, it is noteworthy that all of the figures exhibit a pre-asymptotic phase where \(h_{U_N,\Omega}<h_0\), as expected.  We distinguish between cases where the construction fulfils the conditions of Corollaries \ref{cor:convergence in the native space} or \ref{cor:conv_sob} and where it does not.  Even when our constructions do not entirely fulfil the conditions, we still observe rates consistent with the theoretical predictions.

\subsection*{Examples covered by our theory}
\begin{itemize}
    \item Our first kernel is \(k_{\mix}^{\{\s_\ell,k_\ell\}_{\ell=1}^3}\) with \(k_1=k_{\M(5/2)}\), \(k_2=k_{\M(3/2)}\), and \(k_3=k_{\M(7/2)}\), and \(\s_1(u)=(u/2+1)^2+0.5\) , \(\s_2(u) = (u/2 -0.5)^2+0.5\) and \(\s_3(u)=(u/2+0.5) ^2+0.5\). This satisfies the assumptions of Corollary \ref{cor:conv_sob} since \(\s^{\ell},1/\s_{\ell}\in H^{\beta}(\O)\) for \(\ell=1,2,3\) and so Theorem \ref{lemma: sob equiv mixture} is satisfied with \(\nu_j=3/2\) and \(\beta=2\) . See that in Figure \ref{fig:mixL=3convergence} the observed rate agrees with the predicted rate of \(2\) from Corollary \ref{cor:conv_sob}.
    
    \item The kernel \(k_{\warp}^{w,k_s}\) with \(k_s=k_{\M(5/2)}\) and \(w(u)=(u/5+0.1)^2\)
     satisfies the conditions of Corollary \ref{cor:conv_sob}, since \(\inv(w)\) exists, both \(w'\) and \(\inv(w)'\) are bounded away from zero, and hence Theorem \ref{lemma:sob equiv warping} holds with \(\nu=5/2\) and $\beta=3$. See in Figure \ref{fig:warp_convergence} that the observed rate agrees with the predicted rate of 3.  

    \item The kernel \(k_{\conv}^{{\lambda_{a}},k_i}\)  with \(k_i=k_{\M(1/2)}\) and \(\lambda_{a}(u)=(u-3)^2+\sin(u)+4\) satisfies the assumptions of Corollary \ref{cor:convergence in the native space}, since \(k\in C^{1}(\O\times\O)\). As seen in Figure \ref{fig:convrate1}, the observed rate is significantly faster than the rate $1/2$ predicted by Corollary \ref{cor:convergence in the native space}.
      Note that by establishing an equivalence of the RKHS with a Sobolev space, i.e., \(\H_{k_{\conv}^{{\lambda_{a}},k_i}}(\O) \cong \H^{\beta}(\O)\) with \(\beta\) defined similarly to the warping and mixture kernels, Corollary \ref{cor:conv_sob} would predict a rate of 2, aligning with our observed rate.
\end{itemize}
\subsection*{Examples not covered by our theory}
\begin{itemize}   
    \item This example uses the kernel \(k_{\mix}^{\{\s_\ell,k_\ell\}^{3}_{\ell=1}}\) with \(k_1=k_{\M(3)}\),  \(k_2=k_{\M(5/2)}\) and \(k_3=k_{\M(7/2)}\), along with \(\s_1(u)=\mathbbm{1}_{[0,2]}/2\), \(\s_2 = \mathbbm{1}_{(1,4)}/2\) and \(\s_3=\mathbbm{1}_{[3,5]}/2\). This example does not entirely satisfy the assumptions of Corollary \ref{cor:conv_sob}, since \(\s_\ell\notin H^{\beta}(\O)\) for \(\ell=1,2,3\). However with \(\nu=5/2\) in Theorem \ref{lemma: sob equiv mixture} we see that Corollary \ref{cor:conv_sob} would predict a convergence rate of \(3\), which is close to our observed rate, as seen in Figure \ref{fig:mixconvergence1}.

    \item In Figure \ref{fig:warp_convergence2}, we use \(k=k_{\warp}^{w, k_s}\) with \(k_s=k_{\M(3/2)}\) and \(w(u)=(u-3\pi/4)^2\). Although this kernel does not entirely satisfy the assumptions of Corollary \ref{cor:conv_sob} due to \(\inv(w)\) not existing on \(\O\), the observed results agree with the predicted convergence rate of \(2\) if we take \(\nu=3/2\) in Theorem \ref{lemma:sob equiv warping}.

    \item  In Figure \ref{fig:convratewarp2}, the results using \(k=k_{\warp}^{w, k_s}\) with \(k_s=k_{\M(3/2)}\) and \(w(u)=(u/5+0.1)^2\) for \(u<2.5\) and  \(w(u)=(u/3+0.1)^2\) for \(u\geq 2.5\) are presented. This kernel does not entirely satisfy the assumptions of Corollary \ref{cor:conv_sob} due to the discontinuity of \(w\) at \(u=2.5\). Nevertheless, since the discontinuity occurs on a set of measure zero, \(w\in W^{\beta,\infty}(\O)\) for all \(\beta>0\). As mentioned in Remark \ref{rmk:on ess bnd w}, it is possible to modify our theory to accommodate this case. See that the attained rate agrees with the expected rate of \(2\) with \(\nu=3/2\) in Theorem \ref{lemma:sob equiv warping}.
\end{itemize}

\begin{figure}[htp]%
\centering
\begin{minipage}{0.42\textwidth}
\includegraphics[width=\textwidth]{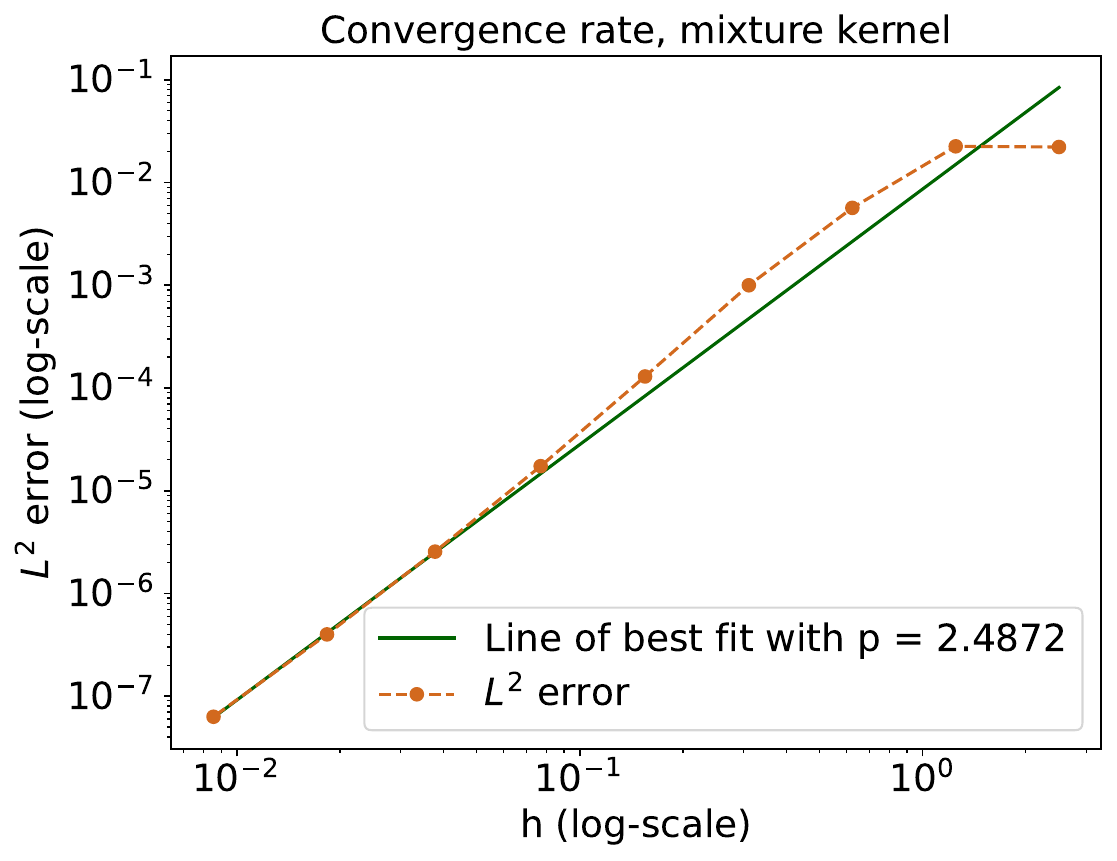}
\caption{\(\GP(0,k_{\mix}^{\{\s_\ell,k_\ell\}^{3}_{\ell=1}})\) with \(k_1=k_{\M(5/2)}\), \(\s_1(u)=(u/2+1)^2+0.5\) \(k_2=k_{\M(3/2)}\), \(\s_2(u) = (u/2 -0.5)^2+0.5\) and \(k_3=k_{\M(7/2)}\), \(\s_3(u)=(u/2+0.5) ^2+0.5\). Expected rate is \(2\).
\vspace{0.0cm}}
\label{fig:mixL=3convergence}
\end{minipage}\hfill
\begin{minipage}{0.42\textwidth}
\vspace{-1.8cm}
\includegraphics[width=\textwidth]{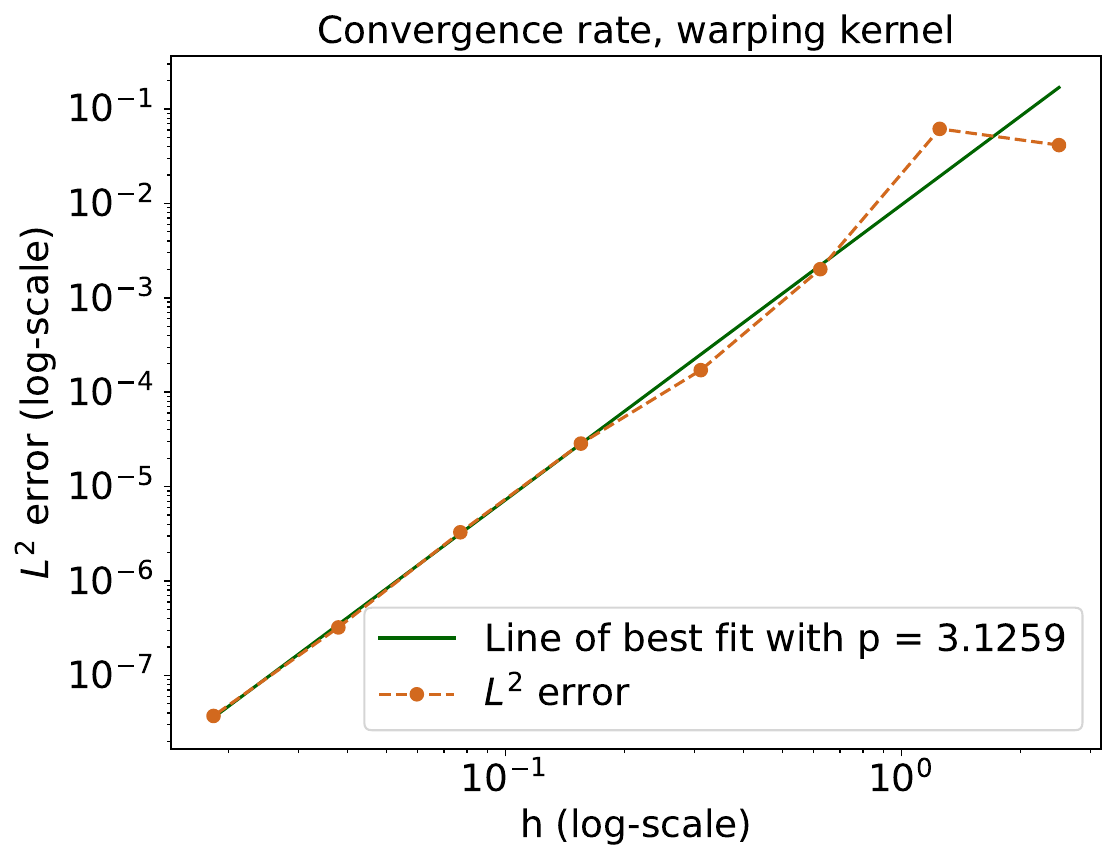}
\caption{\(\GP(0,k_{\warp}^{w, k_s})\) with \(k_s=k_{\M(5/2)}\) and \(w(u)=(u/5+0.1)^2\). Expected rate is \(3\).
\vspace{0.0cm}}
\label{fig:warp_convergence}
\end{minipage}\par
\vskip\floatsep
\begin{minipage}{0.42\textwidth}
\includegraphics[width=\textwidth]{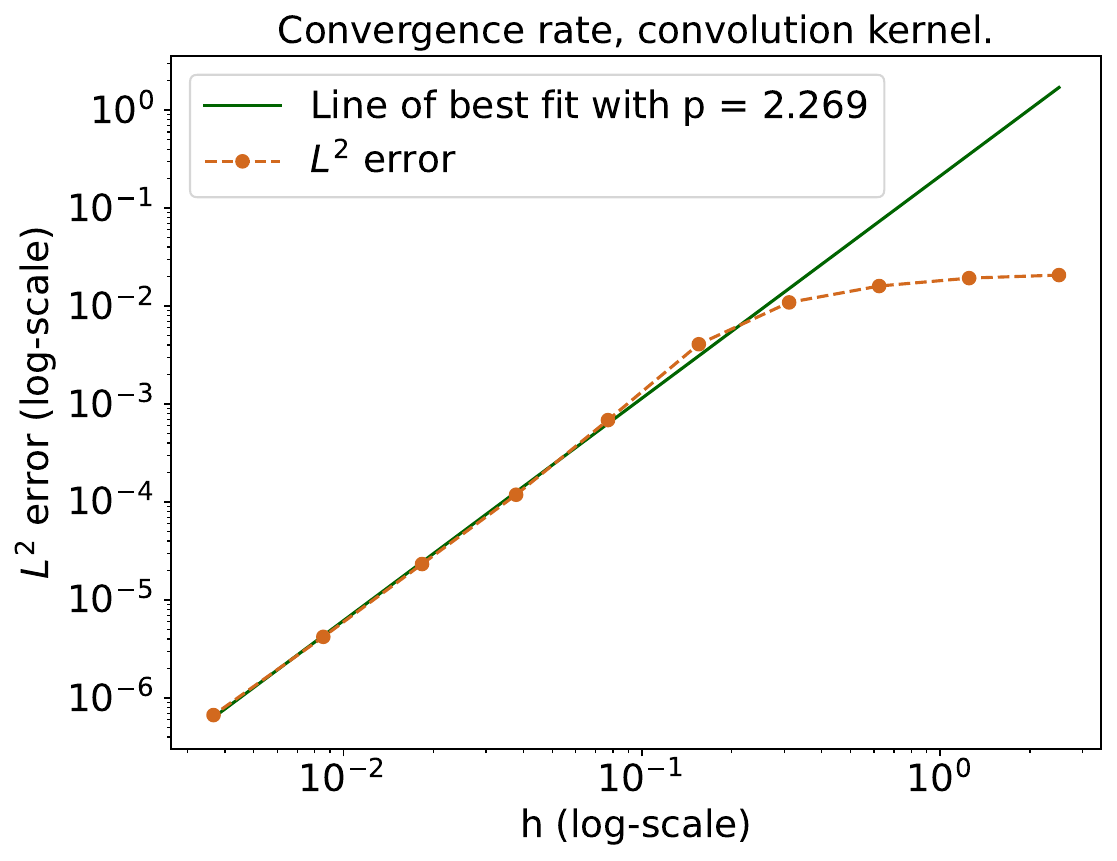}
\caption{\(\GP(0, k_{\conv}^{{\lambda_{a}},k_i})\)  with \(k_i=k_{\M(1/2)}\) and \(\lambda_{a}(u)=(u-3)^2+\sin(u)+4\). Expected rate is \(1/2\).
\vspace{0.3cm}}
\label{fig:convrate1}
\end{minipage}\hfill
\begin{minipage}{0.42\textwidth}
\includegraphics[width=\textwidth]{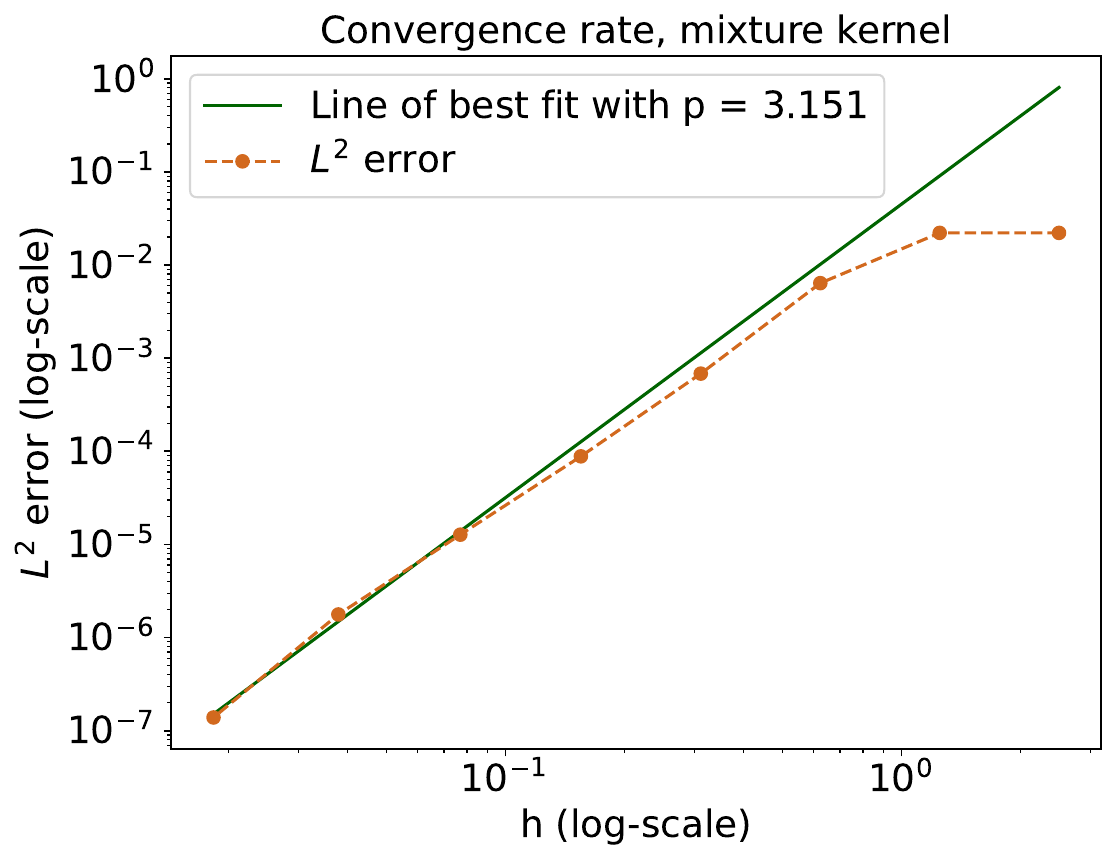}
\caption{\(\GP(0,k_{\mix}^{\{\s_\ell,k_\ell\}^{3}_{\ell=1}})\) with \(k_1=k_{\M(3)}\), \(\s_1(u)=\mathbbm{1}_{[0,2]}/2\) \(k_2=k_{\M(5/2)}\), \(\s_2 = \mathbbm{1}_{(1,4)}/2\) and \(k_3=k_{\M(7/2)}\), \(\s_3=\mathbbm{1}_{[3,5]}/2\). Expected rate is \(3\).
\vspace{0.0cm}}
\label{fig:mixconvergence1}
\end{minipage}
\begin{minipage}{0.42\textwidth}
\includegraphics[width=\textwidth]{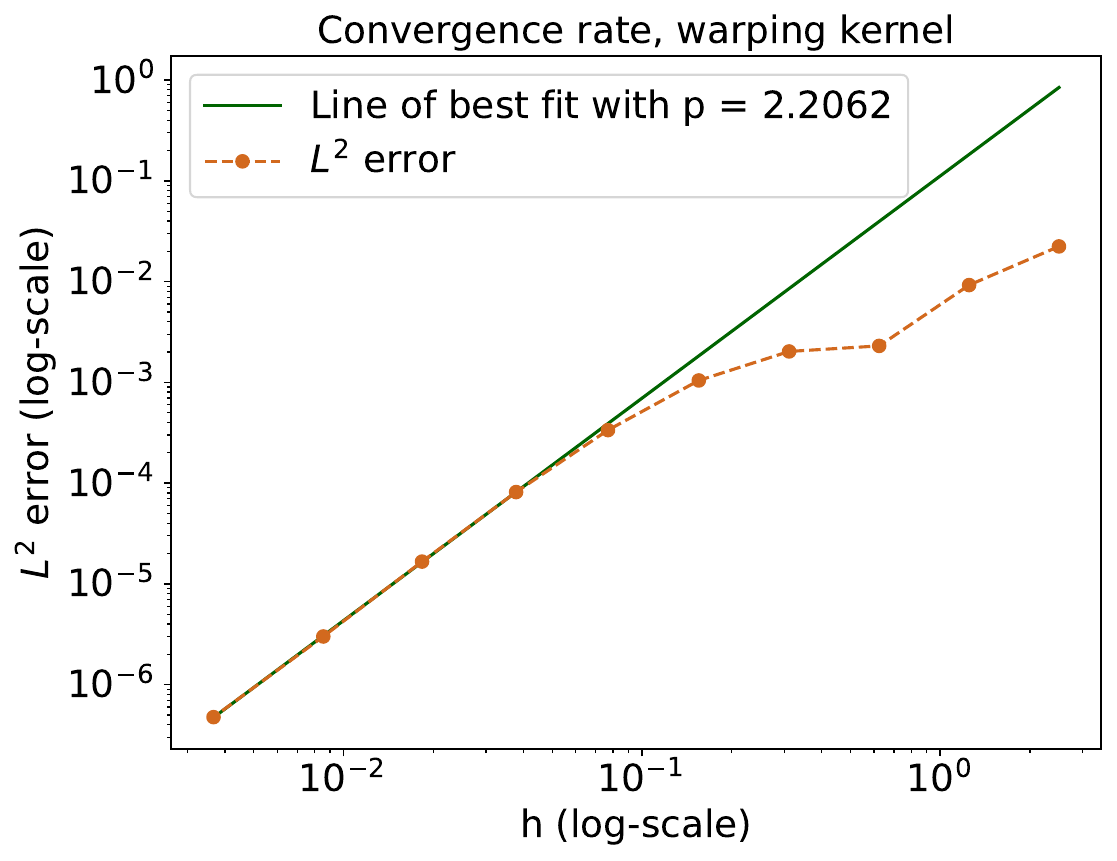}
\caption{\(\GP(0,k_{\warp}^{w, k_s})\) with \(k_s=k_{\M(3/2)}\) and \(w(u)=(u-3\pi/4)^2\). Expected rate is \(2\).
\vspace{0.5cm}}
\label{fig:warp_convergence2}
\end{minipage}\hfill
\begin{minipage}{0.42\textwidth}
\includegraphics[width=\textwidth]{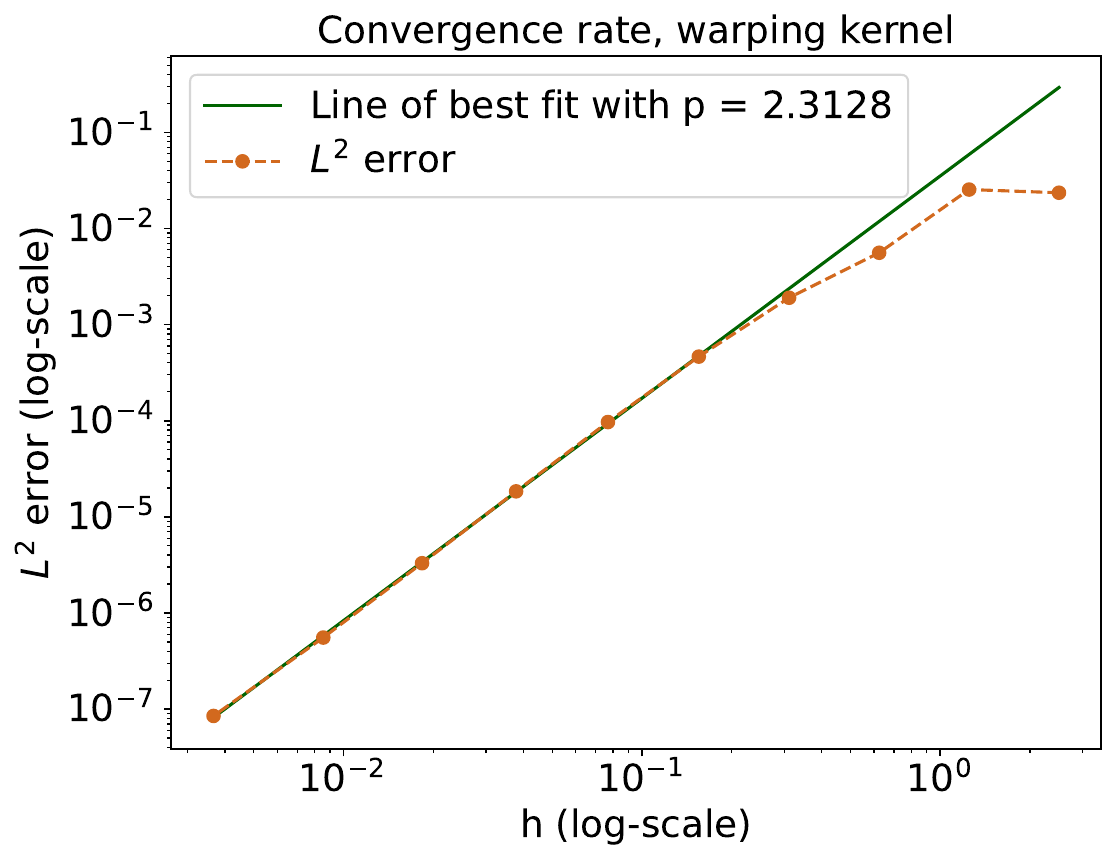}
\caption{\(\GP(0,k_{\warp}^{w, k_s})\) with \(k_s=k_{\M(3/2)}\) and \(w(u)=(u/5+0.1)^2,u<2.5, w(u)=(u/3+0.1)^2,u\geq 2.5\). Expected rate is \(2\).
\vspace{0.0cm}}
\label{fig:convratewarp2}
\end{minipage}
\end{figure}



\section{Conclusion and discussion}\label{sec:conclusion}
{
Gaussian processes are widely employed for approximating complex models. When dealing with non-stationary models, it is crucial that the Gaussian processes used can accurately capture this non-stationarity. This work explores how the validity of this approximation depends on the number of evaluations of the model used in the construction of the emulator, i.e. the number of training points. The error estimates we derive follow the general structure
\[
    \|f-m_{N}\|\leq C N^{-r}\|f\|,
\]
where $f$ is the function of interest and \(m_{N}\) is the predictive mean of the non-stationary or deep GP approximation. The constant \(C\) is dependent on all the hyper-parameters in prior, while the rate \(r\) depends on the regularity of the prior covariance kernel, along with the number of (weak) derivatives  considered in the error norm and the regularity of the function \(f\).

In the following, we summarise our main findings and take-away messages.

\subsection{Brief summary of results}
The novel analytical results in this work can be found in sections \ref{sec:nativespaces} to \ref{sec:EADGPR}, with illustrative numerical examples given in section \ref{sec:numsim}.

In Theorems \ref{lemma:sob equiv warping} and \ref{lemma: sob equiv mixture}, we show that under certain conditions, the reproducing kernel Hilbert space (RKHS) associated to the mixture and warping kernels introduced in section \ref{subsubsec:nonstationary} can be characterised by its equivalence with a Sobolev space. For this to hold, we require the regularity of the stationary kernel \(k_s\) and the non-stationary hyper-parameters $w$ or $\{\sigma_\ell\}_{\ell=1}^L$ to match.

 In section \ref{sec:sample path regularity}, we explore sample path properties of non-stationary Gaussian processes, focusing again on the warping and mixture kernels, see in particular Corollaries \ref{cor:sample path old school} and \ref{cor: sample path sullivan}.  We show that for covariance kernel \(k\in C^{p,p}(\O\times\O)\), we have that \(g\sim \GP(0,k)\) satisfies \(g\in C^{\tilde p}(\O)\) almost surely, for some \(\tilde p\leq p\) depending on the dimension $d$ of $\O$. Note that this may not necessarily be optimal, as it is known that \(g\in H^{p}(\O)\) (see Proposition \ref{prop:regularity of GP samples}), and that under certain conditions the Sobolev and H\"older sample path regularities are equal (see Remark \ref{rem:sob_vs_class}). 

 In sections \ref{sec:EAGPR} and \ref{sec:EADGPR}, we then present a convergence analysis for non-stationary and deep Gaussian process regression. Our analysis is divided into two cases: one where the characterisation of the RKHS as a Sobolev space is possible, and the other when it is not. The majority of this work, including all analysis on DGPs and WGPs, falls into the first case, since this allows for error bounds more explicit in the covariance kernel $k$.

In the case where it is not known that the RKHS is a Sobolev space, convergence rates for non-stationary GP regression with noise-free data are obtained for the warping, mixture and convolution kernels in Corollaries \ref{cor:convergence in the native space} and \ref{cor:conv_rkhs_est}. The latter considers hyper-parameter estimation by an empirical Bayesian approach. These results require that the function $f$ of interest is in the RKHS, which can be difficult to verify.

In the case where the RKHS is norm-equivalent to a Sobolev space, convergence rates for non-stationary GP regression with noise-free data are obtained for the warping and mixture kernels in Corollaries \ref{cor:conv_sob} and \ref{cor:conv_sob_est}, and with noisy data in \ref{cor:conv_sob_est_noisy} and \ref{cor:conv_sob_est_noisy}. Corollaries \ref{cor:conv_sob_est} and \ref{cor:conv_sob_est_noisy} consider hyper-parameter estimation by an empirical Bayesian approach. These results require that the function $f$ of interest is in a Sobolev space, which is typically easier to verify. Various extensions to the error analysis are given in section \ref{subsec:eagr_ext}.

In the case where the RKHS is norm-equivalent to a Sobolev space, convergence rates for deep and wide GP regression are obtained for constructions based on the warping and mixture kernels in Corollaries \ref{cor:DGP_conv_sobolev} and \ref{cor:wideGP}. These results apply to noise-free and noisy data.

\subsection{Discussion and main insights}
\begin{itemize}
    \item  The core idea of this work is to present constructions of non-stationary kernels that achieve the same convergence rates as the stationary kernels used to construct them. A future direction might be to construct non-stationary kernels which overcome any shortcomings of their stationary counterparts. 
    \item 
For most of our results, we only assume that \(f\) belongs to the Sobolev space \(H^\beta(\Omega)\) (or a corresponding tensor product version), for a bounded domain \(\Omega \subseteq \mathbb{R}^d\) and some \(\beta \geq d/2\). This approach offers two main advantages. First, in practice, GP and DGP regression are often used in a black-box setting, where little is known about the function \(f\). Thus, assuming \(f\) is continuous and has a certain number of square-integrable weak derivatives is often reasonable. Second, our results show that the methodology remains robust even if the non-stationarity in the prior is poorly chosen, ensuring that convergence results are still achieved despite potential mismatches in the assumed non-stationary structure compared to the true function \(f\).

This complements recent studies \cite{finocchio2023posterior, Bachoc2024, castillo2024deep, abraham2023deep} which demonstrate that for certain compositional functions \(f\), it is possible to construct a DGP prior distribution that exploits this structure, facilitating faster convergence rates than those found in our study. We believe that our approach can be extended to incorporate more structural assumptions on \(f\) and achieve similar, better convergence results. However, due to the  technicality required to achieve this, it has been reserved for future works.

    \item This work has focused mostly on modelling non-stationary length scales. An interesting future direction would be to model non-stationary regularity, in which case standard Sobolev spaces may no longer be appropriate for the analysis.  
    \item In the case of noise-free data, our analysis recovers the optimal convergence rates $N^{-(\beta-\alpha)/d}$ for approximating $f \in H^\beta(\O)$ with $N$ function values and measuring the error in $H^\alpha(\O)$. This holds for the mixture and warping kernels, and requires that the fill distance decays optimally as \(h_{U_N,\O}\leq CN^{-1/d}\). For non-stationary GP regression, this result can be seen in Corollaries \ref{cor:conv_sob} for fixed hyper-parameters and Corollary \ref{cor:conv_sob_est} for estimated hyper-parameters.  For deep and wide GP regression, this result can be seen in Corollaries \ref{cor:DGP_conv_sobolev} and \ref{cor:wideGP}. For noise-free data on randomly sampled locations $U_N$, for which we have \(h_{U_N,\O}\leq CN^{-1/d+\epsilon}\) for any $\epsilon>0$, we obtain almost optimal rates $N^{-(\beta-\alpha)/d+\epsilon} $ in Corollaries \ref{cor:conv_sob_noisy} and \ref{cor:conv_sob_est_noisy}.
     \item In the case of noisy data, our analysis is more limited and does not recover optimal rates. As can be seen in Theorems \ref{thm:exp convergence rate} and \ref{thm:exp convergence rate_wgp}, convergence of DGP and WGP regression in the noisy case is only guaranteed when the level of noise goes to \(0\) sufficiently fast as \(N\to\infty\). 
     \item Due to the technicality of conditioning deep and wide GPs on noise-free data, we consider approximations of the conditional distribution derived with assumed noisy data. We are then free to choose the size of the noise used in this approximation, and in particular, recovering optimal convergence rates for deep and wide GP regression requires letting this noise go to 0 as $N\rightarrow \infty$ (see Theorems \ref{thm:exp convergence rate} and \ref{thm:exp convergence rate_wgp} for details).
     \item Our analysis for the convolution kernel is quite limited. While we have successfully leveraged the theory on kernel operations to establish that the RKHS for the mixture and warping kernels can be constructed to be norm-equivalent to a Sobolev space, extending this result to the convolution kernel was beyond our reach. However, numerical tests suggest that a similar result should hold true. We do obtain convergence results for the convolution kernel in Corollaries \ref{cor:convergence in the native space} and \ref{cor:conv_rkhs_est}, but they are likely to not be optimal since this is the case for the warping and mixture kernels.
    \item To reduce the number of parameters that need to be tuned, it is common to use the same covariance kernel $k_n \equiv k$ in all layers of the deep (and wide) GP constructions (see e.g. \cite{damianou2013deep,dunlop2018deep}). As discussed below Corollary \ref{cor:DGP_conv_sobolev}, our theory unfortunately does not apply to this setting. The issue arises from the need to ensure the same regularity of the stationary kernel \(k_s\) and the non-stationary the hyper-parameters $w$ or $\{\sigma_\ell\}_{\ell=1}^L$ in Theorems \ref{lemma:sob equiv warping} and \ref{lemma: sob equiv mixture}. This requires more regular kernels in the hidden layers \(n=0,\ldots,D-1\), and the more hidden layers there are, the higher regularity we need in the hidden layers (see Lemmas \ref{lem:DGPkernelWarp}, \ref{lem:DGPkernelMix} and \ref{lem:DGPWideGP} for details). In our approach, we have opted to set \(k_n\equiv k\) for \(n=0,\ldots,D-1\) and \(k_D\neq k\). An alternative approach could involve gradually reducing the regularity over each layer, and it remains to see what works better in practice.
    \item Theorems \ref{thm:exp convergence rate} and \ref{thm:exp convergence rate_wgp} provide general assumptions that guarantee the convergence of deep (and wide) GP regression. Unfortunately, we have been unable to prove these assumptions for constructions based on warping or mixture kernels without introducing the additional assumption that suitable H\"older or Sobolev norms of  the \(D-1\)-th layer are bounded by a constant $R<\infty$ almost surely (see Lemmas \ref{lem:DGPkernelWarp}, \ref{lem:DGPkernelMix} and \ref{lem:DGPWideGP} for details). We note that the same truncation assumption was introduced in \cite{finocchio2023posterior}, albeit it there was introduced on every layer rather than only the penultimate one. Although we were unable to show that the required posterior expected values in Assumption \ref{assump:dgp_sobolev}$(ii),(iii)$ and \ref{assump:wgp_sobolev}$(ii),(iii)$ are bounded independently of $N$ without this truncation assumption, one can show that they are finite using nested applications of Fernique's theorem (see \cite{MoriartyOsborne2024} for more details).
\end{itemize}

}

\section*{Acknowledgements} The authors would like to thank Neil Deo for pointing our attention to {\cite[Lemma 29]{Nickl2020}, which was used in the proof of Lemma \ref{lemma:reciprocal has derivatives}}. CMO was supported by the EPSRC Centre for Doctoral Training in Mathematical
Modelling, Analysis and Computation (MAC-MIGS) funded by the UK Engineering and
Physical Sciences Research Council (grant EP/S023291/1), Heriot-Watt University and the
University of Edinburgh. ALT was supported by EPSRC grant no EP/X01259X/1. CMO and ALT would like to thank the Isaac Newton Institute for Mathematical Sciences, 
Cambridge, for support and hospitality during the programme {\em Mathematical and
Statistical Foundation of Future Data-driven Engineering} where work on this paper
was undertaken. This work was supported by EPSRC grant no EP/R014604/1.



\bibliographystyle{siam}
\bibliography{references}

\appendix
\begin{appendices}
\section{Additional proofs}\label{appendixA}


\begin{proof}[of Lemma \ref{lemma:exp_inputwarping constant}] 
Let \(g:\R\to\R\) and \(h:\R^2\to\R\) be defined as \(g(\zeta)=\s^2\exp(-\frac{\zeta^2}{2\lambda^2})\) and \(h(u_1, u_2)=w(u_1)-w(u_2)\). Then $k_{\warp}^{w,k_s}(u_1,u_2) = g(h(u_1,u_2))$.
We introduce the variable \(v=(v_1,\ldots,v_{|\alpha|})\), for $\alpha \in \N_0^2$ a multi-index as in $C_{\ref{thm:Wend11.13}}$ and $|\alpha|=\|\alpha\|_1$, such that
\begin{equation}\label{eq:app_v}
    v_i = \begin{cases}
            u_1 \quad&\text{ if }i\leq\a_1,\\
             u_2 &\text{ if }i>\a_1.
          \end{cases}
\end{equation}
This notation allows us to write Fa\`{a} di Bruno's formula as \cite{jones_1951}
\begin{align*}
    D^{\a}g(h(u_1, u_2))=\frac{\partial^{|\a|}g(h(u_1, u_2))}{\partial v_1\ldots\partial v_{|\a|}}=\sum_{\pi\in\Pi}d^{|\pi|}_{\zeta}g(\zeta)\bigg|_{\zeta=h(u_1,u_2)}
    \prod_{B\in\pi}\frac{\partial^{|B|}h(u_1,u_2)}{\prod_{i\in B}\partial v_i},
\end{align*}
where \(\Pi\) is the set of all possible partitions of the set \(\{1,\ldots,|\alpha|\}\), \(B\in\pi\) means that \(B\) runs over all the blocks of the partition \(\pi\), and $|\pi|$ is the number of blocks in $\pi$.
It can be shown (see, e.g. \cite[Proposition 7.1]{Herrman2020}) that
\begin{equation}\label{eq:Herrmaneq}
    \left|d_\zeta^ng(\zeta)\right|=\left|d_\zeta^{n}\s^2\exp\left(-\frac{\zeta^2}{2\lambda^2}\right)\right|\leq C_0\s^2\exp\left(-\frac{\zeta^2}{4\lambda^2}\right)\sqrt{n!}\leq C_0\s^2\sqrt{n!},\quad\forall  n\in\N,\;\forall\zeta\in\R,
\end{equation}
where \(C_0\leq1.0866\).
Notice also that for any partition \(\pi\) and any block \(B\in\pi\) we have that
\begin{equation}\label{eq:DLh}
    \frac{\partial^{|B|}h(u_1,u_2)}{\prod_{i\in B}\partial v_i}=
                        \begin{cases}
                        0 \quad&\text{ if }\exists i,j\in B \text{ such that }i\leq\a_1\text{ and } j>\a_1,\\
                         d_{u_1}^{|B|}w(u_1) &\text{ if }\forall i\in B, i\leq\a_1,\\
                         -d_{u_2}^{|B|}w(u_2) &\text{ if }\forall i\in B,\a_1<i.
                      \end{cases}
\end{equation}
Using \eqref{eq:Herrmaneq}, \eqref{eq:DLh} and $|\pi|\leq |\alpha|$, we see that
\begin{align*}
    \left|D^{\a}g(h(u_1, u_2))\right|
    &\leq 
    \sum_{\pi\in\Pi}C_0\s^2\sqrt{|\pi|!}
    \prod_{B\in\pi}\left|
    d_u^{|\beta|}w(u)\right|
    \\
    &\leq 
     {C_0\s^2\sqrt{|\alpha|!}}\|w\|_{C^{|\alpha|}(\Omega)}\sum_{\pi\in\Pi}1.
\end{align*}
With \(B_{|\alpha|}\) denoting the number of ways to partition the set \(\{1,\ldots,|\alpha|\}\), this finishes the proof.
\end{proof}

\begin{proof}[of Lemma \ref{lemma:exp_kernel mixture}]
Let \(g:\R\to\R\) and \(h:\R^2\to\R\) be defined as \(g(\zeta)=\exp(-\frac{\zeta^2}{2\lambda^2})\) and \(h(u, u')=u-u'\). Then we can write the kernel \(k_{\mix}^{\{\sigma_\ell, k_\ell\}_{\ell=1}^L}\) as a sum of terms 
 of the form of $k_\ell(u_1,u_2) = \sigma_\ell(u_1) \sigma_\ell(u_2) g(h(u_1,u_2))$.
Let $\alpha$ and \(v=(v_1,\ldots,v_{|\alpha|})\) as in \eqref{eq:app_v}. 
The Leibniz rule then gives
\begin{equation}\label{eq:prodrulesig}
    D^{\a}k_\ell(u,u')=\sum_{S}\frac{\partial^{|S|}\sigma_\ell(u')}{\Pi_{i\in S}\partial v_i}
    \left(\sum_{T_S}\frac{\partial^{|T_S|}\sigma_\ell(u)}{\Pi_{j\in T_S}\partial v_j}\frac{\partial^{|\a|-|S|-|T_S|}g(h(u,u'))}{\Pi_{m \notin S\cup T_S}\partial v_m}\right),
\end{equation}
where $S$ runs over the set $\mathcal{P}(\{1,\ldots, |\alpha|\})$, and $T_S$ runs over the set $\mathcal{P}(\{1,\ldots, |\alpha|\}\setminus S)$.
Using Fa\`{a} di Bruno's formula \cite{jones_1951}, we see that for any \(\widehat\a=(\widehat\a_1,\widehat\a_2)\in\N^2_0\) with \(\widehat \a_1\leq \a_1\) and \(\widehat \a_2\leq \a_2\)
\begin{align}\label{eq:dalf}
    D^{\widehat\a}g(h(u, u'))=\sum_{\pi\in\Pi}d^{|\pi|}_{\zeta}g(\zeta)\bigg|_{\zeta=h(u,u')}
    \prod_{B\in\pi}\frac{\partial^{|B|}h(u,u')}{\prod_{i\in B}\partial v_i}.
\end{align}
where \(\Pi\) is the set of all possible partitions of the set \(\{1,\ldots,|\widehat\alpha|\}\). 
For any \(\pi\) and \(B\in\pi\), we have
\begin{equation}\label{eq:SSh}
    \frac{\partial^{|B|}h(u,u')}{\prod_{i\in B}\partial v_i}=
    \frac{\partial^{|B|}(u-u')}{\prod_{i\in B}\partial v_i}=
                        \begin{cases}
                        \:\:1 \quad&\text{ if } |B|=1\text{ and }v_1=u,\\
                        -1 \quad&\text{ if } |B|=1\text{ and }v_1=u',\\
                         \:\:0 &\text{ otherwise}.
                      \end{cases}
\end{equation}
This means that the only non-zero term in \eqref{eq:dalf} corresponds to the partition that contains only singletons. Let \(\tilde\Pi\) denote the set containing only this partition. Then \eqref{eq:Herrmaneq}, \eqref{eq:SSh}, $|\pi|\leq |\widehat\alpha|$ and $|\tilde\Pi|=|\widehat\a|$ give
\begin{equation}
    \left| D^{\widehat\a}g(h(u, u'))\right|
    \leq \sum_{\pi\in\tilde\Pi}C_0\sqrt{|\pi|!}
    \prod_{B\in\pi}1 \\ 
    \leq {C_0|\widehat\a|\sqrt{|\widehat\alpha|!}}\label{eq:unforseen}.
\end{equation}
For any subset \(S\) (or \(T_S\)) in \eqref{eq:prodrulesig}, 
we further have
\begin{equation}\label{eq:ssbound}
    \left|\frac{\partial^{|S|}\sigma_\ell(u')}{\Pi_{i\in S}\partial v_i}\right|\leq\|\sigma_\ell\|_{C^{|\a|}(\Omega)}.
\end{equation}
Using \eqref{eq:ssbound} and \eqref{eq:unforseen}, we then have
\begin{align*}
    \left|D^{\a}k(u,u')\right|
    &\leq \|\s_\ell\|^2_{C^{|\a|}(\Omega)}\sum_{S}
    \sum_{T_S}\left|\frac{\partial^{|\a|-|S|-|T_S|}g(h(u,u'))}{\Pi_{\ell\notin S\cup T_S}\partial v_\ell}\right|\\
    &\leq C_0|\a|\sqrt{|\a|!}\|\s_\ell\|_{C^{|\a|}(\Omega)}^2\sum_{S}
    \sum_{T_S}1.
\end{align*}

By taking the maximum over each \(\ell\in \{1,\ldots,L\}\) we achieve the desired result since the power set of \(\{1,\ldots,|\a|\}\) has \(2^{|\a|}\) elements.
\end{proof}



\begin{proof}[of Lemma \ref{lemma:exp_convolutionker constant}]
 Let $\alpha$ and \(v=(v_1,\ldots,v_{|\alpha|})\) be as in \eqref{eq:app_v}.  
With $\lambda=1$, the Leibniz rule gives
\begin{align*}
D^{\alpha}k_{\conv}^{{\lambda_{a}},k_i}(u,u')&=2^{1/2}\s^2\sum_S\frac{\partial^{|S|}{\lambda_{a}}(u)^{1/4}}{\Pi_{i\in S}\partial v_i}\Bigg(\sum_{T_S}\frac{\partial^{|T_S|}{\lambda_{a}}(u')^{1/4}}{\Pi_{j\in T_S}\partial v_j}\\
&\Bigg(\sum_{V_{S,T_S}}\frac{\partial^{|V_{S,T_S}|}({\lambda_{a}}(u)+{\lambda_{a}}(u'))^{-1/2}}{\Pi_{k\in V_{S, T_S}}\partial v_k}\frac{\partial^{|\alpha|-|S|-|T_S|-|V_{S,T_S}|}\exp\left(-\frac{|u-u'|^2}{(({\lambda_{a}}(u)+{\lambda_{a}}(u'))/2)}\right)}{\Pi_{m \not\in S\cup T_S\cup V_{S,T_S}}\partial{v_m}}\Bigg)\Bigg)\nonumber
\end{align*}
where $S$ runs over the set $\mathcal{P}(\{1,\ldots, |\alpha|\})$, $T_S$ runs over the set $\mathcal{P}(\{1,\ldots, |\alpha|\}\setminus S)$, and
    $V_{S,T_S}$ runs over the set $\mathcal{P}(\{1,\ldots, |\alpha|\}\setminus S\cup T_S)$.
We now look at each term separately.

\textbf{First (and second) term.}
Let \(h_1(\zeta)=\zeta^{1/4}\) and \(g_1(u)={\lambda_{a}}(u)\).
For any \(n\in\N\), we have 
\begin{align*}
\left|d^{n}_\zeta h_1(\zeta)\big|_{\zeta = g_1(u)}\right|=\left|d^{n}_\zeta \zeta^{1/4}\big|_{\zeta = g_1(u)}\right|=\left|\prod_{i=0}^{n-1}|1/4 - i|({\lambda_{a}}(u))^{1/4-n}\right| \leq|5/4 - n|^{n}\max\{\|{\lambda_{a}}\|_{C^0(\Omega)}, 1\}^{1/4 - n},
\end{align*}
and for any \(B\), a block of any partition of any \(S\), we have
\[
    \frac{\partial^{|B|}g_1(u)}{\prod_{i\in B}\partial v_i}=                        \begin{cases}
                        0 \quad&\text{ if }\exists i\in B \text{ such that }i>\a_1,\\
                         d_{u}^{|B|}{\lambda_{a}}(u) &\text{ if }\forall i\in B, i\leq\a_1.
                      \end{cases}
\]
We can then use Fa\`a di Bruno's formula to see that for any \(S\) we have
\begin{align}
    \left|\frac{\partial^{|S|}h_1(g_1(u))}{\Pi_{i\in S}\partial v_i}\right|&=\left|\sum_{\pi\in\Pi}d^{|\pi|}_{\zeta}h_1(\zeta)\bigg|_{\zeta=h(u,u')}
    \prod_{B\in\pi}\frac{\partial^{|B|}g_1(u)}{\prod_{i\in B}\partial v_i}\right|\notag\\
    &\leq\sum_{\pi\in\Pi}|5/4 - |\pi||^{|\pi|}\max\{\|{\lambda_{a}}\|_{C^{|\pi|}(\Omega)}, 1\}^{1/4 - |pi|}
    \prod_{B\in\pi}d_{u}^{|B|}{\lambda_{a}}(u)\notag\\
    &\leq|5/4 - |S||^{|S|}\max\{\|{\lambda_{a}}\|_{C^{0}(\Omega)}, 1\}^{1/4 - |S|}
    \|{\lambda_{a}}\|_{C^{|S|}(\Omega)}B_{|S|}\label{eq:term1pac}.
\end{align}
The same argument shows that for any \(T_S\) we have
\begin{equation}\label{eq:term2pac}
    \left|\frac{\partial^{|T_S|}{\lambda_{a}}(u')^{1/4}}{\Pi_{j\in T_S}\partial v_j}\right|\leq|5/4 - |T_S||^{|T_S|}\max\{\|{\lambda_{a}}\|_{C^{0}(\Omega)}, 1\}^{1/4 - |T_S|}
    \|{\lambda_{a}}\|_{C^{|T_S|}(\Omega)}B_{|T_S|}.
\end{equation}


\textbf{Third term.}
Let \(h_2(\zeta)=\zeta^{-1/2}\) and \(g_2(u,u')={\lambda_{a}}(u)+{\lambda_{a}}(u')\). For any \(n\in\N\) we have
\begin{align*}\nonumber
    \left|d^{n}_\zeta h_2(\zeta)\big|_{\zeta = g_2(u,u')}\right|=\left|d^{n}_\zeta \zeta^{-1/2}\big|_{\zeta = g_2(u,u')}\right|
    =\frac{(2n)!}{2^{2n}n!}({\lambda_{a}}(u)+{\lambda_{a}}(u')),
\end{align*}
and for any \(B\), a block of any partition of any \(V_{T_S}\)
\begin{equation*}
    \frac{\partial^{|B|}g_2(u,u')}{\prod_{i\in B}\partial v_i}=                        \begin{cases}
                         d_{u}^{|B|}{\lambda_{a}}(u) &\text{ if }\forall i\in B, i\leq\a_1,\\
                         d_{u'}^{|B|}{\lambda_{a}}(u') &\text{ if }\forall i\in B, i>\a_1,\\
                         0 \quad&\text{ otherwise.}\\
                      \end{cases}
\end{equation*}
We again use Fa\`a di Bruno's formula to see 
that for any $V=V_{S,T_S}$ we have 
\begin{align}
    \left|\frac{\partial^{|V|}h_2(g_2(u,u'))}{\Pi_{i\in V}v_i}\right|
    &=\left|\sum_{\pi\in\Pi}d^{|\pi|}_{\zeta}h_2(\zeta)\bigg|_{\zeta=h(u,u')}
    \prod_{B\in\pi}\frac{\partial^{|B|}g_2(u,u')}{\prod_{i\in B}\partial v_i}\right|\notag\\
    &\leq \sum_{\pi\in\Pi}\frac{(2|\pi|)!}{2^{2|\pi|}|\pi|!}2 \|{\lambda_{a}}\|_{C^{|V|}(\Omega)} \prod_{B\in\pi}\left|\frac{\partial^{|B|}g_2(u,u')}{\prod_{i\in B}\partial v_i}\right|\notag\\
    &\leq(2|V|)2  \|{\lambda_{a}}\|^2_{C^{|V|}(\Omega)}\sum_{\pi\in\Pi}1\notag\\
    &\leq(2|V|)2 \|{\lambda_{a}}\|^2_{C^{|V|}(\Omega)}B_{|V|}.\label{eg:term3pac}
\end{align}

\textbf{Fourth term.}
Let \(h_3(\zeta)=\exp(-\zeta^2/2)\) and $g_3(u,u')=\frac{|u-u'|^2}{({\lambda_{a}}(u)+{\lambda_{a}}(u'))/2}$.
For any \(\bar{S}=(S\cup T_S\cup V_{T_S})^c\),
we use  Fa\`{a} di Bruno's formula to see that
\begin{align*}
    D^{\bar{T}}h_3(g_3(u_1, u_2))=\sum_{\pi\in\Pi}d^{|\pi|}_{\zeta}h_3(\zeta)\bigg|_{\zeta=g_3(u_1,u_2)}
    \prod_{B\in\pi}\frac{\partial^{|B|}g_3(u_1,u_2)}{\prod_{i\in B}\partial v_i}.
\end{align*}
For any block \(B\), we can use the Leibniz rule to see that
\begin{equation*}
    \frac{\partial^{|B|}g_3(u_1,u_2)}{\prod_{i\in B}\partial v_i}=\sum_{\bar{S}}\frac{\partial^{|\bar{S}|}(|u-u'|^2/2)}{\prod_{i\in\bar S}v_i}\frac{\partial^{|B|-|\bar{S}|}({\lambda_{a}}(u)+{\lambda_{a}}(u'))^{-1}}{\prod_{j\notin\bar S}v_j}.
\end{equation*}
We have 
\begin{equation*}
    \left|\frac{\partial^{|\bar{S}|}(|u-u'|^2/2)}{\prod_{i\in \bar{S}}\partial v_i}\right|=                        \begin{cases}
                         |u-u'| &\text{ if }|\bar{S}|=1,\\
                         1 &\text{ if }|\bar{S}|=2\text{ and } \exists i,j \in \bar{S} \text{ with }i \leq\a_1\text{ and }j >\a_1,\\
                         0 \quad&\text{ otherwise,}\\
                      \end{cases}
\end{equation*}
and with \(h_4(\zeta)=\zeta^{-1}\) and \(g_4(u,u')={\lambda_{a}}(u)+{\lambda_{a}}(u')\), we again use Fa\`{a} di Bruno's formula to see that
\begin{equation*}
    \left|\frac{\partial^{|B|-|\bar{S}|}({\lambda_{a}}(u)+{\lambda_{a}}(u'))^{-1}}{\prod_{j\notin\bar S}v_j}\right|\leq\sum_{\pi\in\Pi}\left|d^{|\pi|}_{\zeta}h_4(\zeta)\bigg|_{\zeta=g_4(u,u')}\right|
    \prod_{B\in\pi}\left|\frac{\partial^{|B|}g_4(u_1,u_2)}{\prod_{i\in B}\partial v_i}\right|.
\end{equation*}
where \(\Pi\) is the set of all possible partitions of the set \(\{1,\ldots,|B|-|\bar S|\}\), \(B\in\pi\) means that \(B\) runs over all the blocks of the partition \(\pi\), and $|\pi|$ is the number of blocks in $\pi$.
Furthermore
\begin{align*}
    \left|d^{|\pi|}_{\zeta}h_4(\zeta)\bigg|_{\zeta=g_4(u,u')}\right|= \left|(-1)^{|\pi|}|\pi|!\zeta^{-1-|\pi|}\bigg|_{\zeta=g_4(u,u')}\right| 
    \leq\left||\pi|!(2c)^{-1-|\pi|}\right|,
\end{align*}
and
\begin{equation*}
    \frac{\partial^{|B|}g_4(u,u')}{\prod_{i\in B}\partial v_i}=                        \begin{cases}
                         d_{u}^{|B|}{\lambda_{a}}(u) &\text{ if }\forall i\in B, i\leq\a_1,\\
                         d_{u'}^{|B|}{\lambda_{a}}(u') &\text{ if }\forall i\in B, i>\a_1,\\
                         0 \quad&\text{ otherwise,}\\
                      \end{cases}
\end{equation*}
and hence
\begin{align}
    \left|\frac{\partial^{|B|-|\bar{S}|}({\lambda_{a}}(u)+{\lambda_{a}}(u'))^{-1}}{\prod_{j\notin\bar S}v_j}\right|&\leq
    \sum_{\pi\in\Pi}\left||\pi|!(2c)^{-1-|\pi|}\right|
    \|{\lambda_{a}}\|_{C^{|B|-|\bar{S}|}(\Omega)}\sum_{\pi\in\Pi} \prod_{B\in\pi}1
    \notag\\
   &=(|B|-|\bar{S}|)!(2c)^{-1-(|B|-|\bar{S}|)}\|{\lambda_{a}}\|_{C^{|B|-|\bar{S}|}(\Omega)}B_{|B|-|\bar{S}|}.
    \label{eq:4thgood}
\end{align}

Finally we can bring together \eqref{eq:term1pac}, \eqref{eq:term2pac}, \eqref{eg:term3pac} and \eqref{eq:4thgood} to obtain the bound
\begin{align*}
     &\left|D^{\alpha}k_{\warp,k_i}^{\lambda_{a}}(u,u')\right| \leq \\ 
    &\qquad 2^{1/2}\s^2\sum_S|5/4 - |S||^{|S|}\max\{\|{\lambda_{a}}\|_{C^{0}(\Omega)}, 1\}^{1/4 - |S|}
    \|{\lambda_{a}}\|_{C^{|S|}(\Omega)}B_{|S|} \\
    &\qquad \qquad \Bigg(\sum_{T_S}|5/4 - |T_S||^{T_S}\max\{\|{\lambda_{a}}\|_{C^0(\Omega)}, 1\}^{1/4 - |T_S|}
    \|{\lambda_{a}}\|_{C^{|T_S|}(\Omega)}B_{|T_S|} \\
&\qquad \qquad \qquad\Bigg(\sum_{V_{S,T_S}}(2|V|)!2 \|{\lambda_{a}}(u)\|^2_{C^{|V|}(\Omega)}B_{|V|}
\:(|B|-|\bar{S}|)!(2c)^{-1-(|B|-|\bar{S}|)}C_0\sqrt{(2p)!}\|{\lambda_{a}}\|_{C^{|B|-|\bar{S}|}(\Omega)}B_{|B|-|\bar{S}|}\Bigg)\Bigg)\nonumber.
\end{align*}
Since \(|S|, |T_S|, |V_{T_S}|\leq |\alpha|\), we then see that
\begin{align*}
    \left|D^{\alpha}k(u,u')\right|
\leq 2^{1/2}\s^2&\left(|5/4 - |\a||^{|\a|}\max\{\|{\lambda_{a}}\|_{C^{0}(\Omega)}, 1\}^{1/4 - |\a|}
    \|{\lambda_{a}}\|_{C^{|\a|}(\Omega)}B_{|\a|}\right)^2
\Bigg((2|\a|)!2 \|{\lambda_{a}}(u)\|^2_{C^{|\a|}(\Omega)}B_{|\a|}\\
&\quad(|\a|)!(2c)^{-1-(|\a|)})\|{\lambda_{a}}\|_{C^{|\a|}(\Omega)}B_{|\a|}\Bigg)(B_{|\a|})^3,
\end{align*}
as claimed.
\end{proof}


\end{appendices}
\end{document}